\begin{document}
\newtheorem{tm}{Theorem}
\newtheorem{la}{Lemma}
\newtheorem{cy}{Corollary}
\newtheorem{pn}{Proposition}
\newcommand{\bx}{{\bf x}}
\newcommand{\by}{{\bf y}}
\newcommand{\bw}{{\bf w}}
\newcommand{\ekp}{\enskip}
\newcommand{\wtd}{\widetilde}
\newcommand{\noi}{\noindent}
\thispagestyle{empty}
\pagestyle{myheadings}
\markright{RANDOMIZED ORTHOGONAL ARRAYS}

\begin{center}
{\large {\bf A MULTIVARIATE CENTRAL LIMIT THEOREM FOR}} \\
{\large {\bf RANDOMIZED
ORTHOGONAL ARRAY SAMPLING}} \\
{\large {\bf DESIGNS IN COMPUTER EXPERIMENTS}}\footnote{
{\em AMS} 2000 {\em subject classifications.} Primary 62E20;
secondary 60F05, 65C05.
\newline \indent{\em Key words and phrases.}
Computer experiment, multivariate central limit theorem, numerical integration, 
OA-based Latin hypercube,
randomized orthogonal array, Stein's method.} \\
\vspace{0.3cm}
{\sc By Wei-Liem Loh} \\
\vspace{0.3cm}
{\em National University of Singapore} \\
\end{center}
\begin{quote}
Let $f:[0,1)^d \rightarrow {\mathbb R}$ be an integrable function. An objective of many computer 
experiments is to estimate $\int_{[0,1)^d} f(x) dx$ by evaluating $f$ at a finite number of points 
in $[0,1)^d$. There is a design issue in the choice of these points and a popular choice is via the use
of randomized orthogonal arrays.
This article proves a multivariate central limit theorem for a class of randomized orthogonal array sampling designs
[Owen (1992a)] as well as for a class of OA-based Latin hypercubes [Tang (1993)].
\end{quote}

\section{Introduction}

Let $X$ be a random vector uniformly distributed on the $d$-dimensional unit hypercube
$[0,1)^d$ and $f$ be an integrable function from $[0,1)^d$ to ${\mathbb R}$. An objective of
many computer experiments [see, for example, McKay, Conover and Beckman (1979), Stein (1987),
Sacks, Welch, Mitchell and Wynn (1989) and Santner, Williams and Notz (2003)] is to estimate
\begin{equation}
\mu = E f(X) = \int_{[0,1)^d} f(x) dx,
\label{eq:1.1}
\end{equation}
using a finite number of function evaluations. It is well known 
that as the dimension $d$
increases, Monte Carlo methods and (deterministic) equidistribution methods become competitive
and ultimately dominant. Indeed Davis and Rabinowitz (1984), Chapter 5.10, consider
$d>15$ to be a high enough dimensionality that sampling or equidistribution methods are 
indicated. 

For definiteness, let $n, d, q$ and $t$ be positive integers such that $t\leq d$. An orthogonal array
of strength $t$ is a matrix of $n$ rows and $d$ columns with elements taken from the set of symbols
$\{0,1,\ldots, q-1\}$ such that in any $n\times t$ submatrix, each of the $q^t$ possible rows occurs
the same number of times. The class of all such arrays is denoted by OA$(n, d, q, t)$. Comprehensive accounts
of orthogonal arrays can be found in the books by Raghavarao (1971) and Hedayat, Sloane and Stufken (1999).

Owen (1992a), (1994) and Tang (1993) independently proposed the use of randomized orthogonal arrays in 
computer experiment sampling designs. The main attraction of these designs is that they, in contrast to simple random sampling,
stratify on all $t$-variate margins simultaneously.
A class of randomized orthogonal array sampling designs proposed by Owen (1992a) is as follows.
Let 
\begin{enumerate}
\item[(a)] $A\in {\rm OA}(q^t, d, q, t)$ where $a_{i,j}$ denotes the $(i,j)$th element of $A$,
\item[(b)] $\pi_1,\ldots, \pi_d$ be random permutations of $\{0,\ldots, q-1\}$, each
uniformly distributed on all the $q!$ possible permutations,
\item[(c)] $\{ U_{i,j}: i=1,\ldots, q^t, j =1,\ldots, d \}$, be $[0, 1)$ uniform
random variables,
\item[(d)] and all the $U_{i,j}$'s and $\pi_k$'s are independent.
\end{enumerate}
We randomize the symbols of $A$ by applying the permutation $\pi_j$ to the $j$th column of $A$, $j=1,\ldots, d$.
This gives us another orthogonal array $A^*$ such that its $(i,j)$th element
 satisfies $a^*_{i,j} = \pi_j( a_{i,j})$. 
An orthogonal array based sample of size $q^t$ (taken from $[0,1)^d$) is defined to be
$\{ X_1,\ldots, X_{q^t}\}$ where for $i=1,\ldots, q^t$,
$X_i=(X_{i,1},\ldots, X_{i,d})'$,
\begin{equation}
X_{i,j} = \frac{a^*_{i,j} + U_{i,j} }{q}, \hspace{0.5cm}\forall j=1,\ldots, d.
\label{eq:1.4}
\end{equation}

For $t \geq 2$, Tang (1993) observed that the above sampling designs may not stratify well on $s$-variate margins if $s<t$.
He suggested modified designs that 
stratify on $t$-variate margins as well as $1$-variate margins simultaneously. 
He called these designs OA-based Latin hypercubes.
Finally, Owen (1997a), (1997b), in a series of articles, proposed the use of scrambled nets.
Given $t \in {\mathbb Z}^+$, the  scrambled nets
stratify on $s$-variate margins whenever $t/s$ is a positive integer.

A class of OA-based Latin hypercubes 
can be constructed as follows.
Let $A\in {\rm OA}(q^t, d, q,$ $t)$. 
As before, we randomize its symbols to obtain the orthogonal array $A^*$.
Then for each column of $A^*$, we replace the $q^{t-1}$ positions with entry $k$ by a random permutation
(with each such permutation having an equal probability of being chosen)
of $\{k q^{t-1}, kq^{t-1} +1, \ldots, (k+1) q^{t-1} -1\}$,  for all $k= 0,\ldots, q-1$.
After the replacement is done for all $d$ columns of $A^*$, the newly obtained matrix, say $A^{**}$, 
satisfies $A^{**} \in {\rm OA}(q^t, d, q^t, 1)$.

One version of OA-based Latin hypercubes that was considered by Owen (1997a), page 1906, is of the form
$\{ Y_1,\ldots, Y_{q^t}\}$ where for $i=1,\ldots, q^t$,
$Y_i = (Y_{i,1},\ldots, Y_{i,d})'$,
\begin{equation}
Y_{i,j} = \frac{ a^{**}_{i,j} + U_{i,j} }{ q^t}, \hspace{0.5cm}\forall j=1,\ldots, d,
\label{eq:1.2}
\end{equation}
$\{U_{i,j}: i=1,\ldots, q^t, j=1,\ldots, d\}$ are $U [0,1)$ random variables independent of one another and all other permutations,
and $a^{**}_{i,j}$ denotes the $(i,j)$th element of $A^{**}$.
The class of OA-based Latin hypercubes proposed by Tang (1993) requires one more level of randomization where the columns of  $A^{**}$ are randomized. We denote the
resulting matrix by $A^{***}$.
Tang's OA-based Latin hypercubes can be expressed as
$\{ Y_1^*,\ldots, Y_{q^t}^*\}$ where for $i=1,\ldots, q^t$,
$Y_i^* = (Y_{i,1}^*,\ldots, Y_{i,d}^*)'$,
\begin{equation}
Y_{i,j}^* = \frac{ a^{***}_{i,j} + U_{i,j} }{ q^t}, \hspace{0.5cm}\forall j=1,\ldots, d,
\label{eq:1.21}
\end{equation}
$\{U_{i,j}: i=1,\ldots, q^t, j=1,\ldots, d\}$ are, as before, $U [0,1)$ random variables independent of one another and all other permutations,
and $a^{***}_{i,j}$ denotes the $(i,j)$th element of $A^{***}$.
We note that $\{ Y_1,\ldots, Y_{q^t} \}$ and $\{ Y_1^*,\ldots, Y_{q^t}^* \}$
are Latin hypercube samples [see, for example, McKay, Conover and Beckman (1979) and Owen (1992b)].

The estimators for $\mu$ in (\ref{eq:1.1}) that we are concerned with are
\begin{equation}
\mbox{$\hat{\mu}_{oas} = q^{-t} \sum_{i=1}^{q^t} f( X_i)$,
$\hat{\mu}_{oal} = q^{-t} \sum_{i=1}^{q^t} f( Y_i)$,
and
$\hat{\mu}_{oal}^* = q^{-t} \sum_{i=1}^{q^t} f( Y_i^*)$,}
\label{eq:1.3}
\end{equation}
where the $X_i$'s, $Y_i$'s and $Y_i^*$'s are as in (\ref{eq:1.4}), (\ref{eq:1.2}) and (\ref{eq:1.21}) respectively. It is easily seen that
$\hat{\mu}_{oas}$, $\hat{\mu}_{oal}$ and $\hat{\mu}_{oal}^*$ are all unbiased estimators for
$\mu$. For simplicity, we write $\sigma_{oas}^2 = {\rm Var} (\hat{\mu}_{oas})$, $\sigma_{oal}^2 = {\rm Var}(\hat{\mu}_{oal})$ and $\sigma_{oal}^{*2} = {\rm Var} (\hat{\mu}_{oal}^*)$.

In this article, we shall assume that $t=2$. This significantly simplifies the notation as well
as the theoretical
arguments that follow. Also as Owen (1992a) and Tang (1993) noted, orthogonal arrays of strength $t=2$ lead to the most economical  
sample size $q^2$. This is important in practice especially when $q$ is large.
The following theorem is due to Owen (1992a) and Tang (1993).

\begin{tm} \label{tm:3.1}
Let $d\geq 3$, $f$ be a bounded continuous function on $[0,1)^d$ and $\hat{\mu}_{oas}$, $\hat{\mu}_{oal}^*$ be as in (\ref{eq:1.3}) with $A\in$ OA$(q^2, d, q, 2)$. Then
as $q\rightarrow \infty$, we have
\begin{displaymath}
q^2 \sigma^2_{oas} =  \int_{[0,1)^d} f_{rem}^2 (x) dx + o(1)
\hspace{0.5cm}\mbox{and} \hspace{0.5cm}
q^2 \sigma^{*2}_{oal} =  \int_{[0,1)^d} f_{rem}^2 (x) dx + o(1),
\end{displaymath}
where for all $x = (x_1,\ldots, x_d)' \in [0, 1)^d$, $1\leq j\leq d$, $1\leq k<l\leq d$,
\begin{eqnarray}
f_j (x_j) &=& \int_{[0,1)^{d-1}} [ f(x) - \mu ] \prod_{1\leq k\leq d: k\neq j} dx_k,
\nonumber \\
f_{k,l} (x_k, x_l) &=& \int_{[0,1)^{d-2}} [ f(x) - \mu - f_k(x_k) - f_l(x_l) ]
\prod_{1\leq j\leq d: j\neq k, l} dx_j,
\nonumber \\
f_{rem} (x) &=&
f(x) - \mu -\sum_{j=1}^d f_j (x_j) -\sum_{1\leq k< l\leq d} f_{k,l} (x_k, x_l).
\label{eq:2.39}
\end{eqnarray}
\end{tm}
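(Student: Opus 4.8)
The plan is to expand the variance of $\hat{\mu}_{oas}$ through the decomposition in (\ref{eq:2.39}). Write $\hat{\mu}_{oas}-\mu = S_1 + S_2 + S_{rem}$, where $S_1 = q^{-2}\sum_{i=1}^{q^2}\sum_{j=1}^d f_j(X_{i,j})$, $S_2 = q^{-2}\sum_{i=1}^{q^2}\sum_{1\le k<l\le d} f_{k,l}(X_{i,k},X_{i,l})$ and $S_{rem} = q^{-2}\sum_{i=1}^{q^2} f_{rem}(X_i)$, so that $q^2\sigma_{oas}^2$ is $q^2$ times the sum of the three variances ${\rm Var}(S_a)$ and the three cross terms $2\,{\rm Cov}(S_a,S_b)$. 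The goal is to show that $q^2\,{\rm Var}(S_{rem})\to\int_{[0,1)^d} f_{rem}^2$, that $q^2\,{\rm Var}(S_1)\to 0$ and $q^2\,{\rm Var}(S_2)\to 0$, and that every cross term vanishes identically; the analogue for $\hat{\mu}_{oal}^*$ will follow by the same route.

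Three structural facts drive the computation. (i) Conditionally on $\pi_1,\dots,\pi_d$, the vectors $X_1,\dots,X_{q^2}$ are mutually independent, each having independent coordinates, with $X_{i,j}$ uniform on $[\pi_j(a_{i,j})/q,(\pi_j(a_{i,j})+1)/q)$. (ii) For any $W\subseteq\{1,\dots,d\}$, the conditional expectation $E[f_W(X_i)\mid\pi_1,\dots,\pi_d]$ equals the average of the ANOVA term $f_W$ over the grid cell indexed by $(\pi_j(a_{i,j}))_{j\in W}$, and, because $\int_0^1 f_W\,dx_j=0$ for every $j\in W$, this average sums to zero over each of its coordinate indices. (iii) Since $A\in{\rm OA}(q^2,d,q,2)$ has index one, any two distinct rows of $A$, hence of the randomized array $A^*$, agree in at most one column. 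From (i)--(iii) one gets that each cross term is exactly zero: for two summands that share an index $i$ one conditions on the coordinates of $X_i$ in the support of one summand and uses $\int_0^1 f_W\,dx_j=0$ for the other; for two summands with distinct indices $i\ne i'$ one conditions on $\pi_1,\dots,\pi_d$, factorizes the conditional expectation using (i), and then averages out a coordinate index that occurs in only one of the two summands, which is $0$ by (ii). Such a coordinate always exists because a univariate or bivariate margin cannot contain a set $W$ with $|W|\ge 3$; this is where the hypothesis $d\ge 3$ enters, ensuring that $f_{rem}$ is a genuine sum of interactions of order at least $3$.

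For the diagonal variances, a direct second moment computation gives ${\rm Var}(S_1)=q^{-2}\sum_{j}\int(f_j-\bar{f}_j)^2$ and, using that by the strength-$2$ property the pairs $(X_{i,k},X_{i,l})$, $1\le i\le q^2$, form exactly one uniform point in each cell of the grid of mesh $1/q$ and are independent across cells, ${\rm Var}(S_2)=q^{-2}\sum_{k<l}\int(f_{k,l}-\bar{f}_{k,l})^2$, where $\bar{f}_j$ and $\bar{f}_{k,l}$ denote the $L^2$ projections of $f_j$ and $f_{k,l}$ onto functions that are constant on the cells of that grid. Since $f$, and therefore $f_j$ and $f_{k,l}$, are bounded, $\int(f_j-\bar{f}_j)^2\to 0$ and $\int(f_{k,l}-\bar{f}_{k,l})^2\to 0$ as $q\to\infty$ by the Lebesgue differentiation theorem, so $q^2\,{\rm Var}(S_1)\to 0$ and $q^2\,{\rm Var}(S_2)\to 0$. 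The part of ${\rm Var}(S_{rem})$ with $i=i'$ equals $q^{-4}\cdot q^2\int f_{rem}^2$. The part with $i\ne i'$ does not vanish; conditioning on $\pi_1,\dots,\pi_d$ and expanding $f_{rem}=\sum_{|W|\ge 3}f_W$, each such covariance reduces to a sum over $W$ of terms of order $(q-1)^{-m_W}$ with $m_W\ge 2$, the value $m_W=|W|-1$ arising only when the single column in which rows $i$ and $i'$ agree belongs to $W$, and $m_W=|W|\ge 3$ otherwise. Since only $O(q^3)$ ordered row pairs agree in some column while $O(q^4)$ agree in none, this off-diagonal sum is $O(q^2)$ and contributes $O(q^{-1})$ to $q^2\sigma_{oas}^2$. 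Collecting the pieces yields $q^2\sigma_{oas}^2 = \int_{[0,1)^d} f_{rem}^2 + o(1)$.

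For $\hat{\mu}_{oal}^*$ I would first condition on Tang's extra randomization of the columns of $A^{**}$: that randomization merely relabels coordinates, and $\int f_{rem}^2$ is invariant under coordinate permutations of $f$, so $\sigma_{oal}^{*2}$ equals the variance of an estimator of the form (\ref{eq:1.2}) for a coordinate-permuted $f$ and some base array, and it suffices to treat (\ref{eq:1.2}) for an arbitrary $A\in{\rm OA}(q^2,d,q,2)$. One then conditions on $\pi_1,\dots,\pi_d$ together with the within-block permutations used to pass from $A^*$ to $A^{**}$; the analogues of (i)--(iii) hold with $X$ replaced by $Y$ and the mesh $1/q$ by $1/q^2$, the one difference being that the bivariate stratification of the pairs $(Y_{i,k},Y_{i,l})$ remains at mesh $1/q$. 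The same computations give vanishing cross terms, $q^2\,{\rm Var}(S_1')\to 0$, $q^2\,{\rm Var}(S_2')\to 0$ and $q^2\,{\rm Var}(S_{rem}') = \int f_{rem}^2 + o(1)$, hence $q^2\sigma_{oal}^{*2} = \int_{[0,1)^d} f_{rem}^2 + o(1)$. The step I expect to be hardest is the bookkeeping behind the non-vanishing off-diagonal term of ${\rm Var}(S_{rem})$ and of its OA-based-Latin-hypercube analogue: for each pattern of coincidences among the cell indices of $X_i$ and $X_{i'}$ one must pin down exactly which interaction subsets survive the averaging over the permutations, and at which power of $q$, and then verify that the number of row pairs realizing each pattern keeps the aggregate $o(q^2)$. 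The identical vanishing of all cross terms and of the off-diagonal bivariate covariances, although it is the conceptual core, is mechanical once the conditioning in (i)--(iii) is set up.
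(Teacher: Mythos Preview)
The paper does not give its own proof of this theorem: it is quoted as a result ``due to Owen (1992a) and Tang (1993)'' and used as input for the rest of the article. So there is nothing in the present paper to compare your argument against. Your proposal is essentially the classical ANOVA--decomposition argument of Owen and Tang: split $\hat\mu_{oas}-\mu$ into univariate, bivariate and higher-order pieces, show that all cross-covariances vanish by integrating out a coordinate that lies in the support of only one ANOVA component, compute ${\rm Var}(S_1)=q^{-2}\sum_j\int(f_j-\bar f_j)^2$ and ${\rm Var}(S_2)=q^{-2}\sum_{k<l}\int(f_{k,l}-\bar f_{k,l})^2$ exactly, and control the off-diagonal part of ${\rm Var}(S_{rem})$ via the identity $\sum_{c'\neq c}\bar f_W(\ldots,c',\ldots)=-\bar f_W(\ldots,c,\ldots)$ together with the fact that two distinct rows of an ${\rm OA}(q^2,d,q,2)$ agree in at most one column. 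All of this is correct, and your reduction of $\hat\mu_{oal}^*$ to the unstarred version by conditioning on the column permutation is sound because $E[\hat\mu_{oal}^*\mid\sigma]=\mu$ is deterministic, so the total variance is the average of the conditional variances and $\int f_{rem}^2$ is permutation-invariant.

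One small bookkeeping slip: with your own counts, the off-diagonal sum $\sum_{i\neq i'}{\rm Cov}(f_{rem}(X_i),f_{rem}(X_{i'}))$ is $O(q^3)\cdot O((q-1)^{-2})+O(q^4)\cdot O((q-1)^{-3})=O(q)$, not $O(q^2)$; it is precisely this $O(q)$ that makes the contribution to $q^2\sigma_{oas}^2$ equal to $q^{-2}\cdot O(q)=O(q^{-1})$, matching your stated conclusion. With that correction the sketch is complete.
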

Theorem \ref{tm:3.1} implies that (i) the asymptotic variances of $\hat{\mu}_{oas}$ and $\hat{\mu}_{oal}^*$ are always less than or equal to the asymptotic variance of an analogous estimator based
on a simple random sample of the same size, (ii) they are dramatically smaller 
if the integrand $f$ can be approximated by a sum of bivariate functions, and (iii) $\sigma_{oas}^2 \sim \sigma^{*2}_{oal}$ if $\int_{[0,1)^d} f_{rem}^2 (x) dx >0$.
Tang (1993), page 1395, further showed that $\sigma_{oal}^{*2} \leq \sigma_{oas}^2$ if $f$ is additive.

The aim of this article is to study the asymptotic distributions of $\hat{\mu}_{oas}$, $\hat{\mu}_{oal}$ and $\hat{\mu}_{oal}^*$.
For instance, such a result will be useful in the construction of confidence intervals
for $\mu$.

{\sc Definition.} A function $f: [0,1)^d\rightarrow {\mathbb R}$ is smooth with a Lipschitz continuous mixed partial of order $d$ if there exist 
finite constants $B\geq 0$ and $\beta \in (0,1]$ such that
\begin{displaymath}
\sup_{j_1,\ldots,j_d\in \{0,\ldots, d\} : j_1+\cdots + j_d = d} | \frac{\partial^d }{\partial x_1^{j_1} \ldots \partial x_d^{j_d}} f(x) - \frac{\partial^d}{ \partial x_1^{j_1}\ldots \partial x_d^{j_d}} f(y) |
\leq B \|x-y\|^\beta,
\end{displaymath}
$\forall x,y\in [0,1)^d$
where $\|.\|$ is the usual Euclidean norm.
We shall now state the main result of this article, the proof of which is deferred to the Appendix.
\begin{tm} \label{tm:5.2}
Suppose $d\geq 3$ and $f:[0,1)^d\rightarrow {\mathbb R}$ is smooth with a Lipschitz continuous mixed partial of
order $d$ such that $\int_{{\mathbb R}^d} f_{rem}^2 (x) dx > 0$. Define $W_{oas} = (\hat{\mu}_{oas}-\mu)/\sigma_{oas}$, $W_{oal}= (\hat{\mu}_{oal}-\mu)/\sigma_{oal}$
and
$W^*_{oal} = (\hat{\mu}_{oal}^* -\mu)/\sigma_{oal}^*$  with $A\in$ OA$(q^2, d, q, 2)$.
Then $W_{oas}$, $W_{oal}$ and $W_{oal}^*$ each converges in law to the standard normal distribution as $q\rightarrow \infty$.
\end{tm}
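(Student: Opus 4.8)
The plan is to prove asymptotic normality of $W_{oas}$ (the arguments for $W_{oal}$ and $W_{oal}^*$ being parallel, with the extra column-randomization in the latter two only affecting bookkeeping) by Stein's method, since the summands $f(X_i)/q^2$ are highly dependent through the shared permutations $\pi_1,\dots,\pi_d$. First I would perform an ANOVA-type decomposition of the centered integrand: write $g = f - \mu = \sum_j f_j + \sum_{k<l} f_{k,l} + f_{rem}$ as in (\ref{eq:2.39}), and decompose $\hat{\mu}_{oas} - \mu$ accordingly. The smoothness hypothesis guarantees (via Theorem \ref{tm:3.1} and a more refined version of the variance computation) that the contributions of the main-effect pieces $f_j$ and the bivariate pieces $f_{k,l}$ to $q^2\sigma_{oas}^2$ are $o(1)$: the strength-2 orthogonal array stratifies exactly on all one- and two-dimensional margins, so those terms are integrated (almost) exactly and contribute only lower-order fluctuations. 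Hence the dominant term is $q^{-2}\sum_i f_{rem}(X_i)$, whose variance converges to $\int f_{rem}^2 > 0$; this is why the positivity assumption is needed for a nondegenerate limit.

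Next I would set up an exchangeable-pair or dependency-neighborhood coupling for $W := q^{-1}\sigma_{oas}^{-1}\sum_i f_{rem}(X_i)$. The natural exchangeable pair $(W, W')$ is obtained by picking a coordinate $j$ uniformly at random and applying a random transposition to the permutation $\pi_j$; one checks the linearity condition $E[W' - W \mid W] = -\lambda W$ for an appropriate $\lambda \asymp 1/(dq)$ or similar, using the orthogonality/stratification properties of $A$. Then Stein's bound gives
\begin{displaymath}
\sup_{z}|P(W\le z) - \Phi(z)| \le C\Big( \sqrt{\mathrm{Var}\, E[(W'-W)^2\mid \mathcal F]}\,/\lambda + E|W'-W|^3/\lambda \Big),
\end{displaymath}
and the task reduces to estimating these two quantities. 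The third-moment term is controlled because each transposition of $\pi_j$ changes only $O(q)$ of the $q^2$ points and each $f_{rem}(X_i)$ is bounded, so $|W'-W| = O(q^{-1}\sigma_{oas}^{-1}\cdot q\cdot q^{-1}) = O(\sigma_{oas}^{-1} q^{-1}) = O(q^{-1})$ after inserting $\sigma_{oas}\asymp q^{-1}$; dividing by $\lambda$ still leaves a vanishing bound. The conditional-variance term requires showing that $E[(W'-W)^2\mid\mathcal F]$ concentrates around its mean $2\lambda$, which amounts to a fourth-moment / second-order stratification estimate for $f_{rem}$ across the randomized columns.

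The main obstacle will be the conditional-variance concentration step: one must show $\mathrm{Var}\big(E[(W'-W)^2\mid\mathcal F]\big) = o(\lambda^2)$. This is delicate because $(W'-W)^2$ is a quadratic form in the permutation-indicator variables, and its conditional expectation is a sum of cross-terms $f_{rem}(X_i)f_{rem}(X_{i'})$ over pairs of rows sharing prescribed symbol patterns; bounding its variance brings in fourth-order interactions among the $\pi_j$'s. Here the Lipschitz-mixed-partial smoothness is essential: it lets one approximate $f_{rem}$ and its relevant conditional integrals by piecewise-polynomial surrogates with error $O(q^{-\beta})$ uniformly, reducing the combinatorial estimate to a manageable counting problem over the orthogonal array, where strength 2 forces enough cancellation for the cross-terms to average out. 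I would carry out this approximation first (replace $f$ by a smooth surrogate, bound the replacement error in $L^2$ and in the Stein bound separately), then do the moment computations for the surrogate, and finally let the surrogate converge to $f$. For $W_{oal}$ and $W_{oal}^*$ the same scheme applies verbatim once one notes that the additional within-block permutations and (for the starred version) the column permutation only enlarge the dependency structure by a bounded factor and do not change the order of $\lambda$ or the moment bounds.
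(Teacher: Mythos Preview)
Your overall plan---ANOVA reduction to $f_{rem}$, then an exchangeable pair via a random transposition in a randomly chosen $\pi_j$---is exactly the right instinct, but the step ``one checks the linearity condition $E[W'-W\mid\mathcal F]=-\lambda W$'' fails for $d\geq 4$, and this is precisely the obstruction the paper is built around. If you further decompose $f_{rem}$ by the size $|u|$ of the interaction (write $V_\ell$ for the piece with $|u|=\ell+2$, $\ell=1,\dots,d-2$, so that $W=\sum_\ell(\sigma_\ell/\sigma)V_\ell$), a direct computation (Proposition~\ref{pn:a.1} in the paper) gives
\[
E^{\cal W}[\tilde V_\ell - V_\ell]=-\frac{2(\ell+2)}{d(q-1)}\,V_\ell,
\]
so the contraction rate depends on $\ell$. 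Hence $E^{\cal W}[\tilde W-W]=-\frac{2}{d(q-1)}\sum_\ell(\ell+2)(\sigma_\ell/\sigma)V_\ell$, which is not a scalar multiple of $W$ once $d\geq 4$; for any choice of $\lambda\asymp 1/q$ the residual $R=E^{\cal W}[\tilde W-W]+\lambda W$ stays of the same order as $\lambda W$, and the term $E|R|/\lambda$ in the Stein bound does not vanish. The paper even flags this: the univariate exchangeable-pair argument of Loh (1996) works only for $d=3$ and ``does not seem to be extendable to $d\geq 4$.''

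The paper's cure is to pass to the \emph{multivariate} Stein method of G\"otze/Bolthausen--G\"otze: it proves a CLT for the whole vector $V=(V_1,\dots,V_{d-2})'$ (Theorem~\ref{tm:5.1}), where the $\ell$-dependent contraction poses no problem because it becomes a diagonal linear map, and then recovers the scalar limit via $W=\xi'V$ with $\|\xi\|=1$ and a compactness/subsequence argument. Your third-moment and conditional-variance heuristics are morally the right ingredients, but they must be carried out componentwise (the paper's Propositions~\ref{pn:a.2}--\ref{pn:a.5} and Lemmas~\ref{la:a.17}--\ref{la:a.24}); also note your size estimate $|W'-W|=O(q^{-1})$ is off---a transposition moves $2q$ points, each contributing $O(1)$ to $f_{rem}$, so $|W'-W|=O(1)$ deterministically and one needs moment bounds like $E(S_\ell^4)=O(q^{-2})$ rather than a pointwise bound. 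Finally, $W_{oal}^*$ is not handled ``verbatim'': the paper reduces it to $W_{oal}$ by averaging over the $d!$ column permutations after showing $\sigma_{oal}^*/\sigma_{oal}\to 1$.
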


The remainder of this article proceeds as follows.
In Section 2 we shall first establish base $q$ expansions for $\{X_1,\ldots, X_{q^2}\}$ and
$\{Y_1,\ldots, Y_{q^2}\}$. The main point here is that the difference between these two base $q$ expansions is of order $O(1/q)$.
Following Owen (1997a),  a $d$-dimensional base $q$ Haar multiresolution analysis is applied to $f$ and 
 an ANOVA decomposition of $f$ is obtained.
This ANOVA decomposition facilitates much of the theoretical analysis that ensue.

In Section 3, a proxy statistic $W$ for $W_{oas}$ and $W_{oal}$ is introduced.
Proposition \ref{pn:4.1} shows that to prove the asymptotic normality of $W_{oas}$ and $W_{oal}$ as $q\rightarrow \infty$,
it suffices to prove that $W$ is asymptotically normal.
Stein (1972) proposed a powerful and general method for obtaining a bound for the error in the normal approximation to the distribution 
of a sum of dependent random variables. Since then, Stein's method has found considerable applications in combinatorics, probability and statistics [see Stein (1986)].
We shall use the multivariate normal version of Stein's method as given in G\"{o}tze (1991) and Bolthausen and G\"{o}tze (1993). In particular,
Theorem \ref{tm:5.1} establishes a multivariate central limit theorem for the ``components'' of $W$ under the conditions of Theorem \ref{tm:5.2}. 
This result is needed in the proof of the latter theorem.
Finally, the Appendix contains the proof of Theorem \ref{tm:5.2} as well as some more technical results that used in this article.

We would like to add that Loh (1996) has established the asymptotic normality of $\hat{\mu}_{oas}$ when $d =3$ and $t=2$ under moment conditions on $f$. 
However the approach in Loh (1996), which uses directly the univariate version of Stein's method, does
not seem to be extendable to $d\geq 4$. For example, the inequality (11)
in Loh (1996) is valid for $d=3$ but not for $d\geq 4$. 

We conclude the Introduction with a note on notation. In this article,
the indicator function is denoted by ${\cal I}\{.\}$ and
if $x$ is a vector, then $x'$ is its transpose.
$\|.\|$ denotes the Euclidean norm in ${\mathbb R}^p$ where $p$ is either $d-2$ or $d$ (depending on the context).

\section{ANOVA decomposition}

We shall first establish base $q$ expansions for randomized orthogonal array samples
as well as for OA-based Latin hypercubes.
Let $A\in {\rm OA}(q^2, d, q, 2)$, $a_{i,j}$ be the $(i,j)$th element of $A$ and
\begin{equation}
\{\pi_j, \pi_{j;b}, \pi_{i,j,k}: i=1,\ldots, q^2, j=1,\ldots, d, b=0,\ldots, q-1, k=2,3,\ldots \}
\label{eq:2.21}
\end{equation}
be a set of mutually independent random permutations of $\{0,1,\ldots, q-1\}$, where
each of these permutations is uniformly distributed over its $q!$ possible values.
We observe that the randomized orthogonal array sample $X_1,\ldots, X_{q^2}$ in (\ref{eq:1.4}) can be expressed as
$X_i = (X_{i,1}, \ldots, X_{i,d})'$ where
\begin{equation}
X_{i,j} = \sum_{k=1}^\infty x_{i,j,k} q^{-k},\hspace{0.5cm} \forall
i=1,\ldots, q^2, j=1,\ldots d,
\label{eq:2.3}
\end{equation}
$x_{i,j,1} = \pi_j (a_{i,j} )$ and 
$x_{i,j,k} = \pi_{i,j,k} (0)$ for all $k \geq 2$.
Let $A^{**} \in {\rm OA}(q^2, d, q^2, 1)$ be as in Section 1 with $t=2$.
Since $0\leq a^{**}_{i,j}/q^2 <1$, we observe that
$a^{**}_{i,j}/q^2 = \sum_{k=1}^\infty b_{i,j,k} q^{-k}$
for suitable integers $0\leq b_{i,j,1}, b_{i,j,2} \leq q-1$ and $b_{i,j,k}=0$ for all $k\geq 3$.
Owen (1997a), page 1907, observed that
an OA-based Latin hypercube defined as in (\ref{eq:1.2}) has the form  $\{ Y_i= (Y_{i,1},\ldots, Y_{i,d})': i=1,\ldots, q^2\}$,
where
\begin{equation}
Y_{i,j} = \sum_{k=1}^\infty y_{i,j,k} q^{-k}
\label{eq:2.2}
\end{equation}
and for $1\leq i\leq q^2$, $1\leq j\leq d$,
$y_{i,j,1} = \pi_j (a_{i,j} )$,
$y_{i,j,2} = \pi_{j; a_{i,j} } (b_{i,j,2})$,
$y_{i,j,k} = \pi_{i,j,k} (0)$ for all $k \geq 3$.
We observe from (\ref{eq:2.3}) and (\ref{eq:2.2}) that $\sup_{1\leq i\leq q^2, 1\leq j\leq d} |X_{i,j} - Y_{i,j}| \leq (q-1)/q^2$.

Let $f:[0,1)^d \rightarrow \mathbb R$ be a square integrable function.
Inspired by Owen (1997a), 
we apply a $d$-dimensional base $q$ Haar multiresolution analysis
to $f$. More precisely, 
for any integer $k\geq 0$, let ${\cal Y}_k$ denote the linear span of the functions
$\{\psi_{k,t,c}: t= 0, 1,\ldots,$ and $c=0,\ldots, q -1\}$ where
\begin{displaymath}
\psi_{k,t,c}(x) 
= q^{(k+1)/2} {\cal I}\{ x \in [\frac{ qt+c}{q^{k+1}}, \frac{ qt + c+1}{q^{k+1}}) \}
- q^{(k-1)/2} {\cal I}\{ x \in [\frac{ t}{q^k}, \frac{ t+1}{q^k} )\},
\end{displaymath}
$\forall x\in [0,1)$.
We observe that
the functions in ${\cal Y}_k$ are constant on
$[t q^{-k-1}, (t+1) q^{-k-1})$ and integrate to zero over 
$[t q^{-k}, (t+1) q^{-k})$.
Next let ${\cal U}_0$ denote the space of functions that are constant on $[0,1)$
and
\begin{displaymath}
{\cal U}_k = \{g + g_0+\cdots + g_{k-1}: g\in {\cal U}_0, g_j\in {\cal Y}_j, j=0,\ldots,k-1\},
\hspace{0.5cm} \forall k= 1, 2,\ldots.
\end{displaymath}
Then it is well known that
$\bigcup_{k=0}^\infty {\cal U}_k$ is dense in $L^2 ([0,1))$ and
$\bigcap_{k=0}^\infty {\cal U}_k = {\cal U}_0$.
We further observe from Owen (1997a), page 1897,
that a typical basis function for $L^2([0,1)^d)$ is of the form
$\prod_{r=1}^l \psi_{k_{j_r}, t_{j_r},c_{j_r}} (x_{j_r})$
for all $(x_1,\ldots,x_d)'\in [0,1)^d$,
where $1\leq j_1<\cdots<j_l \leq d$, 
and $k_{j_r} \geq 0$, $0\leq t_{j_r} \leq q^{k_{j_r}}-1$, $0\leq c_{j_r} \leq q-1$ whenever $1\leq r\leq l$.
Here by convention, an empty product (that is $l=0$) is taken to be $1$.
Hence for each $f \in L^2([0,1)^d)$, it follows from (6.6) of Owen (1997a), page 1898, that
\begin{eqnarray}
f(x) 
&=& \mu + \sum_{l=1}^d \sum_{1\leq j_1<\cdots <j_l\leq d} 
( \sum_{k_{j_1}= 0}^\infty
\sum_{t_{j_1}=0}^{q^{k_{j_1}}-1} 
\sum_{c_{j_1}=0}^{q-1} )
\cdots
( \sum_{k_{j_l}= 0}^\infty
\sum_{t_{j_l}=0}^{q^{k_{j_l}}-1} 
\sum_{c_{j_l}=0}^{q-1} )
\nonumber \\
&&\hspace{0.5cm}\times
\langle f,
\prod_{r=1}^l \psi_{k_{j_r}, t_{j_r},c_{j_r}} \rangle
\prod_{r=1}^l \psi_{k_{j_r}, t_{j_r},c_{j_r}} (x_{j_r}),
\hspace{0.5cm}\mbox{a.e.\ $x\in [0,1)^d$},
\label{eq:2.1}
\end{eqnarray}
where $\mu$ is as in (\ref{eq:1.1}) and
\begin{equation}
\langle f,
\prod_{r=1}^l \psi_{k_{j_r}, t_{j_r},c_{j_r}} \rangle
=
\int_{[0,1)^d} f(x)
[\prod_{r=1}^l \psi_{k_{j_r}, t_{j_r},c_{j_r}} (x_{j_r})] dx.
\label{eq:2.4}
\end{equation}
 Without loss of generality, we can assume that equality in (\ref{eq:2.1}) holds for all $x\in [0,1)^d$ since
changing the value of $f$ on a set of Lebesgue measure zero will not alter the value of $\mu$.
For simplicity let
\begin{displaymath}
\{U[\tilde{c}_{j,1},\ldots, \tilde{c}_{j,u_j}: 1\leq j\leq d]:
0\leq \tilde{c}_{j,1}, \ldots, \tilde{c}_{j,u_j} \leq q-1, u_j \geq 0, 1\leq j\leq d\}
\end{displaymath}
be a set of mutually independent random vectors where each
$U[\tilde{c}_{j,1},\ldots, \tilde{c}_{j,u_j}: 1\leq j\leq d]$
has the uniform distribution
on the $d$-dimensional interval
$\prod_{j=1}^d [\sum_{k=1}^{u_j} \tilde{c}_{j,k} q^{-k}, 
q^{-u_j} + \sum_{k=1}^{u_j} \tilde{c}_{j,k} q^{-k})$.
Here
$\sum_{k=1}^{u_j} \tilde{c}_{j,k} q^{-k} = 0$ 
if $u_j=0$.
Furthermore we assume that the above $U$'s are independent of $\pi$'s [defined as in (\ref{eq:2.21})]. 
For nonnegative integers $u_1^*, u_1, \ldots, u_d^*, u_d$,
we write 
\begin{itemize}
\item[(i)] $(u_1^*,\ldots, u_d^*)\preceq (u_1,\ldots, u_d)$ if and
only if $u_j^*\leq u_j$ for all $j=1,\ldots,d$,
\item[(ii)]
$(u_1^*,\ldots, u_d^*)\prec (u_1,\ldots, u_d)$ if and
only if $u_j^*\leq u_j$ for all $j=1,\ldots,d$ with at least one strict inequality.
\end{itemize}
The following construction establishes an ANOVA decomposition of $E f\circ U = E f(U)$
where $E$ denotes expectation. 
For integers $u_j \geq 0$, $0\leq \tilde{c}_{j,k} \leq q-1$,
$1\leq j\leq d, k\geq 1$, define recursively 
\begin{eqnarray}
&& \nu_{u_1,\ldots,u_d} [\tilde{c}_{j,1},\ldots,\tilde{c}_{j,u_j}: 1\leq j\leq d] 
\nonumber \\
&=& 
E f\circ U[\tilde{c}_{j,1},\ldots,\tilde{c}_{j,u_j}: 1\leq j\leq d]
\nonumber \\
&&\hspace{0.5cm} 
- \sum_{u_1^*,\ldots, u_d^*: (0,\ldots,0)\preceq (u_1^*,\ldots, u_d^*)\prec (u_1,\ldots, u_d)} 
\nu_{u^*_1,\ldots, u_d^*}
[\tilde{c}_{j,1},\ldots,\tilde{c}_{j,u_j^*}: 1\leq j\leq d],
\label{eq:3.761}
\end{eqnarray}
and hence
\begin{eqnarray*}
&& \lim_{u_1,\ldots, u_d\rightarrow\infty}
E f\circ U[\tilde{c}_{j,1},\ldots, \tilde{c}_{j,u_j}:1\leq j\leq d]
\nonumber \\
&=& \sum_{u_1,\ldots, u_d\geq 0 } 
\nu_{u_1,\ldots, u_d}[\tilde{c}_{j,1},\ldots,\tilde{c}_{j,u_j} : 1\leq j\leq d].
\end{eqnarray*}
Writing $|u| = \sum_{j=1}^d {\cal I} \{u_j\geq 1\}, u=(u_1,\ldots, u_d)',$ such that $1\leq j_1<\cdots <j_{|u|} \leq d$ and
$u_j \geq 1$ if and only if $j\in \{j_1,\ldots,j_{|u|} \}$,
it follows from (\ref{eq:2.1}) that $\nu_{u_1,\ldots,u_d}$ can be written down explicitly as
$\nu_{0,\ldots, 0} [\hspace{0.3cm} ] 
= \mu$ if $|u|=0$
and
\begin{eqnarray}
&& \nu_{u_1,\ldots, u_d} [\tilde{c}_{j,1},\ldots,\tilde{c}_{j,u_j}: 1\leq j\leq d] 
\nonumber \\
&=& ( \sum_{t_{j_1}=0}^{q^{u_{j_1}-1}-1} \sum_{c_{j_1}=0}^{q-1} )
\cdots ( \sum_{t_{j_{|u|} }=0}^{q^{u_{j_{|u|} }-1}-1} \sum_{c_{j_{|u|} }=0}^{q-1} )
\langle f,  \prod_{l=1}^{|u|}  \psi_{u_{j_l} -1,t_{j_l},c_{j_l}} \rangle  
\nonumber \\
&&\times
E\{ \prod_{l=1}^{|u|}  \psi_{u_{j_l}-1,t_{j_l},c_{j_l}} \circ 
U_{j_l} [\tilde{c}_{j,1},\ldots, \tilde{c}_{j,u_j}
:1\leq j\leq d]\}
\nonumber \\
&=& ( \sum_{t_{j_1}=0}^{q^{u_{j_1}-1}-1} \sum_{c_{j_1}=0}^{q-1} )
\cdots ( \sum_{t_{j_{|u|} }=0}^{q^{u_{j_{|u|} }-1}-1} \sum_{c_{j_{|u|} }=0}^{q-1} )
\langle f,  \prod_{l=1}^{|u|}  \psi_{u_{j_l} -1,t_{j_l},c_{j_l}} \rangle  
\nonumber \\
&&\times
\prod_{l=1}^{|u|}  \psi_{u_{j_l}-1,t_{j_l},c_{j_l}} 
( \sum_{k=1}^{u_{j_l} } \tilde{c}_{j_l,k} q^{-k} ),
\label{eq:3.85}
\end{eqnarray}
if $|u| \geq 1$. Here $U_{j_l}$ denotes the $j_l$th co-ordinate of $U$ and the last equality uses the fact that
$\psi_{u_{j_l}-1,t_{j_l},c_{j_l}}$ is constant on 
$[t q^{-u_{j_l}}, (t+1) q^{-u_{j_l}})$ for an arbitrary but fixed integer $t$.
An important consequence of the ANOVA decomposition (that will be applied repeatedly in the sequel) is
if $u_k \geq 1$ for some $1\leq k\leq d$, then
\begin{equation}
\sum_{\tilde{c}_{k, u_k}=0}^{q-1}
\nu_{u_1,\ldots,u_d} [\tilde{c}_{j,1},\ldots,\tilde{c}_{j,u_j}:1\leq j\leq d] 
= 0.
\label{eq:3.76}
\end{equation}
Writing $W_{oal} = (\hat{\mu}_{oal} -\mu)/\sigma_{oal}$, we observe from (\ref{eq:1.3}), (\ref{eq:2.2}),
(\ref{eq:3.761}), (\ref{eq:3.85})  and (\ref{eq:3.76}) that
\begin{eqnarray}
W_{oal}  
&=& \frac{1}{q^2 \sigma_{oal}} \sum_{i=1}^{q^2} [ f(Y_i ) - \mu]
\nonumber \\
&=&
\frac{1}{q^2 \sigma_{oal} } \sum_{i=1}^{q^2} \sum_{u_1,\ldots, u_d: (0,\ldots, 0)\prec (u_1,\ldots, u_d)}
\nu_{u_1,\ldots, u_d} 
[ \pi_j (a_{i,j}), \pi_{j;a_{i,j}} (b_{i,j,2}), 
\nonumber \\
&&\hspace{0.5cm}
\pi_{i,j,3} (0),\ldots, 
\pi_{i,j,u_j} (0): 1\leq j\leq d ]
\nonumber \\
&=&
\frac{1}{q^2 \sigma_{oal}} \sum_{i=1}^{q^2} \sum_{u_1,\ldots, u_d\geq 0: u_1+\cdots+ u_d\geq 3}
\nu_{u_1,\ldots, u_d} 
[ \pi_j (a_{i,j}), \pi_{j;a_{i,j}} (b_{i,j,2}), 
\nonumber \\
&&\hspace{0.5cm}
\pi_{i,j,3} (0),\ldots, 
\pi_{i,j,u_j} (0): 1\leq j\leq d ].
\label{eq:4.5}
\end{eqnarray}
Writing
$W_{oas} = (\hat{\mu}_{oas} -\mu)/\sigma_{oas}$
and in a similar manner to (\ref{eq:4.5}), we have
\begin{eqnarray}
W_{oas} &=&
\frac{1}{q^2 \sigma_{oas} } \sum_{i=1}^{q^2 } 
\sum_{u_1,\ldots, u_d \geq 0: u_1+\cdots+ u_d\geq 3}
\nu_{u_1,\ldots, u_d} 
[ \pi_j (a_{i,j}), 
\nonumber \\
&&\hspace{0.5cm}
\pi_{i,j,2} (0),\ldots, 
\pi_{i,j,u_j} (0): 1\leq j\leq d ]
\nonumber \\
&& + \frac{1}{q^2 \sigma_{oas} } \sum_{i=1}^{q^2} \sum_{1\leq k\leq d: u_k=2, u_l=0 \forall l\neq k} \nu_{u_1,\ldots, u_d}
[\pi_j (a_{i,j}), 
\nonumber \\
&& \hspace{0.5cm}
\pi_{i,j,2} (0),\dots, \pi_{i,j,u_j} (0): 1\leq j\leq d].
\label{eq:4.6}
\end{eqnarray}

For brevity of notation,  we write in the sequel
\begin{eqnarray}
&& \nu_{u_1,\ldots, u_d} 
[ \pi_j (a_{i,j}), \pi_{j;a_{i,j}} (b_{i,j,2}), 
\pi_{i,j,3} (0),\ldots, 
\pi_{i,j,u_j} (0): 1\leq j\leq d ]
\nonumber \\
&=& 
\nu^*_{u_{j_1},\ldots, u_{j_{|u|}} } 
[ \pi_{j_1} (a_{i,j_1}), \pi_{j_1;a_{i,j_1}} (b_{i,j_1,2}), 
\pi_{i,j_1,3} (0),\ldots, 
\pi_{i,j_1,u_{j_1}} (0); \ldots;
\nonumber \\
&&\hspace{0.5cm}
\pi_{j_{|u|}} (a_{i,j_{|u|}}), \pi_{j_{|u|};a_{i,j_{|u|}}} (b_{i,j_{|u|},2}), 
\pi_{i,j_{|u|},3} (0),\ldots, 
\pi_{i,j_{|u|},u_{j_{|u|}} } (0) ],
\label{eq:3.36}
\end{eqnarray}
$\nu^* [.] = \nu^*_{u_{j_1},\ldots, u_{j_{|u|}}} [.]$ if $u_{j_1} =\cdots = u_{j_{|u|}} =1$,
and
\begin{equation}
\sigma^2 =
E \Big\{ \{\frac{1}{q^2 } \sum_{i=1}^{q^2 } 
\sum_{0\leq u_1,\ldots, u_d \leq 1: |u| \geq 3}
\nu^*
[ \pi_{j_1} (a_{i,j_1}); \ldots; \pi_{j_{|u|}}(a_{i,j_{|u|}})] \}^2 \Big\},
\label{eq:3.23}
\end{equation}
where $1\leq j_1 <\cdots < j_{|u|}\leq d$ are exactly those coordinates of $u=(u_1,\ldots, u_d)'$ in which $u_j\geq 1$
and $|u|$ denotes the cardinality of that set.
We end this section with the following proposition.

\begin{pn} \label{pn:3.1}
Let $f:[0,1)^d \rightarrow {\mathbb R}$ be smooth with a Lipschitz continuous mixed partial of order $d$.
Then
\begin{eqnarray*}
\sigma^2_{oal} &=&
q^{-2}
\sum_{0\leq u_1,\ldots, u_d \leq 1: |u|\geq 3}
E \{ \nu^*
[ \pi_{j_1} (a_{1,j_1});\ldots;
\pi_{j_{|u|}} (a_{1, j_{|u|}})]^2 \} + O(q^{-3}),
\nonumber \\
\sigma^2_{oas} &=& \mbox{$\sigma^2_{oal} + O(q^{-3})$ and $\sigma^2 = \sigma^2_{oal} + O(q^{-3})$
as $q\rightarrow \infty$.}
\end{eqnarray*}
\end{pn}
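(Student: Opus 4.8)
The plan is to split each estimator into a common leading term plus negligible remainders. Using (\ref{eq:4.5}) and (\ref{eq:4.6}), write $\hat\mu_{oal}-\mu = T_1+T_2$, where
\begin{displaymath}
T_1 = \frac{1}{q^2}\sum_{i=1}^{q^2}\ \sum_{0\leq u_1,\ldots,u_d\leq 1:\ |u|\geq 3}\nu^*[\pi_{j_1}(a_{i,j_1});\ldots;\pi_{j_{|u|}}(a_{i,j_{|u|}})]
\end{displaymath}
collects the terms involving only first base-$q$ digits and at least three active coordinates, and $T_2$ collects the $\nu_{u_1,\ldots,u_d}$ with $u_1+\cdots+u_d\geq 3$ and $u_k\geq 2$ for some $k$. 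Comparing (\ref{eq:4.5}) with (\ref{eq:4.6}), the \emph{same} $T_1$ occurs in $\hat\mu_{oas}-\mu$, so $\hat\mu_{oas}-\mu = T_1+T_2'+T_3$, where $T_2'$ is the analogue of $T_2$ with all higher digits drawn from the independent permutations $\pi_{i,j,k}$, and $T_3 = q^{-2}\sum_i\sum_{k=1}^d\nu_{u_1,\ldots,u_d}[\cdots]$ with $u_k=2$, $u_l=0$ $(l\neq k)$. Each of $T_1,T_2,T_2',T_3$ has mean zero, and by (\ref{eq:3.23}) we have $\sigma^2 = ET_1^2$. So it suffices to prove (a) $ET_1^2 = q^{-2}\sum_u E\{\nu^*[\pi_{j_1}(a_{1,j_1});\ldots;\pi_{j_{|u|}}(a_{1,j_{|u|}})]^2\}+O(q^{-3})$, (b) $E[T_1T_2]=0$ and $ET_2^2=O(q^{-3})$, and (c) $E[T_1T_2']=E[T_1T_3]=E[T_2'T_3]=0$ and $E(T_2')^2,ET_3^2=O(q^{-3})$; from these, $\sigma^2_{oal}=ET_1^2+ET_2^2$, $\sigma^2_{oas}=ET_1^2+E(T_2')^2+ET_3^2$ and $\sigma^2=ET_1^2$ give all three assertions.

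The three ingredients are the ANOVA orthogonality (\ref{eq:3.76}), the strength-$2$ property of $A$, and decay of the Haar coefficients of $f$. By (\ref{eq:3.76}) many cross terms vanish: if, in a fixed row $i$, two $\nu$-terms have different active sets, or the same active set but different resolution indices, then one of them has a ``last'' base-$q$ digit governed by a random permutation absent from the other; conditioning on everything except that permutation and averaging over it makes the conditional expectation vanish by (\ref{eq:3.76}). The same argument gives $E[T_1T_2]=0$ exactly, since every summand of $T_2$ has some coordinate at resolution $\geq 2$, hence a last digit supplied by a ``fresh'' permutation ($\pi_{j;a_{i,j}}$ or some $\pi_{i,j,k}$) that does not appear in $T_1$; averaging over it kills that summand's conditional expectation. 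Likewise $E[T_1T_2']=E[T_1T_3]=E[T_2'T_3]=0$.

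For the diagonal of $ET_1^2$, note that for each fixed $i$ and active set $\{j_1,\ldots,j_{|u|}\}$ the vector $(\pi_{j_1}(a_{i,j_1}),\ldots,\pi_{j_{|u|}}(a_{i,j_{|u|}}))$ consists of $|u|$ independent $\{0,\ldots,q-1\}$-uniform coordinates, so $E\{\nu^*[\,\cdot\,]^2\}$ does not depend on $i$; summing the $q^2$ equal diagonal contributions and dividing by $q^4$ yields the stated leading term. For $i\neq i'$, the strength-$2$ property says no two rows of $A$ agree in two columns, so when $|u|\geq 3$ the rows disagree in at least $|u|-1\geq 2$ active columns. Each shared permutation $\pi_j$ with $a_{i,j}\neq a_{i',j}$ contributes a factor $-1/(q-1)$: given $\pi_j(a_{i',j})$, the value $\pi_j(a_{i,j})$ is uniform on the remaining $q-1$ symbols, and the sum of $\nu^*$ over those symbols equals $-1$ times its value at $\pi_j(a_{i',j})$ by (\ref{eq:3.76}); since $\sum_{\mathbf c}\langle f,\prod_l\psi_{0,0,c_l}\rangle^2<\infty$, iterating over the $m$ disagreeing columns bounds $|E[\nu^*[u,i]\nu^*[u,i']]|$ by $O(q^{-m})$, where $m\geq|u|-1$. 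Summing over the $O(q^{3})$ pairs agreeing in one active column (covariance $O(q^{-(|u|-1)})$) and the $O(q^{4})$ pairs agreeing in none (covariance $O(q^{-|u|})$), each contributes $O(q^{-3})$ after dividing by $q^{4}$; this proves (a).

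Assertions (b) and (c) rest on the Haar decay bound: for $f$ smooth with a Lipschitz continuous mixed partial of order $d$, integrating out the inactive coordinates, the total squared ANOVA mass at resolution $k\geq 1$ in any one coordinate is $O(q^{-2k})$, whence $E[\nu_{u_1,\ldots,u_d}^2]=O(\prod_{l:u_{j_l}\geq2}q^{-2(u_{j_l}-1)})$ and $\sum E[\nu_{u_1,\ldots,u_d}^2]=O(q^{-2})$ over all $u$ entering $T_2$; the diagonal of $ET_2^2$ is therefore $O(q^{-4})$. The only surviving $i\neq i'$ cross terms in $ET_2^2$ have $u=u'$ with a single coordinate $j_0$ at resolution $2$, the rows agreeing in $j_0$ (at most one column, by strength $2$) and disagreeing in the remaining active columns, so each such covariance is $O(q^{-(|u|+1)})$ and there are $O(q^{3})$ such pairs, contributing $O(q^{-4})$; hence $ET_2^2=O(q^{-3})$. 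For $\hat\mu_{oas}$ the higher digits are pure noise, so no $i\neq i'$ cross terms survive in $E(T_2')^2$ or $ET_3^2$, and these are $O(q^{-4})$. \textbf{The main obstacle} is the $i\neq i'$ bookkeeping in step (a): one must track column by column whether the shared first-digit permutation acts on equal or distinct array entries, extract the $1/(q-1)$ gain in the distinct case by combining the conditional uniform-on-$(q-1)$-symbols distribution with (\ref{eq:3.76}), and combine these bounds with the strength-$2$ count of pairs with a prescribed agreement pattern; a secondary technical point is the Haar decay estimate, which requires controlling the effect of integrating out the inactive coordinates.
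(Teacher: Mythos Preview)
Your proof is correct and follows essentially the same route as the paper: both rely on ANOVA orthogonality (\ref{eq:3.76}) to reduce to diagonal $u=u'$ terms, on the strength-2 property to see that distinct rows agree in at most one column (yielding the $-1/(q-1)$ factors per disagreeing active column), and on the Haar--coefficient decay (Lemma \ref{la:a.5}) to bound the higher-resolution pieces. The paper expands $\sigma^2_{oal}$ directly, computes the off-diagonal contributions explicitly to reach the closed form (\ref{eq:3.40}), and then bounds each correction; your $T_1+T_2$ decomposition with $E[T_1T_2]=0$ is a cleaner repackaging of the same computation, trading the paper's exact coefficients for $O(\cdot)$ bounds, which is all that is needed here.
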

{\sc Proof.}
Since $E(W_{oal}^2)=1$, we observe from (\ref{eq:4.5}) that
\begin{eqnarray}
\sigma^2_{oal} &=&
\frac{1}{q^4 } \sum_{i_1=1}^{q^2 } \sum_{i_2=1}^{q^2}
\sum_{u_1,\ldots, u_d \geq 0: u_1+\cdots+ u_d\geq 3}
E \Big\{ \nu_{u_1,\ldots, u_d} 
[ \pi_j (a_{i_1,j}), \pi_{j;a_{i_1,j}} (b_{i_1,j,2}), 
\nonumber \\
&&\hspace{0.5cm}
\pi_{i_1,j,3} (0),\ldots, 
\pi_{i_1,j,u_j} (0): 1\leq j\leq d ]
\nonumber \\
&& \times
\nu_{u_1,\ldots, u_d} 
[ \pi_j (a_{i_2,j}), \pi_{j;a_{i_2,j}} (b_{i_2,j,2}), 
\pi_{i_2,j,3} (0),\ldots, 
\pi_{i_2,j,u_j} (0): 1\leq j\leq d ] \Big\}
\nonumber \\
&=&
\frac{1}{q^4 } \sum_{i=1}^{q^2 }
\sum_{u_1,\ldots, u_d \geq 0: u_1+\cdots+ u_d\geq 3}
E \Big\{ \nu_{u_1,\ldots, u_d} 
[ \pi_j (a_{i,j}), \pi_{j;a_{i,j}} (b_{i,j,2}), 
\nonumber \\
&&\hspace{0.5cm}
\pi_{i,j,3} (0),\ldots, 
\pi_{i,j,u_j} (0): 1\leq j\leq d ]^2 \Big\}
\nonumber \\
&&
+ \frac{1}{q^4 } \sum_{i_1=1}^{q^2 } \sum_{i_2\neq i_1}
\sum_{u_1,\ldots, u_d \geq 0: u_1+\cdots+ u_d\geq 3}
E \Big\{ \nu_{u_1,\ldots, u_d} 
[ \pi_j (a_{i_1,j}), \pi_{j;a_{i_1,j}} (b_{i_1,j,2}), 
\nonumber \\
&&\hspace{0.5cm}
\pi_{i_1,j,3} (0),\ldots, 
\pi_{i_1,j,u_j} (0): 1\leq j\leq d ]
\nonumber \\
&&\times
\nu_{u_1,\ldots, u_d} 
[ \pi_j (a_{i_2,j}), \pi_{j;a_{i_2,j}} (b_{i_2,j,2}), 
\pi_{i_2,j,3} (0),\ldots, 
\pi_{i_2,j,u_j} (0): 1\leq j\leq d ] \Big\}.
\label{eq:3.37}
\end{eqnarray}

Since $A\in$ OA$(q^2, d, q, 2)$, we have
\begin{eqnarray}
&&
\frac{1}{q^4 } \sum_{i_1=1}^{q^2 } \sum_{i_2\neq i_1}
\sum_{u_1,\ldots, u_d \geq 0: u_1+\cdots+ u_d\geq 3}
E \Big\{ \nu_{u_1,\ldots, u_d} 
[ \pi_j (a_{i_1,j}), \pi_{j;a_{i_1,j}} (b_{i_1,j,2}), 
\nonumber \\
&&\hspace{0.5cm}
\pi_{i_1,j,3} (0),\ldots, 
\pi_{i_1,j,u_j} (0): 1\leq j\leq d ]
\nonumber \\
&& \times
\nu_{u_1,\ldots, u_d} 
[ \pi_j (a_{i_2,j}), \pi_{j;a_{i_2,j}} (b_{i_2,j,2}), 
\pi_{i_2,j,3} (0),\ldots, 
\pi_{i_2,j,u_j} (0): 1\leq j\leq d ] \Big\}
\nonumber \\
&=&
\frac{1}{q^4 } \sum_{i_1=1}^{q^2 }
\sum_{0\leq u_1,\ldots, u_d \leq 2: u_1+\cdots+ u_d\geq 3}
\sum_{i_2\neq i_1}
E \Big\{ \nu_{u_1,\ldots, u_d} 
[ \pi_j (a_{i_1,j}), \pi_{j;a_{i_1,j}} (b_{i_1,j,2}), 
\nonumber \\
&&\hspace{0.5cm}
\pi_{i_1,j,3} (0),\ldots, 
\pi_{i_1,j,u_j} (0): 1\leq j\leq d ]
\nonumber \\
&& \times
\nu_{u_1,\ldots, u_d} 
[ \pi_j (a_{i_2,j}), \pi_{j;a_{i_2,j}} (b_{i_2,j,2}), 
\pi_{i_2,j,3} (0),\ldots, 
\pi_{i_2,j,u_j} (0): 1\leq j\leq d ] \Big\}
\nonumber \\
&=&
\frac{1}{q^4 } \sum_{i_1=1}^{q^2 }
\sum_{0\leq u_1,\cdots, u_d \leq 2: u_1+\cdots+ u_d\geq 3}
\sum_{i_2\neq i_1} \sum_{l=1}^{|u|}
{\cal I} \{ a_{i_1,j_l} = a_{i_2,j_l}, u_{j_l}=1 \}
\nonumber \\
&&\times
E \Big\{ \nu_{u_1,\ldots, u_d} 
[ \pi_j (a_{i_1,j}), \pi_{j;a_{i_1,j}} (b_{i_1,j,2}), 
\pi_{i_1,j,3} (0),\ldots, 
\pi_{i_1,j,u_j} (0): 1\leq j\leq d ]
\nonumber \\
&& \hspace{0.5cm}\times
\nu_{u_1,\ldots, u_d} 
[ \pi_j (a_{i_2,j}), \pi_{j;a_{i_2,j}} (b_{i_2,j,2}), 
\pi_{i_2,j,3} (0),\ldots, 
\pi_{i_2,j,u_j} (0): 1\leq j\leq d ] \Big\}
\nonumber \\
&&
+ \frac{1}{q^4 } \sum_{i_1=1}^{q^2 }
\sum_{0\leq u_1,\ldots, u_d \leq 2: u_1+\cdots+ u_d\geq 3}
\sum_{i_2\neq i_1} \sum_{l=1}^{|u|}
{\cal I} \{ a_{i_1,j_l} = a_{i_2,j_l}, u_{j_l}=2 \}
\nonumber \\
&&\times
E \Big\{ \nu_{u_1,\ldots, u_d} 
[ \pi_j (a_{i_1,j}), \pi_{j;a_{i_1,j}} (b_{i_1,j,2}), 
\pi_{i_1,j,3} (0),\ldots, 
\pi_{i_1,j,u_j} (0): 1\leq j\leq d ]
\nonumber \\
&& \hspace{0.5cm}\times
\nu_{u_1,\ldots, u_d} 
[ \pi_j (a_{i_2,j}), \pi_{j;a_{i_2,j}} (b_{i_2,j,2}), 
\pi_{i_2,j,3} (0),\ldots, 
\pi_{i_2,j,u_j} (0): 1\leq j\leq d ] \Big\}
\nonumber \\
&&
+ \frac{1}{q^4 } \sum_{i_1=1}^{q^2 }
\sum_{0\leq u_1,\ldots, u_d \leq 2: u_1+\cdots+ u_d\geq 3}
\sum_{i_2\neq i_1} 
{\cal I} \{ a_{i_1,j_l} \neq a_{i_2,j_l}, \forall l=1,\ldots, |u| \}
\nonumber \\
&&\times
E \Big\{ \nu_{u_1,\ldots, u_d} 
[ \pi_j (a_{i_1,j}), \pi_{j;a_{i_1,j}} (b_{i_1,j,2}), 
\pi_{i_1,j,3} (0),\ldots, 
\pi_{i_1,j,u_j} (0): 1\leq j\leq d ]
\nonumber \\
&& \hspace{0.5cm} \times
\nu_{u_1,\ldots, u_d} 
[ \pi_j (a_{i_2,j}), \pi_{j;a_{i_2,j}} (b_{i_2,j,2}), 
\pi_{i_2,j,3} (0),\ldots, 
\pi_{i_2,j,u_j} (0): 1\leq j\leq d ] \Big\}.
\label{eq:3.38}
\end{eqnarray}

We further note that
\begin{displaymath}
{\cal I} \{ a_{i_1,j_l} = a_{i_2,j_l} \} =
{\cal I} \{ \mbox{$a_{i_1,j_l} = a_{i_2,j_l}$ and
$a_{i_1,j_m} \neq a_{i_2,j_m}$ $\forall m\neq l$} \}.
\end{displaymath}
Consequently,
\begin{eqnarray*}
&& \frac{1}{q^4 } \sum_{i_1=1}^{q^2 }
\sum_{0\leq u_1,\ldots, u_d \leq 2: u_1+\cdots+ u_d\geq 3}
\sum_{i_2\neq i_1} \sum_{l=1}^{|u|}
{\cal I} \{ a_{i_1,j_l} = a_{i_2,j_l}, u_{j_l}=1 \}
\nonumber \\
&&\times
E \Big\{ \nu_{u_1,\ldots, u_d} 
[ \pi_j (a_{i_1,j}), \pi_{j;a_{i_1,j}} (b_{i_1,j,2}), 
\pi_{i_1,j,3} (0),\ldots, 
\pi_{i_1,j,u_j} (0): 1\leq j\leq d ]
\nonumber \\
&& \hspace{0.5cm}\times
\nu_{u_1,\ldots, u_d} 
[ \pi_j (a_{i_2,j}), \pi_{j;a_{i_2,j}} (b_{i_2,j,2}), 
\pi_{i_2,j,3} (0),\ldots, 
\pi_{i_2,j,u_j} (0): 1\leq j\leq d ] \Big\}
\nonumber \\
&=&
\frac{1}{q^4 } \sum_{i_1=1}^{q^2 }
\sum_{0\leq u_1,\ldots, u_d \leq 2: u_1+\cdots+ u_d\geq 3}
\sum_{i_2\neq i_1} 
\nonumber \\
&&\hspace{0.5cm}\times
\sum_{l=1}^{|u|}
{\cal I} \{ \mbox{$a_{i_1,j_l} = a_{i_2,j_l}$ and $u_{j_1}=\cdots = u_{j_{|u|}} =1$} \}
\nonumber \\
&&\hspace{0.5cm}\times
E \{ \nu^*
[ \pi_{j_1} (a_{i_1,j_1});\ldots; \pi_{j_{|u|}} (a_{i_1, j_{|u|}}) ] 
\nu^*
[ \pi_{j_1} (a_{i_2,j_1});\ldots; \pi_{j_{|u|}} (a_{i_2, j_{|u|}}) ] 
\}
\nonumber \\
&=&
\frac{1}{q^4 } \sum_{i_1=1}^{q^2 }
\sum_{0\leq u_1,\ldots, u_d \leq 1: u_1+\cdots+ u_d\geq 3}
\sum_{i_2\neq i_1} 
\sum_{l=1}^{|u|}
{\cal I} \{ a_{i_1,j_l} = a_{i_2,j_l} \}
\nonumber \\
&&\hspace{0.5cm}\times
E \Big\{ \nu^*
[ \pi_{j_1} (a_{i_1,j_1});\ldots; \pi_{j_{|u|}} (a_{i_1, j_{|u|}}) ] 
\nonumber \\
&& \hspace{1cm}\times
(\frac{1}{q-1})^{|u|-1}
\Big[ \prod_{1\leq m\leq |u|: m\neq l}
\sum_{0\leq \tilde{c}_{j_m} \leq q-1: \tilde{c}_{j_m} \neq \pi_{j_m} (a_{i_1, j_m}) }
\Big]
\nonumber \\
&&\hspace{1cm}\times
\nu^*
[ \tilde{c}_{j_1};\ldots; \tilde{c}_{j_{l-1}}; \pi_{j_l} (a_{i_1, j_l}); \tilde{c}_{j_{l+1}}; \ldots; \tilde{c}_{j_{|u|}} ] 
\Big\}
\nonumber \\
&=&
\frac{1}{q^4 } \sum_{i_1=1}^{q^2 }
\sum_{0\leq u_1,\ldots, u_d \leq 1: u_1+\cdots+ u_d\geq 3}
\frac{ (-1)^{|u|-1} }{ (q-1)^{|u|-1} }
\nonumber \\
&&\hspace{0.5cm}\times
\sum_{i_2\neq i_1} \sum_{l=1}^{|u|}
{\cal I} \{ a_{i_1,j_l} = a_{i_2,j_l} \}
E \{ \nu^*
[ \pi_{j_1} (a_{i_1,j_1});\ldots; \pi_{j_{|u|}} (a_{i_1, j_{|u|}}) ]^2 \} 
\nonumber \\
&=&
\sum_{0\leq u_1,\ldots, u_d \leq 1: |u|\geq 3}
\frac{ (-1)^{|u|-1} |u| }{q^2 (q-1)^{|u|-2} } 
E \{ \nu^* 
[ \pi_{j_1} (a_{1,j_1});\ldots; \pi_{j_{|u|}} (a_{1, j_{|u|}}) ]^2 \}, 
\end{eqnarray*}

\begin{eqnarray*}
&&
\frac{1}{q^4 } \sum_{i_1=1}^{q^2 }
\sum_{0\leq u_1,\ldots, u_d \leq 2: u_1+\cdots+ u_d\geq 3}
\sum_{i_2\neq i_1} \sum_{l=1}^{|u|}
{\cal I} \{ a_{i_1,j_l} = a_{i_2,j_l}, u_{j_l}=2 \}
\nonumber \\
&&\times
E \{ \nu_{u_1,\ldots, u_d} 
[ \pi_j (a_{i_1,j}), \pi_{j;a_{i_1,j}} (b_{i_1,j,2}), 
\pi_{i_1,j,3} (0),\ldots, 
\pi_{i_1,j,u_j} (0): 1\leq j\leq d ]
\nonumber \\
&& \hspace{0.5cm}\times
\nu_{u_1,\ldots, u_d} 
[ \pi_j (a_{i_2,j}), \pi_{j;a_{i_2,j}} (b_{i_2,j,2}), 
\pi_{i_2,j,3} (0),\ldots, 
\pi_{i_2,j,u_j} (0): 1\leq j\leq d ] \}
\nonumber \\
&=&
\frac{1}{q^4 } \sum_{i_1=1}^{q^2 }
\sum_{0\leq u_1,\ldots, u_d \leq 2: u_1+\cdots+ u_d\geq 3}
\sum_{i_2\neq i_1} 
\nonumber \\
&&\hspace{0.5cm}\times
\sum_{l=1}^{|u|}
{\cal I} \{ \mbox{$a_{i_1,j_l} = a_{i_2,j_l}, u_{j_l}=2$ and $u_{j_k}=1, \forall k\neq l$} \}
\nonumber \\
&&\hspace{0.5cm}\times
E \Big\{ \nu_{u_{j_1},\ldots, u_{j_{|u|} }}^* 
[ \pi_{j_1} (a_{i_1,j_1}); \ldots; \pi_{j_{l-1}} (a_{i_1,j_{l-1}} );
\nonumber \\
&&\hspace{1cm}
\pi_{j_l} (a_{i_1,j_l}), \pi_{j_l; a_{i_1, j_l} } (b_{i_1, j_l, 2});
\pi_{j_{l+1} } (a_{i_1, j_{l+1}});\ldots; \pi_{j_{|u|}} (a_{i_1, j_{|u|}}) ]
\nonumber \\
&& \hspace{0.5cm} \times
\nu_{u_{j_1},\cdots, u_{j_{|u|} }}^* 
[ \pi_{j_1} (a_{i_2,j_1}); \ldots; \pi_{j_{l-1} } (a_{i_2,j_{l-1}} );
\nonumber \\
&&\hspace{1cm}
\pi_{j_l} (a_{i_2,j_l}), \pi_{j_l; a_{i_2, j_l} } (b_{i_2, j_l, 2});
\pi_{j_{l+1} } (a_{i_2, j_{l+1}});\ldots; \pi_{j_{|u|}} (a_{i_2, j_{|u|}}) ]
\Big\}
\nonumber \\
&=&
\frac{1}{q^4 } \sum_{i_1=1}^{q^2 }
\sum_{0\leq u_1,\ldots, u_d \leq 2: u_1+\cdots+ u_d\geq 3}
\sum_{i_2\neq i_1} 
\nonumber \\
&&\hspace{0.5cm}\times
\sum_{l=1}^{|u|}
{\cal I} \{ \mbox{$a_{i_1,j_l} = a_{i_2,j_l}, u_{j_l}=2$ and $u_{j_k}=1, \forall k\neq l$} \}
\nonumber \\
&&\hspace{0.5cm}\times
E\Big\{ \nu_{u_{j_1},\ldots, u_{j_{|u|} }}^* 
[ \pi_{j_1} (a_{i_1,j_1}); \ldots; \pi_{j_{l-1} } (a_{i_1,j_{l-1}} );
\nonumber \\
&&\hspace{1cm}
\pi_{j_l} (a_{i_1,j_l}), \pi_{j_l; a_{i_1, j_l } } (b_{i_1, j_l, 2});
\pi_{j_{l+1} } (a_{i_1, j_{l+1}});\ldots; \pi_{j_{|u|}} (a_{i_1, j_{|u|}}) ]
\nonumber \\
&& \hspace{0.5cm} \times
(\frac{1}{q-1})^{|u|}
\sum_{0\leq \tilde{c}_{j_l} \leq q-1:\tilde{c}_{j_l} \neq \pi_{j_l; a_{i_1, j_l}} (b_{i_1, j_l, 2} ) }
\nonumber \\
&&\hspace{0.5cm}\times
\Big[ \prod_{1\leq m\leq |u|: m\neq l}
\sum_{0\leq \tilde{c}_{j_m} \leq q-1:\tilde{c}_{j_m} \neq \pi_{j_m} (a_{i_1, j_m} ) }
\Big]
\nonumber \\
&&\hspace{0.5cm}\times
\nu_{u_{j_1},\ldots, u_{j_{|u|} }}^* 
[ \tilde{c}_{j_1}; \ldots; \tilde{c}_{j_{l-1}};
\pi_{j_l} (a_{i_1,j_l}), \tilde{c}_{j_l};
\tilde{c}_{j_{l+1}};\ldots; \tilde{c}_{j_{|u|}} ]
\Big\}
\nonumber \\
&=&
\frac{1}{q^4 } \sum_{i_1=1}^{q^2 }
\sum_{0\leq u_1,\ldots, u_d \leq 2: u_1+\cdots+ u_d\geq 3}
\frac{ (-1)^{|u|} }{ (q-1)^{|u|} }
\sum_{i_2\neq i_1} 
\nonumber \\
&& \hspace{0.5cm}\times
\sum_{l=1}^{|u|}
{\cal I} \{ \mbox{$a_{i_1,j_l} = a_{i_2,j_l}, u_{j_l}=2$ and $u_{j_k}=1, \forall k\neq l$} \}
\nonumber \\
&&\hspace{0.5cm}\times
E\Big\{ \nu_{u_{j_1},\ldots, u_{j_{|u|} }}^* 
[ \pi_{j_1} (a_{i_1,j_1}); \ldots; \pi_{j_{l-1}} (a_{i_1,j_{l-1}} );
\nonumber \\
&&\hspace{1cm}
\pi_{j_l} (a_{i_1,j_l}), \pi_{j_l; a_{i_1, j_l} } (b_{i_1, j_l, 2});
\pi_{j_{l+1} } (a_{i_1, j_{l+1}});\ldots; \pi_{j_{|u|}} (a_{i_1, j_{|u|}}) ]^2 \Big\}
\nonumber \\
&=&
\frac{1}{q^2 }
\sum_{l=1}^d \sum_{u_1,\ldots, u_d: u_l=2, 0\leq u_k\leq 1 \forall k\neq l, |u|+1 \geq 3}
\frac{ (-1)^{|u|} }{ (q-1)^{|u|-1}}
\nonumber \\
&&\hspace{0.5cm}\times
E\Big\{ \nu_{u_{j_1},\ldots, u_{j_{|u|} }}^* 
[ \pi_{j_1} (a_{1,j_1}); \ldots; \pi_{j_{l-1}} (a_{1,j_{l-1}} );
\nonumber \\
&&\hspace{1cm}
\pi_{j_l} (a_{1,j_l}), \pi_{j_l; a_{1, j_l } } (b_{1, j_l, 2});
\pi_{j_{l+1}} (a_{1, j_{l+1}});\ldots; \pi_{j_{|u|}} (a_{1, j_{|u|}}) ]^2 \Big\},
\end{eqnarray*}
and
\begin{eqnarray}
&& \frac{1}{q^4 } \sum_{i_1=1}^{q^2 }
\sum_{0\leq u_1,\ldots, u_d \leq 2: u_1+\cdots+ u_d\geq 3}
\sum_{i_2\neq i_1} 
{\cal I} \{ a_{i_1,j_l} \neq a_{i_2,j_l}, \forall l=1,\ldots, |u| \}
\nonumber \\
&&\times
E \Big\{ \nu_{u_1,\ldots, u_d} 
[ \pi_j (a_{i_1,j}), \pi_{j;a_{i_1,j}} (b_{i_1,j,2}), 
\pi_{i_1,j,3} (0),\ldots, 
\pi_{i_1,j,u_j} (0): 1\leq j\leq d ]
\nonumber \\
&& \hspace{0.5cm}\times
\nu_{u_1,\ldots, u_d} 
[ \pi_j (a_{i_2,j}), \pi_{j;a_{i_2,j}} (b_{i_2,j,2}), 
\pi_{i_2,j,3} (0),\ldots, 
\pi_{i_2,j,u_j} (0): 1\leq j\leq d ] \Big\}
\nonumber \\
&=&
\frac{1}{q^4 } \sum_{i_1=1}^{q^2 }
\sum_{0\leq u_1,\ldots, u_d \leq 2: u_1+\cdots+ u_d\geq 3}
\sum_{i_2\neq i_1} 
{\cal I} \{ a_{i_1,j_l} \neq a_{i_2,j_l}, u_{j_l}=1, \forall l=1,\ldots, |u| \}
\nonumber \\
&&\times
E \{ \nu^* 
[ \pi_{j_1} (a_{i_1,j_1}); \ldots; \pi_{j_{|u|}} (a_{i_1, j_{|u|}}) ]
\nu^* 
[ \pi_{j_1} (a_{i_2,j_1}); \ldots; \pi_{j_{|u|}} (a_{i_2, j_{|u|}}) ]
\}
\nonumber \\
&=&
\frac{1}{q^4 } \sum_{i_1=1}^{q^2 }
\sum_{0\leq u_1,\ldots, u_d \leq 1: |u| \geq 3}
\sum_{i_2\neq i_1} 
{\cal I} \{ a_{i_1,j_l} \neq a_{i_2,j_l}, \forall l=1,\ldots, |u| \}
\nonumber \\
&&\times
E \Big\{ \nu^* 
[ \pi_{j_1} (a_{i_1,j_1}); \ldots; \pi_{j_{|u|}} (a_{i_1, j_{|u|}}) ]
\nonumber \\
&& \hspace{0.5cm} \times
(\frac{1}{q-1})^{|u|}
\Big[ \prod_{1\leq m\leq |u|}
\sum_{0\leq \tilde{c}_{j_m} \leq q-1:\tilde{c}_{j_m} \neq \pi_{j_m} (a_{i_1, j_m} ) }
\Big]
\nu^* 
[ \tilde{c}_{j_1}; \ldots;
\tilde{c}_{j_{|u|}} ]
\Big\}
\nonumber \\
&=&
\frac{1}{q^4 } \sum_{i_1=1}^{q^2 }
\sum_{0\leq u_1,\ldots, u_d \leq 1: |u| \geq 3}
\frac{ (-1)^{|u|} }{(q-1)^{|u|} }
\sum_{i_2\neq i_1} 
{\cal I} \{ a_{i_1,j_l} \neq a_{i_2,j_l}, \forall l=1,\ldots, |u| \}
\nonumber \\
&&\hspace{0.5cm}\times
E \{ \nu^* 
[ \pi_{j_1} (a_{i_1,j_1}); \ldots; \pi_{j_{|u|}} (a_{i_1, j_{|u|}}) ]^2 \}
\nonumber \\
&=&
\frac{1}{q^2 }
\sum_{0\leq u_1,\ldots, u_d \leq 1: |u| \geq 3}
\frac{(-1)^{|u|} [ q^2-1 - |u| (q-1)] }{ (q-1)^{|u|} }
E \{ \nu^* 
[ \pi_{j_1} (a_{1,j_1}); \ldots; \pi_{j_{|u|}} (a_{1, j_{|u|}}) ]^2 \}
\nonumber \\
&=&
\sum_{0\leq u_1,\ldots, u_d \leq 1: |u| \geq 3}
\frac{ (-1)^{|u|}  ( q+1 - |u|) }{ q^2 (q-1)^{|u|-1}}
E \{ \nu^* 
[ \pi_{j_1} (a_{1,j_1}); \ldots; \pi_{j_{|u|}} (a_{1, j_{|u|}}) ]^2 \}.
\label{eq:3.39}
\end{eqnarray}

We conclude from (\ref{eq:3.37}), (\ref{eq:3.38}) and (\ref{eq:3.39}) that
\begin{eqnarray}
\sigma^2_{oal} &=&
\frac{1}{q^2 }
\sum_{u_1,\ldots, u_d \geq 0: u_1+\cdots+ u_d\geq 3}
E \Big\{ \nu_{u_1,\ldots, u_d} 
[ \pi_j (a_{1,j}), \pi_{j;a_{1,j}} (b_{1,j,2}), 
\nonumber \\
&&\hspace{0.5cm}
\pi_{1,j,3} (0),\ldots, 
\pi_{1,j,u_j} (0): 1\leq j\leq d ]^2 \Big\}
\nonumber \\
&&
+ \sum_{0\leq u_1,\ldots, u_d \leq 1: |u|\geq 3}
\frac{ (-1)^{|u|-1} |u| }{q^2 (q-1)^{|u|-2} } 
E \{ \nu^* 
[ \pi_{j_1} (a_{1,j_1});\ldots, \pi_{j_{|u|}} (a_{1, j_{|u|}}) ]^2 \}
\nonumber \\
&&
+ \sum_{l=1}^d \sum_{u_1,\ldots, u_d: u_l=2, 0\leq u_k\leq 1 \forall k\neq l, |u| \geq 2}
\frac{ (-1)^{|u|} }{ q^2 (q-1)^{|u|-1}}
\nonumber \\
&&\hspace{0.5cm}\times
E\Big\{ \nu_{u_{j_1},\ldots, u_{j_{|u|} }}^* 
[ \pi_{j_1} (a_{1,j_1}); \ldots; \pi_{j_{l-1}} (a_{1,j_{l-1}} );
\nonumber \\
&&\hspace{1cm}
\pi_{j_l} (a_{1,j_l}), \pi_{j_l; a_{1, j_l} } (b_{1, j_l, 2});
\pi_{j_{l+1}} (a_{1, j_{l+1}});\ldots; \pi_{j_{|u|}} (a_{1, j_{|u|}}) ]^2 \Big\}
\nonumber \\
&&
+ \sum_{0\leq u_1,\ldots, u_d \leq 1: |u| \geq 3}
\frac{ (-1)^{|u|} (q+1 -|u|) }{ q^2 (q-1)^{|u|-1} }
E \{ \nu^* 
[ \pi_{j_1} (a_{1,j_1}); \ldots; \pi_{j_{|u|}} (a_{1, j_{|u|}}) ]^2 \}
\nonumber \\
&=&
\frac{1}{q^2 }
\sum_{0\leq u_1,\ldots, u_d \leq 1: |u|\geq 3}
E \{ \nu^* 
[ \pi_{j_1} (a_{1,j_1});\ldots;
\pi_{j_{|u|}} (a_{1, j_{|u|}})]^2 \}
\nonumber \\
&&
+ \frac{1}{q^2 }
\sum_{u_1,\ldots, u_d \geq 0: u_1+\cdots+ u_d\geq 3\vee (|u|+1)}
E \Big\{ \nu_{u_1,\ldots, u_d} 
[ \pi_j (a_{1,j}), \pi_{j;a_{1,j}} (b_{1,j,2}), 
\nonumber \\
&&\hspace{0.5cm}
\pi_{1,j,3} (0),\ldots, 
\pi_{1,j,u_j} (0): 1\leq j\leq d ]^2 \Big\}
\nonumber \\
&&
+ \sum_{0\leq u_1,\ldots, u_d \leq 1: |u|\geq 3}
\frac{ (-1)^{|u|-1} |u| }{q^2 (q-1)^{|u|-2} } 
E \{ \nu^* 
[ \pi_{j_1} (a_{1,j_1});\ldots, \pi_{j_{|u|}} (a_{1, j_{|u|}}) ]^2 \}
\nonumber \\
&&
+ \sum_{l=1}^d \sum_{u_1,\ldots, u_d: u_l=2, 0\leq u_k\leq 1 \forall k\neq l, |u| \geq 2}
\frac{ (-1)^{|u|} }{ q^2 (q-1)^{|u|-1}}
\nonumber \\
&&\hspace{0.5cm}\times
E\Big\{ \nu_{u_{j_1},\ldots, u_{j_{|u|} }}^* 
[ \pi_{j_1} (a_{1,j_1}); \ldots; \pi_{j_{l-1}} (a_{1,j_{l-1}} );
\nonumber \\
&&\hspace{1cm}
\pi_{j_l} (a_{1,j_l}), \pi_{j_l; a_{1, j_l} } (b_{1, j_l, 2});
\pi_{j_{l+1}} (a_{1, j_{l+1}});\ldots; \pi_{j_{|u|}} (a_{1, j_{|u|}}) ]^2 \Big\}
\nonumber \\
&&
+ \sum_{0\leq u_1,\ldots, u_d \leq 1: |u| \geq 3}
\frac{ (-1)^{|u|} (q+1 -|u|) }{ q^2 (q-1)^{|u|-1} }
E \{ \nu^* 
[ \pi_{j_1} (a_{1,j_1}); \ldots; \pi_{j_{|u|}} (a_{1, j_{|u|}}) ]^2 \}.
\label{eq:3.40}
\end{eqnarray}
Thus it follows from (\ref{eq:3.40}) and Lemma \ref{la:a.5} (see Appendix) that
\begin{eqnarray*}
\sigma^2_{oal} &=&
\frac{1}{q^2 }
\sum_{0\leq u_1,\ldots, u_d \leq 1: |u|\geq 3}
E \{ \nu^* 
[ \pi_{j_1} (a_{1,j_1});\ldots;
\pi_{j_{|u|}} (a_{1, j_{|u|}})]^2 \} + O(\frac{1}{q^3})
\nonumber \\
&=&
\frac{1}{q^2 }
\sum_{0\leq u_1,\ldots, u_d \leq 1: |u| \geq 3} \sum_{c_{j_1}=0}^{q-1} 
\cdots \sum_{c_{j_{|u|} }=0}^{q-1} 
\langle f,  \prod_{l=1}^{|u|}  \psi_{0, 0,c_{j_l}} \rangle^2
+ O(\frac{1}{q^3})
\nonumber \\
&=&
O(\frac{1}{q^2}) 
\sum_{0\leq u_1,\ldots, u_d \leq 1: |u|\geq 3}
q^{3 |u| - 3 \sum_{l=1}^{|u|} u_{j_l}}
\nonumber \\
&=&
O(\frac{1}{q^2}), 
\end{eqnarray*}
as $q\rightarrow \infty$.
Next we observe from (\ref{eq:4.6}) that
\begin{eqnarray}
\sigma^2_{oas} &=& E \Big\{ \{\frac{1}{q^2 } \sum_{i=1}^{q^2 } 
\sum_{u_1,\ldots, u_d \geq 0: u_1+\cdots+ u_d\geq 3}
\nu_{u_1,\ldots, u_d} 
[ \pi_j (a_{i,j}), 
\pi_{i,j,2} (0),\ldots, 
\pi_{i,j,u_j} (0): 
\nonumber \\
&&\hspace{0.5cm}
1\leq j\leq d ]
+ \frac{1}{q^2 } \sum_{i=1}^{q^2 } 
\sum_{k=1}^d
\nu_2^* 
[ \pi_k (a_{i,k}), 
\pi_{i,k,2} (0)] \}^2 \Big\} 
\nonumber \\
&=&
\frac{1}{q^4 } \sum_{i_1=1}^{q^2 } \sum_{i_2=1}^{q^2}
\sum_{u_1,\ldots, u_d \geq 0: u_1+\cdots+ u_d\geq 3}
E\Big\{ \nu_{u_1,\ldots, u_d} 
[ \pi_j (a_{i_1,j}), 
\pi_{i_1,j,2} (0),\ldots, 
\pi_{i_1,j,u_j} (0): 
\nonumber \\
&&\hspace{1cm}
1\leq j\leq d ]
\nu_{u_1,\ldots, u_d} 
[ \pi_j (a_{i_2,j}), 
\pi_{i_2,j,2} (0),\ldots, 
\pi_{i_2,j,u_j} (0): 1\leq j\leq d ] \Big\}
\nonumber \\
&&
+ \frac{1}{q^4 } \sum_{i_1=1}^{q^2 } \sum_{i_2=1}^{q^2}
\sum_{k=1}^d
E \Big\{ \nu_2^*
[ \pi_k (a_{i_1,k}), 
\pi_{i_1,k,2} (0)]
\nu_2^* 
[ \pi_k (a_{i_2,k}), 
\pi_{i_2,k,2} (0) ]
\Big\}
\nonumber \\
&=&
\frac{1}{q^4 } \sum_{i_1=1}^{q^2 }
\sum_{u_1,\ldots, u_d \geq 0: u_1+\cdots+ u_d\geq 3}
E\Big\{ \nu_{u_1,\ldots, u_d} 
[ \pi_j (a_{i_1,j}), 
\pi_{i_1,j,2} (0),\ldots, 
\pi_{i_1,j,u_j} (0): 
\nonumber \\
&&\hspace{1cm}
1\leq j\leq d ]^2 \Big\}
\nonumber \\
&&
+ \frac{1}{q^4 } \sum_{i_1=1}^{q^2 }
\sum_{k=1}^d
E \{ \nu_2^* 
[ \pi_k (a_{i_1,k}), 
\pi_{i_1,k,2} (0)]^2 \}
\nonumber \\
&& + \frac{1}{q^4 } \sum_{i_1=1}^{q^2 } \sum_{i_2\neq i_1}
\sum_{0\leq u_1,\ldots, u_d \leq 1: u_1+\cdots+ u_d\geq 3}
E\Big\{ \nu^* 
[ \pi_{j_1} (a_{i_1,j_1}); \ldots;  \pi_{j_{|u|}} (a_{i_1,j_{|u|}}) ]
\nonumber \\
&&\hspace{0.5cm}\times
\nu^* [ \pi_{j_1} (a_{i_2,j_1}); \ldots;  \pi_{j_{|u|}} (a_{i_2,j_{|u|}}) ]
\Big\}
\nonumber \\
&&
+ \frac{1}{q^4 } \sum_{i_1=1}^{q^2 } \sum_{i_2\neq i_1}
\sum_{k=1}^d
E \{ \nu_2^* 
[ \pi_k (a_{i_1,k}), 
\pi_{i_1,k,2} (0) ]
\nu_2^* [ \pi_k (a_{i_2,k}), 
\pi_{i_2,k,2} (0)] \}.
\label{eq:3.401}
\end{eqnarray}
From Lemma \ref{la:a.5}, we obtain
\begin{eqnarray}
\frac{1}{q^4 } \sum_{i_1=1}^{q^2 }
\sum_{k=1}^d
E \{ \nu_2^* 
[ \pi_k (a_{i_1,k}), 
\pi_{i_1,k,2} (0)]^2 \} &=& O(\frac{1}{q^4}),
\label{eq:3.402} \\
\frac{1}{q^4 } \sum_{i_1=1}^{q^2 } \sum_{i_2\neq i_1}
\sum_{k=1}^d
E \{ \nu_2^* [ \pi_k (a_{i_1,k}), 
\pi_{i_1,k,2} (0) ]
\nu_2^* [ \pi_k (a_{i_2,k}), 
\pi_{i_2,k,2} (0)] \} &=& 0,
\nonumber 
\end{eqnarray}
and
\begin{eqnarray*}
&& \frac{1}{q^4 } \sum_{i_1=1}^{q^2 } \sum_{i_2\neq i_1}
\sum_{0\leq u_1,\ldots, u_d \leq 1: u_1+\cdots+ u_d\geq 3}
E\Big\{ \nu^* 
[ \pi_{j_1} (a_{i_1,j_1}); \ldots;  \pi_{j_{|u|}} (a_{i_1,j_{|u|}}) ]
\nonumber \\
&&\hspace{0.5cm}\times
\nu^* [ \pi_{j_1} (a_{i_2,j_1}); \ldots;  \pi_{j_{|u|}} (a_{i_2,j_{|u|}}) ]
\Big\} 
\nonumber \\
&=&
\frac{1}{q^4 } \sum_{i_1=1}^{q^2 } \sum_{i_2\neq i_1}
\sum_{0\leq u_1,\ldots, u_d \leq 1: u_1+\cdots+ u_d\geq 3} \sum_{l=1}^{|u|} {\cal I}\{ a_{i_1,j_l} = a_{i_2, j_l} \}
\nonumber \\
&&\hspace{0.5cm}\times
E \{ \nu^* 
[ \pi_{j_1} (a_{i_1,j_1}); \ldots;  \pi_{j_{|u|}} (a_{i_1,j_{|u|}}) ]
\nu^* [ \pi_{j_1} (a_{i_2,j_1}); \ldots;  \pi_{j_{|u|}} (a_{i_2,j_{|u|}}) ] \} 
\nonumber \\
&& + \frac{1}{q^4 } \sum_{i_1=1}^{q^2 } \sum_{i_2\neq i_1}
\sum_{0\leq u_1,\ldots, u_d \leq 1: u_1+\cdots+ u_d\geq 3}  {\cal I}\{ a_{i_1,j_l} \neq a_{i_2, j_l}, l=1,\ldots, |u| \}
\nonumber \\
&&\hspace{0.5cm}\times
E \{ \nu^* 
[ \pi_{j_1} (a_{i_1,j_1}); \ldots;  \pi_{j_{|u|}} (a_{i_1,j_{|u|}}) ]
\nu^* [ \pi_{j_1} (a_{i_2,j_1}); \ldots;  \pi_{j_{|u|}} (a_{i_2,j_{|u|}}) ] \} 
\nonumber \\
&=& \frac{1}{q^4 } \sum_{i_1=1}^{q^2 } \sum_{i_2\neq i_1}
\sum_{0\leq u_1,\ldots, u_d \leq 1: u_1+\cdots+ u_d\geq 3} \sum_{l=1}^{|u|} {\cal I}\{ a_{i_1,j_l} = a_{i_2, j_l} \}
\nonumber \\
&&\hspace{0.5cm}\times
(-\frac{1}{q-1})^{|u|-1}
E \{ \nu^* 
[ \pi_{j_1} (a_{i_1,j_1}); \ldots;  \pi_{j_{|u|}} (a_{i_1,j_{|u|}}) ]^2 \}
\nonumber \\
&& + \frac{1}{q^4 } \sum_{i_1=1}^{q^2 } \sum_{i_2\neq i_1}
\sum_{0\leq u_1,\ldots, u_d \leq 1: u_1+\cdots+ u_d\geq 3}  {\cal I}\{ a_{i_1,j_l} \neq a_{i_2, j_l}, l=1,\ldots, |u| \}
\nonumber \\
&&\hspace{0.5cm}\times
(-\frac{1}{q-1})^{|u|}
E \{ \nu^* 
[ \pi_{j_1} (a_{i_1,j_1}); \ldots;  \pi_{j_{|u|}} (a_{i_1,j_{|u|}}) ]^2 \}
\nonumber \\
&=& \frac{1}{q^2 }
\sum_{0\leq u_1,\ldots, u_d \leq 1: |u|\geq 3} |u| (q-1)
(-\frac{1}{q-1})^{|u|-1}
E \{ \nu^* 
[ \pi_{j_1} (a_{1,j_1}); \ldots;  \pi_{j_{|u|}} (a_{1,j_{|u|}}) ]^2 \}
\nonumber \\
&& + \sum_{0\leq u_1,\ldots, u_d \leq 1: |u|\geq 3} \frac{ (-1)^{|u|} [ q^2 -1 - |u| (q-1) ] }{
 q^2 ( q-1)^{|u|} }
E \{ \nu^* 
[ \pi_{j_1} (a_{1,j_1}); \ldots;  \pi_{j_{|u|}} (a_{1,j_{|u|}}) ]^2 \}
\nonumber \\
&=& O(\frac{1}{q^3}),
\end{eqnarray*}
as $q\rightarrow \infty$.
Hence we conclude from (\ref{eq:3.40}) and (\ref{eq:3.401}) that
\begin{eqnarray*}
\sigma^2_{oas} &=&
\frac{1}{q^2 }
\sum_{u_1,\ldots, u_d \geq 0: u_1+\cdots+ u_d\geq 3}
E\Big\{ \nu_{u_1,\ldots, u_d} 
[ \pi_j (a_{1,j}), 
\pi_{1,j,2} (0),\ldots, 
\pi_{1,j,u_j} (0): 
\nonumber \\
&&\hspace{1cm}
1\leq j\leq d ]^2 \Big\} + O(\frac{1}{q^3})
\nonumber \\
&=& \sigma_{oal}^2 + O(\frac{1}{q^3}),
\end{eqnarray*}
as $q\rightarrow\infty$.
The remaining case, namely $\sigma^2$, can be shown in a similar 
(though simpler) manner.
This proves Proposition \ref{pn:3.1}. \hfill $\Box$

\section{A multivariate central limit theorem}

First we define the proxy statistic
\begin{eqnarray}
W &=&
\frac{1}{q^2 \sigma } \sum_{i=1}^{q^2 } 
\sum_{0\leq u_1,\ldots, u_d \leq 1: |u| \geq 3}
\nu^*
[ \pi_{j_1} (a_{i,j_1}); \ldots; \pi_{j_{|u|}}(a_{i,j_{|u|}})],
\label{eq:4.55}
\end{eqnarray}
where
$\sigma^2$ is as in (\ref{eq:3.23}). 
Clearly we have $E( W^2) =1$.

\begin{pn} \label{pn:4.1}
Let $f:[0,1)^d \rightarrow {\mathbb R}$ be smooth with a Lipschitz continuous mixed partial of order $d$.
Suppose further that $W_{oal}, W_{oas}$ and  $W$ are as defined by (\ref{eq:4.5}), (\ref{eq:4.6}) and (\ref{eq:4.55})
respectively with $A\in$ OA$(q^2, d, q, 2)$.
Then
$q ( \sigma_{oal} W_{oal}- \sigma W) \rightarrow 0$
and
$q ( \sigma_{oas} W_{oas}- \sigma W) \rightarrow 0$
in probability as $q\rightarrow \infty$.
\end{pn}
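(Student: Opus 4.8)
The plan is to recognise $\sigma W$ as the orthogonal projection, in $L^2$, of $\sigma_{oal}W_{oal}$ --- and of $\sigma_{oas}W_{oas}$ --- onto the span of the ``low-order'' ANOVA terms, so that the second moments of the two differences collapse by Pythagoras. The starting observation is that, by (\ref{eq:3.36}), whenever a multi-index $(v_1,\ldots,v_d)$ satisfies $0\le v_j\le1$ for every $j$, the corresponding term in (\ref{eq:4.5}) and in (\ref{eq:4.6}) involves only the leading digits $\pi_j(a_{i,j})$ and in fact equals $\nu^*[\pi_{j_1}(a_{i,j_1});\ldots;\pi_{j_{|v|}}(a_{i,j_{|v|}})]$. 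Comparing with (\ref{eq:4.55}), this says that $\sigma W$ is precisely the sub-sum of $\sigma_{oal}W_{oal}$ --- and of the $u_1+\cdots+u_d\ge3$ sum in $\sigma_{oas}W_{oas}$ --- over those $u$ with $\max_j u_j\le1$. Consequently $\sigma_{oal}W_{oal}-\sigma W$ is the sub-sum of $\sigma_{oal}W_{oal}$ over the $u$ with $u_1+\cdots+u_d\ge3$ and $\max_j u_j\ge2$, while $\sigma_{oas}W_{oas}-\sigma W$ is the analogous sub-sum of $\sigma_{oas}W_{oas}$ together with the term $q^{-2}\sum_{i=1}^{q^2}\sum_{k=1}^{d}\nu_2^*[\pi_k(a_{i,k}),\pi_{i,k,2}(0)]$, whose multi-index again has a coordinate equal to $2$. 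Thus every summand of either difference carries a multi-index $u$ with $\max_j u_j\ge2$, whereas every summand of $\sigma W$ carries one with $\max_j u_j\le1$.

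The second ingredient is the orthogonality relation
\begin{displaymath}
E\{\nu_{u_1,\ldots,u_d}[\cdots\,;i_1]\,\nu_{u_1',\ldots,u_d'}[\cdots\,;i_2]\}=0\qquad\mbox{whenever }(u_1,\ldots,u_d)\ne(u_1',\ldots,u_d'),
\end{displaymath}
valid for all rows $i_1,i_2$ and for either randomization scheme --- the same vanishing already used (in reducing $\sigma_{oal}^2$ and $\sigma_{oas}^2$ to single sums over $u$) in the proof of Proposition \ref{pn:3.1}. It holds because, choosing a coordinate $k$ with, say, $u_k>u_k'$, the level-$u_k$ digit of coordinate $k$ attached to row $i_1$ --- namely $\pi_k(a_{i_1,k})$, or $\pi_{k;a_{i_1,k}}(b_{i_1,k,2})$, or $\pi_{i_1,k,u_k}(0)$, according to whether $u_k$ is $1$, $2$ or at least $3$ --- is uniform on $\{0,\ldots,q-1\}$ conditionally on all the other randomizations in (\ref{eq:2.21}) and, since $u_k'<u_k$, appears in no other factor; averaging it out and using (\ref{eq:3.76}) makes the expectation vanish. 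Since the multi-indices occurring in $\sigma W$ are disjoint from those occurring in $\sigma_{oal}W_{oal}-\sigma W$ and in $\sigma_{oas}W_{oas}-\sigma W$, every resulting cross term pairs distinct multi-indices, and therefore
\begin{displaymath}
E\big[(\sigma_{oal}W_{oal}-\sigma W)(\sigma W)\big]=0=E\big[(\sigma_{oas}W_{oas}-\sigma W)(\sigma W)\big].
\end{displaymath}

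Expanding the squares and using $E(W^2)=E(W_{oal}^2)=E(W_{oas}^2)=1$ now gives
\begin{displaymath}
E\big[(\sigma_{oal}W_{oal}-\sigma W)^2\big]=\sigma_{oal}^2-\sigma^2\qquad\mbox{and}\qquad E\big[(\sigma_{oas}W_{oas}-\sigma W)^2\big]=\sigma_{oas}^2-\sigma^2.
\end{displaymath}
By Proposition \ref{pn:3.1}, $\sigma_{oal}^2=\sigma^2+O(q^{-3})$ and $\sigma_{oas}^2=\sigma^2+O(q^{-3})$ as $q\to\infty$, so $q^2E[(\sigma_{oal}W_{oal}-\sigma W)^2]=O(q^{-1})$ and $q^2E[(\sigma_{oas}W_{oas}-\sigma W)^2]=O(q^{-1})$, both tending to $0$; hence $q(\sigma_{oal}W_{oal}-\sigma W)\to0$ and $q(\sigma_{oas}W_{oas}-\sigma W)\to0$ in $L^2$, and a fortiori in probability, as $q\to\infty$. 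The only step needing genuine care is the orthogonality relation: one must verify that the chosen ``top digit'' of row $i_1$ in coordinate $k$ is truly free to be averaged, which is ensured by the way the three families of permutations in (\ref{eq:2.21}) are indexed --- by columns, by column--symbol pairs, and by row--column--level triples, respectively --- so that such a digit could recur elsewhere only if $u_k'\ge u_k$. Once that is granted the argument is purely formal, and all of its quantitative content is supplied by Proposition \ref{pn:3.1}.
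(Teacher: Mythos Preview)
Your argument is correct and takes a genuinely different --- and considerably shorter --- route than the paper's.

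The paper proceeds by a direct second-moment computation: it writes $q(\sigma_{oal}W_{oal}-\sigma W)=\sum_{r}\sum_{k_1<\cdots<k_r}\Delta_{k_1,\ldots,k_r}$, grouping the high-order ANOVA terms according to which coordinates have $u_j\ge2$, and then evaluates each $E(\Delta_{k_1,\ldots,k_r}^2)$ by a lengthy case analysis (splitting the $i_1,i_2$ sum according to which $a_{i_1,j_l},a_{i_2,j_l}$ coincide, invoking the orthogonal-array structure, and appealing to Lemma~\ref{la:a.5}). This yields $q^2E[(\sigma_{oal}W_{oal}-\sigma W)^2]=O(q^{-2})$, after which Chebyshev finishes the job; the $W_{oas}$ case is handled analogously with an extra step for the $\nu_2^*$ term. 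Your approach bypasses all of this by recognising $\sigma W$ as the $L^2$-orthogonal projection of $\sigma_{oal}W_{oal}$ (and of $\sigma_{oas}W_{oas}$) onto the span of the terms with $\max_j u_j\le1$, so that Pythagoras gives $E[(\sigma_{oal}W_{oal}-\sigma W)^2]=\sigma_{oal}^2-\sigma^2$ and likewise for $W_{oas}$; Proposition~\ref{pn:3.1} then supplies the $O(q^{-3})$ bound directly. The orthogonality you invoke is indeed exactly what the paper tacitly uses when it collapses the double sum over multi-indices to a single sum over $u$ at the start of (\ref{eq:3.37}) and (\ref{eq:3.401}), and your justification of it via the indexing of the three permutation families in (\ref{eq:2.21}) is sound.

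What your approach buys is brevity and conceptual clarity: the entire quantitative burden is offloaded to Proposition~\ref{pn:3.1}, which is needed anyway. What the paper's approach buys is a slightly sharper rate ($q^2E[(\cdot)^2]=O(q^{-2})$ rather than your $O(q^{-1})$), though this plays no role in the proposition as stated.
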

{\sc Proof.}
We observe that
\begin{displaymath}
q ( \sigma_{oal} W_{oal} - \sigma W ) = \sum_{r=1}^d \sum_{1\leq k_1<\cdots < k_r\leq d} \Delta_{k_1,\ldots, k_r},
\end{displaymath}
where
\begin{eqnarray*}
\Delta_{k_1,\ldots, k_r} &=& \frac{1}{q } \sum_{i=1}^{q^2} \sum_{u_1,\ldots, u_d \geq 0:
u_k \geq 2 \Leftrightarrow k\in \{k_1,\ldots, k_r \}, u_1+\cdots+ u_d\geq 3 }
\nu_{u_1,\ldots, u_d} [ \pi_j (a_{i,j}), 
\nonumber \\
&&\hspace{0.5cm} 
\pi_{j; a_{i,j}} (b_{i,j,2}), 
\pi_{i,j,3} (0), \ldots, \pi_{i,j,u_j}(0):
1\leq j\leq d].
\end{eqnarray*}
Here $\Leftrightarrow$ denotes if and only if and that given $u_1,\ldots, u_d\geq 0$, we write 
$u_k \geq 1 \Leftrightarrow k\in \{j_1,\ldots, j_{|u|}\}$ where $|u| = \sum_{i=1}^d {\cal I}\{u_i \geq 1\}$.
Now for $r=1,\ldots, d$, we have
\begin{eqnarray}
E(\Delta^2_{1,\ldots, r}) 
&=&\frac{1}{q^2}
\sum_{i_1 =1}^{q^2} \sum_{i_2=1}^{q^2}
\sum_{u_1,\ldots, u_d \geq 0: u_k \geq 2 \Leftrightarrow k\in \{1,\ldots, r\},
u_1+\cdots + u_d \geq 3 }
E\Big\{ \nu_{u_1,\ldots, u_d} [\pi_j(a_{i_1,j}), 
\nonumber \\
&& \hspace{0.5cm}
\pi_{j;a_{i_1,j}} (b_{i_1,j,2}), \pi_{i_1,j,3} (0),\ldots,
\pi_{i_1,j,u_j} (0): 1\leq j\leq d]
\nonumber \\
&&\times
\nu_{u_1,\ldots, u_d} [\pi_j(a_{i_2,j}), 
\pi_{j;a_{i_2,j}} (b_{i_2,j,2}), 
\pi_{i_2,j,3} (0),\ldots,
\pi_{i_2,j,u_j} (0): 1\leq j\leq d]
\Big\}
\nonumber \\
&=&\frac{1}{q^2}
\sum_{i =1}^{q^2}
\sum_{u_1,\ldots, u_d \geq 0: u_k \geq 2 \Leftrightarrow k\in \{1,\ldots, r\},
u_1+\cdots + u_d\geq 3 }
E\Big\{ \nu_{u_1,\ldots, u_d} [\pi_j(a_{i,j}), 
\nonumber \\
&& \hspace{0.5cm}
\pi_{j;a_{i,j}} (b_{i,j,2}), \pi_{i,j,3} (0),\ldots,
\pi_{i,j,u_j} (0): 1\leq j\leq d]^2 \Big\}
\nonumber \\
&&
+ \frac{1}{q^2}
\sum_{i_1 =1}^{q^2} \sum_{i_2\neq i_1}
\sum_{u_1,\ldots, u_d \geq 0: u_k \geq 2 \Leftrightarrow k\in \{1,\ldots, r\},
u_1+\cdots + u_d \geq 3 }
E\Big\{ \nu_{u_1,\ldots, u_d} [\pi_j(a_{i_1,j}), 
\nonumber \\
&& \hspace{0.5cm}
\pi_{j;a_{i_1,j}} (b_{i_1,j,2}), \pi_{i_1,j,3} (0),\ldots,
\pi_{i_1,j,u_j} (0): 1\leq j\leq d]
\nonumber \\
&&\times
\nu_{u_1,\ldots, u_d} [\pi_j(a_{i_2,j}), 
\pi_{j;a_{i_2,j}} (b_{i_2,j,2}), 
\pi_{i_2,j,3} (0),\ldots,
\pi_{i_2,j,u_j} (0): 1\leq j\leq d]
\Big\}.
\label{eq:3.67}
\end{eqnarray}

Using the fact that $A\in$ OA$(q^2, d, q, 2)$, we further observe that
\begin{eqnarray}
&& \frac{1}{q^2}
\sum_{i_1 =1}^{q^2} \sum_{i_2\neq i_1}
\sum_{u_1,\ldots, u_d \geq 0: u_k \geq 2 \Leftrightarrow k\in \{1,\ldots, r\},
u_1+\cdots + u_d \geq 3 }
E\Big\{ \nu_{u_1,\ldots, u_d} [\pi_j(a_{i_1,j}), 
\nonumber \\
&& \hspace{0.5cm}
\pi_{j;a_{i_1,j}} (b_{i_1,j,2}), \pi_{i_1,j,3} (0),\ldots,
\pi_{i_1,j,u_j} (0): 1\leq j\leq d]
\nonumber \\
&&\times
\nu_{u_1,\ldots, u_d} [\pi_j(a_{i_2,j}), 
\pi_{j;a_{i_2,j}} (b_{i_2,j,2}), 
\pi_{i_2,j,3} (0),\ldots,
\pi_{i_2,j,u_j} (0): 1\leq j\leq d]
\Big\}
\nonumber \\
&=&
\frac{1}{q^2}
\sum_{i_1 =1}^{q^2}
\sum_{0\leq u_1,\ldots, u_d \leq 2: u_k = 2 \Leftrightarrow k\in \{1,\ldots, r\},
u_1+\cdots + u_d \geq 3 }
\sum_{i_2\neq i_1} E\Big\{ \nu_{u_1,\ldots, u_d} [\pi_j(a_{i_1,j}), 
\nonumber \\
&& \hspace{0.5cm}
\pi_{j;a_{i_1,j}} (b_{i_1,j,2}), \pi_{i_1,j,3} (0),\ldots,
\pi_{i_1,j,u_j} (0): 1\leq j\leq d]
\nonumber \\
&&\times
\nu_{u_1,\ldots, u_d} [\pi_j(a_{i_2,j}), 
\pi_{j;a_{i_2,j}} (b_{i_2,j,2}), 
\pi_{i_2,j,3} (0),\ldots,
\pi_{i_2,j,u_j} (0): 1\leq j\leq d]
\Big\}
\nonumber \\
&=&
\frac{ {\cal I}\{ r=1\} }{q^2}
\sum_{i_1 =1}^{q^2}
\sum_{0\leq u_1,\ldots, u_d \leq 2: u_k = 2 \Leftrightarrow k= 1,
|u| + 1 \geq 3 }
\sum_{i_2\neq i_1} 
{\cal I}\{ a_{i_1,1} = a_{i_2, 1}\}
\nonumber \\
&&\times
E\Big\{ \nu_{u_1,\ldots, u_d} [\pi_j(a_{i_1,j}), 
\pi_{j;a_{i_1,j}} (b_{i_1,j,2}), \pi_{i_1,j,3} (0),\ldots,
\pi_{i_1,j,u_j} (0): 1\leq j\leq d]
\nonumber \\
&&\times
\nu_{u_1,\ldots, u_d} [\pi_j(a_{i_2,j}), 
\pi_{j;a_{i_2,j}} (b_{i_2,j,2}), 
\pi_{i_2,j,3} (0),\ldots,
\pi_{i_2,j,u_j} (0): 1\leq j\leq d]
\Big\}
\nonumber \\
&=&
\frac{ {\cal I}\{ r=1\} }{q^2}
\sum_{i_1 =1}^{q^2}
\sum_{0\leq u_1,\ldots, u_d \leq 2: u_k = 2 \Leftrightarrow k= 1, |u| \geq 2 }
\sum_{i_2\neq i_1} 
{\cal I}\{ a_{i_1,1} = a_{i_2, 1}\}
\nonumber \\
&&\times
E\Big\{ \nu^*_{u_1, u_{j_2},\ldots, u_{j_{|u|}} } [
\pi_{1} (a_{i_1,1}), 
\pi_{1; 
a_{i_1,1} } ( b_{i_1, 1, 2} );  
\pi_{j_2} ( a_{i_1, j_2});\ldots;
\pi_{j_{|u|}} ( a_{i_1, j_{|u|}}) ]
\nonumber \\
&&\times
\nu^*_{u_1, u_{j_2},\ldots, u_{j_{|u|}} } [
\pi_{1} (a_{i_2,1}), 
\pi_{1; 
a_{i_2,1} } ( b_{i_2, 1, 2} );  
\pi_{j_2} ( a_{i_2, j_2});\ldots;
\pi_{j_{|u|}} ( a_{i_2, j_{|u|}}) ] \Big\}
\nonumber \\
&=&
\frac{ {\cal I}\{ r=1\} }{q^2}
\sum_{i_1 =1}^{q^2}
\sum_{0\leq u_1,\ldots, u_d \leq 2: u_k = 2 \Leftrightarrow k= 1, |u| \geq 2 }
\sum_{i_2\neq i_1} 
{\cal I}\{ a_{i_1,1} = a_{i_2, 1}\}
\nonumber \\
&&\times
E\Big\{ \nu^*_{u_1, u_{j_2},\ldots, u_{j_{|u|}} } [
\pi_{1} (a_{i_1,1}), 
\pi_{1; 
a_{i_1,1} } ( b_{i_1, 1, 2} );  
\pi_{j_2} ( a_{i_1, j_2});\ldots;
\pi_{j_{|u|}} ( a_{i_1, j_{|u|}}) ]
\nonumber \\
&&\times
(\frac{1}{q-1})^{|u|} \sum_{0\leq \tilde{c}_1 \leq q-1: \tilde{c}_1 \neq 
\pi_{1; 
a_{i_1,1} } ( b_{i_1, 1, 2} ) } 
\Big[ \prod_{2\leq m\leq |u|}
\sum_{0\leq \tilde{c}_{j_m} \leq q-1: \tilde{c}_{j_m} \neq \pi_{j_m} (a_{i_1, j_m}) }
\Big]
\nonumber \\
&&\times
\nu^*_{u_1, u_{j_2},\ldots, u_{j_{|u|}} } [
\pi_{1} (a_{i_1,1}), 
\tilde{c}_1; 
\tilde{c}_{j_2};\ldots;
\tilde{c}_{j_{|u|}} ]
\Big\}
\nonumber \\
&=&
\frac{ {\cal I}\{ r=1\} }{q^2}
\sum_{i_1 =1}^{q^2}
\sum_{0\leq u_1,\ldots, u_d \leq 2: u_k = 2 \Leftrightarrow k= 1, |u|\geq 2 }
\frac{(-1)^{|u|} }{ (q-1)^{|u|} }
\sum_{i_2\neq i_1} 
{\cal I}\{ a_{i_1,1} = a_{i_2, 1}\}
\nonumber \\
&&\times
E \{ \nu^*_{u_1,u_{j_2}, \ldots, u_{j_{|u|}} } [
\pi_{1} (a_{i_1,1}), 
\pi_{1; 
a_{i_1,1} } ( b_{i_1, 1, 2} );  
\pi_{j_2} ( a_{i_1, j_2});\ldots;
\pi_{j_{|u|}} ( a_{i_1, j_{|u|}}) ]^2 \}
\nonumber \\
&=& {\cal I}\{ r=1\}
\sum_{0\leq u_1,\ldots, u_d \leq 2: u_k = 2 \Leftrightarrow k= 1, |u| \geq 2 }
\frac{ (-1)^{|u|} }{(q-1)^{|u|-1} } 
\nonumber \\
&&\times
E \{ \nu^*_{u_1, u_{j_2},\ldots, u_{j_{|u|}} } [
\pi_{1} (a_{1,1}), 
\pi_{1; 
a_{1,1} } ( b_{1, 1, 2} );  
\pi_{j_2} ( a_{1, j_2});\ldots;
\pi_{j_{|u|}} ( a_{1, j_{|u|}}) ]^2 \}.
\label{eq:3.68}
\end{eqnarray} 
It follows from (\ref{eq:3.67}) and (\ref{eq:3.68}) that
\begin{eqnarray}
&& q^2 E [ ( \sigma_{oal} W_{oal} - \sigma W)^2]  
\nonumber \\
&=& E [ ( \sum_{r=1}^d \sum_{1\leq j_{k_1}<\cdots < j_{k_r}\leq d} \Delta_{j_{k_1},\ldots, j_{k_r}} )^2 ]
\nonumber \\
&=& \sum_{r=1}^d \sum_{1\leq j_{k_1}<\cdots < j_{k_r}\leq d} E( \Delta_{j_{k_1},\ldots, j_{k_r}}^2 )
\nonumber \\
&=& \sum_{r=1}^d \sum_{1\leq j_{k_1} <\cdots < j_{k_r} \leq d} 
\sum_{u_1,\ldots, u_d \geq 0: u_k \geq 2 \Leftrightarrow k\in \{j_{k_1},\ldots, j_{k_r}\},
u_1+\cdots + u_d\geq 3 }
\nonumber \\
&& \hspace{0.5cm}\times
E \{ \nu_{u_1,\ldots, u_d} [\pi_j(a_{1,j}), 
\pi_{j;a_{1,j}} (b_{1,j,2}), 
\pi_{1,j,3} (0),\ldots,
\pi_{1,j,u_j} (0): 1\leq j\leq d]^2 \}
\nonumber \\
&&
+ \sum_{r=1}^d
\sum_{0\leq u_1,\ldots, u_d \leq 2: u_k = 2 \Leftrightarrow k= r, |u| \geq 2 }
\frac{ (-1)^{|u|} }{(q-1)^{|u|-1} } 
\nonumber \\
&&\hspace{0.5cm} \times
E \{ \nu_{u_1,\ldots, u_d} [\pi_j(a_{1,j}), 
\pi_{j;a_{1,j}} (b_{1,j,2}), 
\pi_{1,j,3} (0),\ldots,
\pi_{1,j,u_j} (0): 1\leq j\leq d]^2 \}.
\label{eq:3.69}
\end{eqnarray}
Finally from
(\ref{eq:3.69}) and Lemma \ref{la:a.5} (see Appendix), we have
\begin{eqnarray*}
&& q^2 E [ ( \sigma_{oal} W_{oal} - \sigma W)^2]  
\nonumber \\
&=&
\sum_{r=1}^d  \sum_{1\leq j_{k_1}< \cdots < j_{k_r} \leq d}
\sum_{u_1,\ldots, u_d \geq 0: u_k \geq 2 \Leftrightarrow k\in \{j_{k_1},\ldots, j_{k_r} \},
u_1+\cdots + u_d\geq 3 }
\sum_{t_{j_1}=0}^{q^{u_{j_1}-1}-1} \sum_{c_{j_1}=0}^{q-1} 
\cdots
\nonumber \\
&&\hspace{0.5cm}
\sum_{t_{j_{|u|} }=0}^{q^{u_{j_{|u|} }-1}-1} 
\sum_{c_{j_{|u|} }=0}^{q-1} 
\langle f,  \prod_{l=1}^{|u|}  \psi_{u_{j_l} -1,t_{j_l},c_{j_l}} \rangle^2
\sum_{\alpha=0}^{|u|} {|u| \choose \alpha} \frac{1}{q^\alpha } (1 - \frac{1}{q})^{|u|-\alpha}
\nonumber \\
&&
+ \sum_{r=1}^d \sum_{0\leq u_1,\ldots, u_d \leq 2: u_k = 2 \Leftrightarrow k= r, |u| \geq 2 }
\frac{ (-1)^{|u|} }{(q-1)^{|u|-1} } 
\sum_{t_{j_1}=0}^{q^{u_{j_1}-1}-1} \sum_{c_{j_1}=0}^{q-1} 
\cdots
\nonumber \\
&&\hspace{0.5cm}
\sum_{t_{j_{|u|} }=0}^{q^{u_{j_{|u|} }-1}-1} 
\sum_{c_{j_{|u|} }=0}^{q-1} 
\langle f,  \prod_{l=1}^{|u|}  \psi_{u_{j_l} -1,t_{j_l},c_{j_l}} \rangle^2
\sum_{\alpha=0}^{|u|} {|u| \choose \alpha} \frac{1}{q^\alpha } (1 - \frac{1}{q})^{|u|-\alpha}
\nonumber \\
&=&
\sum_{r=1}^d  {d\choose r}
\sum_{u_1,\ldots, u_d \geq 0: u_k \geq 2 \Leftrightarrow k\in \{1,\ldots, r\},
u_1+\cdots + u_d\geq 3 }
O( q^{2|u| - 2(u_1+\cdots+ u_d)} )
\nonumber \\
&&
+ \sum_{0\leq u_1,\ldots, u_d \leq 2: u_k = 2 \Leftrightarrow k= 1, |u| \geq 2 }
\frac{ (-1)^{|u|} d}{(q-1)^{|u|-1} } 
O( q^{2|u| - 2(u_1+\cdots+ u_d)} )
\nonumber \\
&=& O(q^{-2}),
\end{eqnarray*}
as $q\rightarrow \infty$.
Using Chebyshev's inequality, we conclude that
$q ( \sigma_{oal} W_{oal} - \sigma W) \rightarrow 0$ in probability as $q\rightarrow \infty$.
Next we observe from (\ref{eq:3.402}) that
\begin{displaymath}
E \Big\{ \{ \frac{1}{q } \sum_{i=1}^{q^2 } 
\sum_{k=1}^d
\nu_2^* [ \pi_k (a_{i,k}), 
\pi_{i,k,2} (0) ] \}^2 \Big\} = O(\frac{1}{q^2}),
\end{displaymath}
as $q\rightarrow\infty$. Again by Chebyshev's inequality, we have $q^{-1} \sum_{i=1}^{q^2 } 
\sum_{k=1}^d
\nu_2^* [ \pi_k (a_{i,k}), 
\pi_{i,k,2} (0) ] \rightarrow 0$
in probability as $q\rightarrow \infty$. Thus we conclude from (\ref{eq:4.6}) that to prove that
$q ( \sigma_{oas} W_{oas} - \sigma W) \rightarrow 0$ in probability as $q\rightarrow \infty$, it suffices to show
that
\begin{eqnarray*}
&& \frac{1}{q } \sum_{i=1}^{q^2 } 
\sum_{u_1,\ldots, u_d \geq 0: u_1+\cdots+ u_d\geq 3}
\nu_{u_1,\ldots, u_d} 
[ \pi_j (a_{i,j}), 
\pi_{i,j,2} (0),\ldots, 
\pi_{i,j,u_j} (0): 1\leq j\leq d ] - q \sigma W
\nonumber \\
&\rightarrow & 0,
\end{eqnarray*}
in probability as $q\rightarrow \infty$. The proof of the latter statement is similar to the
 proof that $q(\sigma_{oal} W_{oal} - \sigma W) \rightarrow 0$ and hence will be omitted. 
This proves Proposition \ref{pn:4.1}. \hfill $\Box$ 

{\sc Remark.}
We observe from Theorem \ref{tm:3.1} and Proposition \ref{pn:3.1} that 
for $\int_{[0,1)^d} f^2_{rem} (x) dx >0$, we have
\begin{displaymath}
\frac{\sigma^2_{oal}}{\sigma^2} = 1 + O(\frac{1}{q}),\hspace{0.5cm}
\frac{\sigma^2_{oas}}{\sigma^2} = 1 + O(\frac{1}{q}),\hspace{0.5cm} \mbox{as $q\rightarrow \infty$.}
\end{displaymath}
Thus we conclude from Proposition \ref{pn:4.1} that to show 
$W_{oal}$ and $W_{oas}$ both tend in law  to the standard (univariate) normal distribution
as $q\rightarrow \infty$, it suffices to
show that the proxy statistic $W$ tends in law to that distribution.

Suppose $d\geq 3$. For $\ell =1,\ldots, d-2$, we define
\begin{eqnarray}
\sigma_\ell^2 &=& E \Big\{ \{ \frac{1}{q^2} \sum_{i=1}^{q^2} \sum_{0\leq u_1,\ldots, u_d \leq 1: |u|=\ell +2} 
\nu^* [\pi_{j_1} (a_{i,j_1});\ldots; \pi_{j_{|u|}} (a_{i, j_{|u|}})] \}^2 \Big\},
\nonumber \\
V_\ell &=& \frac{1}{ q^2 \sigma_\ell } \sum_{i=1}^{q^2} \sum_{0\leq u_1,\ldots, u_d \leq 1: |u|=\ell +2} 
\nu^* [\pi_{j_1} (a_{i,j_1});\ldots; \pi_{j_{|u|}} (a_{i, j_{|u|}})],
\nonumber \\
&& \mbox{and $V = ( V_1,\ldots, V_{d-2})'$ where $|u| = \sum_{i=1}^d {\cal I}\{ u_i\geq 1\}$.}
\label{eq:5.30}
\end{eqnarray}
We shall prove that the random vector $V$ converges weakly to the standard $(d-2)$-variate
normal distribution $\Phi_{d-2}$ as $q$ tends to infinity.
To do so, we shall use the multivariate normal version of Stein's method  [see Stein (1972), (1986)] as given
in G\"{o}tze (1991) and Bolthausen and G\"{o}tze (1993).

Let ${\cal A}$ be a class of measurable functions from ${\mathbb R}^{d-2} \rightarrow {\mathbb R}$ such that
$\sup_{v\in {\mathbb R}^{d-2}} |g(v)| \leq 1$ for all $g\in {\cal A}$.
For $g\in {\cal A}$ and $\delta>0$, define
\begin{eqnarray*}
g^+_\delta (v) &=& \sup\{ g(v+ y): \|y\| \leq \delta \}, \hspace{0.5cm} \forall v\in {\mathbb R}^{d-2},
\nonumber \\
g^-_\delta (v) &=& \inf\{ g(v+y): \|y\| \leq \delta \}, \hspace{0.5cm} \forall v \in {\mathbb R}^{d-2},
\nonumber \\
\omega (g, \delta) &=& \int_{{\mathbb R}^{d-2}} [ g^+_\delta (y) - g^-_\delta (y) ] \Phi_{d-2} (dy).
\end{eqnarray*}
We further assume that ${\cal A}$ is closed under supremum and affine transformations, that is,
$g\in {\cal A}$ implies that $g^+_\delta \in {\cal A}$, $g^-_\delta \in {\cal A}$ and $g\circ T \in {\cal A}$ 
whenever $T: {\mathbb R}^{d-2} \rightarrow {\mathbb R}^{d-2}$ is affine. Finally we assume that there exists a constant $\Delta \geq 2 \sqrt{d-2}$
such that 
\begin{equation}
\sup\{ \omega (g,\delta): g \in {\cal A} \} \leq \Delta \delta, \hspace{0.5cm} \forall \delta>0.
\label{eq:5.5}
\end{equation}
We observe from Bolthausen and G\"{o}tze (1993) that 
${\cal A}$ can be taken to be the class of all indicator functions of measurable
convex sets in ${\mathbb R}^{d-2}$.
For $h\in {\cal A}$ and $0\leq t\leq 1$, define
\begin{eqnarray}
\chi_t (v |h) &=& \int_{{\mathbb R}^{d-2}} [ h(y) - h( t^{1/2} y + (1-t)^{1/2} v )] \Phi_{d-2} (dy),
\label{eq:5.1} \\
\psi_t (v) &=& \frac{1}{2} \int_t^1 \chi_s (v|h) \frac{ ds}{1-s}, \hspace{0.5cm}\forall v\in {\mathbb R}^{d-2}.
\nonumber 
\end{eqnarray}
Then $-\chi_0 (v|h) = h(v) - \Phi_{d-2} (h)$ where  $\Phi_{d-2} (h) = Eh(Z)$ and $Z$ is a random vector having distribution $\Phi_{d-2}$.
The following two lemmas are due to G\"{o}tze (1991).
Since the proofs are only briefly sketched in G\"{o}tze (1991),
detailed proofs of Lemmas \ref{la:5.1} and
\ref{la:5.2} are given below.

\begin{la} \label{la:5.1}
For $0< t< 1$ and  $v= (v_1,\ldots, v_{d-2})'\in {\mathbb R}^{d-2}$, we have
\begin{equation}
\sum_{i=1}^{d-2} \frac{ \partial^2}{\partial v_i^2} \psi_t (v) -\sum_{i=1}^{d-2} v_i \frac{\partial}{\partial v_i} \psi_t (v)
= -\chi_t (v|h).
\label{eq:5.4}
\end{equation}
There exists a  constant $c$ (depending only on $d$) such that 
\begin{displaymath}
\sup_{1\leq i,j\leq d-2} \sup_{v\in {\mathbb R}^{d-2}} | \frac{\partial^2}{\partial v_i \partial v_j} \psi_t (v) | \leq \|h\|_\infty \log(1/t),
\end{displaymath}
where $\|h\|_\infty = \sup_{v\in {\mathbb R}^{d-2}} |h(v)|$ and
\begin{eqnarray*}
&& 
\sup_{1\leq i,j,k\leq d-2} | \int_{{\mathbb R}^{d-2}} [ \frac{ \partial^3}{ \partial v_i\partial v_j \partial v_k} \psi_t (v) ] Q(dv)|
\nonumber \\
&\leq & \frac{c}{t^{1/2} } \sup\{ | \int_{{\mathbb R}^{d-2}} h( s v + y) Q(dv) |: 0\leq s \leq 1, y\in {\mathbb R}^{d-2} \}
\end{eqnarray*}
for all finite signed measures $Q$ on ${\mathbb R}^{d-2}$.
\end{la}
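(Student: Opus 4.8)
The plan is to realise $\psi_t$ through the Ornstein--Uhlenbeck semigroup and then read off the three assertions from repeated Gaussian integration by parts. I would first rewrite
\[
\chi_s(v|h)=\Phi_{d-2}(h)-(U_sh)(v),\qquad (U_sh)(v):=\int_{{\mathbb R}^{d-2}}h\big(s^{1/2}y+(1-s)^{1/2}v\big)\,\Phi_{d-2}(dy),
\]
so that $U_0h=h$, $U_1h\equiv\Phi_{d-2}(h)$, and $-\chi_0(v|h)=h(v)-\Phi_{d-2}(h)$ as stated in the text. Let $L=\sum_{i=1}^{d-2}\partial^2/\partial v_i^2-\sum_{i=1}^{d-2}v_i\,\partial/\partial v_i$ be the OU generator. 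Identifying $(U_th)(v)=E\,h\big(e^{-\tau}v+(1-e^{-2\tau})^{1/2}Z\big)$ with $t=1-e^{-2\tau}$ and $Z\sim\Phi_{d-2}$ shows that $U_th\in C^{\infty}({\mathbb R}^{d-2}\times(0,1))$ for every bounded measurable $h$, and gives the basic identity $L(U_sh)=2(1-s)\,\partial_s(U_sh)$. Since $L$ annihilates constants, differentiating under the integral sign yields
\begin{eqnarray*}
L\psi_t(v) &=& -\frac{1}{2}\int_t^1 L(U_sh)(v)\,\frac{ds}{1-s}=-\int_t^1\partial_s(U_sh)(v)\,ds \\
&=& (U_th)(v)-\Phi_{d-2}(h)=-\chi_t(v|h),
\end{eqnarray*}
which is (\ref{eq:5.4}). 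Convergence of the integral defining $\psi_t$ is not an issue: $\chi_1(v|h)=0$ and the identity above forces $\chi_s(v|h)=O\big((1-s)^{1/2}\big)$ as $s\uparrow1$, so $\chi_s(v|h)/(1-s)$ is integrable on $(t,1)$.

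For the second-derivative bound I would differentiate $\chi_s(\cdot|h)$ twice in $v$; whenever the chain rule produces a factor $(1-s)^{1/2}(\partial_\ell h)\big(s^{1/2}y+(1-s)^{1/2}v\big)$, I integrate by parts in $y$ against the Gaussian density so as to push the derivative back onto $\Phi_{d-2}$. The outcome is
\[
\frac{\partial^2}{\partial v_i\partial v_j}\chi_s(v|h)=-\frac{1-s}{s}\int_{{\mathbb R}^{d-2}}h\big(s^{1/2}y+(1-s)^{1/2}v\big)\,(y_iy_j-\delta_{ij})\,\Phi_{d-2}(dy).
\]
Bounding the integral by $\|h\|_\infty\,\sup_{i,j}\int|y_iy_j-\delta_{ij}|\,\Phi_{d-2}(dy)$ and integrating against $ds/(1-s)$ over $(t,1)$ --- the two $(1-s)$ factors cancel and leave $ds/s$ --- gives
\begin{eqnarray*}
\Big|\frac{\partial^2}{\partial v_i\partial v_j}\psi_t(v)\Big| &\le& \frac{1}{2}\,\|h\|_\infty\log(1/t)\,\sup_{i,j}\int_{{\mathbb R}^{d-2}}|y_iy_j-\delta_{ij}|\,\Phi_{d-2}(dy) \\
&\le& \|h\|_\infty\log(1/t),
\end{eqnarray*}
since the last supremum is at most $1$ (it is $E|Y^2-1|<1$ for $i=j$ and $(E|Y|)^2=2/\pi$ for $i\ne j$, with $Y\sim N(0,1)$).

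For the third-derivative estimate I would differentiate once more in $v$ and integrate by parts once more in $y$, obtaining
\[
\frac{\partial^3}{\partial v_i\partial v_j\partial v_k}\chi_s(v|h)=-\frac{(1-s)^{3/2}}{s^{3/2}}\int_{{\mathbb R}^{d-2}}h\big(s^{1/2}y+(1-s)^{1/2}v\big)\,p_{ijk}(y)\,\Phi_{d-2}(dy),
\]
where $p_{ijk}(y)=y_iy_jy_k-\delta_{ij}y_k-\delta_{ik}y_j-\delta_{jk}y_i$ is a third-degree Hermite polynomial. I would then integrate against $Q$, use Fubini, and note that $\int h\big(s^{1/2}y+(1-s)^{1/2}v\big)\,Q(dv)$ has the form $\int h(s'v+y')\,Q(dv)$ with $s'=(1-s)^{1/2}\in[0,1]$ and $y'=s^{1/2}y$, so its modulus is at most $M:=\sup\{|\int h(sv+y)\,Q(dv)|:0\le s\le1,\ y\in{\mathbb R}^{d-2}\}$ uniformly in $y$. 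Hence
\[
\Big|\int_{{\mathbb R}^{d-2}}\frac{\partial^3}{\partial v_i\partial v_j\partial v_k}\chi_s(v|h)\,Q(dv)\Big|\le M\,\frac{(1-s)^{3/2}}{s^{3/2}}\,\sup_{i,j,k}\int_{{\mathbb R}^{d-2}}|p_{ijk}(y)|\,\Phi_{d-2}(dy),
\]
and integrating against $ds/(1-s)$ leaves $\int_t^1(1-s)^{1/2}s^{-3/2}\,ds\le\int_t^1 s^{-3/2}\,ds\le 2t^{-1/2}$, which is the asserted bound with $c=\sup_{i,j,k}\int|p_{ijk}(y)|\,\Phi_{d-2}(dy)$, a constant depending only on $d$.

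The only genuinely delicate points are the bookkeeping justifications for differentiating under the integrals and for the Fubini interchanges --- all granted by boundedness of $h$ together with Gaussian polynomial majorants that are uniform for $v$ in compact sets and $s$ in compact subsets of $(0,1)$ --- and keeping the numerical constants small enough to land precisely the bound $\|h\|_\infty\log(1/t)$ rather than a constant multiple of it. I expect no real obstacle: the whole argument reduces to the semigroup representation of $\psi_t$ and the iterated Gaussian integration by parts that is standard in Stein's method for the normal distribution.
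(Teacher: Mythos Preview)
Your proposal is correct and follows essentially the same route as the paper's proof. Both arguments rest on the same two ingredients: the PDE identity $L(U_sh)=2(1-s)\partial_s(U_sh)$ for the Ornstein--Uhlenbeck evolution (the paper verifies it by differentiating the Gaussian kernel $\tilde{u}_{y,t}$ directly, you invoke the semigroup reparametrisation $t=1-e^{-2\tau}$), and then Gaussian integration by parts to express the $v$-derivatives of $\chi_s$ as integrals of $h$ against Hermite-type weights. The paper writes out the second and third derivatives case by case ($i=j$, $i\ne j$, etc.) and bounds each explicitly, while you package them uniformly via $y_iy_j-\delta_{ij}$ and $p_{ijk}(y)$; your numerical check that $\sup_{i,j}\int|y_iy_j-\delta_{ij}|\,\Phi_{d-2}(dy)<1$ (namely $E|Y^2-1|=4\phi(1)\approx0.968$ and $(E|Y|)^2=2/\pi$) is a slightly sharper version of the paper's cruder bound $E(1+Y^2)=2$, but both land the stated inequality with constant $1$. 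In short, the difference is one of packaging rather than strategy.
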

{\sc Proof.}  We first observe from (\ref{eq:5.1}) that
\begin{displaymath}
\chi_t (v|h) = \int_{{\mathbb R}^{d-2}} h(y) \Phi_{d-2}(dy) -
\int_{{\mathbb R}^{d-2}} \frac{ h(y)}{ (2 \pi t)^{(d-2)/2}} e^{-\sum_{l=1}^{d-2} (y_l - (1 -t)^{1/2} v_l )^2/(2t) } dy.
\end{displaymath}
Now we consider the partial differential equation
\begin{equation}
2 (1-t) \frac{\partial}{\partial t} u_t (v) = \sum_{i=1}^{d-2} \frac{\partial^2}{\partial v_i^2} u_t(v)
-\sum_{i=1}^{d-2} v_i \frac{\partial}{\partial v_i} u_t (v),\hspace{0.5cm}\forall t>0.
\label{eq:5.2}
\end{equation}
We observe that a trivial  solution to (\ref{eq:5.2}) is $u_t(v) = \int_{{\mathbb R}^{d-2}} h(y) \Phi_{d-2} (dy)$.
Writing
\begin{displaymath}
\tilde{u}_{y,t} (v) = \frac{ 1}{(2 \pi t)^{(d-2)/2}} e^{-\sum_{l=1}^{d-2} (y_l - (1-t)^{1/2} v_l )^2/(2t) },
\end{displaymath}
we have
\begin{eqnarray*}
\sum_{i=1}^{d-2} v_i \frac{\partial}{\partial v_i} \tilde{u}_{y,t} (v)
& = &
\frac{ (1-t)^{1/2} }{ t (2 \pi t)^{(d-2)/2}}  
 e^{-\sum_{l=1}^{d-2} (y_l - (1-t)^{1/2} v_l )^2/(2t) }
\sum_{i=1}^{d-2}
v_i (y_i - (1-t)^{1/2} v_i ),
\nonumber \\
\sum_{i=1}^{d-2} \frac{\partial^2}{\partial v_i^2} \tilde{u}_{y,t} (v)
&=& \frac{1-t}{t^2  (2 \pi t)^{(d-2)/2}} 
e^{-\sum_{l=1}^{d-2} (y_l - (1-t)^{1/2} v_l )^2/(2t) }
\sum_{i=1}^{d-2}
[ (y_i - (1-t)^{1/2} v_i)^2 - t]
\nonumber \\
&=& \frac{1-t}{t^2  (2 \pi t)^{(d-2)/2}} 
e^{-\sum_{l=1}^{d-2} (y_l - (1-t)^{1/2} v_l )^2/(2t) }
\sum_{i=1}^{d-2}
(y_i - (1-t)^{1/2} v_i)^2 
\nonumber \\
&&
- \frac{(d-2) (1-t) }{t  (2 \pi t)^{(d-2)/2}} 
e^{-\sum_{l=1}^{d-2} (y_l - (1-t)^{1/2} v_l )^2/(2t) },
\end{eqnarray*}
and
\begin{eqnarray*} 
2(1-t) \frac{\partial}{\partial t} \tilde{u}_{y,t} (v) 
&=& - \frac{ (d-2 ) (1-t)}{ t (2 \pi t)^{(d-2)/2} }
e^{-\sum_{l=1}^{d-2} (y_l - (1-t)^{1/2} v_l )^2/(2t) }
\nonumber \\
&&
- \frac{ 1-t}{  (2 \pi t)^{(d-2)/2} }
e^{-\sum_{l=1}^{d-2} (y_l - (1-t)^{1/2} v_l )^2/(2t) }
\nonumber \\
&&\hspace{0.5cm}\times
\sum_{i=1}^{d-2} [
- \frac{ (y_i - (1-t)^{1/2} v_i )^2 }{ t^2}
+ \frac{ v_i (y_i - (1- t)^{1/2} v_i) }{ t (1- t)^{1/2} } ].
\end{eqnarray*}
Thus $\tilde{u}_{y,t} (v)$ is a solution to (\ref{eq:5.2}) too for all $y\in {\mathbb R}^{d-2}$.
Using the dominated convergence theorem, we have
\begin{eqnarray}
2 (1-t) \frac{\partial}{\partial t} \chi_t (v| h) 
&=& 
-2 (1-t) \frac{\partial}{\partial t} \int_{{\mathbb R}^{d-2}} h(y) \tilde{u}_{y,t} (v) dy
\nonumber \\
&=&
-2 (1-t) \int_{{\mathbb R}^{d-2}} h(y) \frac{\partial}{\partial t} \tilde{u}_{y,t} (v) dy
\nonumber \\
&=& \sum_{i=1}^{d-2} \int_{{\mathbb R}^{d-2}} h(y) v_i \frac{\partial}{\partial v_i} \tilde{u}_{y,t} (v) dy
- \sum_{i=1}^{d-2} \int_{{\mathbb R}^{d-2}} h(y) \frac{\partial^2 }{\partial v_i^2} \tilde{u}_{y,t} (v) dy
\nonumber \\
&=& \sum_{i=1}^{d-2} v_i \frac{\partial}{\partial v_i}
\int_{{\mathbb R}^{d-2}} h(y) \tilde{u}_{y,t} (v) dy
- \sum_{i=1}^{d-2} \frac{\partial^2 }{\partial v_i^2} \int_{{\mathbb R}^{d-2}} h(y) \tilde{u}_{y,t} (v) dy
\nonumber \\
&=& 
\sum_{i=1}^{d-2} \frac{\partial^2 }{\partial v_i^2} \chi_t( v|h)
- \sum_{i=1}^{d-2} v_i \frac{\partial}{\partial v_i}
\chi_t (v|h).
\label{eq:4.68}
\end{eqnarray}
We further observe that
\begin{eqnarray*}
\frac{\partial}{\partial v_i} \chi_t (v| h)
&=&
-\frac{\partial}{\partial v_i} \int_{{\mathbb R}^{d-2}}  \frac{ h(y)}{ (2 \pi t)^{(d-2)/2}} 
e^{-\sum_{l=1}^{d-2} (y_l -(1-t)^{1/2} v_l)^2/(2t) }
dy
\nonumber \\
&=&
- \frac{ (1-t)^{1/2} }{t (2 \pi t)^{(d-2)/2}}
\int_{{\mathbb R}^{d-2}}  h(y) (y_i - (1-t)^{1/2} v_i) e^{-\sum_{l=1}^{d-2} (y_l -(1-t)^{1/2} v_l)^2/(2t) } dy,
\nonumber \\
\frac{\partial^2 }{\partial v_i^2 } \chi_t (v|h) 
&=& - \frac{ (1-t)^{1/2} }{t (2 \pi t)^{(d-2)/2}} \frac{\partial}{\partial v_i} 
\int_{{\mathbb R}^{d-2}}  h(y) (y_i - (1-t)^{1/2} v_i) e^{-\sum_{l=1}^{d-2} (y_l -(1-t)^{1/2} v_l)^2/(2t) } dy
\nonumber \\
&=& \frac{ 1-t }{t (2 \pi t)^{(d-2)/2}}
\int_{{\mathbb R}^{d-2}}  h(y) e^{-\sum_{l=1}^{d-2} (y_l -(1-t)^{1/2} v_l)^2/(2t) } dy
\nonumber \\
&& 
- \frac{ 1-t }{t^2 (2 \pi t)^{(d-2)/2}}
\int_{{\mathbb R}^{d-2}}  h(y) (y_i - (1-t)^{1/2} v_i)^2 e^{-\sum_{l=1}^{d-2} (y_l -(1-t)^{1/2} v_l)^2/(2t) } dy,
\nonumber \\
\frac{\partial^3 }{\partial v_i^3 } \chi_t (v|h) 
&=& \frac{3 (1-t)^{3/2} }{t^2 (2 \pi t)^{(d-2)/2}}
\int_{{\mathbb R}^{d-2}}  h(y) (y_i - (1-t)^{1/2} v_i) e^{-\sum_{l=1}^{d-2} (y_l -(1-t)^{1/2} v_l)^2/(2t) } dy
\nonumber \\
&&
- \frac{ (1-t)^{3/2} }{t^3 (2 \pi t)^{(d-2)/2}}
\int_{{\mathbb R}^{d-2}}  h(y) (y_i - (1-t)^{1/2} v_i)^3 e^{-\sum_{l=1}^{d-2} (y_l -(1-t)^{1/2} v_l)^2/(2t) } dy.
\end{eqnarray*}
For $1\leq i\neq j\leq d-2$, we have
\begin{eqnarray*}
\frac{\partial^3 }{\partial v_i^2 \partial v_j } \chi_t (v|h) 
&=& \frac{ (1-t)^{3/2} }{t^2 (2 \pi t)^{(d-2)/2}}
\int_{{\mathbb R}^{d-2}}  h(y) (y_j - (1-t)^{1/2} v_j) e^{-\sum_{l=1}^{d-2} (y_l -(1-t)^{1/2} v_l)^2/(2t) } dy
\nonumber \\
&&
- \frac{ (1-t)^{3/2} }{t^3 (2 \pi t)^{(d-2)/2}}
\int_{{\mathbb R}^{d-2}}  h(y) (y_i - (1-t)^{1/2} v_i)^2
(y_j - (1-t)^{1/2} v_j)  
\nonumber \\
&&\hspace{0.5cm}\times
e^{-\sum_{l=1}^{d-2} (y_l -(1-t)^{1/2} v_l)^2/(2t) } dy,
\end{eqnarray*}
and for $i,j,k$ all distinct, we have
\begin{eqnarray*}
\frac{\partial^3 }{\partial v_i \partial v_j \partial v_k} \chi_t (v|h) 
&=&
- \frac{ (1-t)^{3/2} }{t^3 (2 \pi t)^{(d-2)/2}}
\int_{{\mathbb R}^{d-2}}  h(y) (y_i - (1-t)^{1/2} v_i)
(y_j - (1-t)^{1/2} v_j)  
\nonumber \\
&&\hspace{0.5cm}\times
(y_k - (1-t)^{1/2} v_k)  
e^{-\sum_{l=1}^{d-2} (y_l -(1-t)^{1/2} v_l)^2/(2t) } dy.
\end{eqnarray*}
Consequently,
\begin{eqnarray}
| \frac{\partial}{\partial v_i} \chi_t (v|h) |
&\leq &
 \frac{ (1-t)^{1/2} \| h\|_\infty }{t (2 \pi t)^{(d-2)/2}}
\int_{{\mathbb R}^{d-2}}  |y_i | e^{-\sum_{l=1}^{d-2} y_l^2/(2t) } dy
\hspace{0.1cm} = \hspace{0.1cm}
 \frac{ (1-t)^{1/2} \| h\|_\infty }{t^{1/2} },
\nonumber \\
| \frac{\partial^2}{\partial v_i^2 } \chi_t (v|h) |
&\leq &
 \frac{2 (1-t) \| h\|_\infty }{t }.
\label{eq:5.3}
\end{eqnarray}
Since $\chi_1 (v|h) = 0$, it follows from (\ref{eq:4.68}) that
\begin{eqnarray*}
-\chi_t (v|h) &=&
\chi_1 (v|h) - \chi_t (v|h) 
\nonumber \\
&=& \int_t^1 [ \frac{\partial}{\partial s} \chi_s (v|h) ] ds
\nonumber \\
&=& \frac{1}{2} \int_t^1 \sum_{i=1}^{d-2} \frac{\partial^2}{\partial v_i^2} \chi_s (v|h) \frac{ds}{1-s}
-\frac{1}{2} \int_t^1 \sum_{i=1}^{d-2} v_i \frac{\partial}{\partial v_i} \chi_s (v|h) \frac{ds}{1-s}
\nonumber \\
&=& \sum_{i=1}^{d-2} \frac{\partial^2}{\partial v_i^2} \psi_t (v)
- \sum_{i=1}^{d-2} v_i \frac{\partial}{\partial v_i} \psi_t (v), \hspace{0.5cm}\forall 0<t<1.
\end{eqnarray*}
The interchange of integration and partial differentiation is justified via (\ref{eq:5.3}) and the dominated convergence
theorem.
This proves (\ref{eq:5.4}).
Now we observe that for $v\in {\mathbb R}^{d-2}$,
\begin{eqnarray*}
\frac{\partial^2}{\partial v_i^2} \psi_t (v) &=& \frac{1}{2} \int_t^1 [ \frac{\partial^2}{\partial v_i^2} \chi_s (v| h) ] \frac{ds}{1-s}
\nonumber \\
&=& \frac{1}{2} \int_t^1 \frac{ds}{ s (2 \pi s)^{(d-2)/2} } \int_{{\mathbb R}^{d-2}}
h(y) e^{-\sum_{l=1}^{d-2} [y_l - (1-s)^{1/2} v_l]^2/(2s)}
\nonumber \\
&&\hspace{0.5cm}\times 
\{ 1 -  \frac{[y_i - (1-s)^{1/2} v_i ]^2 }{ s} \} dy,
\\
\frac{\partial^2}{\partial v_i\partial v_j} \psi_t (v) &=& \frac{1}{2} \int_t^1 \frac{ds}{s(2\pi s)^{(d-2)/2}} \int_{{\mathbb R}^{d-2}} h(y)
e^{-\sum_{l=1}^{d-2} [y_l -(1-s)^{1/2} v_l]^2/(2s)}
\\
&&\hspace{0.5cm}\times \frac{[y_i- (1-s)^{1/2} v_i] [y_j-(1-s)^{1/2} v_j]}{s} dy, \hspace{0.5cm}\mbox{if $1\leq i\neq j\leq d-2$},
\end{eqnarray*}
and hence
\begin{displaymath}
\sup_{1\leq i,j\leq d-2} \sup_{v\in {\mathbb R}^{d-2}} | \frac{\partial^2}{\partial v_i \partial v_j} \psi_t (v) |
\leq \|h\|_\infty \log(1/t).
\end{displaymath}

Next we observe that
\begin{eqnarray*}
\int_{{\mathbb R}^{d-2}} [ \frac{\partial^3}{\partial v_i^3 } \psi_t (v) ]Q(dv)
&=& \frac{1}{2}
\int_{{\mathbb R}^{d-2}} Q(dv) \frac{\partial^3}{\partial v_i^3} 
\int_t^1 \chi_s (v|h) \frac{ds}{1-s} 
\nonumber \\
&=&
\frac{1}{2} \int_{{\mathbb R}^{d-2}} Q(dv)  
\int_t^1
\frac{ (1-s)^{1/2} ds }{s^2 (2 \pi s)^{(d-2)/2}} 
\nonumber \\
&& \times
\Big[ 3 
\int_{{\mathbb R}^{d-2}}  h(y) (y_i - (1-s)^{1/2} v_i) e^{-\| y -(1-s)^{1/2} v \|^2/(2s) } dy
\nonumber \\
&& \hspace{0.5cm}
- \frac{ 1 }{s }
\int_{{\mathbb R}^{d-2}}  h(y) (y_i - (1-s)^{1/2} v_i)^3 e^{-\| y -(1-s)^{1/2} v \|^2/(2s) } dy
\Big]
\nonumber \\
&=&
\frac{1}{2}
\int_t^1
\frac{ (1-s)^{1/2} ds }{s^2 (2 \pi s)^{(d-2)/2}} 
\int_{{\mathbb R}^{d-2}} \Big[ \int_{{\mathbb R}^{d-2}} h( y + (1-s)^{1/2} v) Q(dv) \Big]
\nonumber \\
&& \times \Big[ 3 y_i e^{-\| y \|^2/(2s) } - \frac{1}{s} y_i^3 e^{-\| y \|^2/(2s)} \Big] dy,
\end{eqnarray*}
where $\|.\|$ is the usual Euclidean norm in ${\mathbb R}^{d-2}$ and
\begin{eqnarray*}
| \int_{{\mathbb R}^{d-2}} [ \frac{\partial^3}{\partial v_i^3 } \psi_t (v) ]Q(dv) |
&\leq &
\frac{1}{2} \sup\{ | \int_{{\mathbb R}^{d-2}} h( y + s v) Q(dv) | : 0\leq s\leq 1, y\in {\mathbb R}^{d-2} \}
\nonumber \\
&&\times
\int_t^1
\frac{ (1-s)^{1/2} ds }{s^{3/2} (2 \pi )^{(d-2)/2}} 
\int_{{\mathbb R}^{d-2}}
( 3 |y_i|  + |y_i^3| ) e^{-\| y \|^2/2} dy
\nonumber \\
&\leq & \frac{c}{t^{1/2}}
\sup\{ | \int_{{\mathbb R}^{d-2}} h( y + s v) Q(dv) | : s\in [0.1], y\in {\mathbb R}^{d-2} \}.
\end{eqnarray*}
In a similar way, we have for $i,j, k$ all distinct,
\begin{eqnarray*}
\int_{{\mathbb R}^{d-2}} [ \frac{\partial^3}{\partial v_i^2 \partial v_j } \psi_t (v) ]Q(dv)
&=& \frac{1}{2} \int_{{\mathbb R}^{d-2}} Q(dv) \int_t^1 \frac{(1-s)^{1/2} ds}{ s^2 (2 \pi s)^{(d-2)/2}}
\nonumber \\
&&\times 
\int_{{\mathbb R}^{d-2}}  h(y + (1-s)^{1/2} v) (y_j - \frac{ y_i^2 y_j}{s})  e^{-\| y \|^2/(2s) } dy,
\nonumber \\
\int_{{\mathbb R}^{d-2}} [ \frac{\partial^3}{\partial v_i \partial v_j \partial v_k} \psi_t (v) ]Q(dv)
&=& \frac{1}{2} \int_{{\mathbb R}^{d-2}} Q(dv) \int_t^1 \frac{(1-s)^{1/2} ds}{ s^3 (2 \pi s)^{(d-2)/2}}
\nonumber \\
&&\times 
\int_{{\mathbb R}^{d-2}}  h(y + (1-s)^{1/2} v) y_i y_j y_k  e^{-\| y \|^2/(2s) } dy,
\end{eqnarray*}
and
\begin{eqnarray*}
| \int_{{\mathbb R}^{d-2}} [ \frac{\partial^3}{\partial v_i^2 \partial v_j } \psi_t (v) ]Q(dv) |
&\leq &
\frac{1}{2} \sup\{ | \int_{{\mathbb R}^{d-2}} h( y + s v) Q(dv) | : s\in [0,1], y\in {\mathbb R}^{d-2} \}
\nonumber \\
&&\times
\int_t^1
\frac{ (1-s)^{1/2} ds }{s^{3/2} (2 \pi )^{(d-2)/2}} 
\int_{{\mathbb R}^{d-2}}
(  |y_j| + |y_i^2 y_j| ) e^{-\| y \|^2/2} dy
\nonumber \\
&\leq & \frac{c}{t^{1/2}}
\sup\{ | \int_{{\mathbb R}^{d-2}} h( y + s v) Q(dv) | : s\in [0,1], y\in {\mathbb R}^{d-2} \},
\nonumber \\
| \int_{{\mathbb R}^{d-2}} [ \frac{\partial^3}{\partial v_i \partial v_j \partial v_k} \psi_t (v) ]Q(dv) |
&\leq & \frac{c}{t^{1/2}}
\sup\{ | \int_{{\mathbb R}^{d-2}} h( y + s v) Q(dv) | : s\in [0,1], y\in {\mathbb R}^{d-2} \}.
\end{eqnarray*}
This proves Lemma \ref{la:5.1}. \hfill $\Box$

\begin{la} \label{la:5.2}
Suppose that (\ref{eq:5.5}) holds. Let $0 < \varepsilon < 1/2$ and $Q$ be a probability distribution on ${\mathbb R}^{d-2}$. Then
there exists a constant $c_d$ depending only on $d$ such that
\begin{displaymath}
\sup_{g\in {\cal A}} | \int_{{\mathbb R}^{d-2}} g(v) [ Q(dv) - \Phi_{d-2} (dv) ] |
\leq  c_d [
\sup_{h\in {\cal A}} | \int_{{\mathbb R}^{d-2}} \chi_{\varepsilon^2} (v|h) Q(dv) |
+ \Delta \varepsilon].
\end{displaymath}
\end{la}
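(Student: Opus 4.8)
The plan is to establish the inequality for a fixed $g\in{\cal A}$ and then pass to the supremum; since the two one-sided estimates are symmetric, it suffices to treat $g={\cal I}\{\cdot\in C\}$ for a measurable convex set $C$ and to bound $Q(C)-\Phi_{d-2}(C)$ from above. Write $E=\sup_{h\in{\cal A}}|\int_{{\mathbb R}^{d-2}}\chi_{\varepsilon^2}(v|h)\,Q(dv)|$. For $h\in{\cal A}$ introduce the Gaussian mollification $h_\varepsilon(v):=\int_{{\mathbb R}^{d-2}}h((1-\varepsilon^2)^{1/2}v+\varepsilon y)\,\Phi_{d-2}(dy)$; the definition (\ref{eq:5.1}) of $\chi_t$ gives $h_\varepsilon(v)=\Phi_{d-2}(h)-\chi_{\varepsilon^2}(v|h)$, so integrating against $Q$ yields $|\int_{{\mathbb R}^{d-2}}h_\varepsilon(v)\,Q(dv)-\Phi_{d-2}(h)|\le E$ for every $h\in{\cal A}$.

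The key step is a two-sided domination of $g$ by the mollifications of an inflated and a deflated convex test function. Set $\widetilde C=(1-\varepsilon^2)^{1/2}C$, and for a radius $R\ge1$ (to be chosen) let $C^{+}$ and $C^{-}$ denote the $R\varepsilon$-dilate and the $R\varepsilon$-erosion of $\widetilde C$; both are convex, so $h^{\pm}:={\cal I}\{\cdot\in C^{\pm}\}$ belong to ${\cal A}$. For $v\in C$ the point $(1-\varepsilon^2)^{1/2}v$ lies in $\widetilde C$, hence the open ball of radius $R\varepsilon$ around it is contained in $C^{+}$, so $h^{+}_\varepsilon(v)\ge\Phi_{d-2}(\{\|\cdot\|<R\})$; for $v\notin C$ the point $(1-\varepsilon^2)^{1/2}v$ lies outside $\widetilde C$ and hence at distance $\ge R\varepsilon$ from $C^{-}$, whence $h^{-}_\varepsilon(v)\le\Phi_{d-2}(\{\|\cdot\|\ge R\})$. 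Together with the trivial bounds in the complementary range of $v$, this gives
\begin{displaymath}
h^{-}_\varepsilon(v)-\Phi_{d-2}(\{\|\cdot\|\ge R\})\le g(v)\le h^{+}_\varepsilon(v)+\Phi_{d-2}(\{\|\cdot\|\ge R\}),\qquad v\in{\mathbb R}^{d-2}.
\end{displaymath}
Integrating the right-hand inequality against $Q$ and invoking the first paragraph gives $Q(C)-\Phi_{d-2}(C)\le[\Phi_{d-2}(C^{+})-\Phi_{d-2}(C)]+E+\Phi_{d-2}(\{\|\cdot\|\ge R\})$, and the left-hand inequality gives the matching lower bound with $\Phi_{d-2}(C)-\Phi_{d-2}(C^{-})$.

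It then remains to control the two Gaussian-measure gaps $\Phi_{d-2}(C^{+})-\Phi_{d-2}(C)$ and $\Phi_{d-2}(C)-\Phi_{d-2}(C^{-})$, which is where the geometry of convex sets enters. Using $\|v-(1-\varepsilon^2)^{\pm1/2}v\|\le\varepsilon^{2}\|v\|$ for $\varepsilon<1/2$, one checks that inside the ball $\{\|v\|\le R/\varepsilon\}$ the set $C^{+}$ is contained in an $O(R\varepsilon)$-dilate of $C$ and $C$ in an $O(R\varepsilon)$-dilate of $C^{-}$; since the $\Phi_{d-2}$-mass outside that ball is at most $(d-2)\varepsilon^{2}$ and the $\Phi_{d-2}$-measure of an $O(R\varepsilon)$-boundary shell of a convex set is $O(\Delta R\varepsilon)$ by hypothesis (\ref{eq:5.5}), each gap is $O(\Delta R\varepsilon+(d-2)\varepsilon^{2})$. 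Putting the pieces together, choosing $R$ a suitable $d$-dependent multiple of $\max(1,\sqrt{\log(1/\varepsilon)})$ so that $\Phi_{d-2}(\{\|\cdot\|\ge R\})=O(\varepsilon)$, and absorbing the $d$-dependent constants (using $\Delta\ge2\sqrt{d-2}$), yields an estimate of the asserted shape. The step I expect to be the main obstacle is precisely this last bookkeeping: the boundary shell one genuinely pays for has width of order $\varepsilon\|v\|$ rather than $\varepsilon$, which forces the truncation at $\|v\|\lesssim1/\varepsilon$, and one has to keep the resulting truncation and Gaussian-tail errors of order $\Delta\varepsilon$ without reintroducing $\sup_{g\in{\cal A}}|\int g\,d(Q-\Phi_{d-2})|$ on the right-hand side; recovering the clean rate $\Delta\varepsilon$ of the statement here needs the more careful argument of G\"{o}tze (1991).
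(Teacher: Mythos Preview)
Your approach is genuinely different from the paper's, and you correctly identify its weakness at the end: the direct sandwiching with an $R\varepsilon$-dilate forces you to balance the Gaussian tail $\Phi_{d-2}(\{\|\cdot\|\ge R\})$ against the shell width $R\varepsilon$, and the optimal trade-off leaves a $\sqrt{\log(1/\varepsilon)}$ factor in front of $\Delta\varepsilon$. That is a real gap for the lemma \emph{as stated}, even though such a bound would still suffice for the downstream application in Theorem~\ref{tm:5.1}.

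The paper avoids this loss by a completely different mechanism: rather than a single sandwich, it invokes the smoothing inequality of Bhattacharya and Rao (1986, Lemma~11.4), which for a fixed $d$-dependent $\alpha>1/2$ gives
\[
\sup_{g\in{\cal A}}\Big|\int g\,d(Q-\Phi_{d-2})\Big|\le \frac{1}{2\alpha-1}\Big[\gamma^*(g;\varepsilon)+\alpha\,\tau^*(g;2\varepsilon)+(1-\alpha)\,\tau^*(g;\varepsilon)\Big],
\]
where $\gamma^*$ is the smoothed discrepancy at scale $\varepsilon/\sqrt{m_d(1-\varepsilon^2)}$ and $\tau^*\le\Delta\varepsilon$ by (\ref{eq:5.5}). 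The point is that $\gamma^*$ itself is a quantity of the same type as the left-hand side, so the inequality can be iterated: applying it $m_d$ times (with $m_d$ chosen so that $\Phi_{d-2}(\{\|x\|\le\sqrt{3m_d/4}\})=\alpha$) pushes the smoothing scale up to $\varepsilon/\sqrt{1-\varepsilon^2}$, and the paper then identifies $\sup_{h\in{\cal A}}|\int h\,d[(Q-\Phi_{d-2})*\Phi_{d-2,\varepsilon/\sqrt{1-\varepsilon^2}}]|$ with $\sup_{h\in{\cal A}}|\int\chi_{\varepsilon^2}(v|h)\,Q(dv)|$ via the closure of ${\cal A}$ under affine maps. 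Each iteration contributes only a fixed multiple of $\Delta\varepsilon$, and the number of iterations $m_d$ depends only on $d$, so no logarithm appears.

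In short: your one-shot sandwich cannot decouple the tail probability from the shell width, whereas the paper's iterated smoothing keeps the shell width at a fixed $d$-dependent multiple of $\varepsilon$ while paying only $d$-dependent constants for the tail. That iteration is the missing idea.
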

{\sc Proof.}
Let $m_d \in {\mathbb Z}^+$ and $\alpha >1/2$ be constants satisfying
\begin{displaymath}
\Phi_{d-2} (\{ \| x\| \leq \sqrt{3 m_d/4 } : x\in {\mathbb R}^{d-2} \} ) = \alpha.
\end{displaymath}
We further write $b= (1-\varepsilon^2)^{1/2}$, $\delta = \varepsilon/b$
and $\Phi_{d-2, \varepsilon}(A) = \Phi_{d-2} (A/\varepsilon), Q_\varepsilon (A) = Q(A/\varepsilon)$
for all Borel sets $A \subset {\mathbb R}^{d-2}$.
For $g\in {\cal A}$, we define
\begin{eqnarray*}
\gamma^* (g;\varepsilon) &=& \sup \Big\{  \max\{ \int_{{\mathbb R}^{d-2}} g^+_\varepsilon (v+y) (Q- \Phi_{d-2})* 
\Phi_{d-2, \varepsilon/\sqrt{m_d (1-\varepsilon^2)} } (dv),
\nonumber \\
&&\hspace{0.5cm}
-\int_{{\mathbb R}^{d-2}} g^-_\varepsilon (v+y) (Q- \Phi_{d-2})* 
\Phi_{d-2, \varepsilon/\sqrt{m_d (1- \varepsilon^2)} } (dv) \} : y \in {\mathbb R}^{d-2} \Big\},
\nonumber \\
\tau^* (g; \varepsilon) &=& \sup \Big\{
\max\{ \int_{{\mathbb R}^{d-2}} [ g^+_\varepsilon (v+y) - g(v + y) ] \Phi_{d-2}(dv),
\nonumber \\
&&\hspace{0.5cm}
- \int_{{\mathbb R}^{d-2}} [ g^-_\varepsilon (v+y) - g(v + y) ] \Phi_{d-2}(dv) \} : y\in {\mathbb R}^{d-2} \Big\}.
\end{eqnarray*}
Since ${\cal A}$ is closed under affine transformations, it follows from (\ref{eq:5.5}) that
$\tau^*(g; \varepsilon) \leq  \Delta \varepsilon$.
We further observe that
\begin{eqnarray*}
&&
\sup_{h \in {\cal A}}
| \int_{{\mathbb R}^{d-2}} h (v) (Q- \Phi_{d-2})* \Phi_{d-2, \varepsilon/\sqrt{1-\varepsilon^2}} (dv) |
\nonumber \\
&= &
\sup_{ h \in {\cal A}}
| \int_{{\mathbb R}^{d-2}} \int_{{\mathbb R}^{d-2}} h( v + \frac{\varepsilon y}{\sqrt{1- \varepsilon^2}} ) Q(dv) \Phi_{d-2} (dy)
\nonumber \\
&&\hspace{0.5cm}
- \int_{{\mathbb R}^{d-2}} \int_{{\mathbb R}^{d-2}}  h( v + \frac{\varepsilon y}{\sqrt{1-\varepsilon^2}} )  \Phi_{d-2} (dv) \Phi_{d-2} (dy) |
\nonumber \\
&=&
\sup_{h \in {\cal A}}
| \int_{{\mathbb R}^{d-2}} \int_{{\mathbb R}^{d-2}} h( (1-\varepsilon^2)^{1/2} v + \varepsilon y ) Q(dv) \Phi_{d-2} (dy)
\nonumber \\
&&\hspace{0.5cm}
- \int_{{\mathbb R}^{d-2}} \int_{{\mathbb R}^{d-2}}
h( (1-\varepsilon^2)^{1/2} v + \varepsilon y )  \Phi_{d-2} (dv) \Phi_{d-2} (dy) |
\nonumber \\
&= &
\sup_{h \in {\cal A}}
| \int_{{\mathbb R}^{d-2}} \int_{{\mathbb R}^{d-2}} [ h( (1-\varepsilon^2)^{1/2} v + \varepsilon y ) - h(y) ] Q(dv) \Phi_{d-2} (dy) |
\nonumber \\
&=& \sup_{h\in {\cal A}} | \int_{{\mathbb R}^{d-2}} \chi_{\varepsilon^2} (v|h) Q(dv) |.
\end{eqnarray*}
The last equality uses (\ref{eq:5.1}). 
Finally using Lemma 11.4 of Bhattacharya and Rao (1986), page 95, we obtain
\begin{eqnarray*}
&& \sup_{g \in {\cal A}} | \int_{{\mathbb R}^{d-2}} g(v) ( Q - \Phi_{d-2} ) (dv) |
\nonumber \\
&\leq &
\sup_{g\in {\cal A}}
\frac{1}{2\alpha-1} [ \gamma^* (g; \varepsilon) + \alpha \tau^* (g; 2 \varepsilon) + (1-\alpha) \tau^* (g; \varepsilon)
]
\nonumber \\
&\leq & 
\frac{1}{2\alpha-1} \sup_{h\in {\cal A}}
| \int_{{\mathbb R}^{d-2}} h (v) (Q- \Phi_{d-2})* \Phi_{d-2, \varepsilon/\sqrt{m_d (1-\varepsilon^2)} } (dv) |
+ \frac{ (1+ \alpha) \Delta \varepsilon }{ 2\alpha -1}
\nonumber \\
&\leq & 
\frac{1}{ (2\alpha-1)^2} \sup_{ h\in {\cal A}}
| \int_{{\mathbb R}^{d-2}} h (v) (Q- \Phi_{d-2})* \Phi_{d-2, \varepsilon\sqrt{2}/\sqrt{m_d (1- \varepsilon^2)} }  
(dv) | 
\nonumber \\
&& + (1+ \alpha) \Delta \varepsilon [ \frac{1}{2\alpha-1} + \frac{1}{(2\alpha -1)^2} ] 
\nonumber \\
&\leq & \frac{1}{ (2\alpha-1)^{m_d} } \sup_{ h\in {\cal A}}
| \int_{{\mathbb R}^{d-2}} h (v) (Q- \Phi_{d-2})* \Phi_{d-2, \varepsilon/\sqrt{1- \varepsilon^2} }  
(dv) |
+ \sum_{i=1}^{m_d} \frac{(1+\alpha) \Delta \varepsilon }{ (2\alpha-1)^i }
\nonumber \\
&=&
\frac{1}{ (2\alpha-1)^{m_d} }
\sup_{h\in {\cal A}} | \int_{{\mathbb R}^{d-2}} \chi_{\varepsilon^2} (v|h) Q(dv) |
+ \sum_{i=1}^{m_d} \frac{(1+\alpha) \Delta \varepsilon }{ (2\alpha-1)^i }.
\end{eqnarray*}
This proves Lemma \ref{la:5.2}.\hfill $\Box$

The theorem below is the main result of this section and is needed in the proof of Theorem \ref{tm:5.2}.
\begin{tm} \label{tm:5.1}
Suppose $d\geq 3$. Let $f:[0,1)^d\rightarrow {\mathbb R}$ be smooth with a Lipschitz continuous mixed partial of
order $d$ such that $\int_{{\mathbb R}^d} f_{rem}^2 (x) dx > 0$ and the $(d-2)$-variate random vector $V$ be as in (\ref{eq:5.30}).
Then $V$ converges to $\Phi_{d-2}$ in distribution as $q\rightarrow \infty$.
\end{tm}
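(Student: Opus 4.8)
\noi\textbf{Proof proposal for Theorem \ref{tm:5.1}.}
The plan is to run the multivariate Stein machinery prepared in Lemmas \ref{la:5.1} and \ref{la:5.2}. Write $Q=Q_q$ for the law of $V$ on ${\mathbb R}^{d-2}$, and take ${\cal A}$ to be the class of indicators of measurable convex sets, for which $\Delta$ is an absolute constant. By Lemma \ref{la:5.2}, for any sequence $\varepsilon=\varepsilon_q\downarrow 0$ to be fixed at the end it suffices to show $\sup_{h\in{\cal A}}|E\chi_{\varepsilon_q^2}(V|h)|\to 0$. Fix $h\in{\cal A}$, put $t=\varepsilon_q^2$ and $\psi_t=\psi_t(\cdot|h)$; by the Stein identity (\ref{eq:5.4}),
$$-E\chi_t(V|h)=E\Big[\sum_{i=1}^{d-2}\tfrac{\partial^2}{\partial v_i^2}\psi_t(V)\Big]-E\Big[\sum_{i=1}^{d-2}V_i\tfrac{\partial}{\partial v_i}\psi_t(V)\Big],$$
so it is enough to show this difference tends to $0$ uniformly in $h$ when $t=\varepsilon_q^2$.

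Decompose $V=\sum_{i=1}^{q^2}T_i$ with $T_i=(T_{i,1},\dots,T_{i,d-2})'$, $T_{i,\ell}=(q^2\sigma_\ell)^{-1}\sum_{0\le u_1,\dots,u_d\le 1:|u|=\ell+2}\nu^*[\pi_{j_1}(a_{i,j_1});\dots;\pi_{j_{|u|}}(a_{i,j_{|u|}})]$, noting that $V$ depends only on the column permutations $\pi_1,\dots,\pi_d$; coordinates with $\sigma_\ell=0$ are vacuous and may be dropped. For each row $i$ I would introduce a coupled configuration $\Pi^{(i)}=(\pi_1^{(i)},\dots,\pi_d^{(i)})$, equal in law to $(\pi_1,\dots,\pi_d)$, obtained by re-randomizing the images $\pi_1(a_{i,1}),\dots,\pi_d(a_{i,d})$ independently (for each $j$, compose $\pi_j$ with the transposition moving the pre-image of a fresh uniform symbol into slot $a_{i,j}$), and set $V^{(i)}=V(\Pi^{(i)})$. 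Because the array has strength $2$, any two rows agree in at most one column, so $\Pi$ and $\Pi^{(i)}$ differ at the images of only $O(1)$ slots per column, $T_i$ is independent of $V^{(i)}$ up to a negligible collateral term, and (since $ET_i=0$) $\sum_iE\langle T_i,\nabla\psi_t(V^{(i)})\rangle$ is negligible. Then, writing $E\langle T_i,\nabla\psi_t(V)\rangle=E\langle T_i,\nabla\psi_t(V)-\nabla\psi_t(V^{(i)})\rangle$ and Taylor expanding to first order around $V^{(i)}$,
$$E\langle T_i,\nabla\psi_t(V)\rangle=E\big[T_i'\nabla^2\psi_t(V^{(i)})(V-V^{(i)})\big]+\tfrac12E\big[T_i'\nabla^3\psi_t(\xi_i)[(V-V^{(i)})^{\otimes 2}]\big]+(\text{negligible}).$$

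Summed over $i$, the first-order term should reproduce $E[\mathrm{tr}\,\nabla^2\psi_t(V)]$: indeed $\sum_{i=1}^{q^2}E[(V-V^{(i)})T_i']\to I_{d-2}$ — an orthogonal-array computation of exactly the type in the proof of Proposition \ref{pn:3.1}, using the strength-$2$ property together with the coordinate-wise sum-zero identity (\ref{eq:3.76}) (one also verifies $\mathrm{Cov}(V_\ell,V_{\ell'})=\delta_{\ell\ell'}$ the same way) — and the bound $|\partial^2\psi_t/\partial v_i\partial v_j|\le\|h\|_\infty\log(1/t)$ of Lemma \ref{la:5.1} lets one replace $\nabla^2\psi_t(V^{(i)})$ by $\nabla^2\psi_t(V)$ and replace the random matrix $\sum_i(V-V^{(i)})T_i'$ by its mean, at the cost of second-moment (again combinatorial) errors. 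The second-order remainder $\sum_iE[T_i'\nabla^3\psi_t(\xi_i)[(V-V^{(i)})^{\otimes 2}]]$ is rewritten as $\int\nabla^3\psi_t\,dQ^*$ for a signed tensor measure $Q^*$ assembled from $T_i$, $V-V^{(i)}$ and the law of $\xi_i$, and controlled by $c\,t^{-1/2}\sup_{s,y}|\int h(sv+y)Q^*(dv)|$ via Lemma \ref{la:5.1}; since $|T_i|=O(q^{-1})$ and $\|V-V^{(i)}\|_2=O(q^{-1/2})$, and a further decoupling of $\xi_i$ supplies enough smoothness for the indicator-integral to reduce to a small concentration function, this is $O(t^{-1/2}q^{-\gamma})$ for some $\gamma>0$. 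Combining, $\sup_{h\in{\cal A}}|E\chi_{\varepsilon_q^2}(V|h)|=O(q^{-\gamma'}\log q+\varepsilon_q^{-1}q^{-\gamma}+\varepsilon_q)$; taking $\varepsilon_q=q^{-c}$ with $0<c<\gamma$ sends all three terms to $0$, and Lemma \ref{la:5.2} then gives $\sup_{h\in{\cal A}}|Eh(V)-\Phi_{d-2}(h)|\to 0$, i.e.\ $V$ converges weakly to $\Phi_{d-2}$.

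The main obstacle is precisely the combinatorial bookkeeping behind ``$\sum_iE[(V-V^{(i)})T_i']\to I_{d-2}$'' and the cubic remainder. Since the column permutations are shared by all $q^2$ rows, re-randomizing row $i$'s images necessarily perturbs $\Theta(q)$ other rows, and each such collateral change is of order $1$ rather than $O(1/q)$; what saves the argument is that the strength-$2$ property (two rows coincide in at most one column) together with (\ref{eq:3.76}) forces each collateral term to have covariance $O(q^{-(\ell+2)})$ with $T_i$, so that its total contribution after summing over $i$ is $o(1)$. Carrying this through while simultaneously propagating the $\psi_t$-derivative bounds of Lemma \ref{la:5.1}, and in particular organizing the cubic term so that the $t^{-1/2}$ blow-up is absorbed, is the technical heart of the proof.
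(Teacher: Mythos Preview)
Your overall architecture—Lemmas \ref{la:5.1} and \ref{la:5.2} plus a Stein coupling—is the paper's framework, but the coupling you choose is different and, as you half-suspect in your last paragraph, the row-wise route does not close at the cubic step. The paper does \emph{not} decompose $V=\sum_i T_i$ and build a leave-one-row-out coupling; instead it constructs a single exchangeable pair $(V,\tilde V)$ by picking $J$ uniform on $\{1,\dots,d\}$, an ordered pair $(B_1,B_2)$ of distinct symbols, and setting $\tilde\pi_J=\tau_{B_1,B_2}\circ\pi_J$ (all other $\pi_j$ unchanged). The decisive computation is Proposition \ref{pn:a.1}: the \emph{exact} conditional regression
\[
E^{\cal W}(\tilde V_\ell-V_\ell)=-\frac{2(\ell+2)}{d(q-1)}\,V_\ell,
\]
with an $\ell$-dependent eigenvalue $\lambda_\ell$. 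This converts $E[V_\ell\,\partial_\ell\psi_t(V)]$ into $\frac{1}{2\lambda_\ell}E[(\tilde V_\ell-V_\ell)(\partial_\ell\psi_t(\tilde V)-\partial_\ell\psi_t(V))]$. The second-moment bookkeeping (Propositions \ref{pn:a.2}, \ref{pn:a.3}, \ref{pn:a.5}, supported by Lemmas \ref{la:a.17}--\ref{la:a.22} and \ref{la:a.24}) shows $\frac{1}{2\lambda_\ell}E^{\cal W}[(\tilde V_\ell-V_\ell)(\tilde V_m-V_m)]\approx\delta_{\ell m}$ in $L^1$ up to $O(q^{-1/2})$. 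The cubic remainder (Proposition \ref{pn:a.4}) is handled via $\lambda_\ell^{-1}[E S_\ell^4]^{1/2}[E(\tilde S_m-S_m)^2]^{1/2}=O(q)\cdot O(q^{-1})\cdot O(q^{-1/2})=O(q^{-1/2})$, using $E S_\ell^4=O(q^{-2})$ from Lemma \ref{la:a.23}. With $\varepsilon=q^{-1/4}$, Lemma \ref{la:5.2} finishes.

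Your coupling loses exactly this $q^{1/2}$. With $|T_i|=O(q^{-1})$ and $E\|V-V^{(i)}\|^2=O(q^{-1})$ (both correct), the naive cubic bound is $\sum_{i=1}^{q^2}E\bigl[|T_i|\,\|V-V^{(i)}\|^2\bigr]=O(1)$, so after the $t^{-1/2}=\varepsilon^{-1}$ factor it diverges for any $\varepsilon\to0$. The signed-measure bound of Lemma \ref{la:5.1} does not rescue this on its own: the supremum there includes $s=0$, which already forces you to control the signed mass $\sum_iE\bigl[T_i\otimes(V-V^{(i)})^{\otimes2}\bigr]$ down to $O(q^{-\gamma})$, i.e.\ to exhibit genuine cancellation across $i$, and your sketch does not do this (nor is it clear how to, since each re-randomization perturbs $\Theta(dq)$ rows that overlap heavily as $i$ varies). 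The single-transposition exchangeable pair is tailored to the permutation dependence—it moves along the natural Markov chain on $S_q^d$—and the $\ell$-dependent eigenvalue it produces is precisely what makes the multivariate Stein calculus land; your leave-one-row-out scheme fights that structure rather than exploiting it.
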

{\sc Proof.} 
In this proof it suffices to take $\Delta = 2 \sqrt{d-2}$ in (\ref{eq:5.5}) and
${\cal A}$ to be the class of all indicator functions of measurable convex sets in ${\mathbb R}^{d-2}$. 
Let $J$ be a random variable uniformly distributed over $\{1,\ldots, d\}$ and $(B_1, B_2)$ be a random vector uniformly distributed over the set
\begin{displaymath}
\{ (b_1, b_2) \in \{0,\ldots, q-1\}^2: b_1\neq b_2 \}.
\end{displaymath}
$J$ and $(B_1, B_2)$ are independent of each other and are also independent of all previously defined random quantities.
Define for $j=1,\ldots, d$,
\begin{displaymath}
\tilde{\pi}_j = \left\{ \begin{array}{ll}
\pi_j & \mbox{if $j\neq J$}, \\
\tau_{B_1, B_2} \circ \pi_j & \mbox{if $j= J$},
\end{array}
\right.
\end{displaymath}
where $\tau_{B_1, B_2}$ denotes the permutation of $\{0,\ldots, q-1\}$ that transposes $B_1$ and $B_2$
leaving all other elements fixed. We further define for $\ell = 1,\ldots, d-2$,
\begin{eqnarray}
\tilde{V}_\ell 
&=& \frac{1}{q^2 \sigma_\ell } \sum_{i=1}^{q^2 } 
\sum_{0\leq u_1,\cdots, u_d \leq 1: |u| =\ell +2}
\nu^*
[ \tilde{\pi}_{j_1} (a_{i,j_1}); \ldots; 
\tilde{\pi}_{j_{|u|}} (a_{i,j_{|u|}}) ],
\nonumber \\
\tilde{V} &=& (\tilde{V}_1,\ldots, \tilde{V}_{d-2})'.
\label{eq:5.47}
\end{eqnarray}
From symmetry, we observe that $(V, \tilde{V} )$ is an exchangeable 
pair of random vectors in that $(V, \tilde{V})$ and
$(\tilde{V}, V)$ possess the same $2(d-2)$-variate distribution.
We now write
\begin{eqnarray}
\tilde{V}_\ell -  V_\ell
&=& \tilde{S}_\ell - S_\ell,
\label{eq:3.75}
\end{eqnarray}
where
\begin{eqnarray*}
\tilde{S}_\ell &=&
\frac{1}{q^2 \sigma_\ell } \sum_{i=1}^{q^2 } 
\sum_{0\leq u_1,\ldots, u_d \leq 1: |u| =\ell +2}
{\cal I} \{ J\in \{j_1,\ldots, j_{|u|}\}, \pi_J (a_{i,J}) \in \{ B_1, B_2\}  \} 
\nonumber \\
&&\hspace{0.5cm}\times
\nu^*
[ \tilde{\pi}_{j_1} (a_{i,j_1}); \ldots; 
\tilde{\pi}_{j_{|u|}} (a_{i,j_{|u|}}) ],
\end{eqnarray*}
and
\begin{eqnarray*}
S_\ell &=&
\frac{1}{q^2 \sigma_\ell } \sum_{i=1}^{q^2 } 
\sum_{0\leq u_1,\ldots, u_d \leq 1: |u| =\ell +2}
{\cal I} \{ J\in \{j_1,\ldots, j_{|u|}\}, \pi_J (a_{i,J}) \in \{B_1, B_2\} \} 
\nonumber \\
&&\hspace{0.5cm}\times
\nu^*
[ \pi_{j_1} (a_{i,j_1}); \ldots; 
\pi_{j_{|u|}} (a_{i,j_{|u|}}) ].
\end{eqnarray*}
Let ${\cal W}$ be the $\sigma$-field generated by the random quantities
\begin{displaymath}
\{ 
\pi_j(a_{i,j}):
i=1,\ldots, q^2, j=1,\ldots, d \},
\end{displaymath}
$E^{{\cal W}}$ denote conditional expectation given ${\cal W}$ and $\psi_t (.)$ be as in (\ref{eq:5.1}).
From the exchangeability of $(V, \tilde{V})$, we have for $0<\varepsilon <1/2$,
\begin{eqnarray*}
0 &=& E \{ (\tilde{V}_i - V_i )[ \frac{\partial}{\partial v_i} \psi_{\varepsilon^2} ( V )+ \frac{\partial}{\partial v_i} \psi_{\varepsilon^2} (\tilde{V})] \} \\
&=& 2E \{ [ \frac{\partial}{\partial v_i} \psi_{\varepsilon^2} ( V ) ] E^{{\cal W}} (\tilde{V}_i - V_i ) \} +E \{ (\tilde{V}_i - V_i )
[ \frac{\partial}{\partial v_i} \psi_{\varepsilon^2} (\tilde{V} )- \frac{\partial}{ \partial v_i} \psi_{\varepsilon^2} (V)]\}.
\end{eqnarray*}
We observe from Proposition \ref{pn:a.1} (see Appendix) that 
\begin{eqnarray*}
E \{ (\tilde{V}_i - V_i )
[ \frac{\partial}{\partial v_i} \psi_{\varepsilon^2} (\tilde{V} )- \frac{\partial}{\partial v_i} \psi_{\varepsilon^2} ( V )]\}
&=&
-2E \{ [ \frac{\partial}{\partial v_i} \psi_{\varepsilon^2} ( V ) ] E^{{\cal W}} (\tilde{V}_i - V_i ) \} 
\nonumber \\
&=& \frac{ 4 (i+2) }{d (q-1)} E[ V_i \frac{\partial}{\partial v_i} \psi_{\varepsilon^2} ( V ) ].
\end{eqnarray*}
Now using Lemma \ref{la:5.1}, we have
\begin{eqnarray*}
&& E[ \chi_{\varepsilon^2} (V| h)]
\nonumber \\
&=& E[ \sum_{i=1}^{d-2} V_i \frac{\partial}{\partial v_i} \psi_{\varepsilon^2} (V ) 
- \sum_{i=1}^{d-2} \frac{\partial^2}{\partial v_i^2} \psi_{\varepsilon^2} (V) ] 
\nonumber \\
&=&
\sum_{i=1}^{d-2} \frac{d (q-1)}{ 4 (i+2) } E \{ (\tilde{V}_i - V_i )
[ \frac{\partial}{\partial v_i} \psi_{\varepsilon^2} (\tilde{V} )- \frac{\partial}{\partial v_i} \psi_{\varepsilon^2} ( V )]\}
- E [ \sum_{i=1}^{d-2} \frac{\partial^2}{\partial v_i^2} \psi_{\varepsilon^2} (V) ] 
\nonumber \\
&=& 
\sum_{i=1}^{d-2} \sum_{j=1}^{d-2} \frac{d (q-1)}{ 4 (i+2) } E \{ (\tilde{V}_i - V_i ) (\tilde{V}_j - V_j) 
\int_0^1 \frac{\partial^2}{\partial v_i\partial v_j} \psi_{\varepsilon^2} (V + t( \tilde{V}- V) ) dt \}
\nonumber \\
&&
- E [ \sum_{i=1}^{d-2} \frac{\partial^2}{\partial v_i^2} \psi_{\varepsilon^2} (V) ] 
\nonumber \\
&=&
\sum_{i=1}^{d-2} \Big\{ \frac{d (q-1)}{ 4 (i+2) } E \{ (\tilde{V}_i - V_i )^2 
\int_0^1 \frac{\partial^2}{\partial v_i^2} \psi_{\varepsilon^2} (V + t( \tilde{V}- V) ) dt \}
- E [ \frac{\partial^2}{\partial v_i^2} \psi_{\varepsilon^2} (V) ] \Big\}
\nonumber \\
&&
+ \sum_{i=1}^{d-2} \sum_{j\neq i} \frac{d (q-1)}{ 4 (i+2) } E \{ (\tilde{V}_i - V_i ) (\tilde{V}_j - V_j) 
\int_0^1 \frac{\partial^2}{\partial v_i\partial v_j} \psi_{\varepsilon^2} (V + t( \tilde{V}- V) ) dt \}
\nonumber \\
&=&
\sum_{i=1}^{d-2} \Big\{ [ \frac{d (q-1)}{ 4 (i+2) } E (\tilde{S}_i - S_i )^2 
-1 ]
E [ \frac{\partial^2}{\partial v_i^2} \psi_{\varepsilon^2} (V) ] 
\nonumber \\
&& \hspace{0.5cm}
- \frac{d (q-1)}{ 4 (i+2) } E [ (\tilde{S}_i - S_i )^2 ]
E [ \frac{\partial^2}{\partial v_i^2} \psi_{\varepsilon^2} (V) ] 
+ \frac{d (q-1)}{ 4 (i+2) } E [ (\tilde{S}_i - S_i )^2
\frac{\partial^2}{\partial v_i^2} \psi_{\varepsilon^2} (V) ] 
\nonumber \\
&& \hspace{0.5cm}
+ \frac{d (q-1)}{ 4 (i+2) } E \{ (\tilde{S}_i - S_i )^2 
 \int_0^1 [ \frac{\partial^2}{\partial v_i^2} \psi_{\varepsilon^2} (V + t( \tilde{V}- V) ) 
- \frac{\partial^2}{\partial v_i^2} \psi_{\varepsilon^2} (V) ] dt  \} \Big\}
\nonumber \\
&&
+ \sum_{i=1}^{d-2} \sum_{j\neq i} \Big\{
\frac{d (q-1)}{ 4 (i+2) } E \{ (\tilde{S}_i - S_i ) (\tilde{S}_j - S_j) 
\frac{\partial^2}{\partial v_i\partial v_j} \psi_{\varepsilon^2} (V ) \}
\nonumber \\
&& \hspace{0.5cm}
+ \frac{d (q-1)}{ 4 (i+2) } E \{ (\tilde{S}_i - S_i ) (\tilde{S}_j - S_j) 
\nonumber \\
&&\hspace{1cm}\times
\int_0^1 [ \frac{\partial^2}{\partial v_i\partial v_j} \psi_{\varepsilon^2} (V + t( \tilde{V}- V) ) 
- \frac{\partial^2}{\partial v_i\partial v_j} \psi_{\varepsilon^2} (V ) ]
dt \}
\Big\}.
\end{eqnarray*}
Hence it follows from Propositions \ref{pn:a.2} to \ref{pn:a.5} (in the Appendix) that
\begin{equation}
E[ \chi_{\varepsilon^2} (V| h)] = O(\frac{\log(1/\varepsilon)}{q^{1/2}}) + O(\frac{1}{\varepsilon q^{1/2}}),
\label{eq:5.7}
\end{equation}
as $q\rightarrow \infty$ uniformly over $h\in {\cal A}$ and $\varepsilon \in (0, 1/2)$. Using (\ref{eq:5.7}) and Lemma
\ref{la:5.2}, we have
\begin{equation}
\sup_{g\in {\cal A}} | E [ g( V) ] - \int_{{\mathbb R}^{d-2}} g(v) \Phi_{d-2} (dv) | = O( \varepsilon + 
\frac{\log(1/\varepsilon)}{q^{1/2}} + \frac{1}{\varepsilon q^{1/2}}),
\label{eq:5.8}
\end{equation}
as $q\rightarrow \infty$ uniformly over $\varepsilon\in (0, 1/2)$.
By taking $\varepsilon = q^{-1/4}$, we conclude that the left hand side of (\ref{eq:5.8}) tends to $0$ as $q\rightarrow \infty$.
This implies that $V$ converges to $\Phi_{d-2}$ in distribution as $q\rightarrow \infty$
and Theorem \ref{tm:5.1} is proved.\hfill $\Box$

\section{Acknowledgements}

I would like to thank Professor Rahul Mukerjee for his encouragement in the writing of this article.
He has also spent quite a lot of time reading this article and spotting a number of oversights.
For all these and more, I am very grateful to him.

\section{Appendix}

\begin{la} \label{la:a.6}
Let $f:[0,1)^d \rightarrow {\mathbb R}$ be smooth with a Lipschitz continuous mixed partial of order $d$.
Then for $(a_1,\ldots, a_d)', (x_1,\ldots, x_d)'\in [0,1)^d$, we have
\begin{eqnarray}
f(x_1,\ldots, x_d) &=& (x_1-a_1)\ldots (x_d-a_d) \frac{\partial^d f(a_1,\ldots, a_d) }{\partial x_1\ldots \partial x_d}  
\nonumber \\
&&
+ \int_{a_1}^{x_1}\cdots \int_{a_d}^{x_d} [ \frac{ \partial^d f(t_1,\ldots, t_d) }{\partial x_1\ldots \partial x_d}
- \frac{\partial^d f(a_1,\ldots, a_d) }{\partial x_1\ldots \partial x_d} ] dt_d \ldots dt_1
\nonumber \\
&& + h_{1; a_1,\ldots, a_d} (x_2,\ldots, x_d) + h_{2; a_1,\ldots, a_d} (x_1,x_3,\ldots, x_d) +\cdots
\nonumber \\
&&
+ h_{d;a_1,\ldots, a_d} (x_1,\ldots, x_{d-1}),
\label{eq:a.100}
\end{eqnarray}
where $h_{i, a_1,\ldots,a_d}: {\mathbb R}^{d-1}\rightarrow \mathbb R$, $i=1,\ldots,d$ are suitably chosen functions.
\end{la}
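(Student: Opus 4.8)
The plan is to derive (\ref{eq:a.100}) from the iterated fundamental theorem of calculus, followed by a bookkeeping step that collects the lower-order terms into the functions $h_{i;a_1,\ldots,a_d}$. Since $f$ is smooth with a Lipschitz continuous mixed partial of order $d$, the top-order partial $\partial^d f/(\partial x_1\cdots\partial x_d)$ is continuous on $[0,1)^d$, the iterated partials of lower order are available as antiderivatives in the successive variables, and $\partial^d f/(\partial x_1\cdots\partial x_d)$ is Riemann integrable on every closed box $\prod_{j=1}^d[\min(a_j,x_j),\max(a_j,x_j)]\subset[0,1)^d$, so Fubini applies there. For $S\subseteq\{1,\ldots,d\}$ I write $z^S=(z^S_1,\ldots,z^S_d)$ with $z^S_j=x_j$ for $j\in S$ and $z^S_j=a_j$ for $j\notin S$.

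First I would establish
\begin{displaymath}
\int_{a_1}^{x_1}\cdots\int_{a_d}^{x_d}\frac{\partial^d f(t_1,\ldots,t_d)}{\partial x_1\cdots\partial x_d}\,dt_d\cdots dt_1=\sum_{S\subseteq\{1,\ldots,d\}}(-1)^{d-|S|}f(z^S),
\end{displaymath}
by integrating one coordinate at a time: integrating out $t_d$ replaces the integrand, via the one-dimensional fundamental theorem of calculus, by the difference of $\partial^{d-1}f/(\partial x_1\cdots\partial x_{d-1})$ evaluated at $x_d$ and at $a_d$; iterating over $t_{d-1},\ldots,t_1$ produces the alternating sum over all $S$, and continuity of the top-order partial makes the order of integration irrelevant. (For $d=1,2$ this reads $f(x_1)-f(a_1)$ and $f(x_1,x_2)-f(a_1,x_2)-f(x_1,a_2)+f(a_1,a_2)$.) Isolating the term $S=\{1,\ldots,d\}$ gives
\begin{displaymath}
f(x_1,\ldots,x_d)=\int_{a_1}^{x_1}\cdots\int_{a_d}^{x_d}\frac{\partial^d f(t_1,\ldots,t_d)}{\partial x_1\cdots\partial x_d}\,dt_d\cdots dt_1-\sum_{S\subsetneq\{1,\ldots,d\}}(-1)^{d-|S|}f(z^S).
\end{displaymath}

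Next I would split off the constant part of the integrand, writing
\begin{eqnarray*}
\int_{a_1}^{x_1}\cdots\int_{a_d}^{x_d}\frac{\partial^d f(t_1,\ldots,t_d)}{\partial x_1\cdots\partial x_d}\,dt_d\cdots dt_1
&=&(x_1-a_1)\cdots(x_d-a_d)\,\frac{\partial^d f(a_1,\ldots,a_d)}{\partial x_1\cdots\partial x_d}\\
&&+\int_{a_1}^{x_1}\cdots\int_{a_d}^{x_d}\Big[\frac{\partial^d f(t_1,\ldots,t_d)}{\partial x_1\cdots\partial x_d}-\frac{\partial^d f(a_1,\ldots,a_d)}{\partial x_1\cdots\partial x_d}\Big]\,dt_d\cdots dt_1,
\end{eqnarray*}
using $\int_{a_1}^{x_1}\cdots\int_{a_d}^{x_d}1\,dt_d\cdots dt_1=(x_1-a_1)\cdots(x_d-a_d)$. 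It then remains to absorb $\sum_{S\subsetneq\{1,\ldots,d\}}(-1)^{d-|S|}f(z^S)$ into the $h_i$'s. For each proper subset $S$ the complement is nonempty; set $i_S=\min(\{1,\ldots,d\}\setminus S)$ and note $z^S_{i_S}=a_{i_S}$, so $f(z^S)$ does not depend on $x_{i_S}$. Defining, for $i=1,\ldots,d$,
\begin{displaymath}
h_{i;a_1,\ldots,a_d}(x_1,\ldots,x_{i-1},x_{i+1},\ldots,x_d)=-\sum_{S\subsetneq\{1,\ldots,d\}:\,i_S=i}(-1)^{d-|S|}f(z^S),
\end{displaymath}
each $h_{i;a_1,\ldots,a_d}$ depends only on the variables displayed, and assembling the three contributions yields exactly (\ref{eq:a.100}).

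I do not anticipate a genuine obstacle: the mechanism is simply the iterated fundamental theorem of calculus together with inclusion--exclusion. The only points that require a word are the legitimacy of the one-variable integrations (existence of the intermediate iterated antiderivatives) and of the Fubini interchange, both guaranteed by the regularity assumption on $f$; and the choice of $i_S$, which is arbitrary among the elements of $\{1,\ldots,d\}\setminus S$ --- any fixed rule will do.
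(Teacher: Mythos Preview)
Your proof is correct and is essentially the paper's approach: both rest on the iterated fundamental theorem of calculus, the paper phrasing it as an induction on $d$ while you unwind that induction into the explicit inclusion--exclusion identity $\int_{a_1}^{x_1}\cdots\int_{a_d}^{x_d}(\partial^d f/\partial x_1\cdots\partial x_d)\,dt=\sum_{S}(-1)^{d-|S|}f(z^S)$. Your allocation of each proper-subset term to $h_{i_S}$ with $i_S=\min(\{1,\ldots,d\}\setminus S)$ makes the $h_i$'s explicit, whereas the paper leaves them implicit in the recursion, but the arguments are otherwise interchangeable.
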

{\sc Proof.}
We shall use induction on $d$.
Clearly, (\ref{eq:a.100}) holds for $d=1$ since in this case, we have
\begin{displaymath}
f(x_1) = f(a_1) 
+ \int_{a_1}^{x_1} \frac{\partial f(t_1) }{\partial x_1} dt_1.
\end{displaymath}
Now we assume that (\ref{eq:a.100}) holds for some $d=k \geq 1$. Then
\begin{eqnarray*}
f(x_1,\ldots, x_{k+1}) 
&=& 
\int_{a_1}^{x_1}\cdots \int_{a_k}^{x_k} \frac{ \partial^k f(t_1,\ldots, t_k, x_{k+1}) }{\partial x_1\ldots \partial x_k}
dt_k \ldots dt_1
\nonumber \\
&& + h_{1; a_1,\ldots, a_k} (x_2,\ldots, x_k, x_{k+1}) + h_{2; a_1,\ldots, a_k} (x_1,x_3,\ldots, x_k, x_{k+1}) +\cdots
\nonumber \\
&&
+ h_{k;a_1,\ldots, a_k} (x_1,\ldots, x_{k-1}, x_{k+1})
\nonumber \\
&=& 
\int_{a_1}^{x_1}\cdots \int_{a_k}^{x_k} \frac{ \partial^k f(t_1,\ldots, t_k, a_{k+1}) }{\partial x_1\ldots \partial x_k}
dt_k\ldots dt_1
\nonumber \\
&&\hspace{0.5cm}
+\int_{a_1}^{x_1} \cdots \int_{a_{k+1}}^{x_{k+1}}  \frac{\partial^{k+1} f(t_1,\ldots,  t_{k+1}) }{\partial x_1\ldots \partial x_{k+1} } dt_{k+1}
\ldots dt_1
\nonumber \\
&& + h_{1; a_1,\ldots, a_k} (x_2,\ldots, x_k, x_{k+1}) + h_{2; a_1,\ldots, a_k} (x_1,x_3,\ldots, x_k, x_{k+1}) +\cdots
\nonumber \\
&&
+ h_{k;a_1,\ldots, a_k} (x_1,\ldots, x_{k-1}, x_{k+1}).
\end{eqnarray*}
This shows that (\ref{eq:a.100}) holds for $d=k+1$ and Lemma \ref{la:a.6} is proved.
\hfill $\Box$

\begin{la} \label{la:a.5}
Let $f:[0,1)^d \rightarrow \mathbb R$ be smooth with a Lipschitz continuous mixed partial of order $d$.
Then
\begin{equation}
\sum_{c_j=0}^{q-1}  \langle f, \psi_{{u_j}-1, t_j, c_j} \rangle =0, \hspace{0.5cm} \mbox{if $u_j \geq 1$,}
\label{eq:a.544}
\end{equation}
and
\begin{eqnarray}
\langle f, \prod_{l=1}^{|u|} \psi_{u_{j_l}-1, t_{j_l}, c_{j_l}} \rangle
&=&
q^{|u| -\sum_{l=1}^{|u|} 3 u_{j_l}/2 }
[\frac{\partial^{|u|} }{\partial x_{j_1} \ldots \partial x_{j_{|u|}} } f_{j_1,\ldots, j_{|u|}} 
(\frac{ t_{j_1} + 0.5}{q^{u_{j_1}-1} },\ldots, \frac{ t_{j_{|u|}} +0.5}{ q^{ u_{j_{|u|}-1} } } ) ]
\nonumber \\
&& \times
\prod_{l=1}^{|u|} ( \frac{ c_{j_l}}{q} + \frac{1}{2q} - \frac{1}{2} )
+O(1) q^{|u| -\sum_{l=1}^{|u|} 3 u_{j_l}/2 } \max_{1\leq l\leq |u|}  q^{-\beta (u_{j_l}-1)} 
\label{eq:a.545}
\end{eqnarray}
as $q\rightarrow \infty$ where
\begin{displaymath}
f_{j_1,\ldots, j_{|u|}} (x_{j_1},\ldots, x_{j_{|u|}}) =\int_{[0,1)^{d-|u|}} f(x) \prod_{1\leq i\leq d: i\not\in \{j_1,\ldots, j_{|u|}\} } dx_i.
\end{displaymath}
Also,
\begin{eqnarray*}
&&E\{ \nu_{u_1,\ldots, u_d} [\pi_j(a_{i,j}), \pi_{j;a_{i,j}} (b_{i,j,2}),\pi_{i,j,3} (0),\ldots, \pi_{i,j,u_j} (0):
1\leq j\leq d]^2 \}
\nonumber \\
&=&
( \sum_{t_{j_1}=0}^{q^{u_{j_1}-1}-1} \sum_{c_{j_1}=0}^{q-1} )
\cdots
( \sum_{t_{j_{|u|} }=0}^{q^{u_{j_{|u|} }-1}-1} 
\sum_{c_{j_{|u|} }=0}^{q-1} )
\langle f,  \prod_{l=1}^{|u|}  \psi_{u_{j_l} -1,t_{j_l},c_{j_l}} \rangle^2
\sum_{k=0}^{|u|} {|u| \choose k} \frac{1}{q^k } (1 - \frac{1}{q})^{|u|-k},
\end{eqnarray*}
and
\begin{eqnarray}
&&E\{ \nu_{u_1,\ldots, u_d} [\pi_j(a_{i,j}), \pi_{i,j,2} (0),\ldots, \pi_{i,j,u_j} (0):
1\leq j\leq d]^2 \}
\label{eq:a.54} \\
&=&
( \sum_{t_{j_1}=0}^{q^{u_{j_1}-1}-1} \sum_{c_{j_1}=0}^{q-1} )
\cdots
( \sum_{t_{j_{|u|} }=0}^{q^{u_{j_{|u|} }-1}-1} 
\sum_{c_{j_{|u|} }=0}^{q-1} )
\langle f,  \prod_{l=1}^{|u|}  \psi_{u_{j_l} -1,t_{j_l},c_{j_l}} \rangle^2
\sum_{k=0}^{|u|} {|u| \choose k} \frac{1}{q^k } (1 - \frac{1}{q})^{|u|-k}.
\nonumber
\end{eqnarray}
Finally
$E\{ \nu^* [\pi_{j_1} (a_{i, j_1});\ldots; \pi_{j_{|u|}} (a_{i, j_{|u|}}) ]^4 \} = O(1)$ as $q\rightarrow \infty$.
\end{la}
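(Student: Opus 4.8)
The plan is to establish the four assertions of Lemma \ref{la:a.5} in the order stated, since each later claim rests on the earlier ones. The first identity (\ref{eq:a.544}) is a purely combinatorial fact about the base $q$ Haar system, needing no smoothness. Summing the definition of $\psi_{k,t,c}$ over $c=0,\ldots,q-1$, the $q$ ``fine'' intervals $[(qt+c)/q^{k+1},(qt+c+1)/q^{k+1})$ tile the ``coarse'' interval $[t/q^k,(t+1)/q^k)$, so $\sum_{c=0}^{q-1}\psi_{k,t,c}(x)=q^{(k+1)/2}{\cal I}\{x\in[t/q^k,(t+1)/q^k)\}-q\cdot q^{(k-1)/2}{\cal I}\{x\in[t/q^k,(t+1)/q^k)\}\equiv 0$. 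Taking $k=u_j-1$ and pairing against $f$ yields (\ref{eq:a.544}). I record the pointwise vanishing $\sum_c\psi_{k,t,c}\equiv 0$ separately, since it (not merely its integrated form) is what drives the collapses below.

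For (\ref{eq:a.545}) I first use Fubini to integrate out the $d-|u|$ coordinates carrying no wavelet factor, reducing $\langle f,\prod_l\psi_{u_{j_l}-1,t_{j_l},c_{j_l}}\rangle$ to the same pairing against the marginal $g=f_{j_1,\ldots,j_{|u|}}$. I then apply Lemma \ref{la:a.6} to $g$, anchoring each variable at the midpoint $m_{j_l}=(t_{j_l}+\tfrac12)q^{-(u_{j_l}-1)}$ of the coarse support of $\psi_{u_{j_l}-1,t_{j_l},c_{j_l}}$. Every term produced by Lemma \ref{la:a.6} other than the leading product and the integral remainder omits at least one variable $x_{j_l}$, and since $\int\psi_{u_{j_l}-1,t_{j_l},c_{j_l}}=0$ all such terms integrate to zero. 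The leading term contributes $\frac{\partial^{|u|}g}{\partial x_{j_1}\cdots\partial x_{j_{|u|}}}(m)\prod_l\int(x_{j_l}-m_{j_l})\psi_{u_{j_l}-1,t_{j_l},c_{j_l}}\,dx_{j_l}$, and a direct one dimensional evaluation gives $\int(x-m)\psi_{k,t,c}\,dx=q^{1-3(k+1)/2}(\frac{c}{q}+\frac{1}{2q}-\frac12)$ (anchor independent because $\int\psi=0$), which reproduces the main term of (\ref{eq:a.545}). The remainder is controlled by the Lipschitz-$\beta$ bound on the order $|u|$ mixed partial of $g$, which inherits Lipschitz continuity from the hypothesis on $f$ (for $|u|=d$ directly, and for $|u|<d$ via boundedness of the order $(|u|+1)$ partials after integrating out variables), over a support of diameter $O(\max_l q^{-(u_{j_l}-1)})$. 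The \textbf{main obstacle} is precisely this last bookkeeping: tracking the scale factors so that the remainder is smaller than the main term by the factor $\max_l q^{-\beta(u_{j_l}-1)}$, uniformly in $t,c$.

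For the two second moment identities I expand $\nu_{u_1,\ldots,u_d}^2$ via (\ref{eq:3.85}) into a double sum over $(t,c)$ and $(t',c')$ of $\langle f,\prod\psi_{tc}\rangle\langle f,\prod\psi_{t'c'}\rangle$ times $E\{\prod_l\psi_{u_{j_l}-1,t_{j_l},c_{j_l}}(\xi_l)\,\psi_{u_{j_l}-1,t'_{j_l},c'_{j_l}}(\xi_l)\}$, where $\xi_l$ denotes the random base $q$ argument of the $l$th wavelet. The digits feeding $\xi_l$ (namely $\pi_{j_l}(a_{i,j_l})$ together with the independent $\pi_{i,j_l,m}(0)$, or $\pi_{j_l;a_{i,j_l}}(b_{i,j_l,2})$ in the case with the extra digit) are i.i.d.\ uniform on $\{0,\ldots,q-1\}$ and independent across the distinct columns $j_l$, so the expectation factorizes across $l$, and a short computation gives the per-coordinate value $\delta_{t_{j_l}t'_{j_l}}(\delta_{c_{j_l}c'_{j_l}}-\frac1q)$. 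Writing the matrix $(\delta_{cc'}-\frac1q)_{c,c'}$ as $(1-\frac1q)I+\frac1q(I-J)$ with $J$ the all-ones matrix and expanding the product over the $|u|$ coordinates into $2^{|u|}$ subset terms, I note that $\langle f,\prod\psi_{tc}\rangle$ has zero sum in each $c_{j_l}$ by (\ref{eq:a.544}); hence both $I$ and $I-J$ act as the identity on it, so a subset of size $k$ contributes $\sum_{t,c}\langle f,\prod\psi\rangle^2$ with weight ${|u|\choose k}q^{-k}(1-\frac1q)^{|u|-k}$. Summing over subsets gives exactly the stated combinatorial factor (which equals $1$ by the binomial theorem). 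The identity with the extra digit is proved verbatim, the only change being that the second digit is $\pi_{j_l;a_{i,j_l}}(b_{i,j_l,2})$, still uniform and independent.

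Finally, the fourth moment bound follows easily once the first two assertions are in hand. Specializing to $u_{j_1}=\cdots=u_{j_{|u|}}=1$, each wavelet is $\psi_{0,0,c}(x)=q^{1/2}{\cal I}\{x\in[c/q,(c+1)/q)\}-q^{-1/2}$ and $\xi_l=\pi_{j_l}(a_{i,j_l})/q$. Writing $R_l=\pi_{j_l}(a_{i,j_l})$ and expanding $\prod_l[q^{1/2}{\cal I}\{R_l=c_{j_l}\}-q^{-1/2}]$ over subsets, (\ref{eq:a.544}) annihilates every term in which some $c_{j_l}$ is summed freely, leaving only $\nu^*=q^{|u|/2}\langle f,\prod_l\psi_{0,0,R_l}\rangle$. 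By (\ref{eq:a.545}) with $u_{j_l}-1=0$ the inner product is $O(q^{-|u|/2})$ uniformly in the $R_l$, since $|\frac{c}{q}+\frac{1}{2q}-\frac12|\le\frac12$ and the relevant mixed partial is bounded on $[0,1)^d$. Hence $|\nu^*|=O(1)$ almost surely, uniformly in $q$, so $E\{(\nu^*)^4\}=O(1)$, which completes the proof of the lemma.
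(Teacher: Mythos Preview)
Your proofs of (\ref{eq:a.544}), (\ref{eq:a.545}) and the two second moment identities follow the paper's argument essentially line for line: the paper also reduces to the marginal $f_{j_1,\ldots,j_{|u|}}$, applies Lemma~\ref{la:a.6} at the midpoints, uses $\int\psi=0$ to kill the additive pieces, and for the second moments splits according to which $c'_{j_l}$ agree with $c_{j_l}$, computes $E[\psi_{t,c}\psi_{t,c'}]=\delta_{tt'}(\delta_{cc'}-q^{-1})$ coordinatewise, and collapses the off-diagonal sums via (\ref{eq:a.544}). Your matrix formulation $M=(1-q^{-1})I+q^{-1}(I-J)$ is a tidy repackaging of the same computation.

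For the fourth moment bound, however, you take a genuinely different and much shorter route. The paper expands $E\{(\nu^*)^4\}$ as a quadruple sum over $(c^{(1)},c^{(2)},c^{(3)},c^{(4)})$, partitions each coordinate into the fifteen possible coincidence patterns of the four $c^{(m)}_{j_l}$ (the sets $\Xi_{\cdot}$), computes $E\bigl[\prod_m \psi_{0,0,c^{(m)}}(\pi_{j_l}(a_{i,j_l})/q)\bigr]$ explicitly for each pattern, and then balances the resulting powers of $q$ against (\ref{eq:a.545}) to conclude $O(1)$. You instead observe that the subset expansion of $\prod_l(q^{1/2}{\cal I}\{R_l=c_{j_l}\}-q^{-1/2})$ collapses, by the pointwise identity $\sum_c\psi_{0,0,c}\equiv 0$, to the single term $\nu^*=q^{|u|/2}\langle f,\prod_l\psi_{0,0,R_l}\rangle$, and then (\ref{eq:a.545}) with $u_{j_l}=1$ gives $|\nu^*|=O(1)$ \emph{almost surely}, whence the fourth moment bound is immediate. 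This sidesteps the entire partition bookkeeping; the paper's approach, by contrast, would extend more directly to moments of $\nu_{u_1,\ldots,u_d}$ with some $u_{j_l}\ge 2$, where the a.s.\ bound is no longer available.
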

{\sc Proof.}
(\ref{eq:a.544}) easily follows from (\ref{eq:2.4}).
To prove (\ref{eq:a.545}), we observe that
\begin{eqnarray}
&& \langle f, \prod_{l=1}^{|u|} \psi_{u_{j_l}-1, t_{j_l}, c_{j_l}} \rangle
\nonumber \\
&=& \int_{[0,1)^d} f(x) [\prod_{l=1}^{|u|} \psi_{u_{j_l}-1, t_{j_l}, c_{j_l} } (x_{j_l}) ] dx
\nonumber \\
&=&
\int_{[0,1)^d} f(x) \Big\{\prod_{l=1}^{|u|} q^{u_{j_l}/2} \Big[ {\cal I}\{ x_{j_l} \in [ \frac{ q t_{j_l}+ c_{j_l} }{ q^{u_{j_l}}},
\frac{ q t_{j_l}+ c_{j_l} +1}{ q^{u_{j_l}}} ) \}
\nonumber \\
&&\hspace{0.5cm}
- \frac{1}{q} {\cal I}\{ x_{j_l} \in [ \frac{ t_{j_l} }{q^{u_{j_l}-1}},
\frac{ t_{j_l} +1}{q^{u_{j_l}-1}} ) \} \Big] \Big\} dx
\nonumber \\
&=&
q^{-\sum_{l=1}^{|u|} u_{j_l}/2 }
\int_{[0,1)^{|u|} } f_{j_1,\ldots, j_{|u|}}
(x_{j_1},\ldots, x_{j_{|u|}} ) \Big\{\prod_{l=1}^{|u|} q^{u_{j_l}} \Big[ {\cal I}\{ x_{j_l} \in 
\nonumber \\
&&\hspace{0.5cm}
[ \frac{ q t_{j_l}+ c_{j_l} }{ q^{u_{j_l}}},
\frac{ q t_{j_l}+ c_{j_l} +1}{ q^{u_{j_l}}} ) \}
- \frac{1}{q} {\cal I}\{ x_{j_l} \in [ \frac{ t_{j_l} }{q^{u_{j_l}-1}},
\frac{ t_{j_l} +1}{q^{u_{j_l}-1}} ) \} \Big] \Big\} dx_{j_1} \ldots dx_{j_{|u|}}.
\label{eq:a.70}
\end{eqnarray} 
We observe from Lemma \ref{la:a.6} that
the right hand side of (\ref{eq:a.70}) equals
\begin{eqnarray*}
&& q^{-\sum_{l=1}^{|u|} u_{j_l}/2 }
[\frac{\partial^{|u|} }{\partial x_{j_1} \ldots \partial x_{j_{|u|}} } f_{j_1,\ldots, j_{|u|}} 
(\frac{ t_{j_1} + 0.5}{q^{u_{j_1}-1} },\ldots, \frac{ t_{j_{|u|}} +0.5}{ q^{ u_{j_{|u|}-1} } } ) ]
\nonumber \\
&& \times
\prod_{l=1}^{|u|} \Big[ q^{u_{j_l}} \int_{(q t_{j_l} + c_{j_l})/q^{u_{j_l}} }^{ 
(q t_{j_l} + c_{j_l}+1)/q^{u_{j_l}} } (x_{j_l} - \frac{ t_{j_l} + 0.5}{ q^{u_{j_l} -1} } ) dx_{j_l} 
\nonumber \\
&& \hspace{0.5cm}
- q^{u_{j_l}-1} \int_{t_{j_l}/q^{u_{j_l}-1} }^{ 
( t_{j_l} + 1)/q^{u_{j_l}-1} } (x_{j_l} - \frac{ t_{j_l} + 0.5}{ q^{u_{j_l} -1} } ) dx_{j_l} \Big]
\nonumber \\
&& + q^{-\sum_{l=1}^{|u|} u_{j_l}/2 }
\int_{[0,1)^{|u|}} 
\int_{(t_{j_1}+0.5)/q^{u_{j_1}-1} }^{x_{j_1}} \cdots
\int_{(t_{j_{|u|} }+0.5)/q^{u_{j_{|u|} }-1} }^{x_{j_{|u|}} } 
\nonumber \\
&&\hspace{0.5cm}\times
[ \frac{ \partial^{|u|} f_{j_1,\ldots, j_{|u|} } (s_1,\ldots, s_{|u|} ) }{\partial x_{j_1} \ldots \partial x_{j_{|u|}} }
- \frac{ \partial^{|u|} 
f_{j_1,\ldots, j_{|u|}} 
(\frac{ t_{j_1} + 0.5}{q^{u_{j_1}-1} },\ldots, \frac{ t_{j_{|u|}} +0.5}{ q^{ u_{j_{|u|}-1} } } ) 
}{\partial x_{j_1} \ldots \partial x_{j_{|u|}} } ]
ds_{|u|} \ldots ds_1
\nonumber \\
&&\hspace{0.5cm}\times
 \Big\{\prod_{l=1}^{|u|} q^{u_{j_l}} \Big[ {\cal I}\{ x_{j_l} \in [ \frac{ q t_{j_l}+ c_{j_l} }{ q^{u_{j_l}}},
\frac{ q t_{j_l}+ c_{j_l} +1}{ q^{u_{j_l}}} ) \}
\nonumber \\
&&\hspace{0.5cm}
- \frac{1}{q} {\cal I}\{ x_{j_l} \in [ \frac{ t_{j_l} }{q^{u_{j_l}-1}},
\frac{ t_{j_l} +1}{q^{u_{j_l}-1}} ) \} \Big] \Big\} dx_{j_1}\ldots dx_{j_{|u|}}
\nonumber \\
&=&
q^{|u| -\sum_{l=1}^{|u|} 3 u_{j_l}/2 }
[\frac{\partial^{|u|} }{\partial x_{j_1} \ldots \partial x_{j_{|u|}} } f_{j_1,\ldots, j_{|u|}} 
(\frac{ t_{j_1} + 0.5}{q^{u_{j_1}-1} },\ldots, \frac{ t_{j_{|u|}} +0.5}{ q^{ u_{j_{|u|}-1} } } ) ]
\nonumber \\
&& \times
\prod_{l=1}^{|u|} [ \frac{ c_{j_l}}{q} - \frac{1}{2}(1 - \frac{1}{q}) ] 
+O(1) q^{|u| -\sum_{l=1}^{|u|} 3 u_{j_l}/2 } \max_{1\leq l\leq |u|}  q^{-\beta (u_{j_l}-1)}, 
\end{eqnarray*}
as $q\rightarrow \infty$.
Next we observe from (\ref{eq:3.85}) that
\begin{eqnarray*}
&& \nu_{u_1,\ldots, u_d} [\pi_j(a_{i,j}), \pi_{j;a_{i,j}} (b_{i,j,2}),\pi_{i,j,3} (0),\ldots, \pi_{i,j,u_j} (0):
1\leq j\leq d]
\nonumber \\
&=& (\sum_{t_{j_1}=0}^{q^{u_{j_1}-1}-1} \sum_{c_{j_1}=0}^{q-1} )
\cdots ( \sum_{t_{j_{|u|} }=0}^{q^{u_{j_{|u|} }-1}-1} \sum_{c_{j_{|u|} }=0}^{q-1} )
\langle f,  \prod_{l=1}^{|u|}  \psi_{u_{j_l} -1,t_{j_l},c_{j_l}} \rangle  
\nonumber \\
&&\hspace{0.5cm} \times
\prod_{l=1}^{|u|}  \psi_{u_{j_l}-1,t_{j_l},c_{j_l}} 
( \frac{ \pi_{j_l} (a_{i,j_l})}{ q}+ \frac{ \pi_{j_l;a_{i,j_l}} (b_{i,j_l,2})}{q^2} + \sum_{k=3}^{u_{j_l}}
\frac{ \pi_{i,j_l,k} (0)}{q^k}  ),
\end{eqnarray*}
and hence
\begin{eqnarray*}
&& E\{\nu_{u_1,\ldots, u_d} [\pi_j(a_{i,j}), \pi_{j;a_{i,j}} (b_{i,j,2}),\pi_{i,j,3} (0),\ldots, \pi_{i,j,u_j} (0):
1\leq j\leq d]^2 \}
\nonumber \\
&=& ( \sum_{t_{j_1}=0}^{q^{u_{j_1}-1}-1} \sum_{c_{j_1}=0}^{q-1} )
\cdots ( \sum_{t_{j_{|u|} }=0}^{q^{u_{j_{|u|} }-1}-1} \sum_{c_{j_{|u|} }=0}^{q-1} )
( \sum_{t_{j_1}'=0}^{q^{u_{j_1}-1}-1} \sum_{c_{j_1}'=0}^{q-1} )
\cdots ( \sum_{t_{j_{|u|} }'=0}^{q^{u_{j_{|u|} }-1}-1} \sum_{c_{j_{|u|} }'=0}^{q-1} )
\nonumber \\
&&\times
\langle f,  \prod_{l=1}^{|u|}  \psi_{u_{j_l} -1,t_{j_l},c_{j_l}} \rangle  
\langle f,  \prod_{l=1}^{|u|}  \psi_{u_{j_l} -1,t_{j_l}',c_{j_l}' } \rangle  
\nonumber \\
&&\times
\prod_{l=1}^{|u|} E \Big[ \psi_{u_{j_l}-1,t_{j_l},c_{j_l}} 
( \frac{ \pi_{j_l} (a_{i,j_l})}{ q}+ \frac{ \pi_{j_l;a_{i,j_l}} (b_{i,j_l,2})}{q^2} + \sum_{k=3}^{u_{j_l}}
\frac{ \pi_{i,j_l,k} (0)}{q^k}  )
\nonumber \\
&&\hspace{0.5cm} \times
\psi_{u_{j_l}-1,t_{j_l}',c_{j_l}'} 
( \frac{ \pi_{j_l} (a_{i,j_l})}{ q}+ \frac{ \pi_{j_l;a_{i,j_l}} (b_{i,j_l,2})}{q^2} + \sum_{k=3}^{u_{j_l}}
\frac{ \pi_{i,j_l,k} (0)}{q^k}  ) \Big]
\nonumber \\
&=& ( \sum_{t_{j_1}=0}^{q^{u_{j_1}-1}-1} \sum_{c_{j_1}=0}^{q-1} 
\sum_{c_{j_1}'=0}^{q-1} )
\cdots ( \sum_{t_{j_{|u|} }=0}^{q^{u_{j_{|u|} }-1}-1} \sum_{c_{j_{|u|} }=0}^{q-1} 
\sum_{c_{j_{|u|} }'=0}^{q-1} )
\nonumber \\
&&\times
\langle f,  \prod_{l=1}^{|u|}  \psi_{u_{j_l} -1,t_{j_l},c_{j_l}} \rangle  
\langle f,  \prod_{l=1}^{|u|}  \psi_{u_{j_l} -1,t_{j_l},c_{j_l}' } \rangle  
\nonumber \\
&&\times
\prod_{l=1}^{|u|} E \Big[ \psi_{u_{j_l}-1,t_{j_l},c_{j_l}} 
( \frac{ \pi_{j_l} (a_{i,j_l})}{ q}+ \frac{ \pi_{j_l;a_{i,j_l}} (b_{i,j_l,2})}{q^2} + \sum_{k=3}^{u_{j_l}}
\frac{ \pi_{i,j_l,k} (0)}{q^k}  )
\nonumber \\
&&\hspace{0.5cm} \times
\psi_{u_{j_l}-1,t_{j_l},c_{j_l}'} 
( \frac{ \pi_{j_l} (a_{i,j_l})}{ q}+ \frac{ \pi_{j_l;a_{i,j_l}} (b_{i,j_l,2})}{q^2} + \sum_{k=3}^{u_{j_l}}
\frac{ \pi_{i,j_l,k} (0)}{q^k}  ) \Big].
\end{eqnarray*}

We observe that the right hand side of the last equation can be expressed as a finite sum of terms of the 
following form
(up to permutations): for $\alpha=0,\ldots, |u|$,
\begin{eqnarray*}
&& ( \sum_{t_{j_1}=0}^{q^{u_{j_1}-1}-1} \sum_{c_{j_1}=0}^{q-1} 
\sum_{c_{j_1}'\neq c_{j_1} } )
\cdots ( \sum_{t_{j_\alpha }=0}^{q^{u_{j_\alpha} -1}-1} \sum_{c_{j_\alpha }=0}^{q-1} 
\sum_{c_{j_\alpha}' \neq c_{j_\alpha} } )
\nonumber \\
&&\times
( \sum_{t_{j_{\alpha+1} }=0}^{q^{u_{j_{\alpha+1} }-1}-1} \sum_{c_{j_{\alpha+1} }=0}^{q-1} )
\cdots ( \sum_{t_{j_{|u|} }=0}^{q^{u_{j_{|u|} }-1}-1} \sum_{c_{j_{|u|} }=0}^{q-1} )
\langle f,  \prod_{l=1}^{|u|}  \psi_{u_{j_l} -1,t_{j_l},c_{j_l}} \rangle  
\nonumber \\
&&\times
\langle f,  (\prod_{l_1=1}^\alpha  \psi_{u_{j_{l_1}} -1,t_{j_{l_1} },c_{j_{l_1} }' } )  
( \prod_{l_2=\alpha + 1}^{|u|}  \psi_{u_{j_{l_2}} -1,t_{j_{l_2} },c_{j_{l_2} } } ) \rangle  
\nonumber \\
&&\times
E\Big[ \prod_{l_1=1}^\alpha \psi_{u_{j_{l_1} }-1,t_{j_{l_1} },c_{j_{l_1} }} 
( \frac{ \pi_{j_{l_1} } (a_{i,j_{l_1}})}{ q}+ \frac{ \pi_{j_{l_1};a_{i,j_{l_1}}} (b_{i,j_{l_1},2})}{q^2} + \sum_{k=3}^{u_{j_{l_1}}}
\frac{ \pi_{i,j_{l_1},k} (0)}{q^k}  )
\nonumber \\
&&\hspace{0.5cm} \times
\psi_{u_{j_{l_1}}-1,t_{j_{l_1}},c_{j_{l_1} }'} 
( \frac{ \pi_{j_{l_1}} (a_{i,j_{l_1}})}{ q}+ \frac{ \pi_{j_{l_1};a_{i,j_{l_1}}} (b_{i,j_{l_1},2})}{q^2} + \sum_{k=3}^{u_{j_{l_1}}}
\frac{ \pi_{i,j_{l_1},k} (0)}{q^k}  ) \Big]
\nonumber \\
&&\times
E\Big[ \prod_{l_2=\alpha + 1}^{|u|} \psi_{u_{j_{l_2} }-1,t_{j_{l_2} },c_{j_{l_2} }} 
( \frac{ \pi_{j_{l_2} } (a_{i,j_{l_2}})}{ q}+ \frac{ \pi_{j_{l_2};a_{i,j_{l_2}}} (b_{i,j_{l_2},2})}{q^2} 
+ \sum_{k=3}^{u_{j_{l_2}}}
\frac{ \pi_{i,j_{l_2},k} (0)}{q^k}  )^2 \Big]
\nonumber \\
&=&
( \sum_{t_{j_1}=0}^{q^{u_{j_1}-1}-1} \sum_{c_{j_1}=0}^{q-1} 
\sum_{c_{j_1}'\neq c_{j_1}} )
\cdots
( \sum_{t_{j_\alpha}=0}^{q^{u_{j_\alpha}-1}-1} \sum_{c_{j_\alpha}=0}^{q-1} 
\sum_{c_{j_\alpha}'\neq c_{j_\alpha}} )
\nonumber \\
&&\hspace{0.5cm} \times
( \sum_{t_{j_{\alpha+1}}=0}^{q^{u_{j_{\alpha+1} }-1}-1} \sum_{c_{j_{\alpha+1} }=0}^{q-1} )
\cdots ( \sum_{t_{j_{|u|} }=0}^{q^{u_{j_{|u|} }-1}-1} 
\sum_{c_{j_{|u|} }=0}^{q-1} )
\langle f,  \prod_{l=1}^{|u|}  \psi_{u_{j_l} -1,t_{j_l},c_{j_l}} \rangle  
\nonumber \\
&&\hspace{0.5cm} \times
\langle f,  
( \prod_{l_1=1}^\alpha \psi_{u_{j_{l_1}} -1,t_{j_{l_1}},c_{j_{l_1} }'} )
( \prod_{l_2=\alpha+1}^{|u|}  \psi_{u_{j_{l_2} } -1,t_{j_{l_2} },c_{j_{l_2} }} ) \rangle  
E\Big[ \prod_{l_1=1}^\alpha \Big(
\nonumber \\
&&\hspace{1cm} 
- q^{u_{j_{l_1}}-1} {\cal I} \{ \frac{q t_{j_{l_1}} + c_{j_{l_1}} }{ q^{u_{j_{l_1}}}} \leq 
\frac{ \pi_{j_{l_1}} (a_{i,j_{l_1}})}{ q}+ \frac{ \pi_{j_{l_1};a_{i,j_{l_1}}} (b_{i,j_{l_1},2})}{q^2} 
\nonumber \\
&&\hspace{1.5cm}
+ \sum_{k=3}^{u_{j_{l_1}}}
\frac{ \pi_{i,j_{l_1},k} (0)}{q^k} < 
\frac{q t_{j_{l_1}} + c_{j_{l_1}} +1}{ q^{u_{j_{l_1}}}}  \}
\nonumber \\
&&\hspace{1cm}
- q^{u_{j_{l_1}}-1} {\cal I} \{ \frac{q t_{j_{l_1}} + c_{j_{l_1}}' }{ q^{u_{j_{l_1}}}} \leq 
\frac{ \pi_{j_{l_1}} (a_{i,j_{l_1}})}{ q}+ \frac{ \pi_{j_{l_1};a_{i,j_{l_1}}} (b_{i,j_{l_1},2})}{q^2} 
\nonumber \\
&&\hspace{1.5cm}
+ \sum_{k=3}^{u_{j_{l_1}}}
\frac{ \pi_{i,j_{l_1},k} (0)}{q^k} < 
\frac{q t_{j_{l_1}} + c_{j_{l_1}}' +1}{ q^{u_{j_{l_1}}}}  \}
\nonumber \\
&&
\hspace{1cm} + q^{u_{j_{l_1}}-2} {\cal I} \{ \frac{ t_{j_{l_1}} }{ q^{u_{j_{l_1}}-1}} \leq 
\frac{ \pi_{j_{l_1}} (a_{i,j_{l_1}})}{ q}+ \frac{ \pi_{j_{l_1};a_{i,j_{l_1}}} (b_{i,j_{l_1},2})}{q^2} 
\nonumber \\
&&\hspace{1.5cm}
+ \sum_{k=3}^{u_{j_{l_1}}}
\frac{ \pi_{i,j_{l_1},k} (0)}{q^k} < 
\frac{t_{j_{l_1}} + 1}{ q^{u_{j_{l_1}}-1}}  \} \Big) \Big]
\nonumber \\
&&\hspace{0.5cm} \times
E \Big[ \prod_{l_2= \alpha+1}^{|u|}  \Big(
q^{u_{j_{l_2}}} 
{\cal I} \{ \frac{q t_{j_{l_2}} + c_{j_{l_2}} }{ q^{u_{j_{l_2}}}} \leq 
\frac{ \pi_{j_{l_2}} (a_{i,j_{l_2}})}{ q}+ \frac{ \pi_{j_{l_2};a_{i,j_{l_2}}} (b_{i,j_{l_2},2})}{q^2} 
\nonumber \\
&&\hspace{1.5cm}
+ \sum_{k=3}^{u_{j_{l_2}}}
\frac{ \pi_{i,j_{l_2},k} (0)}{q^k} < 
\frac{q t_{j_{l_2}} + c_{j_{l_2}} +1}{ q^{u_{j_{l_2}}}}  \}
\nonumber \\
&&
\hspace{1cm} - 2 q^{u_{j_{l_2}}-1} {\cal I} \{ \frac{q t_{j_{l_2}} + c_{j_{l_2}} }{ q^{u_{j_{l_2}}}} \leq 
\frac{ \pi_{j_{l_2}} (a_{i,j_{l_2}})}{ q}+ \frac{ \pi_{j_{l_2};a_{i,j_{l_2}}} (b_{i,j_{l_2},2})}{q^2} 
\nonumber \\
&& \hspace{1.5cm}
+ \sum_{k=3}^{u_{j_{l_2}}}
\frac{ \pi_{i,j_{l_2},k} (0)}{q^k} < 
\frac{q t_{j_{l_2}} + c_{j_{l_2}} +1}{ q^{u_{j_{l_2}}}}  \}
\nonumber \\
&&
\hspace{1cm} + q^{u_{j_{l_2}}-2} {\cal I} \{ \frac{t_{j_{l_2}} }{ q^{u_{j_{l_2}}-1}} \leq 
\frac{ \pi_{j_{l_2}} (a_{i,j_{l_2}})}{ q}+ \frac{ \pi_{j_{l_2};a_{i,j_{l_2}}} (b_{i,j_{l_2},2})}{q^2} 
\nonumber \\
&&\hspace{1.5cm}
+ \sum_{k=3}^{u_{j_{l_2}}}
\frac{ \pi_{i,j_{l_2},k} (0)}{q^k} < 
\frac{t_{j_{l_2}} + 1}{ q^{u_{j_{l_2}}-1}}  \} \Big) \Big]
\nonumber \\
&=& 
( \sum_{t_{j_1}=0}^{q^{u_{j_1}-1}-1} \sum_{c_{j_1}=0}^{q-1} 
\sum_{c_{j_1}'\neq c_{j_1}} )
\cdots
( \sum_{t_{j_\alpha}=0}^{q^{u_{j_\alpha}-1}-1} \sum_{c_{j_\alpha}=0}^{q-1} 
\sum_{c_{j_\alpha}'\neq c_{j_\alpha}} )
\nonumber \\
&&\hspace{0.5cm} \times
( \sum_{t_{j_{\alpha+1}}=0}^{q^{u_{j_{\alpha+1} }-1}-1} \sum_{c_{j_{\alpha+1} }=0}^{q-1} )
\cdots ( \sum_{t_{j_{|u|} }=0}^{q^{u_{j_{|u|} }-1}-1} 
\sum_{c_{j_{|u|} }=0}^{q-1} )
\langle f,  \prod_{l=1}^{|u|}  \psi_{u_{j_l} -1,t_{j_l},c_{j_l}} \rangle  
\nonumber \\
&&\hspace{0.5cm} \times
\langle f,  
( \prod_{l_1=1}^\alpha \psi_{u_{j_{l_1}} -1,t_{j_{l_1}},c_{j_{l_1} }'} )
( \prod_{l_2=\alpha+1}^{|u|}  \psi_{u_{j_{l_2} } -1,t_{j_{l_2} },c_{j_{l_2} }} ) \rangle  
(-\frac{1}{q})^\alpha (1 - \frac{1}{q})^{|u|-\alpha}
\nonumber \\
&=& 
(-\frac{1}{q})^\alpha (1 - \frac{1}{q})^{|u|-\alpha}
[ \sum_{t_{j_1}=0}^{q^{u_{j_1}-1}-1} \sum_{c_{j_1}=0}^{q-1} 
\nonumber \\
&&\hspace{0.5cm}\times
(\sum_{c_{j_1}'=0}^{q-1} - {\cal I}\{ c_{j_1}'=c_{j_1} \} ) ]
\cdots
[ \sum_{t_{j_\alpha}=0}^{q^{u_{j_\alpha}-1}-1} \sum_{c_{j_\alpha}=0}^{q-1} 
(\sum_{c_{j_\alpha}'=0}^{q-1} - {\cal I}\{ c_{j_\alpha}' = c_{j_\alpha} \} ) ]
\nonumber \\
&&\hspace{0.5cm} \times
( \sum_{t_{j_{\alpha+1}}=0}^{q^{u_{j_{\alpha+1} }-1}-1} \sum_{c_{j_{\alpha+1} }=0}^{q-1} )
\cdots ( \sum_{t_{j_{|u|} }=0}^{q^{u_{j_{|u|} }-1}-1} 
\sum_{c_{j_{|u|} }=0}^{q-1} )
\langle f,  \prod_{l=1}^{|u|}  \psi_{u_{j_l} -1,t_{j_l},c_{j_l}} \rangle  
\nonumber \\
&&\hspace{0.5cm} \times
\langle f,  
( \prod_{l_1=1}^\alpha \psi_{u_{j_{l_1}} -1,t_{j_{l_1}},c_{j_{l_1} }'} )
( \prod_{l_2=\alpha+1}^{|u|}  \psi_{u_{j_{l_2} } -1,t_{j_{l_2} },c_{j_{l_2} }} ) \rangle  
\nonumber \\
&=&
\frac{1}{q^\alpha } (1 - \frac{1}{q})^{|u|-\alpha}
( \sum_{t_{j_1}=0}^{q^{u_{j_1}-1}-1} \sum_{c_{j_1}=0}^{q-1} )
\cdots
( \sum_{t_{j_{|u|} }=0}^{q^{u_{j_{|u|} }-1}-1} 
\sum_{c_{j_{|u|} }=0}^{q-1} )
\langle f,  \prod_{l=1}^{|u|}  \psi_{u_{j_l} -1,t_{j_l},c_{j_l}} \rangle^2.  
\end{eqnarray*}
Finally we conclude via symmetry that
\begin{eqnarray*}
&& E\{\nu_{u_1,\ldots, u_d} [\pi_j(a_{i,j}), \pi_{j;a_{i,j}} (b_{i,j,2}),\pi_{i,j,3} (0),\ldots, \pi_{i,j,u_j} (0):
1\leq j\leq d]^2 \}
\nonumber \\
&=&
\sum_{\alpha=0}^{|u|} {|u| \choose \alpha} \frac{1}{q^\alpha } (1 - \frac{1}{q})^{|u|-\alpha}
( \sum_{t_{j_1}=0}^{q^{u_{j_1}-1}-1} \sum_{c_{j_1}=0}^{q-1} )
\cdots
( \sum_{t_{j_{|u|} }=0}^{q^{u_{j_{|u|} }-1}-1} 
\sum_{c_{j_{|u|} }=0}^{q-1} )
\langle f,  \prod_{l=1}^{|u|}  \psi_{u_{j_l} -1,t_{j_l},c_{j_l}} \rangle^2.  
\end{eqnarray*}
The proof of (\ref{eq:a.54}) is similar and is omitted.
Finally using (\ref{eq:3.85}), (\ref{eq:3.36}) and that $u_{j_1}= \cdots = u_{j_{|u|}} = 1$, we have
\begin{eqnarray}
&& E\{ \nu^* [\pi_{j_1} (a_{i,j_1});\ldots; \pi_{j_{|u|}} (a_{i, j_{|u|}}) ]^4 \}
\nonumber \\
&=& ( \sum_{t_{j_1}=0}^{q^{u_{j_1}-1}-1} \sum_{c^{(1)}_{j_1}=0}^{q-1} 
\sum_{c^{(2)}_{j_1}=0}^{q-1} 
\sum_{c^{(3)}_{j_1}=0}^{q-1} 
\sum_{c^{(4)}_{j_1}=0}^{q-1} )
\cdots ( \sum_{t_{j_{|u|} }=0}^{q^{u_{j_{|u|} }-1}-1} \sum_{c^{(1)}_{j_{|u|} }=0}^{q-1} 
\sum_{c^{(2)}_{j_{|u|} }=0}^{q-1} 
\sum_{c^{(3)}_{j_{|u|} }=0}^{q-1} 
\sum_{c^{(4)}_{j_{|u|} }=0}^{q-1} )
\nonumber \\
&&\times
\langle f,  \prod_{l=1}^{|u|}  \psi_{u_{j_l} -1,t_{j_l}, c^{(1)}_{j_l}} \rangle  
\langle f,  \prod_{l=1}^{|u|}  \psi_{u_{j_l} -1,t_{j_l}, c^{(2)}_{j_l} } \rangle  
\langle f,  \prod_{l=1}^{|u|}  \psi_{u_{j_l} -1,t_{j_l}, c^{(3)}_{j_l}} \rangle  
\langle f,  \prod_{l=1}^{|u|}  \psi_{u_{j_l} -1,t_{j_l}, c^{(4)}_{j_l} } \rangle  
\nonumber \\
&&\times
\prod_{l=1}^{|u|} E \Big[ \psi_{u_{j_l}-1,t_{j_l},c^{(1)}_{j_l}} 
( \frac{ \pi_{j_l} (a_{i,j_l})}{ q} )
\psi_{u_{j_l}-1,t_{j_l},c^{(2)}_{j_l}} 
( \frac{ \pi_{j_l} (a_{i,j_l})}{ q} )
\nonumber \\
&&\hspace{0.5cm}\times
\psi_{u_{j_l}-1,t_{j_l},c^{(3)}_{j_l}} 
( \frac{ \pi_{j_l} (a_{i,j_l})}{ q} )
\psi_{u_{j_l}-1,t_{j_l},c^{(4)}_{j_l}} 
( \frac{ \pi_{j_l} (a_{i,j_l})}{ q} )
\Big]
\nonumber \\
&=& ( \sum_{c^{(1)}_{j_1}=0}^{q-1} 
\sum_{c^{(2)}_{j_1}=0}^{q-1} 
\sum_{c^{(3)}_{j_1}=0}^{q-1} 
\sum_{c^{(4)}_{j_1}=0}^{q-1} )
\cdots ( \sum_{c^{(1)}_{j_{|u|} }=0}^{q-1} 
\sum_{c^{(2)}_{j_{|u|} }=0}^{q-1} 
\sum_{c^{(3)}_{j_{|u|} }=0}^{q-1} 
\sum_{c^{(4)}_{j_{|u|} }=0}^{q-1} )
\langle f,  \prod_{l=1}^{|u|}  \psi_{0, 0, c^{(1)}_{j_l}} \rangle  
\nonumber \\
&&\times
\langle f,  \prod_{l=1}^{|u|}  \psi_{0, 0, c^{(2)}_{j_l} } \rangle  
\langle f,  \prod_{l=1}^{|u|}  \psi_{0, 0, c^{(3)}_{j_l}} \rangle  
\langle f,  \prod_{l=1}^{|u|}  \psi_{0, 0, c^{(4)}_{j_l} } \rangle  
\prod_{l=1}^{|u|} E [ \prod_{k=1}^4 \psi_{0, 0,c^{(k)}_{j_l}} 
( \frac{ \pi_{j_l} (a_{i,j_l})}{ q} ) ].
\label{eq:a.43}
\end{eqnarray}

It is convenient to define the following subsets of $\{1,\ldots, d\}$: for $\{l_1, l_2, l_3, l_4\} = \{1,2,3,4\}$,
\begin{eqnarray*}
\Xi_{\{1,2,3,4\} } &=& \{l\in \{1,\ldots, |u|\}: c^{(1)}_{j_l} = c^{(2)}_{j_l} = c^{(3)}_{j_l} = c^{(4)}_{j_l} \},
\nonumber \\
\Xi_{\{l_1, l_2, l_3\}, \{l_4\} } &=& \{ l\in \{1,\ldots, |u|\}: c^{(l_1)}_{j_l} = c^{(l_2)}_{j_l} = c^{(l_3)}_{j_l} \neq c^{(l_4)}_{j_l} \},
\nonumber \\
\Xi_{ \{ l_1, l_2\}, \{l_3\}, \{l_4\}} &=&
\{ l\in \{ 1,\ldots, |u|\}: c^{(l_1)}_{j_l} = c^{(l_2)}_{j_l}, 
\mbox{and $c^{(l_1)}_{j_l}, c^{(l_3)}_{j_l}, 
c^{(l_4)}_{j_l}$ are all distinct} \}, 
\nonumber \\
\Xi_{ \{ l_1, l_2\}, \{l_3, l_4\}} &=&
\{ l\in \{1,\ldots, |u|\}: c^{(l_1)}_{j_l} = c^{(l_2)}_{j_l} \neq c^{(l_3)}_{j_l} = c^{(l_4)}_{j_l} \},
\nonumber \\
\Xi_{ \{ l_1\}, \{l_2\}, \{l_3\}, \{l_4\}} &=&
\{ l\in \{1,\ldots, |u|\}: \mbox{$c^{(l_1)}_{j_l}, c^{(l_2)}_{j_l}, c^{(l_3)}_{j_l}, c^{(l_4)}_{j_l}$ are all distinct}  \}.
\end{eqnarray*}
The right hand side of (\ref{eq:a.43}) can be written as
a finite sum of terms of the following form: 
\begin{eqnarray}
&& [ \prod_{l\in \Xi_{\{1,2,3,4\}}} \sum_{c^{(1)}_{j_l}=0}^{q-1} ]
[ \prod_{l\in \Xi_{\{1,2,3\},\{4\}}} \sum_{c^{(1)}_{j_l}=0}^{q-1} 
\sum_{0\leq c^{(4)}_{j_l}\leq q-1: c^{(4)}_{j_l} \neq c^{(1)}_{j_l} } ] 
\nonumber \\
&&\times
[ \prod_{l\in \Xi_{\{1,2,4\},\{3\}}} \sum_{c^{(1)}_{j_l}=0}^{q-1} 
\sum_{0\leq c^{(3)}_{j_l}\leq q-1: c^{(3)}_{j_l} \neq c^{(1)}_{j_l} } ] 
[ \prod_{l\in \Xi_{\{1,3,4\},\{2\}}} \sum_{c^{(1)}_{j_l}=0}^{q-1} 
\sum_{0\leq c^{(2)}_{j_l}\leq q-1: c^{(2)}_{j_l} \neq c^{(1)}_{j_l} } ] 
\nonumber \\
&&\times
[ \prod_{l\in \Xi_{\{2,3,4\},\{1\}}} \sum_{c^{(2)}_{j_l}=0}^{q-1} 
\sum_{0\leq c^{(1)}_{j_l}\leq q-1: c^{(1)}_{j_l} \neq c^{(2)}_{j_l} } ] 
\nonumber \\
&&\times
[ \prod_{l\in \Xi_{\{1,2\}, \{3\}, \{4\} }} \sum_{c^{(1)}_{j_l}=0}^{q-1} 
\sum_{0\leq c^{(3)}_{j_l}\leq q-1: c^{(3)}_{j_l} \neq c^{(1)}_{j_l} }  
\sum_{0\leq c^{(4)}_{j_l}\leq q-1: c^{(4)}_{j_l} \neq c^{(1)}_{j_l}, c^{(3)}_{j_l} } ] 
\nonumber \\
&&\times
[ \prod_{l\in \Xi_{\{1,3\}, \{2\}, \{4\} }} \sum_{c^{(1)}_{j_l}=0}^{q-1} 
\sum_{0\leq c^{(2)}_{j_l}\leq q-1: c^{(2)}_{j_l} \neq c^{(1)}_{j_l} }  
\sum_{0\leq c^{(4)}_{j_l}\leq q-1: c^{(4)}_{j_l} \neq c^{(1)}_{j_l}, c^{(2)}_{j_l} } ] 
\nonumber \\
&&\times
[ \prod_{l\in \Xi_{\{1,4\}, \{2\}, \{3\} }} \sum_{c^{(1)}_{j_l}=0}^{q-1} 
\sum_{0\leq c^{(2)}_{j_l}\leq q-1: c^{(2)}_{j_l} \neq c^{(1)}_{j_l} }  
\sum_{0\leq c^{(3)}_{j_l}\leq q-1: c^{(3)}_{j_l} \neq c^{(1)}_{j_l}, c^{(3)}_{j_l} } ] 
 \nonumber \\
&&\times
[ \prod_{l\in \Xi_{\{2,3\}, \{1\}, \{4\} }} \sum_{c^{(2)}_{j_l}=0}^{q-1} 
\sum_{0\leq c^{(1)}_{j_l}\leq q-1: c^{(1)}_{j_l} \neq c^{(2)}_{j_l} }  
\sum_{0\leq c^{(4)}_{j_l}\leq q-1: c^{(4)}_{j_l} \neq c^{(1)}_{j_l}, c^{(2)}_{j_l} } ] 
 \nonumber \\
&&\times
[ \prod_{l\in \Xi_{\{2,4\}, \{1\}, \{3\} }} \sum_{c^{(2)}_{j_l}=0}^{q-1} 
\sum_{0\leq c^{(1)}_{j_l}\leq q-1: c^{(1)}_{j_l} \neq c^{(2)}_{j_l} }  
\sum_{0\leq c^{(3)}_{j_l}\leq q-1: c^{(3)}_{j_l} \neq c^{(1)}_{j_l}, c^{(2)}_{j_l} } ] 
\nonumber \\
&&\times
[ \prod_{l\in \Xi_{\{3,4\}, \{1\}, \{2\} }} \sum_{c^{(3)}_{j_l}=0}^{q-1} 
\sum_{0\leq c^{(1)}_{j_l}\leq q-1: c^{(1)}_{j_l} \neq c^{(3)}_{j_l} }  
\sum_{0\leq c^{(2)}_{j_l}\leq q-1: c^{(2)}_{j_l} \neq c^{(1)}_{j_l}, c^{(3)}_{j_l} } ] 
\nonumber \\
&&\times
[ \prod_{l\in \Xi_{\{1,2\}, \{3,4\} }} \sum_{c^{(1)}_{j_l}=0}^{q-1} 
\sum_{0\leq c^{(3)}_{j_l}\leq q-1: c^{(3)}_{j_l} \neq c^{(1)}_{j_l} } ] 
[ \prod_{l\in \Xi_{\{1,3\}, \{2,4\} }} \sum_{c^{(1)}_{j_l}=0}^{q-1} 
\sum_{0\leq c^{(2)}_{j_l}\leq q-1: c^{(2)}_{j_l} \neq c^{(1)}_{j_l} } ] 
\nonumber \\
&&\times
[ \prod_{l\in \Xi_{\{1,4\}, \{2,3\} }} \sum_{c^{(1)}_{j_l}=0}^{q-1} 
\sum_{0\leq c^{(2)}_{j_l}\leq q-1: c^{(2)}_{j_l} \neq c^{(1)}_{j_l} } ] 
[ \prod_{l\in \Xi_{\{1\}, \{2\}, \{3\}, \{4\} }} \sum_{c^{(1)}_{j_l}=0}^{q-1} 
\sum_{0\leq c^{(2)}_{j_l}\leq q-1: c^{(2)}_{j_l} \neq c^{(1)}_{j_l} }  
\nonumber \\
&&\hspace{0.5cm}\times
\sum_{0\leq c^{(3)}_{j_l}\leq q-1: c^{(3)}_{j_l} \neq c^{(1)}_{j_l}, c^{(2)}_{j_l} }  
\sum_{0\leq c^{(4)}_{j_l}\leq q-1: c^{(4)}_{j_l} \neq c^{(1)}_{j_l}, c^{(2)}_{j_l}, c^{(3)}_{j_l} } ]  
\langle f,  \prod_{l=1}^{|u|}  \psi_{0, 0, c^{(1)}_{j_l}} \rangle  
\nonumber \\
&& \times
\langle f,  \prod_{l=1}^{|u|}  \psi_{0, 0, c^{(2)}_{j_l} } \rangle  
\langle f,  \prod_{l=1}^{|u|}  \psi_{0, 0, c^{(3)}_{j_l}} \rangle  
\langle f,  \prod_{l=1}^{|u|}  \psi_{0, 0, c^{(4)}_{j_l} } \rangle  
\Big\{ \prod_{l\in \Xi_{\{1,2,3,4\} }} E \Big[ \psi_{0, 0,c^{(1)}_{j_l}} 
( \frac{ \pi_{j_l} (a_{i,j_l})}{ q} )^4
\Big] \Big\}
\nonumber \\
&& \times
\Big\{ \prod_{l\in \Xi_{\{1,2,3\}, \{4\} }} E \Big[ \psi_{0, 0,c^{(1)}_{j_l}} 
( \frac{ \pi_{j_l} (a_{i,j_l})}{ q} )^3
\psi_{0, 0,c^{(4)}_{j_l}} 
( \frac{ \pi_{j_l} (a_{i,j_l})}{ q} )
\Big] \Big\}
\nonumber \\
&& \times
\Big\{ \prod_{l\in \Xi_{\{1,2,4\}, \{3\} }} E \Big[ \psi_{0, 0,c^{(1)}_{j_l}} 
( \frac{ \pi_{j_l} (a_{i,j_l})}{ q} )^3
\psi_{0, 0,c^{(3)}_{j_l}} 
( \frac{ \pi_{j_l} (a_{i,j_l})}{ q} )
\Big] \Big\}
\nonumber \\
&& \times
\Big\{ \prod_{l\in \Xi_{\{1,3,4\}, \{2\} }} E \Big[ \psi_{0, 0,c^{(1)}_{j_l}} 
( \frac{ \pi_{j_l} (a_{i,j_l})}{ q} )^3
\psi_{0, 0,c^{(2)}_{j_l}} 
( \frac{ \pi_{j_l} (a_{i,j_l})}{ q} )
\Big] \Big\}
\nonumber \\
&& \times
\Big\{ \prod_{l\in \Xi_{\{2,3,4\}, \{1\} }} E \Big[ \psi_{0, 0,c^{(2)}_{j_l}} 
( \frac{ \pi_{j_l} (a_{i,j_l})}{ q} )^3
\psi_{0, 0,c^{(1)}_{j_l}} 
( \frac{ \pi_{j_l} (a_{i,j_l})}{ q} )
\Big] \Big\}
\nonumber \\
&& \times
\Big\{ \prod_{l\in \Xi_{\{1,2\}, \{3\}, \{4\} }} E \Big[ \psi_{0, 0,c^{(1)}_{j_l}} 
( \frac{ \pi_{j_l} (a_{i,j_l})}{ q} )^2
\psi_{0, 0,c^{(3)}_{j_l}} 
( \frac{ \pi_{j_l} (a_{i,j_l})}{ q} )
\psi_{0, 0,c^{(4)}_{j_l}} 
( \frac{ \pi_{j_l} (a_{i,j_l})}{ q} )
\Big] \Big\}
\nonumber \\
&& \times
\Big\{ \prod_{l\in \Xi_{\{1,3\}, \{2\}, \{4\} }} E \Big[ \psi_{0, 0,c^{(1)}_{j_l}} 
( \frac{ \pi_{j_l} (a_{i,j_l})}{ q} )^2
\psi_{0, 0,c^{(2)}_{j_l}} 
( \frac{ \pi_{j_l} (a_{i,j_l})}{ q} )
\psi_{0, 0,c^{(4)}_{j_l}} 
( \frac{ \pi_{j_l} (a_{i,j_l})}{ q} )
\Big] \Big\}
\nonumber \\
&& \times
\Big\{ \prod_{l\in \Xi_{\{1,4\}, \{2\}, \{3\} }} E \Big[ \psi_{0, 0,c^{(1)}_{j_l}} 
( \frac{ \pi_{j_l} (a_{i,j_l})}{ q} )^2
\psi_{0, 0,c^{(2)}_{j_l}} 
( \frac{ \pi_{j_l} (a_{i,j_l})}{ q} )
\psi_{0, 0,c^{(3)}_{j_l}} 
( \frac{ \pi_{j_l} (a_{i,j_l})}{ q} )
\Big] \Big\}
\nonumber \\
&& \times
\Big\{ \prod_{l\in \Xi_{\{2,3\}, \{1\}, \{4\} }} E \Big[ \psi_{0, 0,c^{(2)}_{j_l}} 
( \frac{ \pi_{j_l} (a_{i,j_l})}{ q} )^2
\psi_{0, 0,c^{(1)}_{j_l}} 
( \frac{ \pi_{j_l} (a_{i,j_l})}{ q} )
\psi_{0, 0,c^{(4)}_{j_l}} 
( \frac{ \pi_{j_l} (a_{i,j_l})}{ q} )
\Big] \Big\}
\nonumber \\
&& \times
\Big\{ \prod_{l\in \Xi_{\{2,4\}, \{1\}, \{3\} }} E \Big[ \psi_{0, 0,c^{(2)}_{j_l}} 
( \frac{ \pi_{j_l} (a_{i,j_l})}{ q} )^2
\psi_{0, 0,c^{(1)}_{j_l}} 
( \frac{ \pi_{j_l} (a_{i,j_l})}{ q} )
\psi_{0, 0,c^{(3)}_{j_l}} 
( \frac{ \pi_{j_l} (a_{i,j_l})}{ q} )
\Big] \Big\}
\nonumber \\
&& \times
\Big\{ \prod_{l\in \Xi_{\{3,4\}, \{1\}, \{2\} }} E \Big[ \psi_{0, 0,c^{(3)}_{j_l}} 
( \frac{ \pi_{j_l} (a_{i,j_l})}{ q} )^2
\psi_{0, 0,c^{(1)}_{j_l}} 
( \frac{ \pi_{j_l} (a_{i,j_l})}{ q} )
\psi_{0, 0,c^{(2)}_{j_l}} 
( \frac{ \pi_{j_l} (a_{i,j_l})}{ q} )
\Big] \Big\}
\nonumber \\
&& \times
\Big\{ \prod_{l\in \Xi_{\{1,2\}, \{3,4\} }} E \Big[ \psi_{0, 0,c^{(1)}_{j_l}} 
( \frac{ \pi_{j_l} (a_{i,j_l})}{ q} )^2
\psi_{0, 0,c^{(3)}_{j_l}} 
( \frac{ \pi_{j_l} (a_{i,j_l})}{ q} )^2
\Big] \Big\}
\nonumber \\
&& \times
\Big\{ \prod_{l\in \Xi_{\{1,3\}, \{2,4\} }} E \Big[ \psi_{0, 0,c^{(1)}_{j_l}} 
( \frac{ \pi_{j_l} (a_{i,j_l})}{ q} )^2
\psi_{0, 0,c^{(2)}_{j_l}} 
( \frac{ \pi_{j_l} (a_{i,j_l})}{ q} )^2
\Big] \Big\}
\nonumber \\
&& \times
\Big\{ \prod_{l\in \Xi_{\{1,4\}, \{2,3\} }} E \Big[ \psi_{0, 0,c^{(1)}_{j_l}} 
( \frac{ \pi_{j_l} (a_{i,j_l})}{ q} )^2
\psi_{0, 0,c^{(2)}_{j_l}} 
( \frac{ \pi_{j_l} (a_{i,j_l})}{ q} )^2
\Big] \Big\}
\nonumber \\
&& \times
\Big\{ \prod_{l\in \Xi_{\{1\}, \{2\}, \{3\}, \{4\} }} E \Big[ \psi_{0, 0,c^{(1)}_{j_l}} 
( \frac{ \pi_{j_l} (a_{i,j_l})}{ q} )
\psi_{0, 0,c^{(2)}_{j_l}} 
( \frac{ \pi_{j_l} (a_{i,j_l})}{ q} )
\nonumber \\
&&\hspace{0.5cm}\times
\psi_{0, 0,c^{(3)}_{j_l}} 
( \frac{ \pi_{j_l} (a_{i,j_l})}{ q} )
\psi_{0, 0,c^{(4)}_{j_l}} 
( \frac{ \pi_{j_l} (a_{i,j_l})}{ q} )
\Big] \Big\}.
\label{eq:a.44}
\end{eqnarray}
Now,
\begin{eqnarray*}
&& \prod_{l\in \Xi_{\{1,2,3,4\} }} E \Big[ \psi_{0, 0,c^{(1)}_{j_l}} 
( \frac{ \pi_{j_l} (a_{i,j_l})}{ q} )^4
\Big] 
\nonumber \\
&=&
\prod_{l\in \Xi_{\{1,2,3,4\} }} E \Big[  
[ q^{1/2} {\cal I} \{ \frac{ c^{(1)}_{j_l} }{ q } \leq \frac{ \pi_{j_l} (a_{i,j_l})}{ q}
< \frac{ c^{(1)}_{j_l} +1}{ q } \}
- q^{-1/2} ]^4 \Big] 
\nonumber \\
&=&
\prod_{l\in \Xi_{\{1,2,3,4\} }} q^2 E \Big[
[ {\cal I} \{ \frac{ c^{(1)}_{j_l} }{ q } \leq \frac{ \pi_{j_l} (a_{i,j_l})}{ q}
< \frac{ c^{(1)}_{j_l} +1}{ q } \}
- \frac{1}{ q} ]^4 \Big] 
\nonumber \\
&=&
\prod_{l\in \Xi_{\{1,2,3,4\} }} q^2 E \Big[  
(1 - \frac{4}{q} + \frac{6}{q^2} - \frac{4}{q^3 }) {\cal I} \{ \frac{ c^{(1)}_{j_l} }{ q } \leq \frac{ \pi_{j_l} (a_{i,j_l})}{ q}
< \frac{ c^{(1)}_{j_l} +1}{ q } \} + \frac{1}{q^4} \Big]
\nonumber \\
&=&
\prod_{l\in \Xi_{\{1,2,3,4\} }}  
(q - 4 + \frac{6}{q} - \frac{3}{q^2 }),
\nonumber \\
&& \prod_{l\in \Xi_{\{1,2,3\}, \{4\} }} E \Big[ \psi_{0, 0,c^{(1)}_{j_l}} 
( \frac{ \pi_{j_l} (a_{i,j_l})}{ q} )^3
\psi_{0, 0,c^{(4)}_{j_l}} 
( \frac{ \pi_{j_l} (a_{i,j_l})}{ q} )
\Big]
\nonumber \\
&=&
\prod_{l\in \Xi_{\{1,2,3\}, \{4\} }} q^2 E \Big[
[ {\cal I} \{ \frac{ c^{(1)}_{j_l} }{ q } \leq \frac{ \pi_{j_l} (a_{i,j_l})}{ q}
< \frac{ c^{(1)}_{j_l} +1}{ q } \}
- \frac{1}{ q} ]^3
\nonumber \\
&&\times
[ {\cal I} \{ \frac{ c^{(4)}_{j_l} }{ q } \leq \frac{ \pi_{j_l} (a_{i,j_l})}{ q}
< \frac{ c^{(4)}_{j_l} +1}{ q } \}
- \frac{1}{ q} ]
 \Big] 
\nonumber \\
&=&
\prod_{l\in \Xi_{\{1,2,3\}, \{4\} }} q^2 E \Big[
[ (1 - \frac{3}{q} + \frac{3}{q^2} ) {\cal I} \{ \frac{ c^{(1)}_{j_l} }{ q } \leq \frac{ \pi_{j_l} (a_{i,j_l})}{ q}
< \frac{ c^{(1)}_{j_l} +1}{ q } \}
- \frac{1}{ q^3} ]
\nonumber \\
&&\times
[ {\cal I} \{ \frac{ c^{(4)}_{j_l} }{ q } \leq \frac{ \pi_{j_l} (a_{i,j_l})}{ q}
< \frac{ c^{(4)}_{j_l} +1}{ q } \}
- \frac{1}{ q} ]
 \Big] 
\nonumber \\
&=&
\prod_{l\in \Xi_{\{1,2,3\}, \{4\} }} q^2 E \Big[
- (\frac{1}{q} - \frac{3}{q^2} + \frac{3}{q^3} ) {\cal I} \{ \frac{ c^{(1)}_{j_l} }{ q } \leq \frac{ \pi_{j_l} (a_{i,j_l})}{ q}
< \frac{ c^{(1)}_{j_l} +1}{ q } \}
+ \frac{1}{ q^4}
\nonumber \\
&&
-\frac{1}{q^3}  {\cal I} \{ \frac{ c^{(4)}_{j_l} }{ q } \leq \frac{ \pi_{j_l} (a_{i,j_l})}{ q}
< \frac{ c^{(4)}_{j_l} +1}{ q } \}
 \Big] 
\nonumber \\
&=&
\prod_{l\in \Xi_{\{1,2,3\}, \{4\} }} (-1 + \frac{3}{q} - \frac{3}{q^2}),
\nonumber \\
&& \prod_{l\in \Xi_{\{1,2\}, \{3\}, \{4\} }} E \Big[ \psi_{0, 0,c^{(1)}_{j_l}} 
( \frac{ \pi_{j_l} (a_{i,j_l})}{ q} )^2
\psi_{0, 0,c^{(3)}_{j_l}} 
( \frac{ \pi_{j_l} (a_{i,j_l})}{ q} )
\psi_{0, 0,c^{(4)}_{j_l}} 
( \frac{ \pi_{j_l} (a_{i,j_l})}{ q} )
\Big]
\nonumber \\
&=& 
\prod_{l\in \Xi_{\{1,2\}, \{3\}, \{4\} }} q^2 E \Big[
[ {\cal I} \{ \frac{ c^{(1)}_{j_l} }{ q } \leq \frac{ \pi_{j_l} (a_{i,j_l})}{ q}
< \frac{ c^{(1)}_{j_l} +1}{ q } \}
- \frac{1}{ q} ]^2
\nonumber \\
&&\times
[ {\cal I} \{ \frac{ c^{(3)}_{j_l} }{ q } \leq \frac{ \pi_{j_l} (a_{i,j_l})}{ q}
< \frac{ c^{(3)}_{j_l} +1}{ q } \}
- \frac{1}{ q} ]
[ {\cal I} \{ \frac{ c^{(4)}_{j_l} }{ q } \leq \frac{ \pi_{j_l} (a_{i,j_l})}{ q}
< \frac{ c^{(4)}_{j_l} +1}{ q } \}
- \frac{1}{ q} ] \Big]
\nonumber \\
&=&
\prod_{l\in \Xi_{\{1,2\}, \{3\}, \{4\} }} E \Big[
(1 - \frac{2}{q}) {\cal I} \{ \frac{ c^{(1)}_{j_l} }{ q } \leq \frac{ \pi_{j_l} (a_{i,j_l})}{ q}
< \frac{ c^{(1)}_{j_l} +1}{ q } \}
+ \frac{1}{q^2}
\nonumber \\
&&
-\frac{1}{q} {\cal I} \{ \frac{ c^{(3)}_{j_l} }{ q } \leq \frac{ \pi_{j_l} (a_{i,j_l})}{ q}
< \frac{ c^{(3)}_{j_l} +1}{ q } \}
-\frac{1}{q} {\cal I} \{ \frac{ c^{(4)}_{j_l} }{ q } \leq \frac{ \pi_{j_l} (a_{i,j_l})}{ q}
< \frac{ c^{(4)}_{j_l} +1}{ q } \}
\Big]
\nonumber \\
&=&
\prod_{l\in \Xi_{\{1,2\}, \{3\}, \{4\} }}  (\frac{1}{q} - \frac{3}{q^2}),
\nonumber \\
&& \prod_{l\in \Xi_{\{1,2\}, \{3,4\} }} E \Big[ \psi_{0, 0,c^{(1)}_{j_l}} 
( \frac{ \pi_{j_l} (a_{i,j_l})}{ q} )^2
\psi_{0, 0,c^{(3)}_{j_l}} 
( \frac{ \pi_{j_l} (a_{i,j_l})}{ q} )^2
\Big] \Big\}
\nonumber \\
&=& 
\prod_{l\in \Xi_{\{1,2\}, \{3,4\} }} q^2 E \Big[
[ {\cal I} \{ \frac{ c^{(1)}_{j_l} }{ q } \leq \frac{ \pi_{j_l} (a_{i,j_l})}{ q}
< \frac{ c^{(1)}_{j_l} +1}{ q } \}
- \frac{1}{ q} ]^2
\nonumber \\
&&\hspace{0.5cm}\times
[ {\cal I} \{ \frac{ c^{(3)}_{j_l} }{ q } \leq \frac{ \pi_{j_l} (a_{i,j_l})}{ q}
< \frac{ c^{(3)}_{j_l} +1}{ q } \}
- \frac{1}{ q} ]^2 \Big]
\nonumber \\
&=&
\prod_{l\in \Xi_{\{1,2\}, \{3,4\} }} E \Big[
(1 - \frac{2}{q})  {\cal I} \{ \frac{ c^{(1)}_{j_l} }{ q } \leq \frac{ \pi_{j_l} (a_{i,j_l})}{ q}
< \frac{ c^{(1)}_{j_l} +1}{ q } \}
\nonumber \\
&&\hspace{0.5cm}
+ (1 - \frac{2}{q})  {\cal I} \{ \frac{ c^{(3)}_{j_l} }{ q } \leq \frac{ \pi_{j_l} (a_{i,j_l})}{ q}
< \frac{ c^{(3)}_{j_l} +1}{ q } \}
+ \frac{1}{q^2} \Big]
\nonumber \\
&=&
\prod_{l\in \Xi_{\{1,2\}, \{3,4\} }} (\frac{2}{q} - \frac{3}{q^2}),
\nonumber \\
&& \prod_{l\in \Xi_{\{1\}, \{2\}, \{3\}, \{4\} }} E \Big[ \psi_{0, 0,c^{(1)}_{j_l}} 
( \frac{ \pi_{j_l} (a_{i,j_l})}{ q} )
\psi_{0, 0,c^{(2)}_{j_l}} 
( \frac{ \pi_{j_l} (a_{i,j_l})}{ q} )
\nonumber \\
&&\hspace{0.5cm}\times
\psi_{0, 0,c^{(3)}_{j_l}} 
( \frac{ \pi_{j_l} (a_{i,j_l})}{ q} )
\psi_{0, 0,c^{(4)}_{j_l}} 
( \frac{ \pi_{j_l} (a_{i,j_l})}{ q} )
\Big]
\nonumber \\
&=& \prod_{l\in \Xi_{\{1\}, \{2\}, \{3\}, \{4\} }} q^2 E \Big[ 
[ {\cal I} \{ \frac{ c^{(1)}_{j_l} }{ q } \leq \frac{ \pi_{j_l} (a_{i,j_l})}{ q}
< \frac{ c^{(1)}_{j_l} +1}{ q } \}
- \frac{1}{ q} ]
\nonumber \\
&& \hspace{0.5cm}\times
[ {\cal I} \{ \frac{ c^{(2)}_{j_l} }{ q } \leq \frac{ \pi_{j_l} (a_{i,j_l})}{ q}
< \frac{ c^{(2)}_{j_l} +1}{ q } \}
- \frac{1}{ q} ]
[ {\cal I} \{ \frac{ c^{(3)}_{j_l} }{ q } \leq \frac{ \pi_{j_l} (a_{i,j_l})}{ q}
< \frac{ c^{(3)}_{j_l} +1}{ q } \}
- \frac{1}{ q} ]
\nonumber \\
&& \hspace{0.5cm}\times
[ {\cal I} \{ \frac{ c^{(4)}_{j_l} }{ q } \leq \frac{ \pi_{j_l} (a_{i,j_l})}{ q}
< \frac{ c^{(4)}_{j_l} +1}{ q } \}
- \frac{1}{ q} ]
\Big]
\nonumber \\
&=& \prod_{l\in \Xi_{\{1\}, \{2\}, \{3\}, \{4\} }} (-\frac{3}{q^2}). 
\end{eqnarray*}

Writing $|A|$ to denote the cardinality of a finite set $A$, we observe that (\ref{eq:a.44}) is equal to
\begin{eqnarray*}
&& [ \prod_{l\in \Xi_{\{1,2,3,4\}}} \sum_{c^{(1)}_{j_l}=0}^{q-1} ]
[ \prod_{l\in \Xi_{\{1,2,3\},\{4\}}} \sum_{c^{(1)}_{j_l}=0}^{q-1} 
\sum_{0\leq c^{(4)}_{j_l}\leq q-1: c^{(4)}_{j_l} \neq c^{(1)}_{j_l} } ] 
\nonumber \\
&&\times
[ \prod_{l\in \Xi_{\{1,2,4\},\{3\}}} \sum_{c^{(1)}_{j_l}=0}^{q-1} 
\sum_{0\leq c^{(3)}_{j_l}\leq q-1: c^{(3)}_{j_l} \neq c^{(1)}_{j_l} } ] 
[ \prod_{l\in \Xi_{\{1,3,4\},\{2\}}} \sum_{c^{(1)}_{j_l}=0}^{q-1} 
\sum_{0\leq c^{(2)}_{j_l}\leq q-1: c^{(2)}_{j_l} \neq c^{(1)}_{j_l} } ] 
\nonumber \\
&&\times
[ \prod_{l\in \Xi_{\{2,3,4\},\{1\}}} \sum_{c^{(2)}_{j_l}=0}^{q-1} 
\sum_{0\leq c^{(1)}_{j_l}\leq q-1: c^{(1)}_{j_l} \neq c^{(2)}_{j_l} } ] 
\nonumber \\
&&\times
[ \prod_{l\in \Xi_{\{1,2\}, \{3\}, \{4\} }} \sum_{c^{(1)}_{j_l}=0}^{q-1} 
\sum_{0\leq c^{(3)}_{j_l}\leq q-1: c^{(3)}_{j_l} \neq c^{(1)}_{j_l} }  
\sum_{0\leq c^{(4)}_{j_l}\leq q-1: c^{(4)}_{j_l} \neq c^{(1)}_{j_l}, c^{(3)}_{j_l} } ] 
\nonumber \\
&&\times
[ \prod_{l\in \Xi_{\{1,3\}, \{2\}, \{4\} }} \sum_{c^{(1)}_{j_l}=0}^{q-1} 
\sum_{0\leq c^{(2)}_{j_l}\leq q-1: c^{(2)}_{j_l} \neq c^{(1)}_{j_l} }  
\sum_{0\leq c^{(4)}_{j_l}\leq q-1: c^{(4)}_{j_l} \neq c^{(1)}_{j_l}, c^{(2)}_{j_l} } ] 
\nonumber \\
&&\times
[ \prod_{l\in \Xi_{\{1,4\}, \{2\}, \{3\} }} \sum_{c^{(1)}_{j_l}=0}^{q-1} 
\sum_{0\leq c^{(2)}_{j_l}\leq q-1: c^{(2)}_{j_l} \neq c^{(1)}_{j_l} }  
\sum_{0\leq c^{(3)}_{j_l}\leq q-1: c^{(3)}_{j_l} \neq c^{(1)}_{j_l}, c^{(3)}_{j_l} } ] 
 \nonumber \\
&&\times
[ \prod_{l\in \Xi_{\{2,3\}, \{1\}, \{4\} }} \sum_{c^{(2)}_{j_l}=0}^{q-1} 
\sum_{0\leq c^{(1)}_{j_l}\leq q-1: c^{(1)}_{j_l} \neq c^{(2)}_{j_l} }  
\sum_{0\leq c^{(4)}_{j_l}\leq q-1: c^{(4)}_{j_l} \neq c^{(1)}_{j_l}, c^{(2)}_{j_l} } ] 
 \nonumber \\
&&\times
[ \prod_{l\in \Xi_{\{2,4\}, \{1\}, \{3\} }} \sum_{c^{(2)}_{j_l}=0}^{q-1} 
\sum_{0\leq c^{(1)}_{j_l}\leq q-1: c^{(1)}_{j_l} \neq c^{(2)}_{j_l} }  
\sum_{0\leq c^{(3)}_{j_l}\leq q-1: c^{(3)}_{j_l} \neq c^{(1)}_{j_l}, c^{(2)}_{j_l} } ] 
\nonumber \\
&&\times
[ \prod_{l\in \Xi_{\{3,4\}, \{1\}, \{2\} }} \sum_{c^{(3)}_{j_l}=0}^{q-1} 
\sum_{0\leq c^{(1)}_{j_l}\leq q-1: c^{(1)}_{j_l} \neq c^{(3)}_{j_l} }  
\sum_{0\leq c^{(2)}_{j_l}\leq q-1: c^{(2)}_{j_l} \neq c^{(1)}_{j_l}, c^{(3)}_{j_l} } ] 
\nonumber \\
&&\times
[ \prod_{l\in \Xi_{\{1,2\}, \{3,4\} }} \sum_{c^{(1)}_{j_l}=0}^{q-1} 
\sum_{0\leq c^{(3)}_{j_l}\leq q-1: c^{(3)}_{j_l} \neq c^{(1)}_{j_l} } ] 
[ \prod_{l\in \Xi_{\{1,3\}, \{2,4\} }} \sum_{c^{(1)}_{j_l}=0}^{q-1} 
\sum_{0\leq c^{(2)}_{j_l}\leq q-1: c^{(2)}_{j_l} \neq c^{(1)}_{j_l} } ] 
\nonumber \\
&&\times
[ \prod_{l\in \Xi_{\{1,4\}, \{2,3\} }} \sum_{c^{(1)}_{j_l}=0}^{q-1} 
\sum_{0\leq c^{(2)}_{j_l}\leq q-1: c^{(2)}_{j_l} \neq c^{(1)}_{j_l} } ] 
[ \prod_{l\in \Xi_{\{1\}, \{2\}, \{3\}, \{4\} }} \sum_{c^{(1)}_{j_l}=0}^{q-1} 
\sum_{0\leq c^{(2)}_{j_l}\leq q-1: c^{(2)}_{j_l} \neq c^{(1)}_{j_l} }  
\nonumber \\
&&\times
\sum_{0\leq c^{(3)}_{j_l}\leq q-1: c^{(3)}_{j_l} \neq c^{(1)}_{j_l}, c^{(2)}_{j_l} }  
\sum_{0\leq c^{(4)}_{j_l}\leq q-1: c^{(4)}_{j_l} \neq c^{(1)}_{j_l}, c^{(2)}_{j_l}, c^{(3)}_{j_l} } ]  
\langle f,  \prod_{l=1}^{|u|}  \psi_{0, 0, c^{(1)}_{j_l}} \rangle  
\nonumber \\
&& \times
\langle f,  \prod_{l=1}^{|u|}  \psi_{0, 0, c^{(2)}_{j_l} } \rangle  
\langle f,  \prod_{l=1}^{|u|}  \psi_{0, 0, c^{(3)}_{j_l}} \rangle  
\langle f,  \prod_{l=1}^{|u|}  \psi_{0, 0, c^{(4)}_{j_l} } \rangle  
(q -4 + \frac{6}{q} - \frac{3}{q^2})^{| \Xi_{\{1,2,3,4\}} |}
\nonumber \\
&&\times 
(-1 + \frac{3}{q} - \frac{3}{q^2})^{| \Xi_{\{1,2,3\}, \{4\}} | + |\Xi_{\{1,2,4\},\{3\} }| + \Xi_{\{1,3,4\},\{2\}}| + \Xi_{\{2,3,4\},\{1\}}|}
\nonumber \\
&&\times
(\frac{1}{q} -\frac{3}{q^2})^{| \Xi_{\{1,2\},\{3\}, \{4\}} | + |\Xi_{\{1,3\},\{2\}, \{4\} }|
+ \Xi_{\{1,4\},\{2\}, \{3\}}| + |\Xi_{\{2,3\},\{1\},\{4\}}| 
+ |\Xi_{\{2,4\},\{1\}, \{3\}}| + | \Xi_{\{3,4\}, \{1\},\{2\}}|}
\nonumber \\
&&\times 
(\frac{2}{q} -\frac{3}{q^2})^{|\Xi_{\{1,2\}, \{3,4\}} |+ |\Xi_{\{1,3\}, \{2,4\}}| + |\Xi_{\{1,4\},\{2,3\}}|}
(-\frac{3}{q^2})^{|\Xi_{\{1\},\{2\},\{3\},\{4\}}| }
\nonumber \\
&=&
O(1) 
q^{|\Xi_{\{1,2,3,4\}}|} q^{2(|\Xi_{\{1,2,3\},\{4\}}| + |\Xi_{\{1,2,4\},\{3\}}| + |\Xi_{\{1,3,4\},\{2\}}| + |\Xi_{\{2,3,4\},\{1\}}| )} 
\nonumber \\
&&\times
q^{3 ( |\Xi_{\{1,2\},\{3\}, \{4\}} |+ |\Xi_{\{1,3\},\{2\},\{4\}}| + |\Xi_{\{1,4\},\{2\},\{3\}}| + | \Xi_{\{2,3\},\{1\},\{4\}}| + 
|\Xi_{\{2,4\},\{1\}, \{3\}}| + |\Xi_{\{3,4\},\{1\},\{2\}}|)}
\nonumber \\
&&\times
q^{2 (| \Xi_{\{1,2\},\{3,4\}} | + \Xi_{\{1,3\},\{2,4\}}| + |\Xi_{\{1,4\},\{2,3\}}|)} q^{4 | \Xi_{\{1\},\{2\}, \{3\}, \{4\}} |}
q^{-2|u|}
q^{| \Xi_{\{1,2,3,4\}} |}
\nonumber \\
&&\times
(\frac{1}{q})^{| \Xi_{\{1,2\},\{3\}, \{4\}} |+ |\Xi_{\{1,3\},\{2\},\{4\}}| + |\Xi_{\{1,4\},\{2\},\{3\}}| + | \Xi_{\{2,3\},\{1\},\{4\}}| + 
|\Xi_{\{2,4\},\{1\}, \{3\}}| + |\Xi_{\{3,4\},\{1\},\{2\}}|)}
\nonumber \\
&&\times
(\frac{2}{q})^{| \Xi_{\{1,2\},\{3,4\}} | + \Xi_{\{1,3\},\{2,4\}}| + |\Xi_{\{1,4\},\{2,3\}}} 
(\frac{3}{q^2})^{|\Xi_{\{1\},\{2\},\{3\},\{4\}}|}
\nonumber \\
&=& O(1),
\end{eqnarray*}
as $q\rightarrow \infty$. The last equality uses (\ref{eq:a.545}). This proves 
Lemma \ref{la:a.5}.\hfill $\Box$

\begin{pn} \label{pn:a.1}
Let $V_\ell$ and $\tilde{V}_\ell, \ell=1,\ldots, d-2,$ be as in (\ref{eq:5.30}) and (\ref{eq:5.47}) respectively. Then
\begin{displaymath}
E^{{\cal W}} ( \tilde{V}_\ell - V_\ell )
= - \frac{2 (\ell+2) }{d (q-1) } V_\ell.
\end{displaymath}
\end{pn}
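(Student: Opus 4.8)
\noi{\sc Proof proposal.} The plan is to compute $E^{\cal W}S_\ell$ and $E^{\cal W}\tilde S_\ell$ separately, starting from the decomposition $\tilde V_\ell-V_\ell=\tilde S_\ell-S_\ell$ recorded above (\ref{eq:3.75}). The first observation to make is that, since $u_{j_1}=\cdots=u_{j_{|u|}}=1$ for every index vector $u$ entering (\ref{eq:5.30}), each $\nu^*[\pi_{j_1}(a_{i,j_1});\ldots;\pi_{j_{|u|}}(a_{i,j_{|u|}})]$ depends only on the first-level digits $\pi_{j_l}(a_{i,j_l})$, and is therefore ${\cal W}$-measurable. Hence, conditionally on ${\cal W}$, the only randomness remaining in $S_\ell$ and $\tilde S_\ell$ is that of $J$ and $(B_1,B_2)$, which are independent of ${\cal W}$.

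For $S_\ell$ I would integrate out $J$ and $(B_1,B_2)$ termwise. Since $P(J=j_l)=1/d$ and, for any fixed symbol $c$, the pair $(B_1,B_2)$ being uniform over the $q(q-1)$ ordered pairs of distinct symbols gives $P(c\in\{B_1,B_2\})=2(q-1)/(q(q-1))=2/q$, one obtains
\[
P\big(J\in\{j_1,\ldots,j_{|u|}\},\ \pi_J(a_{i,J})\in\{B_1,B_2\}\ \big|\ {\cal W}\big)=\frac{|u|}{d}\cdot\frac{2}{q}=\frac{2(\ell+2)}{dq},
\]
independently of $i$ and $u$, so that $E^{\cal W}S_\ell=\frac{2(\ell+2)}{dq}V_\ell$.

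For $\tilde S_\ell$ the extra ingredient is the effect of the transposition. On the event $\{J=j_l,\ \pi_{j_l}(a_{i,j_l})\in\{B_1,B_2\}\}$ the map $\tau_{B_1,B_2}$ sends $c:=\pi_{j_l}(a_{i,j_l})$ to the other element of $\{B_1,B_2\}$ while fixing $\pi_{j_m}(a_{i,j_m})$ for $m\neq l$; averaging over the $2(q-1)$ ordered pairs $(B_1,B_2)$ containing $c$, the $l$-th argument of $\nu^*$ runs, with multiplicity two, over all symbols $b\neq c$. I would then invoke the ANOVA vanishing identity (\ref{eq:3.76}) with $u_{j_l}=1$, namely $\sum_{b=0}^{q-1}\nu^*[\,\cdots;b;\cdots\,]=0$ (sum over the $l$-th slot), to replace $\sum_{b\neq c}\nu^*[\,\cdots;b;\cdots\,]$ by $-\nu^*[\,\cdots;c;\cdots\,]$. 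Combining with $P(J=j_l)=1/d$ and summing over $l=1,\ldots,|u|$ gives $E^{\cal W}\tilde S_\ell=-\frac{2(\ell+2)}{dq(q-1)}V_\ell$, and subtracting yields
\[
E^{\cal W}(\tilde V_\ell-V_\ell)=-\frac{2(\ell+2)}{dq(q-1)}V_\ell-\frac{2(\ell+2)}{dq}V_\ell=-\frac{2(\ell+2)}{d(q-1)}V_\ell,
\]
which is the assertion.

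The only genuinely delicate point is the $\tilde S_\ell$ step: one must keep careful track that $(B_1,B_2)$ is an \emph{ordered} pair, so each unordered pair $\{c,b\}$ is counted twice; that after conditioning on ${\cal W}$ the quantity $\nu^*$ with its $l$-th argument replaced by $b$ is still a deterministic function of the remaining (known) first-level digits, so the sum over $b$ can be pulled inside the expectation; and that (\ref{eq:3.76}) is applied slotwise, to that $l$-th coordinate only. Everything else is routine bookkeeping.
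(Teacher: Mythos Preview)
Your proposal is correct and follows essentially the same route as the paper's proof: both compute $E^{\cal W}S_\ell$ and $E^{\cal W}\tilde S_\ell$ separately by integrating out $J$ and $(B_1,B_2)$, and both reduce the $\tilde S_\ell$ computation via the ANOVA identity (\ref{eq:3.76}) to obtain the factor $-1/(q-1)$. The only cosmetic difference is that the paper splits the event $\{\pi_J(a_{i,J})\in\{B_1,B_2\}\}$ into the two subcases $\pi_J(a_{i,J})=B_1$ and $\pi_J(a_{i,J})=B_2$ before averaging, whereas you handle both at once via the multiplicity-two observation; the arithmetic is identical.
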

\begin{proof}[\sc Proof.]
First we observe from (\ref{eq:3.75}) that
\begin{eqnarray*}
E^{{\cal W}} (S_\ell ) &=&
E^{\cal W} \Big\{
\frac{1}{d q^2 \sigma_\ell } \sum_{i=1}^{q^2}
\sum_{k=1}^d
\sum_{0\leq u_1,\cdots,u_d\leq 1: |u| =\ell+2}
{\cal I}\{ k\in \{j_1,\ldots, j_{|u|} \} \}
\nonumber \\
&&\hspace{0.5cm} \times
{\cal I} \{ \pi_k (a_{i,k}) \in \{B_1, B_2\} \}
\nu^* [\pi_{j_1} (a_{i,j_1}); 
\cdots; \pi_{j_{|u|}} (a_{i,j_{|u|}}) ]
\Big\}
\nonumber \\
&=&
\frac{ 2 (\ell +2)}{d q^3 \sigma_\ell } \sum_{i=1}^{q^2}
\sum_{0\leq u_1,\cdots,u_d\leq 1: |u| =\ell+2}
\nu^* [\pi_{j_1} (a_{i,j_1}); 
\cdots; \pi_{j_{|u|}} (a_{i,j_{|u|}}) ],
\end{eqnarray*}
and 
\begin{eqnarray*}
E^{\cal W} (\tilde{S}_\ell ) &=&
E^{\cal W} \Big\{
\frac{1}{d q^2 \sigma_\ell } \sum_{i=1}^{q^2}
\sum_{k=1}^d
\sum_{0\leq u_1,\cdots,u_d\leq 1: |u| =\ell+2}
{\cal I}\{ k\in \{j_1,\ldots, j_{|u|} \} \}
\nonumber \\
&&\hspace{0.5cm} \times
{\cal I} \{ \pi_k (a_{i,k}) \in \{B_1, B_2\} \}
\nu^* [\tilde{\pi}_{j_1} (a_{i,j_1}); 
\cdots; \tilde{\pi}_{j_{|u|}} (a_{i,j_{|u|}}) ]
\Big\}
\nonumber \\
&=&
E^{\cal W} \Big\{
\frac{1}{d q^2 \sigma_\ell } \sum_{i=1}^{q^2}
\sum_{k=1}^d
\sum_{0\leq u_1,\cdots,u_d\leq 1: |u| =\ell +2}
\sum_{l=1}^{|u|}
{\cal I}\{ k= j_l\}
\nonumber \\
&&\hspace{0.5cm} \times
{\cal I} \{ \pi_{j_l} (a_{i, j_l}) \in \{B_1, B_2\} \}
\nu^* [\pi_{j_1} (a_{i,j_1});\ldots; \pi_{j_{l-1}} (a_{i, j_{l-1}}); 
\nonumber \\
&&\hspace{1cm}
\tau_{B_1, B_2} \circ \pi_{j_l} (a_{i, j_l}); \pi_{j_{l+1}} (a_{i, j_{l+1}}); \cdots; \tilde{\pi}_{j_{|u|}} (a_{i,j_{|u|}}) ]
\Big\}
\nonumber \\
&=&
E^{\cal W} \Big\{
\frac{1}{d q^2 \sigma_\ell } \sum_{i=1}^{q^2}
\sum_{k=1}^d
\sum_{0\leq u_1,\cdots,u_d\leq 1: |u| =\ell+2}
\sum_{l=1}^{|u|}
{\cal I}\{ k= j_l\}
\nonumber \\
&&\hspace{0.5cm} \times
{\cal I} \{ \pi_{j_l} (a_{i, j_l}) =B_1 \}
\nu^* [\pi_{j_1} (a_{i,j_1});\ldots; \pi_{j_{l-1}} (a_{i, j_{l-1}}); 
\nonumber \\
&&\hspace{1cm}
B_2; \pi_{j_{l+1}} (a_{i, j_{l+1}}); \cdots; \tilde{\pi}_{j_{|u|}} (a_{i,j_{|u|}}) ]
\Big\}
\nonumber \\
&&
+ E^{\cal W} \Big\{
\frac{1}{d q^2 \sigma_\ell } \sum_{i=1}^{q^2}
\sum_{k=1}^d
\sum_{0\leq u_1,\cdots,u_d\leq 1: |u| =\ell+2}
\sum_{l=1}^{|u|}
{\cal I}\{ k= j_l\}
\nonumber \\
&&\hspace{0.5cm} \times
{\cal I} \{ \pi_{j_l} (a_{i, j_l}) = B_2 \}
\nu^* [\pi_{j_1} (a_{i,j_1});\ldots; \pi_{j_{l-1}} (a_{i, j_{l-1}}); 
\nonumber \\
&&\hspace{1cm}
B_1; \pi_{j_{l+1}} (a_{i, j_{l+1}}); \cdots; \tilde{\pi}_{j_{|u|}} (a_{i,j_{|u|}}) ]
\Big\}
\nonumber \\
&=&
-E^{\cal W} \Big\{
\frac{1}{d q^2 (q-1) \sigma_\ell } \sum_{i=1}^{q^2}
\sum_{k=1}^d
\sum_{0\leq u_1,\cdots,u_d\leq 1: |u| =\ell+2}
\sum_{l=1}^{|u|}
{\cal I}\{ k= j_l\}
\nonumber \\
&&\hspace{0.5cm} \times
{\cal I} \{ \pi_{j_l} (a_{i, j_l}) =B_1 \}
\nu^* [\pi_{j_1} (a_{i,j_1});\ldots; \pi_{j_{|u|}} (a_{i, j_{|u|}}) ]
\Big\}
\nonumber \\
&&
- E^{\cal W} \Big\{
\frac{1}{d q^2 (q-1) \sigma_\ell } \sum_{i=1}^{q^2}
\sum_{k=1}^d
\sum_{0\leq u_1,\cdots,u_d\leq 1: |u| =\ell+2}
\sum_{l=1}^{|u|}
{\cal I}\{ k= j_l\}
\nonumber \\
&&\hspace{0.5cm} \times
{\cal I} \{ \pi_{j_l} (a_{i, j_l}) = B_2 \}
\nu^* [\pi_{j_1} (a_{i,j_1});\ldots; \pi_{j_{|u|}} (a_{i, j_{|u|}})  ]
\Big\}
\nonumber \\
&=&
- \frac{2 (\ell+2) }{d q^3 (q-1) \sigma_\ell } \sum_{i=1}^{q^2}
\sum_{0\leq u_1,\cdots,u_d\leq 1: |u| =\ell+2}
\nu^* [\pi_{j_1} (a_{i,j_1});\ldots; \pi_{j_{|u|}} (a_{i, j_{|u|}}) ].
\end{eqnarray*}
Thus we conclude that
\begin{eqnarray*}
E^{\cal W} (\tilde{V}_\ell - V_\ell ) 
&=& E^{\cal W} (\tilde{S}_\ell - S_\ell )
\nonumber \\
&=&
- \frac{2 (\ell +2)}{d q^2 (q-1) \sigma_\ell } \sum_{i=1}^{q^2}
\sum_{0\leq u_1,\cdots,u_d\leq 1: |u| =\ell +2}
\nu^* [\pi_{j_1} (a_{i,j_1});\ldots; \pi_{j_{|u|}} (a_{i, j_{|u|}}) ]
\nonumber \\
&=&
-\frac{2 (\ell+2) }{d (q-1) } V_\ell. \qedhere 
\end{eqnarray*}
\end{proof}

\begin{pn} \label{pn:a.2}
Let $S_i$ and $\tilde{S}_i$, $i=1,\ldots, d-2$, be as in (\ref{eq:3.75}).
Then
\begin{displaymath}
| \frac{ d(q-1)}{4 (i+2)} E[ (\tilde{S}_i - S_i)^2  ] -1 |
= O(1/q), \hspace{0.5cm}\forall i=1,\ldots, d-2,
\end{displaymath}
and
\begin{equation}
| \sum_{i=1}^{d-2} \{ [ \frac{ d(q-1)}{4 (i+2)} E(\tilde{S}_i - S_i)^2  -1] 
E [\frac{ \partial^2}{\partial v_i^2} \psi_{\varepsilon^2} (V) ] \} |
\leq O( \frac{\|h\|_\infty }{q}) \log(1/\varepsilon),
\label{eq:a.69}
\end{equation}
as $q\rightarrow \infty$ uniformly over $0<\varepsilon <1$.
\end{pn}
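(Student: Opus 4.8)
{\sc Proof proposal.} The plan is to use the identity $\tilde{S}_i-S_i=\tilde{V}_i-V_i$ recorded just before (\ref{eq:3.75}), the exchangeability of the pair $(V,\tilde{V})$, and the conditional identity of Proposition \ref{pn:a.1}. Two preliminary remarks: from the definitions in (\ref{eq:5.30}) one has $E(V_i^2)=1$, and $V_i$ is measurable with respect to the $\sigma$-field ${\cal W}$ generated by $\{\pi_j(a_{i,j}):i=1,\ldots,q^2,\ j=1,\ldots,d\}$.

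For the first assertion I would expand $E[(\tilde{S}_i-S_i)^2]=E[(\tilde{V}_i-V_i)^2]=E(\tilde{V}_i^2)+E(V_i^2)-2E(V_i\tilde{V}_i)$. By exchangeability $E(\tilde{V}_i^2)=E(V_i^2)$, and since $V_i$ is ${\cal W}$-measurable, $E(V_i\tilde{V}_i)=E\{V_i\,E^{{\cal W}}(\tilde{V}_i)\}$. Proposition \ref{pn:a.1} gives $E^{{\cal W}}(\tilde{V}_i)=V_i+E^{{\cal W}}(\tilde{V}_i-V_i)=(1-\frac{2(i+2)}{d(q-1)})V_i$, whence $E(V_i\tilde{V}_i)=(1-\frac{2(i+2)}{d(q-1)})E(V_i^2)$ and therefore $E[(\tilde{S}_i-S_i)^2]=\frac{4(i+2)}{d(q-1)}E(V_i^2)=\frac{4(i+2)}{d(q-1)}$. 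Thus in fact $\frac{d(q-1)}{4(i+2)}E[(\tilde{S}_i-S_i)^2]-1=0$, which is trivially $O(1/q)$.

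The second assertion is then immediate. The bracketed coefficient in (\ref{eq:a.69}) vanishes (it is in any event $O(1/q)$), and Lemma \ref{la:5.1}, applied with $t=\varepsilon^2$, gives $|\frac{\partial^2}{\partial v_i^2}\psi_{\varepsilon^2}(v)|\le\|h\|_\infty\log(1/\varepsilon^2)=2\|h\|_\infty\log(1/\varepsilon)$ for every $v\in{\mathbb R}^{d-2}$, so that $|E[\frac{\partial^2}{\partial v_i^2}\psi_{\varepsilon^2}(V)]|\le 2\|h\|_\infty\log(1/\varepsilon)$. Summing over $i=1,\ldots,d-2$ yields the bound $O(\|h\|_\infty/q)\log(1/\varepsilon)$, uniformly over $\varepsilon\in(0,1)$, as required.

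The one point that needs attention is the degenerate situation $\sigma_i=0$ (i.e.\ $f$ has a vanishing ANOVA component of order $i+2$), in which $V_i$ is not defined by (\ref{eq:5.30}); there $\tilde{S}_i-S_i\equiv 0$, the $i$-th summand drops out, and the assertion is vacuous for that index, so one simply restricts to the indices with $\sigma_i>0$. I expect this book-keeping about degenerate components, rather than any analytic difficulty, to be the main issue. Should one instead prefer the direct second-moment computation (presumably the source of the ``$O(1/q)$'' phrasing), the steps would be: condition on $(J,B_1,B_2)$; expand $(\tilde{S}_i-S_i)^2$ into a double sum over rows $i_1,i_2$ and over pairs of index sets; discard the diagonal cross-terms with distinct index sets using $E[\psi_{0,0,c}(\pi_j(a)/q)]=0$; evaluate the surviving diagonal via the sum-zero relation (\ref{eq:3.76}) for $\nu_{u_1,\ldots,u_d}$, which --- after averaging over $(B_1,B_2)$ and conditioning on $\pi_J(a_{i,J})\in\{B_1,B_2\}$ --- replaces the conditional second moment by $\frac{4}{q-1}$ times the unconditional second moment of the corresponding $\nu^*$, and then average over $J$, where $\#\{k:u_k=1\}=|u|=i+2$ produces the factor $(i+2)/d$; and finally bound the off-diagonal ($i_1\ne i_2$) part using that two distinct rows of $A\in{\rm OA}(q^2,d,q,2)$ agree in at most one column, so that at least $i+1\ge 2$ of the $i+2$ coordinates of each index set contribute a factor $O(1/q)$, exactly as in the proof of Proposition \ref{pn:3.1}. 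In that alternative route the off-diagonal estimate is the only laborious step.
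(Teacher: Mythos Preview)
Your argument is correct and considerably shorter than the paper's. In fact, your exchangeability computation shows that $\frac{d(q-1)}{4(i+2)}E[(\tilde S_i-S_i)^2]=1$ \emph{exactly}, not merely $1+O(1/q)$; the paper's weaker statement is what is needed downstream, but your sharper identity is a pleasant by-product of the method.

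The paper proceeds instead by the direct route you sketch at the end: it introduces the split $S_i=S_{i,1}+S_{i,2}$, $\tilde S_i=\tilde S_{i,1}+\tilde S_{i,2}$ according to whether $\pi_J(a_{i,J})$ hits $B_1$ or $B_2$, reduces $E[(\tilde S_i-S_i)^2]$ to the four cross-moments $E(S_{i,1}^2)$, $E(S_{i,1}S_{i,2})$, $E(\tilde S_{i,1}S_{i,1})$, $E(\tilde S_{i,1}S_{i,2})$, and evaluates each of these explicitly (their Lemma~\ref{la:a.17}) by conditioning on $(J,B_1,B_2)$ and using the strength-$2$ orthogonal-array structure together with the sum-zero relation~(\ref{eq:3.76}). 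Combining these with the expression $\sigma_i^2=q^{-2}[1+O(1/q)]\sum_{|u|=i+2}E\{\nu^*[\ldots]^2\}$ then gives $1+O(1/q)$. The second assertion is deduced exactly as you do, from Lemma~\ref{la:5.1}.

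What each approach buys: yours is a two-line proof that exploits only exchangeability, $E(V_i^2)=1$, and Proposition~\ref{pn:a.1}, and it yields the exact identity. The paper's computation is longer but not wasted: the individual moments in Lemma~\ref{la:a.17} and the $S_{i,k}$, $\tilde S_{i,k}$ decomposition are reused verbatim in the next proposition (Proposition~\ref{pn:a.3}) to control the \emph{conditional} variance $E|E^{\cal W}[(\tilde S_i-S_i)^2]-E[(\tilde S_i-S_i)^2]|$, where your global-expectation shortcut no longer applies. So the paper front-loads machinery it needs anyway; you instead give the clean argument here and would have to develop that machinery separately for the subsequent result. Your remark about the degenerate case $\sigma_i=0$ is apt and applies equally to the paper's treatment.
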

\begin{proof}[\sc Proof.]
For $\ell=1,\ldots, d-2$ and $k=1,2$, we write
\begin{eqnarray}
\tilde{S}_{\ell, k} &=& \frac{1}{q^2 \sigma_{\ell} } \sum_{i=1}^{q^2} \sum_{0\leq u_1,\ldots, u_d \leq 1: |u|=\ell +2}
{\cal I}\{J\in \{j_1,\ldots, j_{|u|} \}, \pi_J (a_{i,J} ) = B_k\}
\nonumber \\
&&\hspace{0.5cm}\times
\nu^* [\tilde{\pi}_{j_1} (a_{i, j_1});\ldots; \tilde{\pi}_{j_{|u|}} (a_{i, j_{|u|}}) ],
\nonumber \\
S_{\ell, k} &=& \frac{1}{q^2 \sigma_{\ell} } \sum_{i=1}^{q^2} \sum_{0\leq u_1,\ldots, u_d \leq 1: |u|=\ell +2}
{\cal I}\{J\in \{j_1,\ldots, j_{|u|} \}, \pi_J (a_{i,J} ) = B_k \}
\nonumber \\
&&\hspace{0.5cm}\times
\nu^* [\pi_{j_1} (a_{i, j_1});\ldots; \pi_{j_{|u|}} (a_{i, j_{|u|}}) ].
\label{eq:a.78}
\end{eqnarray}
Then for $i=1,\ldots, d-2$,
\begin{eqnarray}
\frac{ d(q-1)}{4 (i+2)} E[ (\tilde{S}_i - S_i)^2 ]
&=& \frac{ d(q-1)}{4 (i+2)} E[ (\tilde{S}_{i,1} + \tilde{S}_{i,2} - S_{i,1} - S_{i,2} )^2 ]
\nonumber \\
&=& \frac{ d(q-1)}{4 (i+2)} E ( \tilde{S}_{i,1}^2 + \tilde{S}_{i,2}^2 + S_{i,1}^2 + S_{i,2}^2 
+ 2 \tilde{S}_{i,1} \tilde{S}_{i,2} + 2 S_{i,1} S_{i,2} 
\nonumber \\
&&
-2 \tilde{S}_{i,1} S_{i,1} 
-2 \tilde{S}_{i,1} S_{i,2} 
-2 \tilde{S}_{i,2} S_{i,1} 
-2 \tilde{S}_{i,2} S_{i,2} )
\nonumber \\
&=& \frac{ d(q-1)}{4 (i+2)} E ( 4 S_{i,1}^2
+ 4 S_{i,1} S_{i,2} 
-4 \tilde{S}_{i,1} S_{i,1} 
-4 \tilde{S}_{i,1} S_{i,2} ).
\label{eq:a.48}
\end{eqnarray}
As in Proposition \ref{pn:3.1}, we have
for $i=1,\ldots, d-2$,
\begin{eqnarray*}
\sigma_i^2
&=&
\frac{1}{q^2 }
\sum_{0\leq u_1,\ldots, u_d \leq 1: |u|= i+2}
E \{ \nu^* 
[ \pi_{j_1} (a_{1,j_1});\ldots;
\pi_{j_{|u|}} (a_{1, j_{|u|}})]^2 \}
\nonumber \\
&&
+ \sum_{0\leq u_1,\ldots, u_d \leq 1: |u|=i+2}
\frac{ (-1)^{|u|-1} |u| }{q^2 (q-1)^{|u|-2} } 
E \{ \nu^* 
[ \pi_{j_1} (a_{1,j_1});\ldots, \pi_{j_{|u|}} (a_{1, j_{|u|}}) ]^2 \}
\nonumber \\
&&
+ \sum_{0\leq u_1,\ldots, u_d \leq 1: |u|= i+2}
\frac{ (-1)^{|u|} (q+1 -|u|) }{ q^2 (q-1)^{|u|-1} }
E \{ \nu^* 
[ \pi_{j_1} (a_{1,j_1}); \ldots; \pi_{j_{|u|}} (a_{1, j_{|u|}}) ]^2 \}
\nonumber \\
&=& \frac{1}{q^2 } [ 1 + O(\frac{1}{q})]
\sum_{0\leq u_1,\cdots, u_d \leq 1: |u|= i+2}
E \{ \nu^*
[ \pi_{j_1} (a_{1,j_1});\ldots;
\pi_{j_{|u|}} (a_{1, j_{|u|}})]^2 \},
\end{eqnarray*}
as $q\rightarrow\infty$.
Hence it follows from (\ref{eq:a.48}) and Lemma \ref{la:a.17} that
\begin{equation}
\frac{ d(q-1)}{4 (i+2)} E[ (\tilde{S}_i - S_i)^2 ]
= 1 + O(1/q), \hspace{0.5cm}\forall i=1,\ldots, d-2,
\label{eq:a.705}
\end{equation}
as $q\rightarrow \infty$. Finally, (\ref{eq:a.69}) is an immediate consequence of 
(\ref{eq:a.705}) and Lemma \ref{la:5.1}.
\end{proof}

\begin{la} \label{la:a.17}
Let $S_{\ell, k}$ and $\tilde{S}_{\ell, k}$, $\ell =1,\ldots, d-2$, $k=1,2$,  be as in (\ref{eq:a.78}). Then
\begin{eqnarray*}
&& E (\tilde{S}_{\ell, k}^2 ) = E (S_{\ell, k}^2 )
\nonumber \\
&=& 
\frac{\ell +2}{d q^3 \sigma_\ell^2 } \sum_{0\leq u_1,\ldots, u_d \leq 1: |u|= \ell+2}
E\{ \nu^* [\pi_{j_1} (a_{1, j_1});\ldots; \pi_{j_{|u|}} (a_{1, j_{|u|}}) ]^2\}
\nonumber \\
&& 
+ \frac{(-1)^{\ell -1} (\ell +2)}{d q^3 (q-1)^\ell \sigma_\ell^2 } \sum_{0\leq u_1,\ldots, u_d \leq 1: |u|=\ell+2}
E\{ \nu^* [\pi_{j_1} (a_{1, j_1});\ldots; \pi_{j_{|u|}} (a_{1, j_{|u|}}) ]^2\},
\nonumber \\
&& E(\tilde{S}_{\ell, 1} \tilde{S}_{\ell, 2} ) = E(S_{\ell, 1} S_{\ell, 2} )
\nonumber \\
&=& 
\frac{ (-1)^{\ell-1} \ell (\ell +2) }{d q^2 (q-1)^{\ell +2} \sigma_\ell^2 } \sum_{0\leq u_1,\ldots, u_d \leq 1: |u|=\ell+2}
E\{ \nu^* [\pi_{j_1} (a_{1, j_1});\ldots; \pi_{j_{|u|}} (a_{1, j_{|u|}}) ]^2 \},
\nonumber \\
&& E(\tilde{S}_{\ell, 1} S_{\ell, 2}) = E(S_{\ell, 1} \tilde{S}_{\ell, 2} )
\nonumber \\
&=&
 \frac{ (-1)^\ell \ell (\ell +2) }{ d q^2 (q-1)^{\ell+1}  \sigma_\ell^2 } \sum_{0\leq u_1,\ldots, u_d \leq 1: |u|=\ell+2}
E\{ \nu^* [\pi_{j_1} (a_{1, j_1});\ldots;
\pi_{j_{|u|}} (a_{1, j_{|u|}}) ]^2 \},
\nonumber \\
&& E(\tilde{S}_{\ell, 1} S_{\ell,1}) = E(\tilde{S}_{\ell,2} S_{\ell,2})
\nonumber \\
&=&
- \frac{ \ell+2 }{ d q^3 (q-1) \sigma_\ell^2 }  \sum_{0\leq u_1,\ldots, u_d \leq 1: |u|=\ell+2}
E\{ \nu^* [\pi_{j_1} (a_{1, j_1});\ldots;
\pi_{j_{|u|}} (a_{1, j_{|u|}}) ]^2 \}
\nonumber \\
&& 
+ \frac{ (-1)^\ell (\ell+2) }{ d q^3 (q-1)^{\ell+1} \sigma_\ell^2 } \sum_{0\leq u_1,\ldots, u_d \leq 1: |u|=\ell+2}
E\{ \nu^* [\pi_{j_1} (a_{1, j_1});\ldots;
\pi_{j_{|u|}} (a_{1, j_{|u|}}) ]^2 \}.
\end{eqnarray*}
\end{la}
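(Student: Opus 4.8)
The plan is to compute the five expectations directly, exploiting (i) the symmetry of the construction, (ii) the mutual independence of the permutations $\pi_1,\ldots,\pi_d$ together with the mean-zero identity (\ref{eq:3.76}), and (iii) the defining property of an orthogonal array of strength $2$: any two distinct rows of $A$ agree in at most one column, and within a single column each symbol occurs exactly $q$ times. First I would dispose of the equalities between the two members of each displayed pair. Since $\pi_J$ is uniform on the symmetric group, $(B_1,B_2)$ is uniform over ordered pairs of distinct symbols and independent of $\pi_J$, and $\tau_{B_1,B_2}$ is an involution, the family $(\{\pi_j\}_{j},\{\tilde\pi_j\}_{j})$ has the same law as $(\{\tilde\pi_j\}_j,\{\pi_j\}_j)$; hence $(\tilde S_{\ell,1},\tilde S_{\ell,2})$ and $(S_{\ell,1},S_{\ell,2})$ are equal in distribution, which yields $E(\tilde S_{\ell,k}^2)=E(S_{\ell,k}^2)$, $E(\tilde S_{\ell,1}\tilde S_{\ell,2})=E(S_{\ell,1}S_{\ell,2})$ and $E(\tilde S_{\ell,1}S_{\ell,2})=E(S_{\ell,1}\tilde S_{\ell,2})$; the symmetry $B_1\leftrightarrow B_2$ gives $E(\tilde S_{\ell,1}S_{\ell,1})=E(\tilde S_{\ell,2}S_{\ell,2})$. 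So it suffices to evaluate $E(S_{\ell,k}^2)$, $E(S_{\ell,1}S_{\ell,2})$, $E(\tilde S_{\ell,1}S_{\ell,2})$ and $E(\tilde S_{\ell,1}S_{\ell,1})$.

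For each of these I would expand the product as a quadruple sum over row indices $i_1,i_2$ and over subsets $u,u'$ of $\{1,\ldots,d\}$ of size $\ell+2$, condition on the value of $J$ (write it as $j_l$), and then take the expectation over the $B$'s and the $\pi$'s. Two structural observations collapse the sum. First, only the diagonal $u=u'$ contributes: both indicators force $J\in u\cap u'$, so any $j\in u\bigtriangleup u'$ has $j\neq J$, whence $\pi_j=\tilde\pi_j$ appears in exactly one of the two $\nu^*$ factors and is independent of everything else, and averaging it out gives $0$ by (\ref{eq:3.76}). Second, for $i_1\neq i_2$ the indicator forces either $a_{i_1,J}=a_{i_2,J}$ (when both indicators refer to the same $B_k$) or $a_{i_1,J}\neq a_{i_2,J}$ (when they refer to $B_1$ and $B_2$), and by strength $2$ the set of columns in which rows $i_1$ and $i_2$ agree has at most one element; hence among the remaining coordinates $j_m$, $m\neq l$, at most one satisfies $a_{i_1,j_m}=a_{i_2,j_m}$. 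I would let $\beta\in\{0,1\}$ record this number, so that each case splits into at most two subcases.

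In each (sub)case the conditional expectation reduces, by repeated use of $\sum_{c}\nu^*[\ldots;c;\ldots]=0$ and of $\sum_{c\neq c'}=\sum_{c,c'}-\sum_{c=c'}$, to a scalar multiple of $\sum_{c_1,\ldots,c_{\ell+2}}\nu^*[c_1;\ldots;c_{\ell+2}]^2=q^{\ell+2}E\{\nu^*[\pi_{j_1}(a_{1,j_1});\ldots;\pi_{j_{\ell+2}}(a_{1,j_{\ell+2}})]^2\}$, the last equality holding because for any row $i$ the vector $(\pi_{j_1}(a_{i,j_1}),\ldots,\pi_{j_{\ell+2}}(a_{i,j_{\ell+2}}))$ is uniform on $\{0,\ldots,q-1\}^{\ell+2}$, so $E\{\nu^{*2}\}$ is independent of $i$. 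The number of configurations $(i_1,i_2,l)$ of each type is then read off from the OA properties: for fixed $l$ there are $q^2(q-1)$ ordered pairs with $a_{i_1,j_l}=a_{i_2,j_l}$ and $q^2(q^2-q)$ with $a_{i_1,j_l}\neq a_{i_2,j_l}$, and — since for $m_0\neq l$ the $\ell+1$ events $\{a_{i_1,j_{m_0}}=a_{i_2,j_{m_0}}\}$ are pairwise disjoint and each forces disagreement in all other columns — exactly $(\ell+1)q^2(q-1)$ of the $\beta=1$ type and $q^2(q-1)(q-\ell-1)$ of the $\beta=0$ type. Assembling the pieces and dividing by $q^4\sigma_\ell^2$, with a factor $\tfrac{1}{d}(\ell+2)$ from the choices $J=j_l$, yields, after the elementary simplification $(\ell+1)(q-1)-(q-\ell-1)=\ell q$, exactly the five displayed formulas.

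The routine but error-prone part, and the one I expect to be the real work, is the bookkeeping in the third step: for $\tilde S_{\ell,1}$ one must remember that on the event $\pi_J(a_{i,J})=B_1$ the transposition $\tau_{B_1,B_2}$ replaces the $l$-th argument of the corresponding $\nu^*$ by $B_2$ rather than $B_1$, so that in the mixed products $\tilde S_{\ell,1}S_{\ell,2}$ and $\tilde S_{\ell,1}S_{\ell,1}$ the two copies of $\nu^*$ either share the value $B_2$ in slot $l$ or carry $B_2$ against $B_1$ there; keeping the ensuing signs and powers of $q-1$ straight, and verifying they combine into the stated expressions, is where all the care is needed. The fourth-moment bound in Lemma \ref{la:a.5} is not needed here; only (\ref{eq:3.76}), the uniformity of $\pi_j(a_{i,j})$, and the two OA counts above are used.
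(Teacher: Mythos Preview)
Your proposal is correct and follows essentially the same route as the paper's proof: expand each product, reduce to the diagonal $u=u'$ via the mean-zero identity (\ref{eq:3.76}), split the $i_1\neq i_2$ contribution according to the OA coincidence pattern, and count configurations using the strength-$2$ property. The paper writes out each of the four expectations as a long chain of equalities, silently dropping the off-diagonal $u\neq u'$ terms and never spelling out the exchangeability argument for the pairwise equalities; you make both of these steps explicit, and your organizing device (the parameter $\beta\in\{0,1\}$ and the identity $(\ell+1)(q-1)-(q-\ell-1)=\ell q$) compresses what the paper does over several displays.
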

{\sc Proof.}
We observe that for $\ell=1,\ldots, d-2$ and $k=1,2$,
\begin{eqnarray*}
&& E (S_{\ell, k}^2 ) 
\nonumber \\
&=&
\frac{ 1}{ q^4 \sigma_\ell^2 } E\Big\{ \sum_{i_1=1}^{q^2} \sum_{i_2=1}^{q^2} \sum_{0\leq u_1,\ldots, u_d \leq 1: |u|=\ell+2}
{\cal I}\{J\in \{j_1,\ldots, j_{|u|} \}\}
\nonumber \\
&& \hspace{0.5cm}\times
{\cal I}\{ \pi_J (a_{i_1,J} ) = \pi_J (a_{i_2,J}) = B_k\}
\nu^* [\pi_{j_1} (a_{i_1, j_1});\ldots; \pi_{j_{|u|}} (a_{i_1, j_{|u|}}) ]
\nonumber \\
&&\hspace{0.5cm}\times
\nu^* [\pi_{j_1} (a_{i_2, j_1});\ldots; \pi_{j_{|u|}} (a_{i_2, j_{|u|}}) ]
\Big\}
\nonumber \\
&=& 
\frac{ 1}{ q^4 \sigma_\ell^2 } E\Big\{ \sum_{i_1=1}^{q^2} \sum_{0\leq u_1,\ldots, u_d \leq 1: |u|=\ell+2}
{\cal I}\{J\in \{j_1,\ldots, j_{|u|} \}\}
\nonumber \\
&&\hspace{0.5cm}\times
{\cal I}\{ \pi_J (a_{i_1,J} ) = B_k\}
\nu^* [\pi_{j_1} (a_{i_1, j_1});\ldots; \pi_{j_{|u|}} (a_{i_1, j_{|u|}}) ]^2 \Big\}
\nonumber \\
&& 
+ \frac{ 1}{ q^4 \sigma_\ell^2 } E\Big\{ \sum_{i_1=1}^{q^2} \sum_{i_2\neq i_1} \sum_{0\leq u_1,\ldots, u_d \leq 1: |u|=\ell+2}
{\cal I}\{J\in \{j_1,\ldots, j_{|u|} \}\}
\nonumber \\
&& \hspace{0.5cm}\times
{\cal I}\{ \pi_J (a_{i_1,J} ) = \pi_J (a_{i_2,J}) = B_k\} (-\frac{1}{q-1})^{|u|-1}
\nonumber \\
&&\hspace{0.5cm}\times
\nu^* [\pi_{j_1} (a_{i_1, j_1});\ldots; \pi_{j_{|u|}} (a_{i_1, j_{|u|}}) ]^2 \Big\}
\nonumber \\
&=& 
\frac{1}{ d q^5 \sigma_\ell^2 } \sum_{i_1=1}^{q^2} \sum_{0\leq u_1,\ldots, u_d \leq 1: |u|=\ell+2}
\sum_{k=1}^d {\cal I}\{k\in \{j_1,\ldots, j_{|u|} \}\}
\nonumber \\
&&\hspace{0.5cm}\times
E\{ \nu^* [\pi_{j_1} (a_{i_1, j_1});\ldots; \pi_{j_{|u|}} (a_{i_1, j_{|u|}}) ]^2\}
\nonumber \\
&& 
+ \frac{1}{d q^5 \sigma_\ell^2 } \sum_{i_1=1}^{q^2} \sum_{i_2\neq i_1} \sum_{0\leq u_1,\ldots, u_d \leq 1: |u|=\ell+2}
\sum_{k=1}^d {\cal I}\{k \in \{j_1,\ldots, j_{|u|} \}\}
\nonumber \\
&& \hspace{0.5cm}\times
{\cal I}\{ a_{i_1,k} = a_{i_2,k} \} (-\frac{1}{q-1})^{|u|-1}
E\{ \nu^* [\pi_{j_1} (a_{i_1, j_1});\ldots; \pi_{j_{|u|}} (a_{i_1, j_{|u|}}) ]^2\}
\nonumber \\
&=& 
\frac{\ell+2}{d q^3 \sigma_\ell^2 } \sum_{0\leq u_1,\ldots, u_d \leq 1: |u|=\ell+2}
E\{ \nu^* [\pi_{j_1} (a_{1, j_1});\ldots; \pi_{j_{|u|}} (a_{1, j_{|u|}}) ]^2\}
\nonumber \\
&& 
+ \frac{\ell +2}{d q^3 (q-1) \sigma_\ell^2 } \sum_{0\leq u_1,\ldots, u_d \leq 1: |u|=\ell+2}
(-\frac{1}{q-1})^{|u|- 3}
\nonumber \\
&&\hspace{0.5cm}\times
E\{ \nu^* [\pi_{j_1} (a_{1, j_1});\ldots; \pi_{j_{|u|}} (a_{1, j_{|u|}}) ]^2\},
\nonumber \\
&& E( S_{\ell,1} S_{\ell, 2})
\nonumber \\
&=& 
\frac{ 1}{ q^4 \sigma_\ell^2 } E\Big\{ \sum_{i_1=1}^{q^2} \sum_{i_2=1}^{q^2} \sum_{0\leq u_1,\ldots, u_d \leq 1: |u|=\ell+2}
{\cal I}\{J\in \{j_1,\ldots, j_{|u|} \}\}
\nonumber \\
&& \hspace{0.5cm}\times
{\cal I}\{ \pi_J (a_{i_1,J} ) = B_1
\neq \pi_J (a_{i_2,J}) = B_2\}
\nonumber \\
&&\hspace{0.5cm}\times
\nu^* [\pi_{j_1} (a_{i_1, j_1});\ldots; \pi_{j_{|u|}} (a_{i_1, j_{|u|}}) ]
\nu^* [\pi_{j_1} (a_{i_2, j_1});\ldots; \pi_{j_{|u|}} (a_{i_2, j_{|u|}}) ]
\Big\}
\nonumber \\
&=& 
\frac{ 1 }{ d q^5 (q-1) \sigma_\ell^2 } \sum_{i_1=1}^{q^2} \sum_{i_2\neq i_1} \sum_{0\leq u_1,\ldots, u_d \leq 1: |u|=\ell+2}
\sum_{k=1}^d {\cal I}\{ k\in \{j_1,\ldots, j_{|u|} \}\}
\nonumber \\
&& \hspace{0.5cm}\times
{\cal I}\{ a_{i_1,k}
\neq a_{i_2, k} \}
E\{ \nu^* [\pi_{j_1} (a_{i_1, j_1});\ldots; \pi_{j_{|u|}} (a_{i_1, j_{|u|}}) ]
\nonumber \\
&&\hspace{0.5cm}\times
\nu^* [\pi_{j_1} (a_{i_2, j_1});\ldots; \pi_{j_{|u|}} (a_{i_2, j_{|u|}}) ]
\}
\nonumber \\
&=& 
\frac{ 1 }{ d q^5 (q-1) \sigma_\ell^2 } \sum_{i_1=1}^{q^2} \sum_{i_2\neq i_1} \sum_{0\leq u_1,\ldots, u_d \leq 1: |u|=\ell+2}
\sum_{k=1}^d {\cal I}\{ k\in \{j_1,\ldots, j_{|u|} \}\}
\nonumber \\
&&\hspace{0.5cm}\times
\sum_{1\leq l\leq |u|: j_l\neq k} {\cal I}\{ a_{i_1, j_l}
= a_{i_2, j_l} \}
E\{ \nu^* [\pi_{j_1} (a_{i_1, j_1});\ldots; \pi_{j_{|u|}} (a_{i_1, j_{|u|}}) ]
\nonumber \\
&& \hspace{0.5cm}\times
\nu^* [\pi_{j_1} (a_{i_2, j_1});\ldots; \pi_{j_{|u|}} (a_{i_2, j_{|u|}}) ]
\}
\nonumber \\
&& 
+ \frac{ 1 }{ d q^5 (q-1) \sigma_\ell^2 } \sum_{i_1=1}^{q^2} \sum_{i_2\neq i_1} \sum_{0\leq u_1,\ldots, u_d \leq 1: |u|=\ell+2}
\sum_{k=1}^d {\cal I}\{ k\in \{j_1,\ldots, j_{|u|} \}\}
\nonumber \\
&&\hspace{0.5cm}\times
{\cal I}\{ a_{i_1, j_l}
\neq a_{i_2, j_l}, \forall 1\leq l\leq |u| \}
E\{ \nu^* [\pi_{j_1} (a_{i_1, j_1});\ldots; \pi_{j_{|u|}} (a_{i_1, j_{|u|}}) ]
\nonumber \\
&&\hspace{0.5cm}\times
\nu^* [\pi_{j_1} (a_{i_2, j_1});\ldots; \pi_{j_{|u|}} (a_{i_2, j_{|u|}}) ]
\}
\nonumber \\
&=& 
\frac{ 1 }{ d q^5 (q-1) \sigma_\ell^2 } \sum_{i_1=1}^{q^2} \sum_{i_2\neq i_1} \sum_{0\leq u_1,\ldots, u_d \leq 1: |u|=\ell+2}
\sum_{k=1}^d {\cal I}\{ k\in \{j_1,\ldots, j_{|u|} \}\}
\nonumber \\
&&\hspace{0.5cm}\times
\sum_{1\leq l\leq |u|: j_l\neq k} {\cal I}\{ a_{i_1, j_l}
= a_{i_2, j_l} \} (-\frac{1}{q-1})^{|u|-1}
E\{ \nu^* [\pi_{j_1} (a_{i_1, j_1});\ldots; \pi_{j_{|u|}} (a_{i_1, j_{|u|}}) ]^2 \}
\nonumber \\
&& 
+ \frac{ 1 }{ d q^5 (q-1) \sigma_\ell^2 } \sum_{i_1=1}^{q^2} \sum_{i_2\neq i_1} \sum_{0\leq u_1,\ldots, u_d \leq 1: |u|=\ell+2}
\sum_{k=1}^d {\cal I}\{ k\in \{j_1,\ldots, j_{|u|} \}\}
\nonumber \\
&&\hspace{0.5cm}\times
{\cal I}\{ a_{i_1, j_l}
\neq a_{i_2, j_l}, \forall 1\leq l\leq |u| \} (-\frac{1}{q-1})^{|u|}
E\{ \nu^* [\pi_{j_1} (a_{i_1, j_1});\ldots; \pi_{j_{|u|}} (a_{i_1, j_{|u|}}) ]^2 \}
\nonumber \\
&=& 
\frac{ 1 }{ d q^5 \sigma_\ell^2 } \sum_{i_1=1}^{q^2} \sum_{0\leq u_1,\ldots, u_d \leq 1: |u|=\ell+2}
|u| (|u|-1)
(-\frac{1}{q-1})^{|u|-1}
\nonumber \\
&& \hspace{0.5cm}\times
E\{ \nu^* [\pi_{j_1} (a_{1, j_1});\ldots; \pi_{j_{|u|}} (a_{1, j_{|u|}}) ]^2 \}
\nonumber \\
&& 
+ \frac{ 1 }{ d q^5 (q-1) \sigma_\ell^2 } \sum_{i_1=1}^{q^2} \sum_{0\leq u_1,\ldots, u_d \leq 1: |u|=\ell+2}
[q^2 -1 - |u|(q-1)] 
\nonumber \\
&& \hspace{0.5cm}\times
|u| (-\frac{1}{q-1})^{|u|}
E\{ \nu^* [\pi_{j_1} (a_{1, j_1});\ldots; \pi_{j_{|u|}} (a_{1, j_{|u|}}) ]^2 \}
\nonumber \\
&=& 
\frac{ 1 }{ d q^3 (q-1)^2 \sigma_\ell^2 } \sum_{0\leq u_1,\ldots, u_d \leq 1: |u|=\ell+2}
|u| (|u|-1)
(-\frac{1}{q-1})^{|u|-3}
\nonumber \\
&& \hspace{0.5cm}\times
E\{ \nu^* [\pi_{j_1} (a_{1, j_1});\ldots; \pi_{j_{|u|}} (a_{1, j_{|u|}}) ]^2 \}
\nonumber \\
&& 
+ \frac{ 1 }{ d q^3 \sigma_\ell^2 } \sum_{0\leq u_1,\ldots, u_d \leq 1: |u|=\ell+2}
(q + 1 - |u|) |u| (-\frac{1}{q-1})^{|u|}
\nonumber \\
&& \hspace{0.5cm}\times
E\{ \nu^* [\pi_{j_1} (a_{1, j_1});\ldots; \pi_{j_{|u|}} (a_{1, j_{|u|}}) ]^2 \}
\nonumber \\
&=& 
\frac{ 1 }{ d q^3 (q-1)^2 \sigma_\ell^2 } \sum_{0\leq u_1,\ldots, u_d \leq 1: |u|=\ell+2}
|u| (|u|-1)
(-\frac{1}{q-1})^{|u|-3}
\nonumber \\
&& \hspace{0.5cm}\times
E\{ \nu^* [\pi_{j_1} (a_{1, j_1});\ldots; \pi_{j_{|u|}} (a_{1, j_{|u|}}) ]^2 \}
\nonumber \\
&& 
- \frac{ 1 }{ d q^3 (q-1)^2 \sigma_\ell^2 } \sum_{0\leq u_1,\ldots, u_d \leq 1: |u|=\ell+2}
(1 - \frac{ |u|-2}{ q-1} ) |u| (-\frac{1}{q-1})^{|u|-3}
\nonumber \\
&& \hspace{0.5cm}\times
E\{ \nu^* [\pi_{j_1} (a_{1, j_1});\ldots; \pi_{j_{|u|}} (a_{1, j_{|u|}}) ]^2 \}
\nonumber \\
&=&
\frac{ 1 }{ d q^3 (q-1)^2 \sigma_\ell^2 } \sum_{0\leq u_1,\ldots, u_d \leq 1: |u|=\ell+2}
( |u|^2 - 2 |u| + \frac{ |u| (|u|-2) }{q-1} ) 
\nonumber \\
&& \hspace{0.5cm}\times
(-\frac{1}{q-1})^{|u|-3}
E\{ \nu^* [\pi_{j_1} (a_{1, j_1});\ldots; \pi_{j_{|u|}} (a_{1, j_{|u|}}) ]^2 \},
\nonumber \\
&& E( \tilde{S}_{\ell,1} S_{\ell,2})
\nonumber \\
&=&
\frac{ 1 }{ q^4 \sigma_\ell^2 } E\Big\{ \sum_{i_1=1}^{q^2} \sum_{i_2=1}^{q^2} \sum_{0\leq u_1,\ldots, u_d \leq 1: |u|=\ell+2}
{\cal I}\{J\in \{j_1,\ldots, j_{|u|} \}\}
\nonumber \\
&& \hspace{0.5cm}\times
{\cal I}\{ \pi_J (a_{i_1,J} ) = B_1\}
{\cal I}\{ \pi_J (a_{i_2,J}) = B_2 \}
\nonumber \\
&&\hspace{0.5cm}\times
\nu^* [\tilde{\pi}_{j_1} (a_{i_1, j_1});\ldots; \tilde{\pi}_{j_{|u|}} (a_{i_1, j_{|u|}}) ]
\nu^* [\pi_{j_1} (a_{i_2, j_1});\ldots; \pi_{j_{|u|}} (a_{i_2, j_{|u|}}) ]
\Big\}
\nonumber \\
&=& 
\frac{ 1 }{ d q^4 \sigma_\ell^2 } E\Big\{ \sum_{i_1=1}^{q^2} \sum_{i_2\neq i_1} \sum_{0\leq u_1,\ldots, u_d \leq 1: |u|=\ell+2}
\sum_{k=1}^d {\cal I}\{k\in \{j_1,\ldots, j_{|u|} \}\}
\nonumber \\
&& \hspace{0.5cm}\times
{\cal I}\{ \pi_k (a_{i_1, k} ) = B_1\neq
\pi_k (a_{i_2, k}) = B_2 \}
\nonumber \\
&&\hspace{0.5cm}\times
\nu^* [\pi_{j_1} (a_{i_1, j_1});\ldots; \tau_{B_1, B_2} \circ \pi_k (a_{i_1, k});
\ldots; \pi_{j_{|u|}} (a_{i_1, j_{|u|}}) ]
\nonumber \\
&&\hspace{0.5cm}\times
\nu^* [\pi_{j_1} (a_{i_2, j_1});\ldots; \pi_{j_{|u|}} (a_{i_2, j_{|u|}}) ]
\Big\}
\nonumber \\
&=& 
\frac{ 1 }{ d q^5 (q-1) \sigma_\ell^2 } E\Big\{ \sum_{i_1=1}^{q^2} \sum_{i_2\neq i_1} \sum_{0\leq u_1,\ldots, u_d \leq 1: |u|=\ell+2}
\sum_{k=1}^d {\cal I}\{k\in \{j_1,\ldots, j_{|u|} \}\}
\nonumber \\
&& \hspace{0.5cm}\times
{\cal I}\{ a_{i_1, k} \neq
a_{i_2, k} \}
\nu^* [\pi_{j_1} (a_{i_1, j_1});\ldots; \pi_k (a_{i_2, k});
\ldots; \pi_{j_{|u|}} (a_{i_1, j_{|u|}}) ]
\nonumber \\
&&\hspace{0.5cm}\times
\nu^* [\pi_{j_1} (a_{i_2, j_1});\ldots; \pi_{j_{|u|}} (a_{i_2, j_{|u|}}) ]
\Big\}
\nonumber \\
&=& 
\frac{ 1 }{ d q^5 (q-1) \sigma_\ell^2 } E\Big\{ \sum_{i_1=1}^{q^2} \sum_{i_2\neq i_1} \sum_{0\leq u_1,\ldots, u_d \leq 1: |u|=\ell+2}
\sum_{k=1}^d 
{\cal I}\{k\in \{j_1,\ldots, j_{|u|} \}\}
\nonumber \\
&&\hspace{0.5cm}\times
\sum_{1\leq l\leq |u|: j_l\neq k} {\cal I}\{ a_{i_1, j_l} = a_{i_2, j_l}\} 
\nu^* [\pi_{j_1} (a_{i_1, j_1});\ldots; \pi_k (a_{i_2, k});
\ldots; \pi_{j_{|u|}} (a_{i_1, j_{|u|}}) ]
\nonumber \\
&&\hspace{0.5cm}\times
\nu^* [\pi_{j_1} (a_{i_2, j_1});\ldots; \pi_{j_{|u|}} (a_{i_2, j_{|u|}}) ]
\Big\}
\nonumber \\
&& 
+ \frac{ 1 }{ d q^5 (q-1) \sigma_\ell^2 } E\Big\{ \sum_{i_1=1}^{q^2} \sum_{i_2\neq i_1} \sum_{0\leq u_1,\ldots, u_d \leq 1: |u|=\ell+2}
\sum_{k=1}^d 
{\cal I}\{k\in \{j_1,\ldots, j_{|u|} \}\}
\nonumber \\
&& \hspace{0.5cm}\times
{\cal I}\{ a_{i_1, j_l} \neq
a_{i_2, j_l}, \forall 1\leq l\leq |u| \}
\nu^* [\pi_{j_1} (a_{i_1, j_1});\ldots; \pi_k (a_{i_2, k});
\ldots; \pi_{j_{|u|}} (a_{i_1, j_{|u|}}) ]
\nonumber \\
&&\hspace{0.5cm}\times
\nu^* [\pi_{j_1} (a_{i_2, j_1});\ldots; \pi_{j_{|u|}} (a_{i_2, j_{|u|}}) ]
\Big\}
\nonumber \\
&=& 
\frac{ 1 }{ d q^5 (q-1) \sigma_\ell^2 } \sum_{i_1=1}^{q^2} \sum_{0\leq u_1,\ldots, u_d \leq 1: |u|=\ell+2}
|u| (|u|-1) (q-1) 
\nonumber \\
&&\hspace{0.5cm}\times
(-\frac{1}{q-1})^{|u|-2}
E\{ \nu^* [\pi_{j_1} (a_{i_1, j_1});\ldots;
\pi_{j_{|u|}} (a_{i_1, j_{|u|}}) ]^2 \}
\nonumber \\
&& 
+ \frac{ 1 }{ d q^5 (q-1) \sigma_\ell^2 } \sum_{i_1=1}^{q^2} \sum_{0\leq u_1,\ldots, u_d \leq 1: |u|=\ell+2}
|u| [ q^2-1 - |u|(q-1)]
\nonumber \\
&&\hspace{0.5cm}\times
(-\frac{1}{q-1})^{|u|-1}
E\{ \nu^* [\pi_{j_1} (a_{i_1, j_1});\ldots; 
\pi_{j_{|u|}} (a_{i_1, j_{|u|}}) ]^2 \}
\nonumber \\
&=& 
- \frac{ 1 }{ d q^3 (q-1) \sigma_\ell^2 } \sum_{0\leq u_1,\ldots, u_d \leq 1: |u|=\ell+2}
|u| (|u|-1) (-\frac{1}{q-1})^{|u|- 3}
\nonumber \\
&&\hspace{0.5cm}\times
E\{ \nu^* [\pi_{j_1} (a_{1, j_1});\ldots;
\pi_{j_{|u|}} (a_{1, j_{|u|}}) ]^2 \}
\nonumber \\
&& 
+ \frac{ 1 }{ d q^3 (q-1) \sigma_\ell^2 } \sum_{0\leq u_1,\ldots, u_d \leq 1: |u|=\ell+2}
|u| (1- \frac{ |u|-2}{q-1})
(-\frac{1}{q-1})^{|u|- 3}
\nonumber \\
&&\hspace{0.5cm}\times
E\{ \nu^* [\pi_{j_1} (a_{1, j_1});\ldots; 
\pi_{j_{|u|}} (a_{1, j_{|u|}}) ]^2 \}
\nonumber \\
&=& 
- \frac{ 1 }{ d q^3 (q-1) \sigma_\ell^2 } \sum_{0\leq u_1,\ldots, u_d \leq 1: |u|=\ell+2}
|u| (|u|- 2 + \frac{ |u|-2}{q-1} )  
\nonumber \\
&&\hspace{0.5cm}\times
(-\frac{1}{q-1})^{|u|- 3}
E\{ \nu^* [\pi_{j_1} (a_{1, j_1});\ldots;
\pi_{j_{|u|}} (a_{1, j_{|u|}}) ]^2 \},
\nonumber \\
&& E( \tilde{S}_{\ell, 1} S_{\ell, 1} )
\nonumber \\
&=&
\frac{ 1}{ q^4 \sigma_\ell^2 } E\Big\{ \sum_{i_1=1}^{q^2} \sum_{i_2=1}^{q^2} \sum_{0\leq u_1,\ldots, u_d \leq 1: |u|=\ell+2}
{\cal I}\{J\in \{j_1,\ldots, j_{|u|} \}\}
\nonumber \\
&&\hspace{0.5cm}\times
\nu^* [\tilde{\pi}_{j_1} (a_{i_1, j_1});\ldots; \tilde{\pi}_{j_{|u|}} (a_{i_1, j_{|u|}}) ]
\nonumber \\
&&\hspace{0.5cm}\times
\nu^* [\pi_{j_1} (a_{i_2, j_1});\ldots; \pi_{j_{|u|}} (a_{i_2, j_{|u|}}) ]
{\cal I}\{ \pi_J (a_{i_1,J} ) = 
\pi_J (a_{i_2,J}) = B_1 \}
\Big\}
\nonumber \\
&=& 
\frac{ 1 }{ d q^4 \sigma_\ell^2 } E\Big\{ \sum_{i_1=1}^{q^2} \sum_{i_2=1}^{q^2} \sum_{0\leq u_1,\ldots, u_d \leq 1: |u|=\ell+2}
\sum_{k=1}^d {\cal I}\{ k\in \{j_1,\ldots, j_{|u|} \}\}
\nonumber \\
&&\hspace{0.5cm}\times
\nu^* [\pi_{j_1} (a_{i_1, j_1});\ldots; \tau_{B_1, B_2} \circ \pi_k (a_{i_1, k});
\ldots; \pi_{j_{|u|}} (a_{i_1, j_{|u|}}) ]
\nonumber \\
&&\hspace{0.5cm}\times
\nu^* [\pi_{j_1} (a_{i_2, j_1});\ldots; \pi_{j_{|u|}} (a_{i_2, j_{|u|}}) ]
{\cal I}\{ \pi_k (a_{i_1, k} ) = 
\pi_k (a_{i_2, k}) = B_1 \}
\Big\}
\nonumber \\
&=& 
\frac{ 1 }{ d q^4 \sigma_\ell^2 } E\Big\{ \sum_{i_1=1}^{q^2} \sum_{0\leq u_1,\ldots, u_d \leq 1: |u|=\ell+2}
\sum_{k=1}^d {\cal I}\{ k\in \{j_1,\ldots, j_{|u|} \}\}
\nonumber \\
&&\hspace{0.5cm}\times
{\cal I}\{ \pi_k (a_{i_1, k} ) = 
B_1 \}
\nu^* [\pi_{j_1} (a_{i_1, j_1});\ldots; \pi_{j_{|u|}} (a_{i_1, j_{|u|}}) ]
\nonumber \\
&&\hspace{0.5cm}\times
\nu^* [\pi_{j_1} (a_{i_1, j_1});\ldots; \tau_{B_1, B_2} \circ \pi_k (a_{i_1, k});
\ldots; \pi_{j_{|u|}} (a_{i_1, j_{|u|}}) ]
\Big\}
\nonumber \\
&& 
+ \frac{ 1 }{ d q^4 \sigma_\ell^2 } E\Big\{ \sum_{i_1=1}^{q^2} \sum_{i_2\neq i_1} \sum_{0\leq u_1,\ldots, u_d \leq 1: |u|=\ell+2}
\sum_{k=1}^d {\cal I}\{ k\in \{j_1,\ldots, j_{|u|} \}\}
\nonumber \\
&& \hspace{0.5cm}\times
{\cal I}\{ \pi_k (a_{i_1, k} ) = 
\pi_k (a_{i_2, k}) = B_1 \}
\nonumber \\
&&\hspace{0.5cm}\times
\nu^* [\pi_{j_1} (a_{i_1, j_1});\ldots; \tau_{B_1, B_2} \circ \pi_k (a_{i_1, k});
\ldots; \pi_{j_{|u|}} (a_{i_1, j_{|u|}}) ]
\nonumber \\
&&\hspace{0.5cm}\times
\nu^* [\pi_{j_1} (a_{i_2, j_1});\ldots; \pi_{j_{|u|}} (a_{i_2, j_{|u|}}) ]
\Big\}
\nonumber \\
&=& 
\frac{ 1 }{ d q^5 \sigma_\ell^2 }  \sum_{i_1=1}^{q^2} \sum_{0\leq u_1,\ldots, u_d \leq 1: |u|=\ell+2}
\sum_{k=1}^d {\cal I}\{ k\in \{j_1,\ldots, j_{|u|} \}\}
(-\frac{1}{q-1})
\nonumber \\
&&\hspace{0.5cm}\times
E\{ \nu^* [\pi_{j_1} (a_{i_1, j_1});\ldots;
\pi_{j_{|u|}} (a_{i_1, j_{|u|}}) ]^2 \}
\nonumber \\
&& 
+ \frac{ 1 }{ d q^4 \sigma_\ell^2 } \sum_{i_1=1}^{q^2} \sum_{i_2\neq i_1} \sum_{0\leq u_1,\ldots, u_d \leq 1: |u|=\ell+2}
\sum_{k=1}^d {\cal I}\{ k\in \{j_1,\ldots, j_{|u|} \}\}
\nonumber \\
&& \hspace{0.5cm}\times
{\cal I}\{ \pi_k (a_{i_1, k} ) = 
\pi_k (a_{i_2, k}) = B_1 \}
(-\frac{1}{q-1})^{|u|}
\nonumber \\
&&\hspace{0.5cm}\times
E\{ \nu^* [\pi_{j_1} (a_{i_1, j_1});\ldots;
\pi_{j_{|u|}} (a_{i_1, j_{|u|}}) ]^2 \}
\nonumber \\
&=& 
- \frac{ \ell +2 }{ d q^3 (q-1) \sigma_\ell^2 }  \sum_{0\leq u_1,\ldots, u_d \leq 1: |u|=\ell+2}
E\{ \nu^* [\pi_{j_1} (a_{1, j_1});\ldots;
\pi_{j_{|u|}} (a_{1, j_{|u|}}) ]^2 \}
\nonumber \\
&& 
- \frac{ \ell +2 }{ d q^3 (q-1)^2 \sigma_\ell^2 } \sum_{0\leq u_1,\ldots, u_d \leq 1: |u|=\ell+2}
(-\frac{1}{q-1})^{|u| - 3}
\nonumber \\
&&\hspace{0.5cm}\times
E\{ \nu^* [\pi_{j_1} (a_{1, j_1});\ldots;
\pi_{j_{|u|}} (a_{1, j_{|u|}}) ]^2 \}.
\end{eqnarray*}
This proves Lemma \ref{la:a.17}. \hfill $\Box$

\begin{pn} \label{pn:a.3}
Let $S_i$ and $\tilde{S}_i$, $i=1,\ldots, d-2$, be as in (\ref{eq:3.75}).
Then for $i=1,\ldots, d-2$,
\begin{displaymath}
\frac{ d(q-1)}{4 (i+2)} \Big| E[ (\tilde{S}_i - S_i)^2  \frac{\partial^2}{\partial v_i^2} \psi_{\varepsilon^2} (V) ]
- E[ (\tilde{S}_i - S_i)^2 ] E [ \frac{\partial^2}{\partial v_i^2} \psi_{\varepsilon^2} (V) ] \Big|
\leq O(\frac{\|h\|_\infty}{ q^{1/2}} ) \log(1/\varepsilon),
\end{displaymath}
as $q\rightarrow\infty$ uniformly over $0<\varepsilon<1$.
\end{pn}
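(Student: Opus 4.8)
\smallskip
\noindent{\sc Proof proposal.} The quantity to be bounded is, up to the factor $d(q-1)/[4(i+2)]=O(q)$, the covariance ${\rm Cov}\big((\tilde S_i-S_i)^2,\,\partial^2_{v_i^2}\psi_{\varepsilon^2}(V)\big)$, so the asserted inequality is equivalent to showing this covariance is $O\big(\|h\|_\infty q^{-3/2}\big)\log(1/\varepsilon)$. Put $X=(\tilde S_i-S_i)^2\ge 0$ and $Y=\partial^2_{v_i^2}\psi_{\varepsilon^2}(V)$. Since $V$, hence $Y$, is measurable with respect to ${\cal W}$, while $X$ becomes ${\cal W}$-measurable only after $J$ and $(B_1,B_2)$ are fixed, the plan is to pass first to the conditional expectation: ${\rm Cov}(X,Y)={\rm Cov}(E^{{\cal W}}X,Y)=E\big[(E^{{\cal W}}X-EX)Y\big]$, so that with the bound $\sup_{i,j}\|\partial^2_{v_iv_j}\psi_t\|_\infty\le\|h\|_\infty\log(1/t)$ of Lemma~\ref{la:5.1} at $t=\varepsilon^2$,
\[
\big|{\rm Cov}(X,Y)\big|\le\|\partial^2_{v_i^2}\psi_{\varepsilon^2}\|_\infty\,E\big|E^{{\cal W}}X-EX\big|\le 2\|h\|_\infty\log(1/\varepsilon)\,\big({\rm Var}(E^{{\cal W}}X)\big)^{1/2}.
\]
Multiplying by $O(q)$, the whole proposition reduces to the purely ${\cal W}$-measurable estimate ${\rm Var}(E^{{\cal W}}X)=O(q^{-3})$, uniformly in $\varepsilon$; note $\varepsilon$ has now disappeared, which is why only $\|\partial^2\psi\|_\infty$ enters here and not $\partial^3\psi$ — the latter is responsible instead for the $1/(\varepsilon q^{1/2})$ contributions to (\ref{eq:5.7}).

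\smallskip
To estimate ${\rm Var}(E^{{\cal W}}X)$ I would first make $E^{{\cal W}}X$ explicit. Using the decomposition $\tilde S_i-S_i=(\tilde S_{i,1}-S_{i,1})+(\tilde S_{i,2}-S_{i,2})$ from (\ref{eq:a.78}), whose summands are supported on the \emph{disjoint} row sets $\{i':\pi_J(a_{i',J})=B_1\}$ and $\{i':\pi_J(a_{i',J})=B_2\}$ — disjointness being forced because each column of $A\in{\rm OA}(q^2,d,q,2)$ carries every symbol exactly $q$ times — expand $X$ into $(\tilde S_{i,1}-S_{i,1})^2$, $(\tilde S_{i,2}-S_{i,2})^2$ and $2(\tilde S_{i,1}-S_{i,1})(\tilde S_{i,2}-S_{i,2})$ and average over $J$ and $(B_1,B_2)$. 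The crucial simplification is that averaging over the transposition target collapses the sums through the ANOVA identity (\ref{eq:3.76})/(\ref{eq:a.544}) — summing $\nu^*$ over one coordinate's symbol yields $0$ — exactly as in Proposition~\ref{pn:a.1} and Lemma~\ref{la:a.17}. After this collapse $E^{{\cal W}}X$ is an explicit normalized bilinear expression in the ${\cal W}$-measurable sums $\sum_{i'}\nu^*[\pi_{j_1}(a_{i',j_1});\dots;\pi_{j_{|u|}}(a_{i',j_{|u|}})]$ over rows carrying a fixed symbol in a fixed column, any two distinct such rows agreeing (by strength $2$) in at most one column.

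\smallskip
Computing ${\rm Var}(E^{{\cal W}}X)=E[(E^{{\cal W}}X)^2]-(EX)^2$ then amounts to a fourth-order moment calculation in the $\nu^*$'s, parallel to Lemma~\ref{la:a.17} but with four $\nu^*$-factors. It is governed by (i) the ANOVA identity (\ref{eq:3.76}), which annihilates every configuration in which some coordinate's symbol is summed freely; (ii) the strength-$2$ property, which forces distinct rows sharing a column symbol to differ in every other column and so limits how many of the four $\nu^*$'s can share arguments; and (iii) the uniform bound $E\{\nu^*[\,\cdot\,]^4\}=O(1)$ of Lemma~\ref{la:a.5}. The configurations that would reconstruct $(EX)^2$ cancel, and I expect every surviving configuration to carry at least one extra power $q^{-1}$ beyond the crude count, giving ${\rm Var}(E^{{\cal W}}X)=O(q^{-3})$; feeding this back through the first display finishes the proof.

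\smallskip
The main obstacle will be this last step: the four-index combinatorial bookkeeping establishing ${\rm Var}(E^{{\cal W}}X)=O(q^{-3})$. One must enumerate the matching patterns of the two pairs of (row, coordinate-set) indices appearing in $E[(E^{{\cal W}}X)^2]$ and verify the promised extra $q^{-1}$ in each case; this is the fourth-moment analogue of the two-index enumeration carried out in Lemma~\ref{la:a.17}, with algebra of the same flavour as, though bulkier than, the expansion already performed in the proof of Lemma~\ref{la:a.5}. A minor technical point is the interchange of $E^{{\cal W}}$ with the averages over $J$ and $(B_1,B_2)$, which is immediate since those variables are independent of ${\cal W}$ and all quantities involved lie in $L^2$.
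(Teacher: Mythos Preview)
Your proposal is correct and follows essentially the same route as the paper: condition on ${\cal W}$ to pull $\partial^2_{v_i^2}\psi_{\varepsilon^2}(V)$ out, invoke the $\|h\|_\infty\log(1/\varepsilon)$ bound of Lemma~\ref{la:5.1}, decompose $(\tilde S_i-S_i)^2$ via $\tilde S_{i,k},S_{i,k}$, and reduce to fourth-moment combinatorics driven by (\ref{eq:3.76}), the strength-$2$ constraint, and the $O(1)$ fourth-moment bound in Lemma~\ref{la:a.5}. The paper carries out exactly this program, packaging the ``main obstacle'' you flag into five separate lemmas (Lemmas~\ref{la:a.18}--\ref{la:a.22}), one per type of cross term $\tilde S_{i,a}S_{j,b}$ etc., each proved by the partition-of-$\{1,2,3,4\}$ enumeration you anticipate.
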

{\sc Proof.}
Let $\tilde{S}_{i,k}$ and $S_{i,k}$, $i=1,\ldots, d-2$, $k=1,2$, be as in (\ref{eq:a.78}).
Then it follows from Lemma \ref{la:a.17} that
\begin{eqnarray}
&& \frac{ d(q-1)}{4 (i+2)} \Big| E[ (\tilde{S}_i - S_i)^2  \frac{\partial^2}{\partial v_i^2} \psi_{\varepsilon^2} (V) ]
- E[ (\tilde{S}_i - S_i)^2 ] E [ \frac{\partial^2}{\partial v_i^2} \psi_{\varepsilon^2} (V) ] \Big|
\nonumber \\
&=& \frac{ d(q-1)}{4(i+2)}
| E \{ [ \frac{\partial^2}{\partial v_i^2} \psi_{\varepsilon^2} (V) ]
E^{\cal W} [ (\tilde{S}_i - S_i)^2 - E (\tilde{S}_i - S_i)^2 ] \} |
\nonumber \\
&\leq & \frac{ d(q-1)}{4(i+2)}
\{ \sup_{v\in {\mathbb R}^{d-2}} |
\frac{\partial^2}{\partial v_i^2} \psi_{\varepsilon^2} (v) | \}
E \Big| E^{\cal W} [ (\tilde{S}_{i, 1} +\tilde{S}_{i,2} - S_{i, 1} 
- S_{i,2})^2 
\nonumber \\
&&\hspace{0.5cm}
- E (\tilde{S}_{i, 1} 
+\tilde{S}_{i,2} - S_{i,1} - S_{i,2} )^2 ] \Big|
\nonumber \\
&\leq & \frac{d(q-1)}{4(i+2)}
\{ \sup_{v\in {\mathbb R}^{d-2}} |
\frac{\partial^2}{\partial v_i^2} \psi_{\varepsilon^2} (v) | \}
\Big\{
E | E^{\cal W} [\tilde{S}_{i, 1}^2 - E (\tilde{S}_{i,1}^2 ) ]|
\nonumber \\
&&\hspace{0.5cm}
+ E | E^{\cal W} [\tilde{S}_{i, 2}^2 - E (\tilde{S}_{i,2}^2 ) ]|
+ E | E^{\cal W} [S_{i, 1}^2 - E (S_{i,1}^2 ) ]|
+ E | E^{\cal W} [S_{i, 2}^2 - E (S_{i,2}^2 ) ]|
\nonumber \\
&&\hspace{0.5cm}
+ 2 E | E^{\cal W} [\tilde{S}_{i, 1} \tilde{S}_{i,2} - E(
\tilde{S}_{i, 1} \tilde{S}_{i,2} )
 ]|
+ 2 E | E^{\cal W} [\tilde{S}_{i, 1} S_{i,1} - E(
\tilde{S}_{i, 1} S_{i,1} )
 ]|
\nonumber \\
&&\hspace{0.5cm}
+ 2 E | E^{\cal W} [\tilde{S}_{i, 1} S_{i,2} - E(
\tilde{S}_{i, 1} S_{i,2} )
 ]|
+ 2 E | E^{\cal W} [\tilde{S}_{i, 2} S_{i,1} - E(
\tilde{S}_{i, 2} S_{i,1} )
 ]|
\nonumber \\
&&\hspace{0.5cm}
+ 2 E | E^{\cal W} [\tilde{S}_{i, 2} S_{i,2} - E(
\tilde{S}_{i, 2} S_{i,2} )
 ]|
+ 2 E | E^{\cal W} [S_{i, 1} S_{i,2} - E(
S_{i, 1} S_{i,2} )
 ]| \Big\}
\nonumber \\
&\leq & \frac{ d(q-1)}{2 (i+2)}
 \{ \sup_{v\in {\mathbb R}^{d-2}} |
\frac{\partial^2}{\partial v_i^2} \psi_{\varepsilon^2} (v) | \}
\Big\{
E | E^{\cal W} [\tilde{S}_{i, 1}^2 - E (\tilde{S}_{i,1}^2 ) ]|
\nonumber \\
&&\hspace{0.5cm}
+ E | E^{\cal W} [S_{i, 1}^2 - E (S_{i,1}^2 ) ]|
+ E | E^{\cal W} ( \tilde{S}_{i, 1} \tilde{S}_{i,2} ) |
+ 2 E | E^{\cal W} ( \tilde{S}_{i, 1} S_{i,1} ) |
\nonumber \\
&&\hspace{0.5cm}
+ 2 E | E^{\cal W} ( \tilde{S}_{i, 1} S_{i,2} )|
+ E | E^{\cal W} ( S_{i, 1} S_{i,2} )|
+O(q^{-2})
\Big\},
\label{eq:a.67}
\end{eqnarray}
as $q\rightarrow\infty$ uniformly over $0<\varepsilon<1$.
Finally we conclude from Lemma \ref{la:5.1} and Lemmas \ref{la:a.18} to \ref{la:a.22} that
\begin{displaymath}
\frac{ d(q-1)}{4 (i+2)} \Big| E[ (\tilde{S}_i - S_i)^2  \frac{\partial^2}{\partial v_i^2} \psi_{\varepsilon^2} (V) ]
- E[ (\tilde{S}_i - S_i)^2 ] E [ \frac{\partial^2}{\partial v_i^2} \psi_{\varepsilon^2} (V) ] \Big|
\leq O(\frac{\|h\|_\infty}{ q^{1/2}} ) \log(1/\varepsilon),
\end{displaymath}
as $q\rightarrow\infty$ uniformly over $0<\varepsilon<1$. This proves Proposition \ref{pn:a.3}.
\hfill $\Box$

\begin{la} \label{la:a.18}
With the notation of (\ref{eq:a.67}), for $\ell=1,\ldots, d-2$,
\begin{eqnarray*}
\frac{d (q-1)}{2 (\ell+2)} E| E^{\cal W} [S_{\ell,1}^2 - E ( S_{\ell,1}^2) ] | &=& O(q^{-1/2}),
\nonumber \\
\frac{d (q-1)}{2 (\ell+2)} E| E^{\cal W} [\tilde{S}_{\ell,1}^2 - E ( \tilde{S}_{\ell,1}^2) ] | &=& O(q^{-1/2}),
\hspace{0.5cm}\mbox{as $q\rightarrow \infty$.}
\end{eqnarray*}
\end{la}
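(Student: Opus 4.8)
{\sc Proof plan.}
The plan is to produce a closed form for $E^{{\cal W}}(S_{\ell,1}^2)$ by integrating out the auxiliary randomness $(J,B_1,B_2)$, and then to bound the $L^1$ deviation of this (now ${\cal W}$-measurable) quantity from its mean by its standard deviation. Recall that $S_{\ell,1}$ depends on the permutations $\pi_j$ only through the ${\cal W}$-measurable variables $\pi_j(a_{i,j})$ and on the independent triple $(J,B_1,B_2)$, with $J$ uniform on $\{1,\ldots,d\}$ and $B_1$ uniform on $\{0,\ldots,q-1\}$. Squaring the defining sum in (\ref{eq:a.78}) and using that, given ${\cal W}$, the event $\{J=k,\ \pi_k(a_{i_1,k})=\pi_k(a_{i_2,k})=B_1\}$ has conditional probability $(dq)^{-1}{\cal I}\{a_{i_1,k}=a_{i_2,k}\}$ (because $\pi_k$ is a bijection), I would obtain
\[
E^{{\cal W}}(S_{\ell,1}^2) = \frac{1}{dq^5\sigma_\ell^2}\sum_{i_1=1}^{q^2}\sum_{i_2=1}^{q^2}\sum_{0\leq u_1,\ldots,u_d\leq 1:\, |u|=\ell+2}\;\sum_{0\leq u_1',\ldots,u_d'\leq 1:\, |u'|=\ell+2}\Big(\sum_{k\in S_u\cap S_{u'}}{\cal I}\{a_{i_1,k}=a_{i_2,k}\}\Big)\nu^*_{i_1,u}\,\nu^*_{i_2,u'},
\]
where $S_u=\{j:u_j\geq 1\}$ and $\nu^*_{i,u}=\nu^*[\pi_{j_1}(a_{i,j_1});\ldots;\pi_{j_{|u|}}(a_{i,j_{|u|}})]$ with $\{j_1,\ldots,j_{|u|}\}=S_u$. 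This is a quadratic form in the $\nu^*_{i,u}$'s with coefficients fixed by $A$, and by the tower property its expectation is $E(S_{\ell,1}^2)$, already evaluated in Lemma \ref{la:a.17}.

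Since $E|E^{{\cal W}}(S_{\ell,1}^2)-E(S_{\ell,1}^2)|\leq\{{\rm Var}(E^{{\cal W}}(S_{\ell,1}^2))\}^{1/2}$ by Jensen's inequality, it suffices to prove ${\rm Var}(E^{{\cal W}}(S_{\ell,1}^2))=O(q^{-3})$; multiplying the resulting $O(q^{-3/2})$ bound by $d(q-1)/\{2(\ell+2)\}=O(q)$ then yields $O(q^{-1/2})$. I would split the quadratic form into its dominant part $X_0:=\frac{\ell+2}{dq^5\sigma_\ell^2}\sum_u\sum_i(\nu^*_{i,u})^2$ (the diagonal $i_1=i_2$, $u=u'$) and a remainder. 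For the remainder (either $i_1\neq i_2$ or $u\neq u'$) the strength-$2$ property of $A$ forces two distinct rows to agree in at most one column, so each surviving term carries, after one use of the ANOVA identity $\sum_c\langle f,\prod_l\psi_{0,0,c_{j_l}}\rangle=0$ from Lemma \ref{la:a.5}, a factor of order $(q-1)^{-(|u|-1)}$ or smaller; such terms are $O(q^{-2})$ in magnitude and contribute negligibly to the variance. For the dominant part, $E[X_0]=\frac{\ell+2}{dq}(1+O(q^{-1}))$ by the asymptotics of $\sigma_\ell^2$ derived as in Proposition \ref{pn:3.1}, while ${\rm Var}(X_0)=\big(\frac{\ell+2}{dq^5\sigma_\ell^2}\big)^2{\rm Var}\big(\sum_u\sum_i(\nu^*_{i,u})^2\big)$ with prefactor of order $q^{-6}$.

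The core estimate is thus ${\rm Var}\big(\sum_u\sum_i(\nu^*_{i,u})^2\big)=O(q^3)$, which I would obtain by bounding ${\rm Cov}((\nu^*_{i_1,u})^2,(\nu^*_{i_2,u'})^2)$ via: Lemma \ref{la:a.5}, giving $E[(\nu^*_{i,u})^4]=O(1)$; the mutual independence of the $\pi_k$'s together with $\sum_c\langle f,\prod\psi\rangle=0$, which makes the covariance of two blocks sharing no common column negligible; and the fact that two distinct rows sharing exactly one column $k$ carry, through the common value $\pi_k(a_{i_1,k})=\pi_k(a_{i_2,k})$, a covariance of order $1$, while by the strength-$2$ property there are only $O(q^3)$ such row pairs (each column of $A$ contains every symbol in exactly $q$ rows, hence ${q \choose 2}$ row pairs per symbol, over $|u|$ relevant columns). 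These $O(q^3)$ one-common-column pairs dominate, the purely diagonal terms contribute only $O(q^2)$, and so ${\rm Var}(X_0)=O(q^{-6})\,O(q^3)=O(q^{-3})$, as needed. The main obstacle is this last point: one must verify that the ANOVA orthogonality annihilates the would-be $O(q^4)$ leading behaviour of the one-common-column covariances and leaves exactly an $O(q^3)$ residue --- this is where the identity $\sum_c\langle f,\prod\psi\rangle=0$ and the asymptotics of $\langle f,\prod_l\psi_{0,0,c_{j_l}}\rangle$ from Lemma \ref{la:a.5} are essential. Finally, the bound for $\tilde{S}_{\ell,1}$ follows the same route: integrating out $(J,B_1,B_2)$ now replaces one slot of $\nu^*$ by $B_2$, and after applying $\sum_b\nu^*[\ldots;b;\ldots]=0$ one is left with a quadratic form of the same shape (and, by Lemma \ref{la:a.17}, the same mean $E(\tilde{S}_{\ell,1}^2)=E(S_{\ell,1}^2)$), to which the identical variance estimate applies.
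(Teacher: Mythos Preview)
Your high-level plan is sound and matches the paper: write out $E^{\cal W}(S_{\ell,1}^2)$ by integrating out $(J,B_1)$, bound the $L^1$ deviation by $\{{\rm Var}(E^{\cal W}(S_{\ell,1}^2))\}^{1/2}$, and show this variance is $O(q^{-3})$.

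Where your argument departs from the paper --- and where it has a genuine gap --- is in the variance estimate itself. You split $E^{\cal W}(S_{\ell,1}^2)=X_0+R$ and claim the off-diagonal remainder $R$ ``contributes negligibly to the variance'' because, after one application of the ANOVA identity, each cross term carries a factor of order $(q-1)^{-(|u|-1)}$. But the ANOVA identity you invoke bounds the \emph{expectation} $E[\nu^*_{i_1,u}\nu^*_{i_2,u'}]$, not the random product. Pointwise, $R$ is a sum of $O(q^3)$ terms each of size $O(1)$ (the $\nu^*$'s are bounded) with a prefactor of order $q^{-3}$, so a priori $|R|=O(1)$; deducing ${\rm Var}(R)=O(q^{-3})$ and ${\rm Cov}(X_0,R)=O(q^{-3})$ requires controlling expectations of \emph{four}-fold products of the $\nu^*$'s, which your argument does not supply. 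The same four-fold products already appear in your ``core estimate'' for ${\rm Var}(X_0)$ --- which you yourself flag as the main obstacle --- so the split $X_0+R$ does not actually shortcut anything.

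The paper does not attempt this split. It computes $E\{[E^{\cal W}(S_{\ell,1}^2)]^2\}$ directly as a four-fold sum over $(i_1,i_2,i_3,i_4)$ and decomposes it according to the fifteen set partitions of $\{1,2,3,4\}$, writing $R_{A_1,\ldots,A_p}$ for the piece with coincidence pattern $A_1,\ldots,A_p$. Using (\ref{eq:3.76}) and the $O(1)$ fourth-moment bound from Lemma~\ref{la:a.5}, each block is shown to be $O(q^{-3})$ except $R_{\{1,2\},\{3,4\}}$, which equals $[E(S_{\ell,1}^2)]^2+O(q^{-3})$; subtraction gives ${\rm Var}(E^{\cal W}(S_{\ell,1}^2))=O(q^{-3})$. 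The same partition scheme, with an extra sum over the transposed value $\tilde c_k$, handles $\tilde{S}_{\ell,1}$. This systematic partition decomposition is what your proposal is implicitly reaching toward but does not carry out; to complete your route you would end up reproducing it.
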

{\sc Proof.}
First we observe  from (\ref{eq:a.78}) that for $\ell=1,\ldots, d-2$,
\begin{eqnarray*}
&& E^{\cal W} ( S_{\ell, 1}^2 )
\nonumber \\
&=&
E^{\cal W} \Big\{ \frac{1}{ q^4 \sigma_\ell^2} \sum_{i_1=1}^{q^2}
\sum_{i_2=1}^{q^2} \sum_{0\leq u^{(1)}_1,\ldots, u^{(1)}_d\leq 1: |u^{(1)}|=\ell +2}
\sum_{0\leq u^{(2)}_1,\ldots, u^{(2)}_d\leq 1: |u^{(2)}|=\ell +2}
\nonumber \\
&&
\times
{\cal I} \{ J\in \{j_{1,1},\ldots, j_{1,|u^{(1)}|} \} \cap \{ j_{2,1},\ldots, j_{2,|u^{(2)}|} \} \}
{\cal I} \{ \pi_J (a_{i_1, J} ) = B_1 = \pi_J (a_{i_2, J}) \}
\nonumber \\
&&
\times
\nu^* [\pi_{j_{1,1}} (a_{i_1, j_{1,1}});\ldots; \pi_{j_{1,|u^{(1)}|}} (a_{i_1, j_{1, |u^{(1)}|}} ) ]
\nu^* [\pi_{j_{2,1}} (a_{i_2, j_{2,1}});\ldots; \pi_{j_{2,|u^{(2)}|}} (a_{i_2, j_{2, |u^{(2)}|}} ) ]
\Big\}
\nonumber \\
&=&
\frac{1}{d q^5 \sigma_\ell^2} \sum_{i_1=1}^{q^2}
\sum_{i_2=1}^{q^2} \sum_{0\leq u^{(1)}_1,\ldots, u^{(1)}_d\leq 1: |u^{(1)}|=\ell +2}
\sum_{0\leq u^{(2)}_1,\ldots, u^{(2)}_d\leq 1: |u^{(2)}|=\ell +2}
\nonumber \\
&&
\times
\sum_{k=1}^d {\cal I} \{ k\in \{j_{1,1},\ldots, j_{1,|u^{(1)}|} \} \cap \{ j_{2,1},\ldots, j_{2,|u^{(2)}|} \} \}
{\cal I} \{ a_{i_1, k} = a_{i_2, k} \}
\nonumber \\
&&
\times
\nu^* [\pi_{j_{1,1}} (a_{i_1, j_{1,1}});\ldots; \pi_{j_{1,|u^{(1)}|}} (a_{i_1, j_{1, |u^{(1)}|}} ) ]
\nu^* [\pi_{j_{2,1}} (a_{i_2, j_{2,1}});\ldots; \pi_{j_{2,|u^{(2)}|}} (a_{i_2, j_{2, |u^{(2)}|}} ) ].
\end{eqnarray*}
Here for $l=1,\ldots, 4$, given $u^{(l)}= (u^{(l)}_1,\ldots, u^{(l)}_d )'$, we write
$k\in \{ j_{l,1}, \ldots, j_{l, |u^{(l)}| } \}$
if and only if $u^{(l)}_k \geq 1$.
Hence
\begin{eqnarray}
&& E \{ [ E^{\cal W} (S_{\ell, 1}^2 ) ]^2 \}
\nonumber \\
&=&
E\Big\{
\frac{1}{d^2 q^{10} \sigma_\ell^4} \sum_{i_1=1}^{q^2}
\sum_{i_2=1}^{q^2} \sum_{i_3=1}^{q^2} \sum_{i_4=1}^{q^2}
\sum_{0\leq u^{(1)}_1,\ldots, u^{(1)}_d\leq 1: |u^{(1)}|= \ell+2}
\nonumber \\
&&\hspace{0.5cm}\times
\sum_{0\leq u^{(2)}_1,\ldots, u^{(2)}_d\leq 1: |u^{(2)}|= \ell+2}
\sum_{0\leq u^{(3)}_1,\ldots, u^{(3)}_d\leq 1: |u^{(3)}|= \ell+2}
\sum_{0\leq u^{(4)}_1,\ldots, u^{(4)}_d\leq 1: |u^{(4)}|= \ell+2}
\nonumber \\
&&\hspace{0.5cm}\times
\sum_{k_1=1}^d \sum_{k_3=1}^d {\cal I} \{ k_1\in \{j_{1,1},\ldots, j_{1,|u^{(1)}|} \} \cap \{ j_{2,1},\ldots, j_{2,|u^{(2)}|} \} \}
\nonumber \\
&&\hspace{0.5cm}\times {\cal I} \{ k_3\in \{j_{3,1},\ldots, j_{3,|u^{(3)}|} \} \cap \{ j_{4,1},\ldots, j_{4,|u^{(4)}|} \} \}
{\cal I} \{ a_{i_1, k_1} = a_{i_2, k_1} \}
{\cal I} \{ a_{i_3, k_3} = a_{i_4, k_3} \}
\nonumber \\
&&\hspace{0.5cm}\times
\nu^* [\pi_{j_{1,1}} (a_{i_1, j_{1,1}});\ldots; \pi_{j_{1,|u^{(1)}|}} (a_{i_1, j_{1, |u^{(1)}|}} ) ]
\nu^* [\pi_{j_{2,1}} (a_{i_2, j_{2,1}});\ldots; \pi_{j_{2,|u^{(2)}|}} (a_{i_2, j_{2, |u^{(2)}|}} ) ]
\nonumber \\
&&\hspace{0.5cm}\times
\nu^* [\pi_{j_{3,1}} (a_{i_3, j_{3,1}});\ldots; \pi_{j_{3,|u^{(3)}|}} (a_{i_3, j_{3, |u^{(3)}|}} ) ]
\nu^* [\pi_{j_{4,1}} (a_{i_4, j_{4,1}});\ldots; \pi_{j_{4,|u^{(4)}|}} (a_{i_4, j_{4, |u^{(4)}|}} ) ]
\Big\}
\nonumber \\
&=& R_{\{1,2,3,4\}} + R_{\{1,2,3\},\{4\}} + R_{\{1,2,4\},\{3\}} + R_{\{1,3,4\},\{2\}} + R_{\{2,3,4\},\{1\}} 
\nonumber \\
&&
+ R_{\{1,2\}, \{3\},\{4\}} + R_{\{1,3\}, \{2\}, \{4\}} + R_{\{1,4\},\{2\}, \{3\}} + R_{\{2,3\}, \{1\}, \{4\}} + R_{\{2,4\}, \{1\}, \{3\}}
\nonumber \\
&&
+ R_{\{3,4\}, \{1\}, \{2\}} + R_{\{1,2\},\{3,4\}} + R_{\{1,3\},\{2,4\}} + R_{\{1,4\}, \{2,3\}} + R_{\{1\},\{2\},\{3\},\{4\}},
\label{eq:a.61}
\end{eqnarray}
where given a partition, say $A_1,\ldots, A_p$, of $\{1,2,3,4\}$, (that is $\cup_{i=1}^p A_i = \{1,2,3,4\}$ and $A_i\cap A_j
= \emptyset$ whenever $i\neq j$), we define
\begin{eqnarray}
&& R_{A_1,\ldots, A_p}
\nonumber \\
&=&
E\Big\{
\frac{1}{d^2 q^{10} \sigma_\ell^4} {\sum}^*
\sum_{0\leq u^{(1)}_1,\ldots, u^{(1)}_d\leq 1: |u^{(1)}|= \ell+2}
\sum_{0\leq u^{(2)}_1,\ldots, u^{(2)}_d\leq 1: |u^{(2)}|= \ell+2}
\sum_{0\leq u^{(3)}_1,\ldots, u^{(3)}_d\leq 1: |u^{(3)}|= \ell+2}
\nonumber \\
&&\times
\sum_{0\leq u^{(4)}_1,\ldots, u^{(4)}_d\leq 1: |u^{(4)}|= \ell+2}
\sum_{k_1=1}^d \sum_{k_3=1}^d {\cal I} \{ k_1\in \{j_{1,1},\ldots, j_{1,|u^{(1)}|} \} \cap \{ j_{2,1},\ldots, j_{2,|u^{(2)}|} \} \}
\nonumber \\
&&\times {\cal I} \{ k_3\in \{j_{3,1},\ldots, j_{3,|u^{(3)}|} \} \cap \{ j_{4,1},\ldots, j_{4,|u^{(4)}|} \} \}
{\cal I} \{ a_{i_1, k_1} = a_{i_2, k_1} \}
{\cal I} \{ a_{i_3, k_3} = a_{i_4, k_3} \}
\nonumber \\
&&\times
\nu^* [\pi_{j_{1,1}} (a_{i_1, j_{1,1}});\ldots; \pi_{j_{1,|u^{(1)}|}} (a_{i_1, j_{1, |u^{(1)}|}} ) ]
\nu^* [\pi_{j_{2,1}} (a_{i_2, j_{2,1}});\ldots; \pi_{j_{2,|u^{(2)}|}} (a_{i_2, j_{2, |u^{(2)}|}} ) ]
\nonumber \\
&&\times
\nu^* [\pi_{j_{3,1}} (a_{i_3, j_{3,1}});\ldots; \pi_{j_{3,|u^{(3)}|}} (a_{i_3, j_{3, |u^{(3)}|}} ) ]
\nu^* [\pi_{j_{4,1}} (a_{i_4, j_{4,1}});\ldots; \pi_{j_{4,|u^{(4)}|}} (a_{i_4, j_{4, |u^{(4)}|}} ) ]
\Big\},
\label{eq:a.49}
\end{eqnarray}
and $\Sigma^*$ denotes summation over $1\leq i_1,i_2,i_3, i_4\leq q^2$ such that if
$k,l\in A_j$, then $i_k=i_l$, and if $k,l$ are in different $A_j$'s, then $i_k\neq i_l$.
In order to evaluate the terms on the right hand side of (\ref{eq:a.61}), it is convenient to further define the following subsets of $\{1,\ldots, d\}$:
for $\{l_1, l_2, l_3, l_4\} = \{1,2,3,4\}$,
\begin{eqnarray*}
\Theta_{\{1,2,3,4\}} &=& \Big\{l\in \cap_{\alpha =1}^4 \{ j_{l_\alpha, 1},\ldots, j_{l_\alpha, |u^{(l_\alpha)}|} \} \Big\},
\nonumber \\
\Theta_{\{l_1,l_2,l_3\}} &=& \Big\{ l\in \cap_{\alpha=1}^3 \{ j_{l_\alpha, 1},\ldots, j_{l_\alpha, |u^{(l_\alpha)}|} \} \Big\}
\backslash
\{ j_{l_4, 1},\ldots, j_{l_4, |u^{(l_4)}|} \},
\nonumber \\
\Theta_{\{l_1,l_2\}} &=& \Big\{ l\in \cap_{\alpha =1}^2 \{ j_{l_\alpha, 1},\ldots, j_{l_\alpha, |u^{(l_\alpha )}|} \} \Big\}
\backslash
\cup_{\beta=3}^4 \{ j_{l_\beta, 1},\ldots, j_{l_\beta, |u^{(l_\beta )}|} \},
\nonumber \\
\Theta_{\{l_1\}} &=& \Big\{ l\in \{ j_{l_1, 1},\ldots, j_{l_1, |u^{(l_1 )}|} \} \Big\}
\backslash
\cup_{\beta=2}^4 \{ j_{l_\beta, 1},\ldots, j_{l_\beta, |u^{(l_\beta )}|} \}.
\end{eqnarray*}
Now we observe from Lemma \ref{la:a.5} that as $q\rightarrow \infty$,
\begin{eqnarray*}
&& R_{\{1,2,3,4\}} 
\nonumber \\
&=&
E\Big\{
\frac{1}{d^2 q^{10} \sigma_\ell^4} \sum_{i_1=i_2=i_3=i_4=1}^{q^2}
\sum_{0\leq u^{(1)}_1,\ldots, u^{(1)}_d\leq 1: |u^{(1)}|=\ell+2}
\nonumber \\
&&\times
\sum_{0\leq u^{(2)}_1,\ldots, u^{(2)}_d\leq 1: |u^{(2)}|= \ell+2}
\sum_{0\leq u^{(3)}_1,\ldots, u^{(3)}_d\leq 1: |u^{(3)}|= \ell+2}
\sum_{0\leq u^{(4)}_1,\ldots, u^{(4)}_d\leq 1: |u^{(4)}|= \ell+2}
\nonumber \\
&&\times
\sum_{k_1=1}^d \sum_{k_3=1}^d {\cal I} \{ k_1\in \{j_{1,1},\ldots, j_{1,|u^{(1)}|} \} \cap \{ j_{2,1},\ldots, j_{2,|u^{(2)}|} \} \}
\nonumber \\
&&\times {\cal I} \{ k_3\in \{j_{3,1},\ldots, j_{3,|u^{(3)}|} \} \cap \{ j_{4,1},\ldots, j_{4,|u^{(4)}|} \} \}
\nonumber \\
&&\times
\nu^* [\pi_{j_{1,1}} (a_{i_1, j_{1,1}});\ldots; \pi_{j_{1,|u^{(1)}|}} (a_{i_1, j_{1, |u^{(1)}|}} ) ]
\nu^* [\pi_{j_{2,1}} (a_{i_2, j_{2,1}});\ldots; \pi_{j_{2,|u^{(2)}|}} (a_{i_2, j_{2, |u^{(2)}|}} ) ]
\nonumber \\
&&\times
\nu^* [\pi_{j_{3,1}} (a_{i_3, j_{3,1}});\ldots; \pi_{j_{3,|u^{(3)}|}} (a_{i_3, j_{3, |u^{(3)}|}} ) ]
\nu^* [\pi_{j_{4,1}} (a_{i_4, j_{4,1}});\ldots; \pi_{j_{4,|u^{(4)}|}} (a_{i_4, j_{4, |u^{(4)}|}} ) ]
\Big\}
\nonumber \\
&=& O(\frac{1}{q^4}),
\nonumber \\
&& R_{\{1,2,3\},\{4\}} 
\nonumber \\
&=& R_{\{1,2,4\},\{3\}} \hspace{0.1cm} = \hspace{0.1cm} R_{\{1,3,4\},\{2\}} \hspace{0.1cm} = \hspace{0.1cm} R_{\{2,3,4\},\{1\}} 
\nonumber \\
&=&
E\Big\{
\frac{1}{d^2 q^{10} \sigma_\ell^4} \sum_{i_1=i_2=i_3=1}^{q^2}
\sum_{1\leq i_4\leq q^2: i_4\neq i_1}
\sum_{0\leq u^{(1)}_1,\ldots, u^{(1)}_d\leq 1: |u^{(1)}|=\ell+2}
\nonumber \\
&&\times
\sum_{0\leq u^{(2)}_1,\ldots, u^{(2)}_d\leq 1: |u^{(2)}|=\ell+2}
\sum_{0\leq u^{(3)}_1,\ldots, u^{(3)}_d\leq 1: |u^{(3)}|=\ell+2}
\sum_{0\leq u^{(4)}_1,\ldots, u^{(4)}_d\leq 1: |u^{(4)}|=\ell+2}
\nonumber \\
&&\times
\sum_{k_1=1}^d \sum_{k_3=1}^d {\cal I} \{ k_1\in \{j_{1,1},\ldots, j_{1,|u^{(1)}|} \} \cap \{ j_{2,1},\ldots, j_{2,|u^{(2)}|} \} \}
\nonumber \\
&&\times {\cal I} \{ k_3\in \{j_{3,1},\ldots, j_{3,|u^{(3)}|} \} \cap \{ j_{4,1},\ldots, j_{4,|u^{(4)}|} \} \}
{\cal I} \{ a_{i_3, k_3} = a_{i_4, k_3} \}
\nonumber \\
&&\times
\nu^* [\pi_{j_{1,1}} (a_{i_1, j_{1,1}});\ldots; \pi_{j_{1,|u^{(1)}|}} (a_{i_1, j_{1, |u^{(1)}|}} ) ]
\nu^* [\pi_{j_{2,1}} (a_{i_2, j_{2,1}});\ldots; \pi_{j_{2,|u^{(2)}|}} (a_{i_2, j_{2, |u^{(2)}|}} ) ]
\nonumber \\
&&\times
\nu^* [\pi_{j_{3,1}} (a_{i_3, j_{3,1}});\ldots; \pi_{j_{3,|u^{(3)}|}} (a_{i_3, j_{3, |u^{(3)}|}} ) ]
\nu^* [\pi_{j_{4,1}} (a_{i_4, j_{4,1}});\ldots; \pi_{j_{4,|u^{(4)}|}} (a_{i_4, j_{4, |u^{(4)}|}} ) ]
\Big\}
\nonumber \\
&=& O(\frac{1}{q^3}),
\nonumber \\
&& R_{\{1,2\},\{3\},\{4\}} 
\hspace{0.1cm} = \hspace{0.1cm} R_{\{3,4\},\{1\},\{2\}}
\nonumber \\
&=&
E\Big\{
\frac{1}{d^2 q^{10} \sigma_\ell^4} \sum_{i_1=i_2=1}^{q^2}
\sum_{1\leq i_3\leq q^2: i_3\neq i_1} \sum_{1\leq i_4\leq q^2: i_4\neq i_1, i_3}
\sum_{0\leq u^{(1)}_1,\ldots, u^{(1)}_d\leq 1: |u^{(1)}|=\ell+2}
\nonumber \\
&&\times
\sum_{0\leq u^{(2)}_1,\ldots, u^{(2)}_d\leq 1: |u^{(2)}|=\ell+2}
\sum_{0\leq u^{(3)}_1,\ldots, u^{(3)}_d\leq 1: |u^{(3)}|=\ell+2}
\sum_{0\leq u^{(4)}_1,\ldots, u^{(4)}_d\leq 1: |u^{(4)}|=\ell+2}
\nonumber \\
&&\times
\sum_{k_1=1}^d \sum_{k_3=1}^d {\cal I} \{ k_1\in \{j_{1,1},\ldots, j_{1,|u^{(1)}|} \} \cap \{ j_{2,1},\ldots, j_{2,|u^{(2)}|} \} \}
\nonumber \\
&&\times {\cal I} \{ k_3\in \{j_{3,1},\ldots, j_{3,|u^{(3)}|} \} \cap \{ j_{4,1},\ldots, j_{4,|u^{(4)}|} \} \}
{\cal I} \{ a_{i_3, k_3} = a_{i_4, k_3} \}
\nonumber \\
&&\times
\nu^* [\pi_{j_{1,1}} (a_{i_1, j_{1,1}});\ldots; \pi_{j_{1,|u^{(1)}|}} (a_{i_1, j_{1, |u^{(1)}|}} ) ]
\nu^* [\pi_{j_{2,1}} (a_{i_2, j_{2,1}});\ldots; \pi_{j_{2,|u^{(2)}|}} (a_{i_2, j_{2, |u^{(2)}|}} ) ]
\nonumber \\
&&\times
\nu^* [\pi_{j_{3,1}} (a_{i_3, j_{3,1}});\ldots; \pi_{j_{3,|u^{(3)}|}} (a_{i_3, j_{3, |u^{(3)}|}} ) ]
\nu^* [\pi_{j_{4,1}} (a_{i_4, j_{4,1}});\ldots; \pi_{j_{4,|u^{(4)}|}} (a_{i_4, j_{4, |u^{(4)}|}} ) ]
\Big\}
\nonumber \\
&=& \frac{O(1)}{q^{10} \sigma_\ell^4} \sum_{i_1=1}^{q^2}
\sum_{1\leq i_3\leq q^2: i_3\neq i_1} \sum_{1\leq i_4\leq q^2: i_4\neq i_1, i_3}
\sum_{0\leq u^{(1)}_1,\ldots, u^{(1)}_d\leq 1: |u^{(1)}|=\ell+2}
\sum_{0\leq u^{(2)}_1,\ldots, u^{(2)}_d\leq 1: |u^{(2)}|=\ell+2}
\nonumber \\
&&\times
\sum_{0\leq u^{(3)}_1,\ldots, u^{(3)}_d\leq 1: |u^{(3)}|=\ell+2}
\sum_{0\leq u^{(4)}_1,\ldots, u^{(4)}_d\leq 1: |u^{(4)}|=\ell+2}
\nonumber \\
&&\times
\sum_{k_3=1}^d 
{\cal I} \{ k_3\in \{j_{3,1},\ldots, j_{3,|u^{(3)}|} \} \cap \{ j_{4,1},\ldots, j_{4,|u^{(4)}|} \} \}
{\cal I} \{ a_{i_3, k_3} = a_{i_4, k_3} \}
0^{ |\Theta_{\{4\}}| }
\nonumber \\
&&\times
(\frac{1}{q})^{|\Theta_{\{1,2,3,4\}}|+ |\Theta_{\{1,2,4\}}| + |\Theta_{\{1,3,4\}}| + |\Theta_{\{2,3,4\}}| + |\Theta_{\{1,4\}}|+|\Theta_{\{2,4\}}|
+ |\Theta_{\{3,4\}}| -1}
\nonumber \\
&=&
\frac{O(1)}{q^{10} \sigma_\ell^4} \sum_{i_1=1}^{q^2}
\sum_{1\leq i_3\leq q^2: i_3\neq i_1} \sum_{1\leq i_4\leq q^2: i_4\neq i_1, i_3}
\sum_{0\leq u^{(1)}_1,\ldots, u^{(1)}_d\leq 1: |u^{(1)}|=\ell+2}
\sum_{0\leq u^{(2)}_1,\ldots, u^{(2)}_d\leq 1: |u^{(2)}|=\ell+2}
\nonumber \\
&&\times
\sum_{0\leq u^{(3)}_1,\ldots, u^{(3)}_d\leq 1: |u^{(3)}|=\ell+2}
\sum_{0\leq u^{(4)}_1,\ldots, u^{(4)}_d\leq 1: |u^{(4)}|=\ell+2}
\nonumber \\
&&\times
\sum_{k_3=1}^d 
{\cal I} \{ k_3\in \{j_{3,1},\ldots, j_{3,|u^{(3)}|} \} \cap \{ j_{4,1},\ldots, j_{4,|u^{(4)}|} \} \}
{\cal I} \{ a_{i_3, k_3} = a_{i_4, k_3} \} (\frac{1}{q})^{|u^{(4)}|-1}
\nonumber \\
&=& O(\frac{1}{q^3}),
\end{eqnarray*}
as $q\rightarrow \infty$. In a similar manner, we have
\begin{eqnarray*}
&& R_{\{1,3\},\{2\}, \{4\}} \hspace{0.1cm}= \hspace{0.1cm} R_{\{1,4\},\{2\},\{3\}} \hspace{0.1cm} = \hspace{0.1cm} R_{\{2,3\},\{1\},\{4\}} 
\hspace{0.1cm}= \hspace{0.1cm} R_{\{2,4\}, \{1\},\{3\}}
\nonumber \\
&=&
E\Big\{
\frac{1}{d^2 q^{10} \sigma_\ell^4} \sum_{i_1=i_3=1}^{q^2}
\sum_{1\leq i_2\leq q^2: i_2\neq i_1} \sum_{1\leq i_4\leq q^2: i_4\neq i_1, i_2}
\sum_{0\leq u^{(1)}_1,\ldots, u^{(1)}_d\leq 1: |u^{(1)}|=\ell+2}
\nonumber \\
&&\times
\sum_{0\leq u^{(2)}_1,\ldots, u^{(2)}_d\leq 1: |u^{(2)}|=\ell+2}
\sum_{0\leq u^{(3)}_1,\ldots, u^{(3)}_d\leq 1: |u^{(3)}|=\ell+2}
\sum_{0\leq u^{(4)}_1,\ldots, u^{(4)}_d\leq 1: |u^{(4)}|=\ell+2}
\nonumber \\
&&\times
\sum_{k_1=1}^d \sum_{k_3=1}^d {\cal I} \{ k_1\in \{j_{1,1},\ldots, j_{1,|u^{(1)}|} \} \cap \{ j_{2,1},\ldots, j_{2,|u^{(2)}|} \} \}
\nonumber \\
&&\times {\cal I} \{ k_3\in \{j_{3,1},\ldots, j_{3,|u^{(3)}|} \} \cap \{ j_{4,1},\ldots, j_{4,|u^{(4)}|} \} \}
{\cal I} \{ a_{i_1, k_1} = a_{i_2, k_1} \}
{\cal I} \{ a_{i_3, k_3} = a_{i_4, k_3} \}
\nonumber \\
&&\times
\nu^* [\pi_{j_{1,1}} (a_{i_1, j_{1,1}});\ldots; \pi_{j_{1,|u^{(1)}|}} (a_{i_1, j_{1, |u^{(1)}|}} ) ]
\nu^* [\pi_{j_{2,1}} (a_{i_2, j_{2,1}});\ldots; \pi_{j_{2,|u^{(2)}|}} (a_{i_2, j_{2, |u^{(2)}|}} ) ]
\nonumber \\
&&\times
\nu^* [\pi_{j_{3,1}} (a_{i_3, j_{3,1}});\ldots; \pi_{j_{3,|u^{(3)}|}} (a_{i_3, j_{3, |u^{(3)}|}} ) ]
\nu^* [\pi_{j_{4,1}} (a_{i_4, j_{4,1}});\ldots; \pi_{j_{4,|u^{(4)}|}} (a_{i_4, j_{4, |u^{(4)}|}} ) ]
\Big\}
\nonumber \\
&=& O(\frac{1}{q^3}),
\nonumber \\
&& R_{\{1,3\},\{2,4\}} \hspace{0.1cm}=\hspace{0.1cm} R_{\{1,4\},\{2,3\}} 
\nonumber \\
&=&
E\Big\{
\frac{1}{d^2 q^{10} \sigma_\ell^4} \sum_{i_1=i_3=1}^{q^2}
\sum_{1\leq i_2=i_4\leq q^2: i_2\neq i_1}
\sum_{0\leq u^{(1)}_1,\ldots, u^{(1)}_d\leq 1: |u^{(1)}|=\ell+2}
\nonumber \\
&&\times
\sum_{0\leq u^{(2)}_1,\ldots, u^{(2)}_d\leq 1: |u^{(2)}|=\ell+2}
\sum_{0\leq u^{(3)}_1,\ldots, u^{(3)}_d\leq 1: |u^{(3)}|=\ell+2}
\sum_{0\leq u^{(4)}_1,\ldots, u^{(4)}_d\leq 1: |u^{(4)}|=\ell+2}
\nonumber \\
&&\times
\sum_{k_1=1}^d \sum_{k_3=1}^d {\cal I} \{ k_1\in \{j_{1,1},\ldots, j_{1,|u^{(1)}|} \} \cap \{ j_{2,1},\ldots, j_{2,|u^{(2)}|} \} \}
\nonumber \\
&& \times
{\cal I} \{ k_3\in \{j_{3,1},\ldots, j_{3,|u^{(3)}|} \} \cap \{ j_{4,1},\ldots, j_{4,|u^{(4)}|} \} \}
{\cal I} \{ a_{i_1, k_1} = a_{i_2, k_1} \}
{\cal I} \{ a_{i_3, k_3} = a_{i_4, k_3} \}
\nonumber \\
&&\times
\nu^* [\pi_{j_{1,1}} (a_{i_1, j_{1,1}});\ldots; \pi_{j_{1,|u^{(1)}|}} (a_{i_1, j_{1, |u^{(1)}|}} ) ]
\nu^* [\pi_{j_{2,1}} (a_{i_2, j_{2,1}});\ldots; \pi_{j_{2,|u^{(2)}|}} (a_{i_2, j_{2, |u^{(2)}|}} ) ]
\nonumber \\
&&\times
\nu^* [\pi_{j_{3,1}} (a_{i_3, j_{3,1}});\ldots; \pi_{j_{3,|u^{(3)}|}} (a_{i_3, j_{3, |u^{(3)}|}} ) ]
\nu^* [\pi_{j_{4,1}} (a_{i_4, j_{4,1}});\ldots; \pi_{j_{4,|u^{(4)}|}} (a_{i_4, j_{4, |u^{(4)}|}} ) ]
\Big\}
\nonumber \\
&=& O(\frac{1}{q^3}).
\end{eqnarray*}

Next we have
\begin{eqnarray*}
&& R_{\{1\}, \{2\}, \{3\}, \{4\}} 
\nonumber \\
&=&
E\Big\{
\frac{1}{d^2 q^{10} \sigma_\ell^4} \sum_{i_1=1}^{q^2} \sum_{1\leq i_2\leq q^2: i_2\neq i_1}
\sum_{1\leq i_3\leq q^2: i_3\neq i_1, i_2} \sum_{1\leq i_4\leq q^2: i_4\neq i_1, i_2, i_3}
\nonumber \\
&&\times
\sum_{0\leq u^{(1)}_1,\ldots, u^{(1)}_d\leq 1: |u^{(1)}|=\ell+2}
\sum_{0\leq u^{(2)}_1,\ldots, u^{(2)}_d\leq 1: |u^{(2)}|=\ell+2}
\nonumber \\
&&\times
\sum_{0\leq u^{(3)}_1,\ldots, u^{(3)}_d\leq 1: |u^{(3)}|=\ell+2}
\sum_{0\leq u^{(4)}_1,\ldots, u^{(4)}_d\leq 1: |u^{(4)}|=\ell+2}
\nonumber \\
&&\times
\sum_{k_1=1}^d \sum_{k_3=1}^d {\cal I} \{ k_1\in \{j_{1,1},\ldots, j_{1,|u^{(1)}|} \} \cap \{ j_{2,1},\ldots, j_{2,|u^{(2)}|} \} \}
\nonumber \\
&&\times {\cal I} \{ k_3\in \{j_{3,1},\ldots, j_{3,|u^{(3)}|} \} \cap \{ j_{4,1},\ldots, j_{4,|u^{(4)}|} \} \}
{\cal I} \{ a_{i_1, k_1} = a_{i_2, k_1} \}
{\cal I} \{ a_{i_3, k_3} = a_{i_4, k_3} \}
\nonumber \\
&&\times
\nu^* [\pi_{j_{1,1}} (a_{i_1, j_{1,1}});\ldots; \pi_{j_{1,|u^{(1)}|}} (a_{i_1, j_{1, |u^{(1)}|}} ) ]
\nu^* [\pi_{j_{2,1}} (a_{i_2, j_{2,1}});\ldots; \pi_{j_{2,|u^{(2)}|}} (a_{i_2, j_{2, |u^{(2)}|}} ) ]
\nonumber \\
&&\times
\nu^* [\pi_{j_{3,1}} (a_{i_3, j_{3,1}});\ldots; \pi_{j_{3,|u^{(3)}|}} (a_{i_3, j_{3, |u^{(3)}|}} ) ]
\nu^* [\pi_{j_{4,1}} (a_{i_4, j_{4,1}});\ldots; \pi_{j_{4,|u^{(4)}|}} (a_{i_4, j_{4, |u^{(4)}|}} ) ]
\Big\}
\nonumber \\
&=&
\frac{ O(1)}{q^{10} \sigma_\ell^4} \sum_{i_1=1}^{q^2} \sum_{1\leq i_2\leq q^2: i_2\neq i_1}
\sum_{1\leq i_3\leq q^2: i_3\neq i_1, i_2} \sum_{1\leq i_4\leq q^2: i_4\neq i_1, i_2, i_3}
\sum_{0\leq u^{(1)}_1,\ldots, u^{(1)}_d\leq 1: |u^{(1)}|=\ell+2}
\nonumber \\
&&\times
\sum_{0\leq u^{(2)}_1,\ldots, u^{(2)}_d\leq 1: |u^{(2)}|=\ell+2}
\sum_{0\leq u^{(3)}_1,\ldots, u^{(3)}_d\leq 1: |u^{(3)}|=\ell+2}
\sum_{0\leq u^{(4)}_1,\ldots, u^{(4)}_d\leq 1: |u^{(4)}|=\ell+2}
\nonumber \\
&&\times
\sum_{k_1=1}^d \sum_{k_3=1}^d {\cal I} \{ k_1\in \{j_{1,1},\ldots, j_{1,|u^{(1)}|} \} \cap \{ j_{2,1},\ldots, j_{2,|u^{(2)}|} \} \}
\nonumber \\
&&\times {\cal I} \{ k_3\in \{j_{3,1},\ldots, j_{3,|u^{(3)}|} \} \cap \{ j_{4,1},\ldots, j_{4,|u^{(4)}|} \} \}
{\cal I} \{ a_{i_1, k_1} = a_{i_2, k_1} \}
{\cal I} \{ a_{i_3, k_3} = a_{i_4, k_3} \}
\nonumber \\
&&\times
0^{ |\Theta_{\{1\}}| + |\Theta_{\{2\}}| + |\Theta_{\{3\}}| + |\Theta_{\{4\}}| }
(\frac{1}{q^2})^{ |\Theta_{\{1,2,3,4\} }| + |\Theta_{\{1,2,3\}}|+  |\Theta_{\{1,2,4\}}| + |\Theta_{\{1,3,4\}}| + |\Theta_{\{2,3,4\}}|} 
\nonumber \\
&&\times (\frac{1}{q})^{ |\Theta_{\{1,2\}}| + |\Theta_{\{1,3\}}| + |\Theta_{\{1,4\}}| + |\Theta_{\{2,3\}}| + |\Theta_{\{2,4\}}|
+ |\Theta_{\{3,4\}}| -2} {\cal I}\{ |\Theta_{\{1,2\}}| \geq |\Theta_{\{3,4\}} |\}
\nonumber \\
&=&
\frac{ O(1)}{q^{10} \sigma_\ell^4} \sum_{i_1=1}^{q^2} \sum_{1\leq i_2\leq q^2: i_2\neq i_1}
\sum_{1\leq i_3\leq q^2: i_3\neq i_1, i_2} \sum_{1\leq i_4\leq q^2: i_4\neq i_1, i_2, i_3}
\sum_{0\leq u^{(1)}_1,\ldots, u^{(1)}_d\leq 1: |u^{(1)}|=\ell+2}
\nonumber \\
&&\times
\sum_{0\leq u^{(2)}_1,\ldots, u^{(2)}_d\leq 1: |u^{(2)}|=\ell+2}
\sum_{0\leq u^{(3)}_1,\ldots, u^{(3)}_d\leq 1: |u^{(3)}|=\ell+2}
\sum_{0\leq u^{(4)}_1,\ldots, u^{(4)}_d\leq 1: |u^{(4)}|=\ell+2}
\nonumber \\
&&\times
\sum_{k_1=1}^d \sum_{k_3=1}^d 
{\cal I} \{ a_{i_1, k_1} = a_{i_2, k_1} \}
{\cal I} \{ a_{i_3, k_3} = a_{i_4, k_3} \} (\frac{1}{q})^{|u^{(3)}| + |u^{(4)}| -2}
\nonumber \\
&=& O(\frac{1}{q^4}).
\end{eqnarray*}
The second last equality can be obtained using the heuristic that $a_{i_1, j} \neq a_{i_2, j}$ when $i_1\neq i_2$ etc.\  and $k_1 \neq k_3$.
However the above bound remains valid when $k_1= k_3$ or when $a_{i_1, j}= a_{i_2, j}$ since this additional constraint introduces
a factor of $q$ while reduces the number of $i_1,\ldots, i_4$ by a factor of $1/q$.
Finally,
\begin{eqnarray*}
&& R_{\{1,2\},\{3,4\}} 
\nonumber \\
&=&
E\Big\{
\frac{1}{d^2 q^{10} \sigma_\ell^4} \sum_{i_1=i_2=1}^{q^2}
\sum_{1\leq i_3=i_4\leq q^2: i_4\neq i_1}
\sum_{0\leq u^{(1)}_1,\ldots, u^{(1)}_d\leq 1: |u^{(1)}|=\ell+2}
\nonumber \\
&&\times
\sum_{0\leq u^{(2)}_1,\ldots, u^{(2)}_d\leq 1: |u^{(2)}|=\ell+2}
\sum_{0\leq u^{(3)}_1,\ldots, u^{(3)}_d\leq 1: |u^{(3)}|=\ell+2}
\sum_{0\leq u^{(4)}_1,\ldots, u^{(4)}_d\leq 1: |u^{(4)}|=\ell+2}
\nonumber \\
&&\times
\sum_{k_1=1}^d \sum_{k_3=1}^d {\cal I} \{ k_1\in \{j_{1,1},\ldots, j_{1,|u^{(1)}|} \} \cap \{ j_{2,1},\ldots, j_{2,|u^{(2)}|} \} \}
\nonumber \\
&&\times {\cal I} \{ k_3\in \{j_{3,1},\ldots, j_{3,|u^{(3)}|} \} \cap \{ j_{4,1},\ldots, j_{4,|u^{(4)}|} \} \}
\nonumber \\
&&\times
\nu^* [\pi_{j_{1,1}} (a_{i_1, j_{1,1}});\ldots; \pi_{j_{1,|u^{(1)}|}} (a_{i_1, j_{1, |u^{(1)}|}} ) ]
\nu^* [\pi_{j_{2,1}} (a_{i_2, j_{2,1}});\ldots; \pi_{j_{2,|u^{(2)}|}} (a_{i_2, j_{2, |u^{(2)}|}} ) ]
\nonumber \\
&&\times
\nu^* [\pi_{j_{3,1}} (a_{i_3, j_{3,1}});\ldots; \pi_{j_{3,|u^{(3)}|}} (a_{i_3, j_{3, |u^{(3)}|}} ) ]
\nu^* [\pi_{j_{4,1}} (a_{i_4, j_{4,1}});\ldots; \pi_{j_{4,|u^{(4)}|}} (a_{i_4, j_{4, |u^{(4)}|}} ) ]
\Big\}
\nonumber \\
&=&
\frac{1}{d^2 q^{10} \sigma_\ell^4} \sum_{i_1=1}^{q^2}
\sum_{1\leq i_3\leq q^2: i_3\neq i_1}
\sum_{0\leq u^{(1)}_1,\ldots, u^{(1)}_d\leq 1: |u^{(1)}|=\ell+2}
\nonumber \\
&&\times
\sum_{0\leq u^{(2)}_1,\ldots, u^{(2)}_d\leq 1: |u^{(2)}|=\ell+2}
\sum_{0\leq u^{(3)}_1,\ldots, u^{(3)}_d\leq 1: |u^{(3)}|=\ell+2}
\sum_{0\leq u^{(4)}_1,\ldots, u^{(4)}_d\leq 1: |u^{(4)}|=\ell+2}
\nonumber \\
&&\times
\sum_{k_1=1}^d \sum_{k_3=1}^d {\cal I} \{ k_1\in \{j_{1,1},\ldots, j_{1,|u^{(1)}|} \} \cap \{ j_{2,1},\ldots, j_{2,|u^{(2)}|} \} \}
\nonumber \\
&&\times {\cal I} \{ k_3\in \{j_{3,1},\ldots, j_{3,|u^{(3)}|} \} \cap \{ j_{4,1},\ldots, j_{4,|u^{(4)}|} \} \}
\nonumber \\
&&\times
E \Big\{ \nu^* [\pi_{j_{1,1}} (a_{i_1, j_{1,1}});\ldots; \pi_{j_{1,|u^{(1)}|}} (a_{i_1, j_{1, |u^{(1)}|}} ) ]
\nu^* [\pi_{j_{2,1}} (a_{i_1, j_{2,1}});\ldots; \pi_{j_{2,|u^{(2)}|}} (a_{i_1, j_{2, |u^{(2)}|}} ) ]
\nonumber \\
&&\times
\nu^* [\pi_{j_{3,1}} (a_{i_3, j_{3,1}});\ldots; \pi_{j_{3,|u^{(3)}|}} (a_{i_3, j_{3, |u^{(3)}|}} ) ]
\nu^* [\pi_{j_{4,1}} (a_{i_3, j_{4,1}});\ldots; \pi_{j_{4,|u^{(4)}|}} (a_{i_3, j_{4, |u^{(4)}|}} ) ]
\Big\}
\nonumber \\
&=&
\frac{1}{d^2 q^{10} \sigma_\ell^4} \sum_{i_1=1}^{q^2}
\sum_{1\leq i_3\leq q^2: i_3\neq i_1}
\sum_{0\leq u^{(1)}_1,\ldots, u^{(1)}_d\leq 1: |u^{(1)}|=\ell+2}
\sum_{0\leq u^{(3)}_1,\ldots, u^{(3)}_d\leq 1: |u^{(3)}|=\ell+2}
|u^{(1)}| |u^{(3)}|
\nonumber \\
&&\times
E \Big\{ \nu^* [\pi_{j_{1,1}} (a_{i_1, j_{1,1}});\ldots; \pi_{j_{1,|u^{(1)}|}} (a_{i_1, j_{1, |u^{(1)}|}} ) ]^2
\nonumber \\
&&\times
\nu^* [\pi_{j_{3,1}} (a_{i_3, j_{3,1}});\ldots; \pi_{j_{3,|u^{(3)}|}} (a_{i_3, j_{3, |u^{(3)}|}} ) ]^2
\Big\} + O(\frac{1}{q^3})
\nonumber \\
&=&
\frac{1}{d^2 q^6 \sigma_\ell^4}
\sum_{0\leq u^{(1)}_1,\ldots, u^{(1)}_d\leq 1: |u^{(1)}|=\ell+2}
\sum_{0\leq u^{(3)}_1,\ldots, u^{(3)}_d\leq 1: |u^{(3)}|=\ell+2}
|u^{(1)}| |u^{(3)}| 
\nonumber \\
&&\times
E \{ \nu^* [\pi_{j_{1,1}} (a_{1, j_{1,1}});\ldots; \pi_{j_{1,|u^{(1)}|}} (a_{1, j_{1, |u^{(1)}|}} ) ]^2 \}
\nonumber \\
&&\times
E \{ \nu^* [\pi_{j_{3,1}} (a_{1, j_{3,1}});\ldots; \pi_{j_{3,|u^{(3)}|}} (a_{1, j_{3, |u^{(3)}|}} ) ]^2
\} + O(\frac{1}{q^3})
\nonumber \\
&=& [ E ( S_{\ell, 1}^2 ) ]^2 + O(\frac{1}{q^3}),
\end{eqnarray*}
as $q\rightarrow\infty$. The last equality uses Lemma \ref{la:a.17}.
Consequently it follows from (\ref{eq:a.61}) that
\begin{eqnarray*}
\frac{ d(q-1) }{ 2 (\ell+2)}  E | E^{\cal W} [ S_{\ell,1}^2  - E  (S_{\ell, 1}^2 ) ] |
&\leq & \frac{ d (q-1)}{2 (\ell+2)} \Big\{ E \{ [ E^{\cal W} ( S_{\ell, 1}^2 ) - E ( S_{\ell, 1}^2 ) ]^2 \} \Big\}^{1/2}
\nonumber \\
&=& \frac{ d(q-1)}{2 (\ell+2)} \Big\{ E \{ [ E^{\cal W}  (S_{\ell, 1}^2) ]^2 \} - [ E (S_{\ell, 1}^2 ) ]^2 \Big\}^{1/2}
\nonumber \\
&=& O(\frac{1}{q^{1/2}}),
\end{eqnarray*}
as $q\rightarrow\infty$. 
Next we observe that
\begin{eqnarray*}
&& E^{\cal W}  (\tilde{S}_{\ell, 1}^2 )
\nonumber \\
&=&
E^{\cal W} \Big\{ \frac{1}{ q^4 \sigma_\ell^2} \sum_{i_1=1}^{q^2}
\sum_{i_2=1}^{q^2} \sum_{0\leq u^{(1)}_1,\ldots, u^{(1)}_d\leq 1: |u^{(1)}|=\ell+2}
\sum_{0\leq u^{(2)}_1,\ldots, u^{(2)}_d\leq 1: |u^{(2)}|=\ell+2}
\nonumber \\
&&\times
{\cal I} \{ J\in \{j_{1,1},\ldots, j_{1,|u^{(1)}|} \} \cap \{ j_{2,1},\ldots, j_{2,|u^{(2)}|} \} \}
{\cal I} \{ \pi_J (a_{i_1, J} ) = B_1 = \pi_J (a_{i_2, J}) \}
\nonumber \\
&&\times
\nu^* [\pi_{j_{1,1}} (a_{i_1, j_{1,1}});\ldots; \tau_{B_1,B_2}\circ \pi_J (a_{i_1, J});\ldots;
\pi_{j_{1,|u^{(1)}|}} (a_{i_1, j_{1, |u^{(1)}|}} ) ]
\nonumber \\
&&\times
\nu^* [\pi_{j_{2,1}} (a_{i_2, j_{2,1}});\ldots; \tau_{B_1,B_2}\circ \pi_J (a_{i_2, J}); \ldots;
\pi_{j_{2,|u^{(2)}|}} (a_{i_2, j_{2, |u^{(2)}|}} ) ]
\Big\}
\nonumber \\
&=&
\frac{1}{ d q^5 (q-1) \sigma_\ell^2} \sum_{i_1=1}^{q^2}
\sum_{i_2=1}^{q^2} \sum_{0\leq u^{(1)}_1,\ldots, u^{(1)}_d\leq 1: |u^{(1)}|=\ell+2}
\sum_{0\leq u^{(2)}_1,\ldots, u^{(2)}_d\leq 1: |u^{(2)}|=\ell+2}
\nonumber \\
&&\times
\sum_{k=1}^d {\cal I} \{ k\in \{j_{1,1},\ldots, j_{1,|u^{(1)}|} \} \cap \{ j_{2,1},\ldots, j_{2,|u^{(2)}|} \} \}
{\cal I} \{ a_{i_1, k} = a_{i_2, k} \}
\nonumber \\
&&\times
\sum_{0\leq \tilde{c}_k\leq q-1: \tilde{c}_k\neq \pi_k (a_{i_1, k})}
\nu^* [\pi_{j_{1,1}} (a_{i_1, j_{1,1}});\ldots; \tilde{c}_k;\ldots;
\pi_{j_{1,|u^{(1)}|}} (a_{i_1, j_{1, |u^{(1)}|}} ) ]
\nonumber \\
&&\times
\nu^* [\pi_{j_{2,1}} (a_{i_2, j_{2,1}});\ldots; \tilde{c}_k; \ldots;
\pi_{j_{2,|u^{(2)}|}} (a_{i_2, j_{2, |u^{(2)}|}} ) ],
\end{eqnarray*}
and
\begin{eqnarray*}
&& E \{ [ E^{\cal W}  (\tilde{S}_{\ell, 1}^2 ) ]^2 \}
\nonumber \\
&=&
E\Big\{ \frac{1}{ d^2 q^{10} (q-1)^2 \sigma_\ell^4 } \sum_{i_1=1}^{q^2}
\sum_{i_2=1}^{q^2} \sum_{i_3=1}^{q^2} \sum_{i_4=1}^{q^2} 
\sum_{0\leq u^{(1)}_1,\ldots, u^{(1)}_d\leq 1: |u^{(1)}|=\ell+2}
\nonumber \\
&&\times
\sum_{0\leq u^{(2)}_1,\ldots, u^{(2)}_d\leq 1: |u^{(2)}|=\ell+2}
\sum_{0\leq u^{(3)}_1,\ldots, u^{(3)}_d\leq 1: |u^{(3)}|=\ell+2}
\sum_{0\leq u^{(4)}_1,\ldots, u^{(4)}_d\leq 1: |u^{(4)}|=\ell+2}
\nonumber \\
&&\times
\sum_{k_1 =1}^d {\cal I} \{ k_1\in \{j_{1,1},\ldots, j_{1,|u^{(1)}|} \} \cap \{ j_{2,1},\ldots, j_{2,|u^{(2)}|} \} \}
{\cal I} \{ a_{i_1, k_1} = a_{i_2, k_1} \}
\nonumber \\
&&\times
\sum_{k_3 =1}^d {\cal I} \{ k_3\in \{j_{3,1},\ldots, j_{3,|u^{(3)}|} \} \cap \{ j_{4,1},\ldots, j_{4,|u^{(4)}|} \} \}
{\cal I} \{ a_{i_3, k_3} = a_{i_4, k_3} \}
\nonumber \\
&&\times
\sum_{0\leq \tilde{c}_{k_1}\leq q-1: \tilde{c}_{k_1}\neq \pi_{k_1} (a_{i_1, k_1})}
\sum_{0\leq \tilde{c}_{k_3}\leq q-1: \tilde{c}_{k_3}\neq \pi_{k_3} (a_{i_3, k_3})}
\nonumber \\
&&\times
\nu^* [\pi_{j_{1,1}} (a_{i_1, j_{1,1}});\ldots; \tilde{c}_{k_1};\ldots;
\pi_{j_{1,|u^{(1)}|}} (a_{i_1, j_{1, |u^{(1)}|}} ) ]
\nonumber \\
&&\times
\nu^* [\pi_{j_{2,1}} (a_{i_2, j_{2,1}});\ldots; \tilde{c}_{k_1}; \ldots;
\pi_{j_{2,|u^{(2)}|}} (a_{i_2, j_{2, |u^{(2)}|}} ) ]
\nonumber \\
&&\times
\nu^* [\pi_{j_{3,1}} (a_{i_3, j_{3,1}});\ldots; \tilde{c}_{k_3};\ldots;
\pi_{j_{3,|u^{(3)}|}} (a_{i_3, j_{3, |u^{(3)}|}} ) ]
\nonumber \\
&&\times
\nu^* [\pi_{j_{4,1}} (a_{i_4, j_{4,1}});\ldots; \tilde{c}_{k_3}; \ldots;
\pi_{j_{4,|u^{(4)}|}} (a_{i_4, j_{4, |u^{(4)}|}} ) ]
\Big\}
\nonumber \\
&=& \tilde{R}_{\{1,2,3,4\}} + \tilde{R}_{\{1,2,3\},\{4\}} + \tilde{R}_{\{1,2,4\},\{3\}} + \tilde{R}_{\{1,3,4\},\{2\}} + \tilde{R}_{\{2,3,4\},\{1\}} 
\nonumber \\
&&
+ \tilde{R}_{\{1,2\}, \{3\},\{4\}} + \tilde{R}_{\{1,3\}, \{2\}, \{4\}} + \tilde{R}_{\{1,4\},\{2\}, \{3\}} + \tilde{R}_{\{2,3\}, \{1\}, \{4\}} 
+ \tilde{R}_{\{2,4\}, \{1\}, \{3\}}
\nonumber \\
&&
+ \tilde{R}_{\{3,4\}, \{1\}, \{2\}} + \tilde{R}_{\{1,2\},\{3,4\}} + \tilde{R}_{\{1,3\},\{2,4\}} + \tilde{R}_{\{1,4\}, \{2,3\}} 
+ \tilde{R}_{\{1\},\{2\},\{3\},\{4\}},
\end{eqnarray*}
where given a partition, say $A_1,\ldots, A_p$, of $\{1,2,3,4\}$, we define
\begin{eqnarray*}
 \tilde{R}_{A_1,\ldots, A_p}
&=&
E\Big\{ \frac{1}{ d^2 q^{10} (q-1)^2 \sigma_\ell^4 } {\sum}^* 
\sum_{0\leq u^{(1)}_1,\ldots, u^{(1)}_d\leq 1: |u^{(1)}|=\ell+2}
\sum_{0\leq u^{(2)}_1,\ldots, u^{(2)}_d\leq 1: |u^{(2)}|=\ell+2}
\nonumber \\
&&\times
\sum_{0\leq u^{(3)}_1,\ldots, u^{(3)}_d\leq 1: |u^{(3)}|=\ell+2}
\sum_{0\leq u^{(4)}_1,\ldots, u^{(4)}_d\leq 1: |u^{(4)}|=\ell+2}
\nonumber \\
&&\times
\sum_{k_1 =1}^d {\cal I} \{ k_1\in \{j_{1,1},\ldots, j_{1,|u^{(1)}|} \} \cap \{ j_{2,1},\ldots, j_{2,|u^{(2)}|} \} \}
{\cal I} \{ a_{i_1, k_1} = a_{i_2, k_1} \}
\nonumber \\
&&\times
\sum_{k_3 =1}^d {\cal I} \{ k_3\in \{j_{3,1},\ldots, j_{3,|u^{(3)}|} \} \cap \{ j_{4,1},\ldots, j_{4,|u^{(4)}|} \} \}
{\cal I} \{ a_{i_3, k_3} = a_{i_4, k_3} \}
\nonumber \\
&&\times
\sum_{0\leq \tilde{c}_{k_1}\leq q-1: \tilde{c}_{k_1}\neq \pi_{k_1} (a_{i_1, k_1})}
\sum_{0\leq \tilde{c}_{k_3}\leq q-1: \tilde{c}_{k_3}\neq \pi_{k_3} (a_{i_3, k_3})}
\nonumber \\
&&\times
\nu^* [\pi_{j_{1,1}} (a_{i_1, j_{1,1}});\ldots; \tilde{c}_{k_1};\ldots;
\pi_{j_{1,|u^{(1)}|}} (a_{i_1, j_{1, |u^{(1)}|}} ) ]
\nonumber \\
&&\times
\nu^* [\pi_{j_{2,1}} (a_{i_2, j_{2,1}});\ldots; \tilde{c}_{k_1}; \ldots;
\pi_{j_{2,|u^{(2)}|}} (a_{i_2, j_{2, |u^{(2)}|}} ) ]
\nonumber \\
&&\times
\nu^* [\pi_{j_{3,1}} (a_{i_3, j_{3,1}});\ldots; \tilde{c}_{k_3};\ldots;
\pi_{j_{3,|u^{(3)}|}} (a_{i_3, j_{3, |u^{(3)}|}} ) ]
\nonumber \\
&&\times
\nu^* [\pi_{j_{4,1}} (a_{i_4, j_{4,1}});\ldots; \tilde{c}_{k_3}; \ldots;
\pi_{j_{4,|u^{(4)}|}} (a_{i_4, j_{4, |u^{(4)}|}} ) ]
\Big\},
\end{eqnarray*}
and $\Sigma^*$ is as in (\ref{eq:a.49}). In a similar way, we observe that
\begin{eqnarray*}
&& \tilde{R}_{\{1,2,3,4\}} 
\nonumber \\
&=&
E\Big\{ \frac{1}{ d^2 q^{10} (q-1)^2 \sigma_\ell^4 } \sum_{i_1=i_2=i_3=i_4=1}^{q^2}
\sum_{0\leq u^{(1)}_1,\ldots, u^{(1)}_d\leq 1: |u^{(1)}|=\ell+2}
\nonumber \\
&&\times
\sum_{0\leq u^{(2)}_1,\ldots, u^{(2)}_d\leq 1: |u^{(2)}|=\ell+2}
\sum_{0\leq u^{(3)}_1,\ldots, u^{(3)}_d\leq 1: |u^{(3)}|=\ell+2}
\sum_{0\leq u^{(4)}_1,\ldots, u^{(4)}_d\leq 1: |u^{(4)}|=\ell+2}
\nonumber \\
&&\times
\sum_{k_1 =1}^d {\cal I} \{ k_1\in \{j_{1,1},\ldots, j_{1,|u^{(1)}|} \} \cap \{ j_{2,1},\ldots, j_{2,|u^{(2)}|} \} \}
\sum_{0\leq \tilde{c}_{k_1}\leq q-1: \tilde{c}_{k_1}\neq \pi_{k_1} (a_{i_1, k_1})}
\nonumber \\
&&\times
\sum_{k_3 =1}^d {\cal I} \{ k_3\in \{j_{3,1},\ldots, j_{3,|u^{(3)}|} \} \cap \{ j_{4,1},\ldots, j_{4,|u^{(4)}|} \} \}
\sum_{0\leq \tilde{c}_{k_3}\leq q-1: \tilde{c}_{k_3}\neq \pi_{k_3} (a_{i_3, k_3})}
\nonumber \\
&&\times
\nu^* [\pi_{j_{1,1}} (a_{i_1, j_{1,1}});\ldots; \tilde{c}_{k_1};\ldots;
\pi_{j_{1,|u^{(1)}|}} (a_{i_1, j_{1, |u^{(1)}|}} ) ]
\nonumber \\
&&\times
\nu^* [\pi_{j_{2,1}} (a_{i_2, j_{2,1}});\ldots; \tilde{c}_{k_1}; \ldots;
\pi_{j_{2,|u^{(2)}|}} (a_{i_2, j_{2, |u^{(2)}|}} ) ]
\nonumber \\
&&\times
\nu^* [\pi_{j_{3,1}} (a_{i_3, j_{3,1}});\ldots; \tilde{c}_{k_3};\ldots;
\pi_{j_{3,|u^{(3)}|}} (a_{i_3, j_{3, |u^{(3)}|}} ) ]
\nonumber \\
&&\times
\nu^* [\pi_{j_{4,1}} (a_{i_4, j_{4,1}});\ldots; \tilde{c}_{k_3}; \ldots;
\pi_{j_{4,|u^{(4)}|}} (a_{i_4, j_{4, |u^{(4)}|}} ) ]
\Big\}
\nonumber \\
&=& O(\frac{1}{q^4}),
\nonumber \\
&& \tilde{R}_{\{1,2,3\},\{4\}} \hspace{0.1cm}= \hspace{0.1cm}
\tilde{R}_{\{1,2,4\},\{3\}} \hspace{0.1cm}= \hspace{0.1cm} \tilde{R}_{\{1,3,4\},\{2\}} 
\hspace{0.1cm} = \hspace{0.1cm} \tilde{R}_{\{2,3,4\},\{1\}}
\nonumber \\
&=&
E\Big\{ \frac{1}{ d^2 q^{10} (q-1)^2 \sigma_\ell^4 } \sum_{i_1=i_2=i_3=1}^{q^2}
\sum_{1\leq i_4\leq q^2: i_4\neq i_1} 
\sum_{0\leq u^{(1)}_1,\ldots, u^{(1)}_d\leq 1: |u^{(1)}|=\ell+2}
\nonumber \\
&&\times
\sum_{0\leq u^{(2)}_1,\ldots, u^{(2)}_d\leq 1: |u^{(2)}|=\ell+2}
\sum_{0\leq u^{(3)}_1,\ldots, u^{(3)}_d\leq 1: |u^{(3)}|=\ell+2}
\sum_{0\leq u^{(4)}_1,\ldots, u^{(4)}_d\leq 1: |u^{(4)}|=\ell+2}
\nonumber \\
&&\times
\sum_{k_1 =1}^d {\cal I} \{ k_1\in \{j_{1,1},\ldots, j_{1,|u^{(1)}|} \} \cap \{ j_{2,1},\ldots, j_{2,|u^{(2)}|} \} \}
\nonumber \\
&&\times
\sum_{k_3 =1}^d {\cal I} \{ k_3\in \{j_{3,1},\ldots, j_{3,|u^{(3)}|} \} \cap \{ j_{4,1},\ldots, j_{4,|u^{(4)}|} \} \}
{\cal I} \{ a_{i_3, k_3} = a_{i_4, k_3} \}
\nonumber \\
&&\times
\sum_{0\leq \tilde{c}_{k_1}\leq q-1: \tilde{c}_{k_1}\neq \pi_{k_1} (a_{i_1, k_1})}
\sum_{0\leq \tilde{c}_{k_3}\leq q-1: \tilde{c}_{k_3}\neq \pi_{k_3} (a_{i_3, k_3})}
\nonumber \\
&&\times
\nu^* [\pi_{j_{1,1}} (a_{i_1, j_{1,1}});\ldots; \tilde{c}_{k_1};\ldots;
\pi_{j_{1,|u^{(1)}|}} (a_{i_1, j_{1, |u^{(1)}|}} ) ]
\nonumber \\
&&\times
\nu^* [\pi_{j_{2,1}} (a_{i_2, j_{2,1}});\ldots; \tilde{c}_{k_1}; \ldots;
\pi_{j_{2,|u^{(2)}|}} (a_{i_2, j_{2, |u^{(2)}|}} ) ]
\nonumber \\
&&\times
\nu^* [\pi_{j_{3,1}} (a_{i_3, j_{3,1}});\ldots; \tilde{c}_{k_3};\ldots;
\pi_{j_{3,|u^{(3)}|}} (a_{i_3, j_{3, |u^{(3)}|}} ) ]
\nonumber \\
&&\times
\nu^* [\pi_{j_{4,1}} (a_{i_4, j_{4,1}});\ldots; \tilde{c}_{k_3}; \ldots;
\pi_{j_{4,|u^{(4)}|}} (a_{i_4, j_{4, |u^{(4)}|}} ) ]
\Big\}
\nonumber \\
&=& O(\frac{1}{q^3}),
\nonumber \\
&& \tilde{R}_{\{1,2\},\{3,4\}} 
\nonumber \\
&=&
E\Big\{ \frac{1}{ d^2 q^{10} (q-1)^2 \sigma_\ell^4 } \sum_{i_1=i_2=1}^{q^2}
\sum_{1\leq i_3= i_4 \leq q^2: i_3\neq i_1} 
\sum_{0\leq u^{(1)}_1,\ldots, u^{(1)}_d\leq 1: |u^{(1)}|=\ell+2}
\nonumber \\
&&\times
\sum_{0\leq u^{(2)}_1,\ldots, u^{(2)}_d\leq 1: |u^{(2)}|=\ell+2}
\sum_{0\leq u^{(3)}_1,\ldots, u^{(3)}_d\leq 1: |u^{(3)}|=\ell+2}
\sum_{0\leq u^{(4)}_1,\ldots, u^{(4)}_d\leq 1: |u^{(4)}|=\ell+2}
\nonumber \\
&&\times
\sum_{k_1 =1}^d {\cal I} \{ k_1\in \{j_{1,1},\ldots, j_{1,|u^{(1)}|} \} \cap \{ j_{2,1},\ldots, j_{2,|u^{(2)}|} \} \}
\nonumber \\
&&\times
\sum_{k_3 =1}^d {\cal I} \{ k_3\in \{j_{3,1},\ldots, j_{3,|u^{(3)}|} \} \cap \{ j_{4,1},\ldots, j_{4,|u^{(4)}|} \} \}
\nonumber \\
&&\times
\sum_{0\leq \tilde{c}_{k_1}\leq q-1: \tilde{c}_{k_1}\neq \pi_{k_1} (a_{i_1, k_1})}
\sum_{0\leq \tilde{c}_{k_3}\leq q-1: \tilde{c}_{k_3}\neq \pi_{k_3} (a_{i_3, k_3})}
\nonumber \\
&&\times
\nu^* [\pi_{j_{1,1}} (a_{i_1, j_{1,1}});\ldots; \tilde{c}_{k_1};\ldots;
\pi_{j_{1,|u^{(1)}|}} (a_{i_1, j_{1, |u^{(1)}|}} ) ]
\nonumber \\
&&\times
\nu^* [\pi_{j_{2,1}} (a_{i_2, j_{2,1}});\ldots; \tilde{c}_{k_1}; \ldots;
\pi_{j_{2,|u^{(2)}|}} (a_{i_2, j_{2, |u^{(2)}|}} ) ]
\nonumber \\
&&\times
\nu^* [\pi_{j_{3,1}} (a_{i_3, j_{3,1}});\ldots; \tilde{c}_{k_3};\ldots;
\pi_{j_{3,|u^{(3)}|}} (a_{i_3, j_{3, |u^{(3)}|}} ) ]
\nonumber \\
&&\times
\nu^* [\pi_{j_{4,1}} (a_{i_4, j_{4,1}});\ldots; \tilde{c}_{k_3}; \ldots;
\pi_{j_{4,|u^{(4)}|}} (a_{i_4, j_{4, |u^{(4)}|}} ) ]
\Big\}
\nonumber \\
&=& [E (\tilde{S}_{\ell, 1}^2) ]^2 + O(\frac{1}{q^3}),
\nonumber \\
&& \tilde{R}_{\{1,3\},\{2,4\}} \hspace{0.1cm} = \hspace{0.1cm} \tilde{R}_{\{1,4\}, \{2,3\}}
\nonumber \\
&=&
E\Big\{ \frac{1}{ d^2 q^{10} (q-1)^2 \sigma_\ell^4 } \sum_{i_1=i_3=1}^{q^2}
\sum_{1\leq i_2= i_4 \leq q^2: i_2\neq i_1} 
\sum_{0\leq u^{(1)}_1,\ldots, u^{(1)}_d\leq 1: |u^{(1)}|=\ell+2}
\nonumber \\
&&\times
\sum_{0\leq u^{(2)}_1,\ldots, u^{(2)}_d\leq 1: |u^{(2)}|=\ell+2}
\sum_{0\leq u^{(3)}_1,\ldots, u^{(3)}_d\leq 1: |u^{(3)}|=\ell+2}
\sum_{0\leq u^{(4)}_1,\ldots, u^{(4)}_d\leq 1: |u^{(4)}|=\ell+2}
\nonumber \\
&&\times
\sum_{k_1 =1}^d {\cal I} \{ k_1\in \{j_{1,1},\ldots, j_{1,|u^{(1)}|} \} \cap \{ j_{2,1},\ldots, j_{2,|u^{(2)}|} \} \}
{\cal I} \{ a_{i_1, k_1} = a_{i_2, k_1} \}
\nonumber \\
&&\times
\sum_{k_3 =1}^d {\cal I} \{ k_3\in \{j_{3,1},\ldots, j_{3,|u^{(3)}|} \} \cap \{ j_{4,1},\ldots, j_{4,|u^{(4)}|} \} \}
{\cal I} \{ a_{i_3, k_3} = a_{i_4, k_3} \}
\nonumber \\
&&\times
\sum_{0\leq \tilde{c}_{k_1}\leq q-1: \tilde{c}_{k_1}\neq \pi_{k_1} (a_{i_1, k_1})}
\sum_{0\leq \tilde{c}_{k_3}\leq q-1: \tilde{c}_{k_3}\neq \pi_{k_3} (a_{i_3, k_3})}
\nonumber \\
&&\times
\nu^* [\pi_{j_{1,1}} (a_{i_1, j_{1,1}});\ldots; \tilde{c}_{k_1};\ldots;
\pi_{j_{1,|u^{(1)}|}} (a_{i_1, j_{1, |u^{(1)}|}} ) ]
\nonumber \\
&&\times
\nu^* [\pi_{j_{2,1}} (a_{i_2, j_{2,1}});\ldots; \tilde{c}_{k_1}; \ldots;
\pi_{j_{2,|u^{(2)}|}} (a_{i_2, j_{2, |u^{(2)}|}} ) ]
\nonumber \\
&&\times
\nu^* [\pi_{j_{3,1}} (a_{i_3, j_{3,1}});\ldots; \tilde{c}_{k_3};\ldots;
\pi_{j_{3,|u^{(3)}|}} (a_{i_3, j_{3, |u^{(3)}|}} ) ]
\nonumber \\
&&\times
\nu^* [\pi_{j_{4,1}} (a_{i_4, j_{4,1}});\ldots; \tilde{c}_{k_3}; \ldots;
\pi_{j_{4,|u^{(4)}|}} (a_{i_4, j_{4, |u^{(4)}|}} ) ]
\Big\}
\nonumber \\
&=& O(\frac{1}{q^3}),
\nonumber \\
&& \tilde{R}_{\{1,2\},\{3\},\{4\}} \hspace{0.1cm} = \hspace{0.1cm} \tilde{R}_{\{3,4\}, \{1\}, \{2\}}
\nonumber \\
&=&
E\Big\{ \frac{1}{ d^2 q^{10} (q-1)^2 \sigma_\ell^4 } \sum_{i_1=i_2=1}^{q^2}
\sum_{1\leq i_3\leq q^2: i_3\neq i_1} \sum_{1 \leq i_4\leq q^2: i_4 \neq i_1, i_3} 
\sum_{0\leq u^{(1)}_1,\ldots, u^{(1)}_d\leq 1: |u^{(1)}|=\ell+2}
\nonumber \\
&&\times
\sum_{0\leq u^{(2)}_1,\ldots, u^{(2)}_d\leq 1: |u^{(2)}|=\ell+2}
\sum_{0\leq u^{(3)}_1,\ldots, u^{(3)}_d\leq 1: |u^{(3)}|=\ell+2}
\sum_{0\leq u^{(4)}_1,\ldots, u^{(4)}_d\leq 1: |u^{(4)}|=\ell+2}
\nonumber \\
&&\times
\sum_{k_1 =1}^d {\cal I} \{ k_1\in \{j_{1,1},\ldots, j_{1,|u^{(1)}|} \} \cap \{ j_{2,1},\ldots, j_{2,|u^{(2)}|} \} \}
\nonumber \\
&&\times
\sum_{k_3 =1}^d {\cal I} \{ k_3\in \{j_{3,1},\ldots, j_{3,|u^{(3)}|} \} \cap \{ j_{4,1},\ldots, j_{4,|u^{(4)}|} \} \}
{\cal I} \{ a_{i_3, k_3} = a_{i_4, k_3} \}
\nonumber \\
&&\times
\sum_{0\leq \tilde{c}_{k_1}\leq q-1: \tilde{c}_{k_1}\neq \pi_{k_1} (a_{i_1, k_1})}
\sum_{0\leq \tilde{c}_{k_3}\leq q-1: \tilde{c}_{k_3}\neq \pi_{k_3} (a_{i_3, k_3})}
\nonumber \\
&&\times
\nu^* [\pi_{j_{1,1}} (a_{i_1, j_{1,1}});\ldots; \tilde{c}_{k_1};\ldots;
\pi_{j_{1,|u^{(1)}|}} (a_{i_1, j_{1, |u^{(1)}|}} ) ]
\nonumber \\
&&\times
\nu^* [\pi_{j_{2,1}} (a_{i_2, j_{2,1}});\ldots; \tilde{c}_{k_1}; \ldots;
\pi_{j_{2,|u^{(2)}|}} (a_{i_2, j_{2, |u^{(2)}|}} ) ]
\nonumber \\
&&\times
\nu^* [\pi_{j_{3,1}} (a_{i_3, j_{3,1}});\ldots; \tilde{c}_{k_3};\ldots;
\pi_{j_{3,|u^{(3)}|}} (a_{i_3, j_{3, |u^{(3)}|}} ) ]
\nonumber \\
&&\times
\nu^* [\pi_{j_{4,1}} (a_{i_4, j_{4,1}});\ldots; \tilde{c}_{k_3}; \ldots;
\pi_{j_{4,|u^{(4)}|}} (a_{i_4, j_{4, |u^{(4)}|}} ) ]
\Big\}
\nonumber \\
&=& O(\frac{1}{q^3}),
\nonumber \\
&& \tilde{R}_{\{1\},\{2\},\{3\},\{4\}} 
\nonumber \\
&=&
E\Big\{ \frac{1}{ d^2 q^{10} (q-1)^2 \sigma_\ell^4 } \sum_{i_1=1}^{q^2}
\sum_{1\leq i_2\leq q^2: i_2\neq i_1} \sum_{1\leq i_3\leq q^2: i_3\neq i_1, i_2} 
\sum_{1\leq i_4\leq q^2: i_4\neq i_1, i_2, i_3} 
\nonumber \\
&&\times
\sum_{0\leq u^{(1)}_1,\ldots, u^{(1)}_d\leq 1: |u^{(1)}|=\ell+2}
\sum_{0\leq u^{(2)}_1,\ldots, u^{(2)}_d\leq 1: |u^{(2)}|=\ell+2}
\sum_{0\leq u^{(3)}_1,\ldots, u^{(3)}_d\leq 1: |u^{(3)}|=\ell+2}
\nonumber \\
&&\times
\sum_{0\leq u^{(4)}_1,\ldots, u^{(4)}_d\leq 1: |u^{(4)}|=\ell+2}
\sum_{k_1 =1}^d {\cal I} \{ k_1\in \{j_{1,1},\ldots, j_{1,|u^{(1)}|} \} \cap \{ j_{2,1},\ldots, j_{2,|u^{(2)}|} \} \}
\nonumber \\
&&\times
\sum_{k_3 =1}^d {\cal I} \{ k_3\in \{j_{3,1},\ldots, j_{3,|u^{(3)}|} \} \cap \{ j_{4,1},\ldots, j_{4,|u^{(4)}|} \} \}
{\cal I} \{ a_{i_1, k_1} = a_{i_2, k_1} \}
\nonumber \\
&&\times
{\cal I} \{ a_{i_3, k_3} = a_{i_4, k_3} \}
\sum_{0\leq \tilde{c}_{k_1}\leq q-1: \tilde{c}_{k_1}\neq \pi_{k_1} (a_{i_1, k_1})}
\sum_{0\leq \tilde{c}_{k_3}\leq q-1: \tilde{c}_{k_3}\neq \pi_{k_3} (a_{i_3, k_3})}
\nonumber \\
&&\times
\nu^* [\pi_{j_{1,1}} (a_{i_1, j_{1,1}});\ldots; \tilde{c}_{k_1};\ldots;
\pi_{j_{1,|u^{(1)}|}} (a_{i_1, j_{1, |u^{(1)}|}} ) ]
\nonumber \\
&&\times
\nu^* [\pi_{j_{2,1}} (a_{i_2, j_{2,1}});\ldots; \tilde{c}_{k_1}; \ldots;
\pi_{j_{2,|u^{(2)}|}} (a_{i_2, j_{2, |u^{(2)}|}} ) ]
\nonumber \\
&&\times
\nu^* [\pi_{j_{3,1}} (a_{i_3, j_{3,1}});\ldots; \tilde{c}_{k_3};\ldots;
\pi_{j_{3,|u^{(3)}|}} (a_{i_3, j_{3, |u^{(3)}|}} ) ]
\nonumber \\
&&\times
\nu^* [\pi_{j_{4,1}} (a_{i_4, j_{4,1}});\ldots; \tilde{c}_{k_3}; \ldots;
\pi_{j_{4,|u^{(4)}|}} (a_{i_4, j_{4, |u^{(4)}|}} ) ]
\Big\}
\nonumber \\
&=& O(\frac{1}{q^4}),
\end{eqnarray*}
and
\begin{eqnarray*}
&& \tilde{R}_{\{1,3\},\{2\},\{4\}} \hspace{0.1cm}= \hspace{0.1cm} \tilde{R}_{\{1,4\},\{2\},\{3\}} 
\hspace{0.1cm}= \hspace{0.1cm} \tilde{R}_{\{2,3\}, \{1\},\{4\}} \hspace{0.1cm}= \hspace{0.1cm} \tilde{R}_{\{2,4\}, \{1\}, \{3\}} 
\nonumber \\
&=&
E\Big\{ \frac{1}{ d^2 q^{10} (q-1)^2 \sigma_\ell^4 } \sum_{i_1=i_3=1}^{q^2}
\sum_{1\leq i_2\leq q^2: i_2\neq i_1} \sum_{1 \leq i_4\leq q^2: i_4 \neq i_1, i_2} 
\sum_{0\leq u^{(1)}_1,\ldots, u^{(1)}_d\leq 1: |u^{(1)}|=\ell+2}
\nonumber \\
&&\times
\sum_{0\leq u^{(2)}_1,\ldots, u^{(2)}_d\leq 1: |u^{(2)}|=\ell+2}
\sum_{0\leq u^{(3)}_1,\ldots, u^{(3)}_d\leq 1: |u^{(3)}|=\ell+2}
\sum_{0\leq u^{(4)}_1,\ldots, u^{(4)}_d\leq 1: |u^{(4)}|=\ell+2}
\nonumber \\
&&\times
\sum_{k_1 =1}^d {\cal I} \{ k_1\in \{j_{1,1},\ldots, j_{1,|u^{(1)}|} \} \cap \{ j_{2,1},\ldots, j_{2,|u^{(2)}|} \} \}
{\cal I} \{ a_{i_1, k_1} = a_{i_2, k_1} \}
\nonumber \\
&&\times
\sum_{k_3 =1}^d {\cal I} \{ k_3\in \{j_{3,1},\ldots, j_{3,|u^{(3)}|} \} \cap \{ j_{4,1},\ldots, j_{4,|u^{(4)}|} \} \}
{\cal I} \{ a_{i_3, k_3} = a_{i_4, k_3} \}
\nonumber \\
&&\times
\sum_{0\leq \tilde{c}_{k_1}\leq q-1: \tilde{c}_{k_1}\neq \pi_{k_1} (a_{i_1, k_1})}
\sum_{0\leq \tilde{c}_{k_3}\leq q-1: \tilde{c}_{k_3}\neq \pi_{k_3} (a_{i_3, k_3})}
\nonumber \\
&&\times
\nu^* [\pi_{j_{1,1}} (a_{i_1, j_{1,1}});\ldots; \tilde{c}_{k_1};\ldots;
\pi_{j_{1,|u^{(1)}|}} (a_{i_1, j_{1, |u^{(1)}|}} ) ]
\nonumber \\
&&\times
\nu^* [\pi_{j_{2,1}} (a_{i_2, j_{2,1}});\ldots; \tilde{c}_{k_1}; \ldots;
\pi_{j_{2,|u^{(2)}|}} (a_{i_2, j_{2, |u^{(2)}|}} ) ]
\nonumber \\
&&\times
\nu^* [\pi_{j_{3,1}} (a_{i_3, j_{3,1}});\ldots; \tilde{c}_{k_3};\ldots;
\pi_{j_{3,|u^{(3)}|}} (a_{i_3, j_{3, |u^{(3)}|}} ) ]
\nonumber \\
&&\times
\nu^* [\pi_{j_{4,1}} (a_{i_4, j_{4,1}});\ldots; \tilde{c}_{k_3}; \ldots;
\pi_{j_{4,|u^{(4)}|}} (a_{i_4, j_{4, |u^{(4)}|}} ) ]
\Big\}
\nonumber \\
&=& O(\frac{1}{q^4}),
\end{eqnarray*}
as $q\rightarrow \infty$.
Consequently,
\begin{eqnarray*}
\frac{ d(q-1) }{2 (\ell+2)}  E | E^{\cal W} [ \tilde{S}_{\ell, 1}^2  - E (\tilde{S}_{\ell, 1}^2 ) ]|
&\leq & \frac{ d (q-1)}{2 (\ell+2)} \Big\{ E \{ [ E^{\cal W}  (\tilde{S}_{\ell, 1}^2 ) - E (\tilde{S}_{\ell, 1}^2 ) ]^2 \} \Big\}^{1/2}
\nonumber \\
&=& \frac{ d(q-1)}{2 (\ell+2)} \Big\{ E \{ [ E^{\cal W}  (\tilde{S}_{\ell, 1}^2 ) ]^2 \} - [ E (\tilde{S}_{\ell, 1}^2 ) ]^2 \Big\}^{1/2}
\nonumber \\
&=& O(\frac{1}{q^{1/2}}),
\end{eqnarray*}
as $q\rightarrow\infty$. 
This proves Lemma \ref{la:a.18}.\hfill $\Box$

\begin{la} \label{la:a.19}
With the notation of (\ref{eq:a.67}), for $1\leq \ell_1, \ell_2\leq d-2$,
\begin{displaymath}
\frac{d (q-1)}{2 (\ell_1 +2)} E | E^{\cal W} (S_{\ell_1, 1} S_{\ell_2, 2}) | = O(\frac{1}{q }),
\hspace{0.5cm}\mbox{as $q\rightarrow\infty$}.
\end{displaymath}
\end{la}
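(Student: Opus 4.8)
The plan is to follow the pattern of the proof of Lemma~\ref{la:a.18}. First I would compute the conditional expectation $E^{\cal W}(S_{\ell_1,1}S_{\ell_2,2})$ explicitly. Multiplying out the two defining sums and averaging the product of indicators over $(J,B_1,B_2)$, the requirement $B_1\neq B_2$ forces a non-zero contribution to come only from terms with $J=k$ and $\pi_k(a_{i,k})\neq\pi_k(a_{i',k})$, i.e.\ $i\neq i'$ and $a_{i,k}\neq a_{i',k}$, this event having conditional probability $\{dq(q-1)\}^{-1}$; hence
\begin{eqnarray*}
&& E^{\cal W}(S_{\ell_1,1}S_{\ell_2,2}) \\
&=& \frac{1}{dq^5(q-1)\sigma_{\ell_1}\sigma_{\ell_2}}
\sum_{k=1}^d\ \sum_{i\neq i':\ a_{i,k}\neq a_{i',k}}\
\sum_{0\leq u_1,\ldots,u_d\leq1:\ |u|=\ell_1+2,\ k\in\{j_1,\ldots,j_{|u|}\}}\
\sum_{0\leq u'_1,\ldots,u'_d\leq1:\ |u'|=\ell_2+2,\ k\in\{j'_1,\ldots,j'_{|u'|}\}} \\
&&\times\ \nu^*[\pi_{j_1}(a_{i,j_1});\ldots;\pi_{j_{|u|}}(a_{i,j_{|u|}})]\
\nu^*[\pi_{j'_1}(a_{i',j'_1});\ldots;\pi_{j'_{|u'|}}(a_{i',j'_{|u'|}})].
\end{eqnarray*}
By Jensen's inequality $E|E^{\cal W}(S_{\ell_1,1}S_{\ell_2,2})|\leq\{E[(E^{\cal W}(S_{\ell_1,1}S_{\ell_2,2}))^2]\}^{1/2}$, so it would suffice to prove that this mean square is $O(q^{-4})$; since $d(q-1)/\{2(\ell_1+2)\}=O(q)$ this gives the assertion.

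Next I would square the displayed expression and take the full expectation, obtaining a sum over four row indices $i_1,i_2,i_3,i_4$, two column indices $k_1,k_3$ and four configuration vectors, of expectations of products of four $\nu^*$-factors, and break it up according to the coincidence partition of $\{i_1,i_2,i_3,i_4\}$ exactly as the quantities $R_{A_1,\ldots,A_p}$ are introduced in the proof of Lemma~\ref{la:a.18}. The decisive simplification is that the constraints $a_{i_1,k_1}\neq a_{i_2,k_1}$ and $a_{i_3,k_3}\neq a_{i_4,k_3}$ kill every partition in which $i_1=i_2$ or $i_3=i_4$; in particular the piece corresponding to $R_{\{1,2\},\{3,4\}}$ of Lemma~\ref{la:a.18} --- the one there of exact order $q^{-2}$ that had to be removed by centering --- is now absent. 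The leading surviving piece is that with $i_1=i_3$, $i_2=i_4$ (and, by symmetry, $i_1=i_4$, $i_2=i_3$): since two distinct rows of a strength-$2$ array agree in at most one column, on that piece the product of $\nu^*$'s factors --- up to at most one shared column, which one checks contributes only lower order --- into a function of the $\pi$-values at $i_1$ times one at $i_2$; by Lemma~\ref{la:a.5} each factor has expectation $O(1)$ and vanishes unless the two configuration vectors attached to a common row coincide. Counting $O(q^4)$ ordered pairs $(i_1,i_2)$ and $O(1)$ choices of each configuration vector and of $k_1,k_3$, against the prefactor $\{d^2q^{10}(q-1)^2\sigma_{\ell_1}^2\sigma_{\ell_2}^2\}^{-1}$ with $\sigma_{\ell_1}^2\sigma_{\ell_2}^2$ bounded above and below by multiples of $q^{-4}$ (from the expression for $\sigma_i^2$ recorded in the proof of Proposition~\ref{pn:a.2}), this piece is $O(q^{-8}\cdot q^4)=O(q^{-4})$.

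For every remaining partition at least one further row coincidence is imposed, so there are fewer free row indices but the surviving $\nu^*$-products must have essentially all of their columns shared among the factors; since each pair of distinct rows can share at most one column, this forces additional row-agreement constraints, each of which trades a factor $q$ (coming from the larger value of $E\{\nu^*[\cdots]^2\}$ when an argument is repeated, together with the bound $E\{\nu^*[\cdots]^4\}=O(1)$ of Lemma~\ref{la:a.5}) against a factor-$q$ reduction in the number of admissible row tuples --- the same self-balancing mechanism as in the proof of Lemma~\ref{la:a.18} --- and keeps every such piece $O(q^{-4})$ or smaller. Summing the boundedly many pieces gives $E[(E^{\cal W}(S_{\ell_1,1}S_{\ell_2,2}))^2]=O(q^{-4})$, which completes the argument. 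The main obstacle is precisely this last step: writing out each remaining coincidence partition, pinning down the order of its multiple sum via the strength-$2$ counting and the orthogonality and moment estimates of Lemma~\ref{la:a.5}, and checking that the correction terms arising when a product of $\nu^*$'s fails to factor are still $O(q^{-4})$ --- a bookkeeping exercise entirely parallel to the corresponding estimates in the proof of Lemma~\ref{la:a.18}.
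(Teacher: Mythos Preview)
Your proposal is correct and follows essentially the same route as the paper: compute $E^{\cal W}(S_{\ell_1,1}S_{\ell_2,2})$ explicitly, bound its $L^1$-norm by the $L^2$-norm, expand the square as a sum over $(i_1,i_2,i_3,i_4)$ and two column indices, and split according to the coincidence partition of the row indices. You have correctly isolated the key structural point --- the constraints $a_{i_1,k_1}\neq a_{i_2,k_1}$ and $a_{i_3,k_3}\neq a_{i_4,k_3}$ eliminate every partition with $i_1=i_2$ or $i_3=i_4$, in particular the piece analogous to $R_{\{1,2\},\{3,4\}}$ that dominated in Lemma~\ref{la:a.18} --- and your order count for the surviving leading piece $\{1,3\},\{2,4\}$ matches the paper's $O(q^{-4})$; the paper in fact obtains the sharper bounds $O(q^{-5})$ and $O(q^{-6})$ for the three-block and four-block partitions respectively, but your uniform $O(q^{-4})$ suffices.
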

{\sc Proof.}
We observe that
\begin{eqnarray*}
&& E^{\cal W} (S_{\ell_1, 1} S_{\ell_2, 2})
\nonumber \\
&=&
E^{\cal W} \Big\{ \frac{1}{q^4 \sigma_{\ell_1} \sigma_{\ell_2} } \sum_{i_1=1}^{q^2} \sum_{i_2=1}^{q^2} 
\sum_{0\leq u^{(1)}_1,\ldots, u^{(1)}_d \leq 1: |u^{(1)}|=\ell_1 +2}
\sum_{0\leq u^{(2)}_1,\ldots, u^{(2)}_d \leq 1: |u^{(2)}|=\ell_2 +2}
\nonumber \\
&&\times
{\cal I}\{J\in \{j_{1,1},\ldots, j_{1,|u^{(1)}|} \}
\cap \{j_{2,1},\ldots, j_{2,|u^{(2)}|} \} \}
{\cal I}\{ \pi_J (a_{i_1,J} ) = B_1,
\pi_J (a_{i_2,J} ) = B_2\}
\nonumber \\
&&\times
\nu^* [\pi_{j_{1,1}} (a_{i_1, j_{1,1}});\ldots; \pi_{j_{1,|u^{(1)}|}} (a_{i_1, j_{1,|u^{(1)}|}}) ]
\nu^* [\pi_{j_{2,1}} (a_{i_2, j_{2,1}});\ldots; \pi_{j_{2, |u^{(2)}|}} (a_{i_2, j_{2,|u^{(2)}|}}) ]
\Big\}
\nonumber \\
&=&
E^{\cal W} \Big\{ \frac{1}{d q^4 \sigma_{\ell_1} \sigma_{\ell_2} } \sum_{i_1=1}^{q^2} \sum_{i_2\neq i_1} 
\sum_{0\leq u^{(1)}_1,\ldots, u^{(1)}_d \leq 1: |u^{(1)}|=\ell_1 +2}
\sum_{0\leq u^{(2)}_1,\ldots, u^{(2)}_d \leq 1: |u^{(2)}|=\ell_2 +2}
\nonumber \\
&&\times
\sum_{k=1}^d {\cal I}\{ k\in \{j_{1,1},\ldots, j_{1,|u^{(1)}|} \}
\cap \{j_{2,1},\ldots, j_{2,|u^{(2)}|} \} \}
\nonumber \\
&&\times
{\cal I}\{ \pi_k (a_{i_1, k} ) = B_1
\neq \pi_k (a_{i_2, k} ) = B_2\}
\nonumber \\
&&\times
\nu^* [\pi_{j_{1,1}} (a_{i_1, j_{1,1}});\ldots; \pi_{j_{1,|u^{(1)}|}} (a_{i_1, j_{1,|u^{(1)}|}}) ]
\nu^* [\pi_{j_{2,1}} (a_{i_2, j_{2,1}});\ldots; \pi_{j_{2, |u^{(2)}|}} (a_{i_2, j_{2,|u^{(2)}|}}) ]
\Big\}
\nonumber \\
&=&
\frac{1}{d q^5 (q-1) \sigma_{\ell_1} \sigma_{\ell_2} } \sum_{i_1=1}^{q^2} \sum_{i_2\neq i_1} 
\sum_{0\leq u^{(1)}_1,\ldots, u^{(1)}_d \leq 1: |u^{(1)}|=\ell_1 +2}
\sum_{0\leq u^{(2)}_1,\ldots, u^{(2)}_d \leq 1: |u^{(2)}|=\ell_2 +2}
\sum_{k=1}^d 
\nonumber \\
&&\times
{\cal I}\{ k\in \{j_{1,1},\ldots, j_{1,|u^{(1)}|} \}
\cap \{j_{2,1},\ldots, j_{2,|u^{(2)}|} \} \}
{\cal I}\{ a_{i_1, k}
\neq a_{i_2, k} \}
\nonumber \\
&&\times
\nu^* [\pi_{j_{1,1}} (a_{i_1, j_{1,1}});\ldots; \pi_{j_{1,|u^{(1)}|}} (a_{i_1, j_{1,|u^{(1)}|}}) ]
\nu^* [\pi_{j_{2,1}} (a_{i_2, j_{2,1}});\ldots; \pi_{j_{2, |u^{(2)}|}} (a_{i_2, j_{2,|u^{(2)}|}}) ],
\end{eqnarray*}
and
\begin{eqnarray*}
&& E \{ [ E^{\cal W} (S_{\ell_1, 1} S_{\ell_2, 2}) ]^2 \}
\nonumber \\
&=&
E \Big\{ \frac{1}{d^2 q^{10} (q-1)^2 \sigma_{\ell_1}^2 \sigma_{\ell_2}^2 } \sum_{i_1=1}^{q^2} \sum_{i_2\neq i_1} 
\sum_{i_3=1}^{q^2} \sum_{i_4\neq i_3} 
\sum_{0\leq u^{(1)}_1,\ldots, u^{(1)}_d \leq 1: |u^{(1)}|=\ell_1 +2}
\nonumber \\
&&\hspace{0.5cm}\times
\sum_{0\leq u^{(2)}_1,\ldots, u^{(2)}_d \leq 1: |u^{(2)}|=\ell_2 +2}
\sum_{0\leq u^{(3)}_1,\ldots, u^{(3)}_d \leq 1: |u^{(3)}|=\ell_1 +2}
\sum_{0\leq u^{(4)}_1,\ldots, u^{(4)}_d \leq 1: |u^{(4)}|=\ell_2 +2}
\nonumber \\
&&\hspace{0.5cm}\times
\sum_{k_1=1}^d {\cal I}\{ k_1 \in \{j_{1,1},\ldots, j_{1,|u^{(1)}|} \}
\cap \{j_{2,1},\ldots, j_{2,|u^{(2)}|} \} \}
\nonumber \\
&&\hspace{0.5cm}\times
\sum_{k_3=1}^d {\cal I}\{ k_3 \in \{j_{3,1},\ldots, j_{3,|u^{(3)}|} \}
\cap \{j_{4,1},\ldots, j_{4,|u^{(4)}|} \} \}
\nonumber \\
&&\hspace{0.5cm}\times
{\cal I}\{ a_{i_1, k_1}
\neq a_{i_2, k_1 } \}
\nu^* [\pi_{j_{1,1}} (a_{i_1, j_{1,1}});\ldots; \pi_{j_{1,|u^{(1)}|}} (a_{i_1, j_{1,|u^{(1)}|}}) ]
\nonumber \\
&&\hspace{0.5cm}\times
\nu^* [\pi_{j_{2,1}} (a_{i_2, j_{2,1}});\ldots; \pi_{j_{2, |u^{(2)}|}} (a_{i_2, j_{2,|u^{(2)}|}}) ]
\nonumber \\
&&\hspace{0.5cm}\times
{\cal I}\{ a_{i_3, k_3}
\neq a_{i_4, k_3 } \}
\nu^* [\pi_{j_{3,1}} (a_{i_3, j_{3,1}});\ldots; \pi_{j_{3,|u^{(3)}|}} (a_{i_3, j_{3,|u^{(3)}|}}) ]
\nonumber \\
&&\hspace{0.5cm}\times
\nu^* [\pi_{j_{4,1}} (a_{i_4, j_{4,1}});\ldots; \pi_{j_{4, |u^{(4)}|}} (a_{i_4, j_{4,|u^{(4)}|}}) ]
\Big\}
\nonumber \\
&=& R^{(1,2)}_{\{1,3\},\{2\},\{4\}} + R^{(1,2)}_{\{1,4\},\{2\},\{3\}} + R^{(1,2)}_{\{2,3\},\{1\},\{4\}} 
+ R^{(1,2)}_{\{2,4\},\{1\},\{3\}} 
\nonumber \\
&&+ R^{(1,2)}_{\{1,3\},\{2,4\}} + R^{(1,2)}_{\{1,4\},\{2,3\}} 
+ R^{(1,2)}_{\{1\},\{2\},\{3\},\{4\}},
\end{eqnarray*}
where 
\begin{eqnarray*}
&& R^{(1,2)}_{\{1,3\},\{2\},\{4\}} \hspace{0.1cm} = \hspace{0.1cm} R^{(1,2)}_{\{1,4\},\{2\}, \{3\}} 
\hspace{0.1cm} =\hspace{0.1cm} R^{(1,2)}_{\{2,3\}, \{1\}, \{4\}} 
\hspace{0.1cm}= \hspace{0.1cm} R^{(1,2)}_{\{2,4\},\{1\}, \{3\}}
\nonumber \\
&=&
E \Big\{ \frac{1}{d^2 q^{10} (q-1)^2 \sigma_{\ell_1}^2 \sigma_{\ell_2}^2 } \sum_{i_1=1}^{q^2} \sum_{1\leq i_2\leq q^2: i_2\neq i_1} 
\sum_{1\leq i_4\leq q^2: i_4\neq i_1, i_2} 
\sum_{0\leq u^{(1)}_1,\ldots, u^{(1)}_d \leq 1: |u^{(1)}|=\ell_1 +2}
\nonumber \\
&&\times
\sum_{0\leq u^{(2)}_1,\ldots, u^{(2)}_d \leq 1: |u^{(2)}|=\ell_2 +2}
\sum_{0\leq u^{(3)}_1,\ldots, u^{(3)}_d \leq 1: |u^{(3)}|=\ell_1 +2}
\sum_{0\leq u^{(4)}_1,\ldots, u^{(4)}_d \leq 1: |u^{(4)}|=\ell_2 +2}
\nonumber \\
&&\times
\sum_{k_1=1}^d {\cal I}\{ k_1 \in \{j_{1,1},\ldots, j_{1,|u^{(1)}|} \}
\cap \{j_{2,1},\ldots, j_{2,|u^{(2)}|} \} \}
{\cal I}\{ a_{i_1, k_1}
\neq a_{i_2, k_1 } \}
\nonumber \\
&&\times
\sum_{k_3=1}^d {\cal I}\{ k_3 \in \{j_{3,1},\ldots, j_{3,|u^{(3)}|} \}
\cap \{j_{4,1},\ldots, j_{4,|u^{(4)}|} \} \}
{\cal I}\{ a_{i_1, k_3}
\neq a_{i_4, k_3 } \}
\nonumber \\
&&\times
\nu^* [\pi_{j_{1,1}} (a_{i_1, j_{1,1}});\ldots; \pi_{j_{1,|u^{(1)}|}} (a_{i_1, j_{1,|u^{(1)}|}}) ]
\nu^* [\pi_{j_{2,1}} (a_{i_2, j_{2,1}});\ldots; \pi_{j_{2, |u^{(2)}|}} (a_{i_2, j_{2,|u^{(2)}|}}) ]
\nonumber \\
&&\times
\nu^* [\pi_{j_{3,1}} (a_{i_1, j_{3,1}});\ldots; \pi_{j_{3,|u^{(3)}|}} (a_{i_1, j_{3,|u^{(3)}|}}) ]
\nu^* [\pi_{j_{4,1}} (a_{i_4, j_{4,1}});\ldots; \pi_{j_{4, |u^{(4)}|}} (a_{i_4, j_{4,|u^{(4)}|}}) ]
\Big\}
\nonumber \\
&=& O(\frac{1}{q^5}),
\nonumber \\
&& R^{(1,2)}_{\{1,3\},\{2,4\}} \hspace{0.1cm}= \hspace{0.1cm} R^{(1,2)}_{\{1,4\}, \{2,3\}}
\nonumber \\
&=&
E \Big\{ \frac{1}{d^2 q^{10} (q-1)^2 \sigma_{\ell_1}^2 \sigma_{\ell_2}^2 } \sum_{i_1=1}^{q^2} \sum_{1\leq i_2\leq q^2: i_2\neq i_1} 
\sum_{0\leq u^{(1)}_1,\ldots, u^{(1)}_d \leq 1: |u^{(1)}|=\ell_1 +2}
\nonumber \\
&&\times
\sum_{0\leq u^{(2)}_1,\ldots, u^{(2)}_d \leq 1: |u^{(2)}|=\ell_2 +2}
\sum_{0\leq u^{(3)}_1,\ldots, u^{(3)}_d \leq 1: |u^{(3)}|=\ell_1 +2}
\sum_{0\leq u^{(4)}_1,\ldots, u^{(4)}_d \leq 1: |u^{(4)}|=\ell_2 +2}
\nonumber \\
&&\times
\sum_{k_1=1}^d {\cal I}\{ k_1 \in \{j_{1,1},\ldots, j_{1,|u^{(1)}|} \}
\cap \{j_{2,1},\ldots, j_{2,|u^{(2)}|} \} \}
{\cal I}\{ a_{i_1, k_1}
\neq a_{i_2, k_1 } \}
\nonumber \\
&&\times
\sum_{k_3=1}^d {\cal I}\{ k_3 \in \{j_{3,1},\ldots, j_{3,|u^{(3)}|} \}
\cap \{j_{4,1},\ldots, j_{4,|u^{(4)}|} \} \}
{\cal I}\{ a_{i_1, k_3}
\neq a_{i_2, k_3 } \}
\nonumber \\
&&\times
\nu^* [\pi_{j_{1,1}} (a_{i_1, j_{1,1}});\ldots; \pi_{j_{1,|u^{(1)}|}} (a_{i_1, j_{1,|u^{(1)}|}}) ]
\nu^* [\pi_{j_{2,1}} (a_{i_2, j_{2,1}});\ldots; \pi_{j_{2, |u^{(2)}|}} (a_{i_2, j_{2,|u^{(2)}|}}) ]
\nonumber \\
&&\times
\nu^* [\pi_{j_{3,1}} (a_{i_1, j_{3,1}});\ldots; \pi_{j_{3,|u^{(3)}|}} (a_{i_1, j_{3,|u^{(3)}|}}) ]
\nu^* [\pi_{j_{4,1}} (a_{i_2, j_{4,1}});\ldots; \pi_{j_{4, |u^{(4)}|}} (a_{i_2, j_{4,|u^{(4)}|}}) ]
\Big\}
\nonumber \\
&=& O(\frac{1}{q^4}),
\nonumber \\
&& R^{(1,2)}_{\{1\},\{2\},\{3\},\{4\}}
\nonumber \\
&=&
E \Big\{ \frac{1}{d^2 q^{10} (q-1)^2 \sigma_{\ell_1}^2 \sigma_{\ell_2}^2 } \sum_{i_1=1}^{q^2} \sum_{1\leq i_2\leq q^2: i_2\neq i_1} 
\sum_{1\leq i_3\leq q^2: i_3\neq i_1, i_2} 
\sum_{1\leq i_4\leq q^2: i_4\neq i_1,i_2,i_3} 
\nonumber \\
&&\times
\sum_{0\leq u^{(1)}_1,\ldots, u^{(1)}_d \leq 1: |u^{(1)}|=\ell_1 +2}
\sum_{0\leq u^{(2)}_1,\ldots, u^{(2)}_d \leq 1: |u^{(2)}|=\ell_2 +2}
\nonumber \\
&&\times
\sum_{0\leq u^{(3)}_1,\ldots, u^{(3)}_d \leq 1: |u^{(3)}|=\ell_1 +2}
\sum_{0\leq u^{(4)}_1,\ldots, u^{(4)}_d \leq 1: |u^{(4)}|=\ell_2 +2}
\nonumber \\
&&\times
\sum_{k_1=1}^d {\cal I}\{ k_1 \in \{j_{1,1},\ldots, j_{1,|u^{(1)}|} \}
\cap \{j_{2,1},\ldots, j_{2,|u^{(2)}|} \} \}
{\cal I}\{ a_{i_1, k_1}
\neq a_{i_2, k_1 } \}
\nonumber \\
&&\times
\sum_{k_3=1}^d {\cal I}\{ k_3 \in \{j_{3,1},\ldots, j_{3,|u^{(3)}|} \}
\cap \{j_{4,1},\ldots, j_{4,|u^{(4)}|} \} \}
{\cal I}\{ a_{i_3, k_3}
\neq a_{i_4, k_3 } \}
\nonumber \\
&&\times
\nu^* [\pi_{j_{1,1}} (a_{i_1, j_{1,1}});\ldots; \pi_{j_{1,|u^{(1)}|}} (a_{i_1, j_{1,|u^{(1)}|}}) ]
\nu^* [\pi_{j_{2,1}} (a_{i_2, j_{2,1}});\ldots; \pi_{j_{2, |u^{(2)}|}} (a_{i_2, j_{2,|u^{(2)}|}}) ]
\nonumber \\
&&\times
\nu^* [\pi_{j_{3,1}} (a_{i_3, j_{3,1}});\ldots; \pi_{j_{3,|u^{(3)}|}} (a_{i_3, j_{3,|u^{(3)}|}}) ]
\nu^* [\pi_{j_{4,1}} (a_{i_4, j_{4,1}});\ldots; \pi_{j_{4, |u^{(4)}|}} (a_{i_4, j_{4,|u^{(4)}|}}) ]
\Big\}
\nonumber \\
&=& O(\frac{1}{q^6}),
\end{eqnarray*}
as $q\rightarrow \infty$. 
Thus we conclude that
\begin{displaymath}
\frac{d (q-1)}{2 (\ell_1 +2)} E | E^{\cal W} (S_{\ell_1, 1} S_{\ell_2, 2}) | \leq
\frac{d (q-1)}{2 (\ell_1 +2)}
\{ E [ E^{\cal W} (S_{\ell_1, 1} S_{\ell_2, 2}) ]^2 \}^{1/2}
\nonumber \\
= O(\frac{1}{q}),
\end{displaymath}
as $q\rightarrow\infty$. This proves Lemma  \ref{la:a.19}. \hfill $\Box$

\begin{la} \label{la:a.20}
With the notation of (\ref{eq:a.67}),
for $1\leq \ell_1, \ell_2\leq d-2$,
\begin{displaymath}
\frac{d (q-1)}{\ell_1 +2} E | E^{\cal W} ( \tilde{S}_{\ell_1,1} S_{\ell_2,2}) | = O(\frac{1}{q }),
\hspace{0.5cm}\mbox{as $q\rightarrow\infty$}.
\end{displaymath}
\end{la}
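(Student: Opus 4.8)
The plan is to follow verbatim the template of Lemmas \ref{la:a.17}--\ref{la:a.19}. First I would obtain a closed form for $E^{{\cal W}}(\tilde{S}_{\ell_1,1} S_{\ell_2,2})$. Both $\tilde{S}_{\ell_1,1}$ and $S_{\ell_2,2}$ are built from the single random triple $(J,B_1,B_2)$, so $E^{{\cal W}}$ is just an average over $J$ uniform on $\{1,\ldots,d\}$ and $(B_1,B_2)$ uniform on ordered pairs of distinct symbols; every factor $\nu^*[\pi_{j_1}(a_{i,j_1});\ldots;\pi_{j_{|u|}}(a_{i,j_{|u|}})]$ occurring is ${\cal W}$-measurable because $|u|=\ell+2\geq 3$ forces each nonzero coordinate exponent to equal $1$, so only first-level permutations enter. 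In the product $\tilde{S}_{\ell_1,1}S_{\ell_2,2}$ the $\tilde{S}$-summand (index $i_1$) contributes ${\cal I}\{\pi_J(a_{i_1,J})=B_1\}$ and the $S$-summand (index $i_2$) contributes ${\cal I}\{\pi_J(a_{i_2,J})=B_2\}$; since $B_1\neq B_2$ and $\pi_J$ is a bijection, this forces $a_{i_1,J}\neq a_{i_2,J}$, pins down $(B_1,B_2)$ uniquely, and makes $\tau_{B_1,B_2}\circ\pi_J(a_{i_1,J})=\pi_J(a_{i_2,J})$. Exactly as in the computation of $E(\tilde{S}_{\ell,1}S_{\ell,2})$ in Lemma \ref{la:a.17}, this yields
\begin{displaymath}
E^{{\cal W}}(\tilde{S}_{\ell_1,1} S_{\ell_2,2})
=
\frac{1}{d q^5 (q-1)\sigma_{\ell_1}\sigma_{\ell_2}}
\sum_{i_1}\sum_{i_2\neq i_1}\sum_{u^{(1)},u^{(2)}}\sum_{k=1}^d
{\cal I}\{\cdots\}\,\nu^*[\cdots;\pi_k(a_{i_2,k});\cdots]\,\nu^*[\cdots],
\end{displaymath}
where the $u^{(r)}$ range over $0\leq u^{(r)}_j\leq 1$ with $|u^{(1)}|=\ell_1+2$, $|u^{(2)}|=\ell_2+2$, the indicator requires $k$ to lie in both coordinate sets and $a_{i_1,k}\neq a_{i_2,k}$, and in the first (the $i_1$-indexed) $\nu^*$ the $k$-th argument $\pi_k(a_{i_1,k})$ has been replaced by $\pi_k(a_{i_2,k})$. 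Note that, in contrast to $E^{{\cal W}}(\tilde{S}_{\ell,1}^2)$ in Lemma \ref{la:a.18}, no residual free sum over a replaced symbol survives, since the two indicators together determine both $B_1$ and $B_2$; hence the structure here is that of Lemma \ref{la:a.19} rather than Lemma \ref{la:a.18}.

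By the Cauchy--Schwarz inequality it suffices to prove $E\{[E^{{\cal W}}(\tilde{S}_{\ell_1,1} S_{\ell_2,2})]^2\}=O(q^{-4})$, for then
\begin{displaymath}
\frac{d(q-1)}{\ell_1+2}\,E\big|E^{{\cal W}}(\tilde{S}_{\ell_1,1} S_{\ell_2,2})\big|
\leq \frac{d(q-1)}{\ell_1+2}\big\{E[(E^{{\cal W}}(\tilde{S}_{\ell_1,1} S_{\ell_2,2}))^2]\big\}^{1/2}
= O(q)\cdot O(q^{-2}) = O(q^{-1}).
\end{displaymath}
To bound the second moment I would square the displayed expression, obtaining a quadruple sum over $i_1,i_2,i_3,i_4$ (with $u^{(1)},\ldots,u^{(4)}$ and $k_1,k_3$), and take the full expectation over $\pi_1,\ldots,\pi_d$. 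As in the proof of Lemma \ref{la:a.19} I would split this expectation according to the partition of $\{1,2,3,4\}$ recording which of $i_1,i_2,i_3,i_4$ coincide, defining terms $R^{(1,2)}_{A_1,\ldots,A_p}$ in the style of $(\ref{eq:a.49})$; the only difference from Lemma \ref{la:a.19} is the swapped coordinate in the $i_1$- and $i_3$-indexed $\nu^*$'s (exactly as in the formula for $E(\tilde{S}_{\ell,1}S_{\ell,2})$ in Lemma \ref{la:a.17}). Each partition term is then evaluated with Lemma \ref{la:a.5}: orthogonality of the base-$q$ Haar products over the $\pi$'s forces the surviving index sets to agree, the ANOVA collapse identity $(\ref{eq:3.76})$ turns each unmatched summation symbol into a factor $-1/(q-1)$, and the two indicator constraints $a_{i_1,k_1}\neq a_{i_2,k_1}$, $a_{i_3,k_3}\neq a_{i_4,k_3}$ each cost a further $1/(q-1)$ from collapsing; together with the moment bounds of Lemma \ref{la:a.5} and the strength-$2$ property of $A$ this gives $R^{(1,2)}_{A_1,\ldots,A_p}=O(q^{-4})$, with the terms having more than two distinct $i$'s being $O(q^{-5})$ or smaller. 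Summing the finitely many partition terms gives the desired $O(q^{-4})$ bound.

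The main obstacle is the bookkeeping in this partition analysis: one must simultaneously track, in each term, (i) the two inequality indicators on the $a$'s, (ii) the swapped coordinates in the $i_1$- and $i_3$-indexed factors, and (iii) the factor of $q$ (compensated by a factor $1/q$ in the number of admissible $(i_1,\ldots,i_4)$) that is gained whenever $k_1=k_3$ or whenever one of the generic inequalities $a_{i,j}\neq a_{i',j}$ (for $i\neq i'$) happens to fail. This is precisely the heuristic already used in Lemma \ref{la:a.18}, and it shows that none of the partition terms exceeds $O(q^{-4})$, which completes the proof.
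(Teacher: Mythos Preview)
Your proposal is correct and follows essentially the same approach as the paper: compute $E^{\cal W}(\tilde S_{\ell_1,1}S_{\ell_2,2})$ explicitly (with the swapped $k$-th argument in the $i_1$-indexed $\nu^*$), square, decompose the fourfold sum by set partitions of $\{1,2,3,4\}$ compatible with $i_1\neq i_2$ and $i_3\neq i_4$, bound each block by $O(q^{-4})$, and finish with Cauchy--Schwarz. One small inaccuracy: the coordinate swap couples the $i_1$-factor to row $i_2$ (and the $i_3$-factor to row $i_4$), so the paper finds that the partition terms with three or four distinct indices are only $O(q^{-4})$ here, not $O(q^{-5})$ or $O(q^{-6})$ as in Lemma~\ref{la:a.19}; this does not affect your conclusion since $O(q^{-4})$ is exactly what is needed.
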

{\sc Proof.}
We observe that
\begin{eqnarray*}
&& E^{\cal W} (\tilde{S}_{\ell_1,1} S_{\ell_2, 2})
\nonumber \\
&=&
E^{\cal W} \Big\{ \frac{1}{q^4 \sigma_{\ell_1} \sigma_{\ell_2} } \sum_{i_1=1}^{q^2} \sum_{i_2\neq i_1} 
\sum_{0\leq u^{(1)}_1,\ldots, u^{(1)}_d \leq 1: |u^{(1)}|=\ell_1 +2}
\sum_{0\leq u^{(2)}_1,\ldots, u^{(2)}_d \leq 1: |u^{(2)}|=\ell_2 +2}
\nonumber \\
&&\times
{\cal I}\{J\in \{j_{1,1},\ldots, j_{1,|u^{(1)}|} \}
\cap \{j_{2,1},\ldots, j_{2,|u^{(2)}|} \} \}
{\cal I}\{ \pi_J (a_{i_1,J} ) = B_1,
\pi_J (a_{i_2,J} ) = B_2\}
\nonumber \\
&&\times
\nu^* [\tilde{\pi}_{j_{1,1}} (a_{i_1, j_{1,1}});\ldots; \tilde{\pi}_{j_{1,|u^{(1)}|}} (a_{i_1, j_{1,|u^{(1)}|}}) ]
\nu^* [\pi_{j_{2,1}} (a_{i_2, j_{2,1}});\ldots; \pi_{j_{2, |u^{(2)}|}} (a_{i_2, j_{2,|u^{(2)}|}}) ]
\Big\}
\nonumber \\
&=&
E^{\cal W} \Big\{ \frac{1}{d q^4 \sigma_{\ell_1} \sigma_{\ell_2} } \sum_{i_1=1}^{q^2} \sum_{i_2\neq i_1} 
\sum_{0\leq u^{(1)}_1,\ldots, u^{(1)}_d \leq 1: |u^{(1)}|= \ell_1 +2}
\sum_{0\leq u^{(2)}_1,\ldots, u^{(2)}_d \leq 1: |u^{(2)}|= \ell_2 +2}
\sum_{k=1}^d 
\nonumber \\
&&\times
{\cal I}\{ k\in \{j_{1,1},\ldots, j_{1,|u^{(1)}|} \}
\cap \{j_{2,1},\ldots, j_{2,|u^{(2)}|} \} \}
{\cal I}\{ \pi_k (a_{i_1, k} ) = B_1,
\pi_k (a_{i_2, k} ) = B_2\}
\nonumber \\
&&\times
\nu^* [\pi_{j_{1,1}} (a_{i_1, j_{1,1}});\ldots; \tau_{B_1,B_2}\circ \pi_k( a_{i_1, k});\ldots;
\pi_{j_{1,|u^{(1)}|}} (a_{i_1, j_{1,|u^{(1)}|}}) ]
\nonumber \\
&&\times
\nu^* [\pi_{j_{2,1}} (a_{i_2, j_{2,1}});\ldots; \pi_{j_{2, |u^{(2)}|}} (a_{i_2, j_{2,|u^{(2)}|}}) ]
\Big\}
\nonumber \\
&=&
\frac{1}{d q^5 (q-1) \sigma_{\ell_1} \sigma_{\ell_2} } \sum_{i_1=1}^{q^2} \sum_{i_2\neq i_1} 
\sum_{0\leq u^{(1)}_1,\ldots, u^{(1)}_d \leq 1: |u^{(1)}|=\ell_1 +2}
\sum_{0\leq u^{(2)}_1,\ldots, u^{(2)}_d \leq 1: |u^{(2)}|=\ell_2 +2}
\nonumber \\
&&\hspace{0.5cm}\times
\sum_{k=1}^d {\cal I}\{ k\in \{j_{1,1},\ldots, j_{1,|u^{(1)}|} \}
\cap \{j_{2,1},\ldots, j_{2,|u^{(2)}|} \} \}
{\cal I}\{ a_{i_1, k}
\neq a_{i_2, k} \}
\nonumber \\
&&\hspace{0.5cm}\times
\nu^* [\pi_{j_{1,1}} (a_{i_1, j_{1,1}});\ldots; \pi_k( a_{i_2, k});\ldots;
\pi_{j_{1,|u^{(1)}|}} (a_{i_1, j_{1,|u^{(1)}|}}) ]
\nonumber \\
&&\hspace{0.5cm}\times
\nu^* [\pi_{j_{2,1}} (a_{i_2, j_{2,1}});\ldots; \pi_{j_{2, |u^{(2)}|}} (a_{i_2, j_{2,|u^{(2)}|}}) ],
\end{eqnarray*}
and
\begin{eqnarray*}
&& E \{ [ E^{\cal W} (\tilde{S}_{\ell_1,1} S_{\ell_2,2}) ]^2\}
\nonumber \\
&=&
E\Big\{ \frac{1}{d^2 q^{10} (q-1)^2 \sigma_{\ell_1}^2 \sigma_{\ell_2}^2 } 
\sum_{i_1=1}^{q^2} \sum_{i_2\neq i_1} 
\sum_{i_3=1}^{q^2} \sum_{i_4\neq i_3} 
\sum_{0\leq u^{(1)}_1,\ldots, u^{(1)}_d \leq 1: |u^{(1)}|=\ell_1 +2}
\nonumber \\
&&\hspace{0.5cm}\times
\sum_{0\leq u^{(2)}_1,\ldots, u^{(2)}_d \leq 1: |u^{(2)}|=\ell_2 +2}
\sum_{0\leq u^{(3)}_1,\ldots, u^{(3)}_d \leq 1: |u^{(3)}|=\ell_1 +2}
\sum_{0\leq u^{(4)}_1,\ldots, u^{(4)}_d \leq 1: |u^{(4)}|=\ell_2 +2}
\nonumber \\
&&\hspace{0.5cm}\times
\sum_{k_1=1}^d {\cal I}\{ k_1\in \{j_{1,1},\ldots, j_{1,|u^{(1)}|} \}
\cap \{j_{2,1},\ldots, j_{2,|u^{(2)}|} \} \}
{\cal I}\{ a_{i_1, k_1}
\neq a_{i_2, k_1} \}
\nonumber \\
&&\hspace{0.5cm}\times
\sum_{k_3=1}^d {\cal I}\{ k_3\in \{j_{3,1},\ldots, j_{3,|u^{(3)}|} \}
\cap \{j_{4,1},\ldots, j_{4,|u^{(4)}|} \} \}
{\cal I}\{ a_{i_3, k_3}
\neq a_{i_4, k_3} \}
\nonumber \\
&&\hspace{0.5cm}\times
\nu^* [\pi_{j_{1,1}} (a_{i_1, j_{1,1}});\ldots; \pi_{k_1} ( a_{i_2, k_1});\ldots;
\pi_{j_{1,|u^{(1)}|}} (a_{i_1, j_{1,|u^{(1)}|}}) ]
\nonumber \\
&&\hspace{0.5cm}\times
\nu^* [\pi_{j_{2,1}} (a_{i_2, j_{2,1}});\ldots; \pi_{j_{2, |u^{(2)}|}} (a_{i_2, j_{2,|u^{(2)}|}}) ]
\nonumber \\
&& \hspace{0.5cm}\times
\nu^* [\pi_{j_{3,1}} (a_{i_3, j_{3,1}});\ldots; \pi_{k_3} ( a_{i_4, k_3});\ldots;
\pi_{j_{3,|u^{(3)}|}} (a_{i_3, j_{3,|u^{(3)}|}}) ]
\nonumber \\
&&\hspace{0.5cm}\times
\nu^* [\pi_{j_{4,1}} (a_{i_4, j_{4,1}});\ldots; \pi_{j_{4, |u^{(4)}|}} (a_{i_4, j_{4,|u^{(4)}|}}) ]
\Big\}
\nonumber \\
&=& R^{(\tilde{1},2)}_{\{1,3\},\{2\},\{4\}} + R^{(\tilde{1},2)}_{\{1,4\},\{2\},\{3\}} + R^{(\tilde{1},2)}_{\{2,3\},\{1\},\{4\}} 
+ R^{(\tilde{1},2)}_{\{2,4\},\{1\},\{3\}} 
\nonumber \\
&&+ R^{(\tilde{1},2)}_{\{1,3\},\{2,4\}} + R^{(\tilde{1},2)}_{\{1,4\},\{2,3\}} 
+ R^{(\tilde{1},2)}_{\{1\},\{2\},\{3\},\{4\}},
\end{eqnarray*}
where
\begin{eqnarray*}
&& R^{(\tilde{1},2)}_{\{1,3\},\{2\},\{4\}} \hspace{0.1cm}= \hspace{0.1cm}
 R^{(\tilde{1},2)}_{\{1,4\},\{2\},\{3\}} \hspace{0.1cm} = \hspace{0.1cm} R^{(\tilde{1},2)}_{\{2,3\},\{1\},\{4\}} 
\hspace{0.1cm} = \hspace{0.1cm} R^{(\tilde{1},2)}_{\{2,4\},\{1\},\{3\}}
\nonumber \\
&=&
E\Big\{ \frac{1}{d^2 q^{10} (q-1)^2 \sigma_{\ell_1}^2 \sigma_{\ell_2}^2 } 
\sum_{i_1=1}^{q^2} \sum_{1\leq i_2\leq q^2: i_2\neq i_1} 
\sum_{1\leq i_4\leq q^2: i_4 \neq i_2, i_3} 
\sum_{0\leq u^{(1)}_1,\ldots, u^{(1)}_d \leq 1: |u^{(1)}|=\ell_1 +2}
\nonumber \\
&&\times
\sum_{0\leq u^{(2)}_1,\ldots, u^{(2)}_d \leq 1: |u^{(2)}|=\ell_2 +2}
\sum_{0\leq u^{(3)}_1,\ldots, u^{(3)}_d \leq 1: |u^{(3)}|=\ell_1 +2}
\sum_{0\leq u^{(4)}_1,\ldots, u^{(4)}_d \leq 1: |u^{(4)}|=\ell_2 +2}
\nonumber \\
&&\times
\sum_{k_1=1}^d {\cal I}\{ k_1\in \{j_{1,1},\ldots, j_{1,|u^{(1)}|} \}
\cap \{j_{2,1},\ldots, j_{2,|u^{(2)}|} \} \}
\nonumber \\
&&\times
\sum_{k_3=1}^d {\cal I}\{ k_3\in \{j_{3,1},\ldots, j_{3,|u^{(3)}|} \}
\cap \{j_{4,1},\ldots, j_{4,|u^{(4)}|} \} \}
\nonumber \\
&&\times
\nu^* [\pi_{j_{1,1}} (a_{i_1, j_{1,1}});\ldots; \pi_{k_1} ( a_{i_2, k_1});\ldots;
\pi_{j_{1,|u^{(1)}|}} (a_{i_1, j_{1,|u^{(1)}|}}) ]
\nonumber \\
&&\times
\nu^* [\pi_{j_{2,1}} (a_{i_2, j_{2,1}});\ldots; \pi_{j_{2, |u^{(2)}|}} (a_{i_2, j_{2,|u^{(2)}|}}) ]
{\cal I}\{ a_{i_1, k_1}
\neq a_{i_2, k_1} \}
\nonumber \\
&&\times
\nu^* [\pi_{j_{3,1}} (a_{i_1, j_{3,1}});\ldots; \pi_{k_3} ( a_{i_4, k_3});\ldots;
\pi_{j_{3,|u^{(3)}|}} (a_{i_1, j_{3,|u^{(3)}|}}) ]
\nonumber \\
&&\times
\nu^* [\pi_{j_{4,1}} (a_{i_4, j_{4,1}});\ldots; \pi_{j_{4, |u^{(4)}|}} (a_{i_4, j_{4,|u^{(4)}|}}) ]
{\cal I}\{ a_{i_1, k_3}
\neq a_{i_4, k_3} \}
\Big\}
\nonumber \\
&=& O(\frac{1}{q^4}),
\nonumber \\
&& R^{(\tilde{1},2)}_{\{1,3\},\{2,4\}} 
\hspace{0.1cm} = \hspace{0.1cm} R^{(\tilde{1},2)}_{\{1,4\},\{2,3\}} 
\nonumber \\
&=&
E\Big\{ \frac{1}{d^2 q^{10} (q-1)^2 \sigma_{\ell_1}^2 \sigma_{\ell_2}^2 } 
\sum_{i_1=1}^{q^2} \sum_{1\leq i_2\leq q^2: i_2\neq i_1} 
\sum_{0\leq u^{(1)}_1,\ldots, u^{(1)}_d \leq 1: |u^{(1)}|=\ell_1 +2}
\nonumber \\
&&\times
\sum_{0\leq u^{(2)}_1,\ldots, u^{(2)}_d \leq 1: |u^{(2)}|=\ell_2 +2}
\sum_{0\leq u^{(3)}_1,\ldots, u^{(3)}_d \leq 1: |u^{(3)}|=\ell_1 +2}
\sum_{0\leq u^{(4)}_1,\ldots, u^{(4)}_d \leq 1: |u^{(4)}|=\ell_2 +2}
\nonumber \\
&&\times
\sum_{k_1=1}^d {\cal I}\{ k_1\in \{j_{1,1},\ldots, j_{1,|u^{(1)}|} \}
\cap \{j_{2,1},\ldots, j_{2,|u^{(2)}|} \} \}
\nonumber \\
&&\times
\sum_{k_3=1}^d {\cal I}\{ k_3\in \{j_{3,1},\ldots, j_{3,|u^{(3)}|} \}
\cap \{j_{4,1},\ldots, j_{4,|u^{(4)}|} \} \}
\nonumber \\
&&\times
\nu^* [\pi_{j_{1,1}} (a_{i_1, j_{1,1}});\ldots; \pi_{k_1} ( a_{i_2, k_1});\ldots;
\pi_{j_{1,|u^{(1)}|}} (a_{i_1, j_{1,|u^{(1)}|}}) ]
\nonumber \\
&&\times
\nu^* [\pi_{j_{2,1}} (a_{i_2, j_{2,1}});\ldots; \pi_{j_{2, |u^{(2)}|}} (a_{i_2, j_{2,|u^{(2)}|}}) ]
{\cal I}\{ a_{i_1, k_1}
\neq a_{i_2, k_1} \}
\nonumber \\
&&\times
\nu^* [\pi_{j_{3,1}} (a_{i_1, j_{3,1}});\ldots; \pi_{k_3} ( a_{i_2, k_3});\ldots;
\pi_{j_{3,|u^{(3)}|}} (a_{i_1, j_{3,|u^{(3)}|}}) ]
\nonumber \\
&&\times
\nu^* [\pi_{j_{4,1}} (a_{i_2, j_{4,1}});\ldots; \pi_{j_{4, |u^{(4)}|}} (a_{i_2, j_{4,|u^{(4)}|}}) ]
{\cal I}\{ a_{i_1, k_3}
\neq a_{i_2, k_3} \}
\Big\}
\nonumber \\
&=& O(\frac{1}{q^4}),
\end{eqnarray*}
and
\begin{eqnarray*}
R^{(\tilde{1},2)}_{\{1\},\{2\},\{3\},\{4\}} 
&=&
E\Big\{ \frac{1}{d^2 q^{10} (q-1)^2 \sigma_{\ell_1}^2 \sigma_{\ell_2}^2 } 
\sum_{i_1=1}^{q^2} \sum_{1\leq i_2\leq q^2: i_2\neq i_1} 
\sum_{1\leq i_3\leq q^2: i_3\neq i_1,i_2}
\nonumber \\
&&\times
\sum_{1\leq i_4\leq q^2: i_4 \neq i_1, i_2, i_3} 
\sum_{0\leq u^{(1)}_1,\ldots, u^{(1)}_d \leq 1: |u^{(1)}|=\ell_1 +2}
\sum_{0\leq u^{(2)}_1,\ldots, u^{(2)}_d \leq 1: |u^{(2)}|=\ell_2 +2}
\nonumber \\
&&\times
\sum_{0\leq u^{(3)}_1,\ldots, u^{(3)}_d \leq 1: |u^{(3)}|=\ell_1 +2}
\sum_{0\leq u^{(4)}_1,\ldots, u^{(4)}_d \leq 1: |u^{(4)}|=\ell_2 +2}
\nonumber \\
&&\times
\sum_{k_1=1}^d {\cal I}\{ k_1\in \{j_{1,1},\ldots, j_{1,|u^{(1)}|} \}
\cap \{j_{2,1},\ldots, j_{2,|u^{(2)}|} \} \}
\nonumber \\
&&\times
\sum_{k_3=1}^d {\cal I}\{ k_3\in \{j_{3,1},\ldots, j_{3,|u^{(3)}|} \}
\cap \{j_{4,1},\ldots, j_{4,|u^{(4)}|} \} \}
\nonumber \\
&&\times
\nu^* [\pi_{j_{1,1}} (a_{i_1, j_{1,1}});\ldots; \pi_{k_1} ( a_{i_2, k_1});\ldots;
\pi_{j_{1,|u^{(1)}|}} (a_{i_1, j_{1,|u^{(1)}|}}) ]
\nonumber \\
&&\times
\nu^* [\pi_{j_{2,1}} (a_{i_2, j_{2,1}});\ldots; \pi_{j_{2, |u^{(2)}|}} (a_{i_2, j_{2,|u^{(2)}|}}) ]
{\cal I}\{ a_{i_1, k_1}
\neq a_{i_2, k_1} \}
\nonumber \\
&&\times
\nu^* [\pi_{j_{3,1}} (a_{i_3, j_{3,1}});\ldots; \pi_{k_3} ( a_{i_4, k_3});\ldots;
\pi_{j_{3,|u^{(3)}|}} (a_{i_3, j_{3,|u^{(3)}|}}) ]
\nonumber \\
&&\times
\nu^* [\pi_{j_{4,1}} (a_{i_4, j_{4,1}});\ldots; \pi_{j_{4, |u^{(4)}|}} (a_{i_4, j_{4,|u^{(4)}|}}) ]
{\cal I}\{ a_{i_3, k_3}
\neq a_{i_4, k_3} \}
\Big\}
\nonumber \\
&=& O(\frac{1}{q^4}),
\end{eqnarray*}
as $q\rightarrow\infty$.
Thus we conclude that
\begin{displaymath}
\frac{d (q-1)}{\ell_1 +2} E | E^{\cal W} (\tilde{S}_{\ell_1,1} S_{\ell_2,2}) | \leq
\frac{d (q-1)}{\ell_1 +2}
\{ E [ E^{\cal W} (\tilde{S}_{\ell_1,1} S_{\ell_2,2} ) ]^2 \}^{1/2}
= O(\frac{1}{q }),
\end{displaymath}
as $q\rightarrow\infty$. This proves Lemma  \ref{la:a.20}. \hfill $\Box$

\begin{la} \label{la:a.21}
With the notation of (\ref{eq:a.67}),
for $1\leq \ell_1, \ell_2 \leq d-2$,
\begin{displaymath}
\frac{d (q-1)}{\ell_1 +2} E | E^{\cal W} ( \tilde{S}_{\ell_1,1} S_{\ell_2,1}) | = O(\frac{1}{q}),
\hspace{0.5cm}\mbox{as $q\rightarrow\infty$}.
\end{displaymath}
\end{la}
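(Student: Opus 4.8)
The plan is to follow exactly the route used for Lemmas \ref{la:a.18}--\ref{la:a.20}, the one structural change being that here both factors $\tilde S_{\ell_1,1}$ and $S_{\ell_2,1}$ are attached to the \emph{same} value $B_1$. First I would put $E^{\cal W}(\tilde S_{\ell_1,1} S_{\ell_2,1})$ into closed form. Expanding the definitions in (\ref{eq:a.78}) and averaging over $J$ and $(B_1,B_2)$, the two indicators ${\cal I}\{\pi_J(a_{i_1,J})=B_1\}$ and ${\cal I}\{\pi_J(a_{i_2,J})=B_1\}$ force $a_{i_1,J}=a_{i_2,J}$; on that event $\tilde\pi_J(a_{i_1,J})=\tau_{B_1,B_2}(B_1)=B_2$, and given ${\cal W}$ and $J$ the symbol $B_2$ is uniform over the $q-1$ values different from $\pi_J(a_{i_1,J})$. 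Summing the factor $\nu^*[\cdots;\tilde c_J;\cdots]$ over those $q-1$ values and invoking the ANOVA vanishing relation (\ref{eq:3.76}) collapses the average to $-(q-1)^{-1}\nu^*[\cdots;\pi_J(a_{i_1,J});\cdots]$. Handling the diagonal contribution $i_1=i_2$ in the same way, one obtains $E^{\cal W}(\tilde S_{\ell_1,1} S_{\ell_2,1})$ as $(d q^5 (q-1)\sigma_{\ell_1}\sigma_{\ell_2})^{-1}$ times a (signed) sum over $i_1$ and $i_2$, with $a_{i_1,k}=a_{i_2,k}$ in the shared coordinate $k$, of products of two $\nu^*$'s --- in close analogy with the expression for $E^{\cal W}(\tilde S_{\ell_1,1} S_{\ell_2,2})$ in the proof of Lemma \ref{la:a.20}, the only difference being that the coincidence constraint is now $a_{i_1,k}=a_{i_2,k}$ rather than $a_{i_1,k}\neq a_{i_2,k}$.

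Next I would apply Cauchy--Schwarz, $E|E^{\cal W}(\tilde S_{\ell_1,1} S_{\ell_2,1})| \leq \{E[E^{\cal W}(\tilde S_{\ell_1,1} S_{\ell_2,1})]^2\}^{1/2}$, and square the formula from the first step. This produces a sum over four row indices $i_1,i_2,i_3,i_4$ and four multi-indices with $|u^{(1)}|=|u^{(3)}|=\ell_1+2$, $|u^{(2)}|=|u^{(4)}|=\ell_2+2$, which I would organize as $\sum R^{(\tilde 1,1)}_{A_1,\ldots,A_p}$ over the partitions of $\{1,2,3,4\}$ recording which of the $i$'s coincide, exactly as in (\ref{eq:a.61}). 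For each block I would combine three effects: (i) the strength-$2$ property of $A$, which shows that forcing two distinct rows to agree in one column shrinks the count of admissible ordered pairs by a factor $q$, and that two distinct rows cannot agree in two distinct columns at all; (ii) the relation (\ref{eq:3.76}), which annihilates any block in which some coordinate occurs in exactly one $\nu^*$-factor and otherwise produces extra $\pm(q-1)^{-1}$ factors; (iii) Lemma \ref{la:a.5}, which gives $E\{\nu^*[\cdots]^2\}=O(1)$, $E\{\nu^*[\cdots]^4\}=O(1)$, and, via the $\sigma_\ell^2$-expansion in the proof of Lemma \ref{la:a.18}, $\sigma_{\ell_1}^{-2}\sigma_{\ell_2}^{-2}=O(q^4)$. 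Running through the partitions, the off-diagonal blocks each pick up an additional factor $q^{-1}$ from the equality constraints, while the diagonal blocks with $i_1=i_2$ or $i_3=i_4$ reduce to $[E(\tilde S_{\ell_1,1} S_{\ell_2,1})]^2+O(q^{-5})$. Since $E(\tilde S_{\ell_1,1} S_{\ell_2,1})=0$ when $\ell_1\neq\ell_2$ (orthogonality of Haar components of different cardinality) and equals the $O(q^{-2})$ quantity computed in Lemma \ref{la:a.17} when $\ell_1=\ell_2$, every block is $O(q^{-4})$, so $E[E^{\cal W}(\tilde S_{\ell_1,1} S_{\ell_2,1})]^2=O(q^{-4})$; multiplying by the prefactor $d(q-1)/(\ell_1+2)=O(q)$ yields the claimed $O(1/q)$.

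The work is entirely in this second step, and the main obstacle is organizational rather than conceptual: unlike in Lemma \ref{la:a.20}, where the event $a_{i_1,k}\neq a_{i_2,k}$ excludes $i_1=i_2$, here $a_{i_1,k}=a_{i_2,k}$ permits $i_1=i_2$, so the full list of fifteen partition types of $\{1,2,3,4\}$ occurs --- including the diagonal ones --- and each must be verified to be $O(q^{-4})$ once all $q$-powers are tallied (the power from counting admissible $(i_1,\ldots,i_4)$, the $q^{-10}$ prefactor, the $\sigma_{\ell_1}^{-2}\sigma_{\ell_2}^{-2}=O(q^4)$ factor, the $(q-1)^{-2}$ factor, and the extra $(q-1)^{-1}$ powers coming from (\ref{eq:3.76})). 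The borderline case is the block with $i_1=i_2$ and $i_3=i_4$ distinct, which contributes precisely $[E(\tilde S_{\ell_1,1} S_{\ell_2,1})]^2$; as just noted this is $O(q^{-4})$ and hence harmless, so the estimate closes exactly as in Lemmas \ref{la:a.18}--\ref{la:a.20}.
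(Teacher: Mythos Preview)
Your proposal is correct and follows essentially the same route as the paper's proof: derive the closed form for $E^{\cal W}(\tilde S_{\ell_1,1}S_{\ell_2,1})$ by collapsing the $B_2$-average via (\ref{eq:3.76}) to obtain the factor $-(q-1)^{-1}$, then apply Cauchy--Schwarz, square, decompose over the fifteen set partitions of $\{1,2,3,4\}$, and verify each block is $O(q^{-4})$. The only cosmetic difference is that the paper bounds the borderline block $R^{(\tilde 1,1)}_{\{1,2\},\{3,4\}}$ directly as $O(q^{-4})$ rather than identifying it with $[E(\tilde S_{\ell_1,1}S_{\ell_2,1})]^2$ and then invoking Lemma~\ref{la:a.17} (or orthogonality when $\ell_1\neq\ell_2$) as you do; both routes yield the same estimate.
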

{\sc Proof.}
We observe that
\begin{eqnarray*}
&& E^{\cal W} (\tilde{S}_{\ell_1,1} S_{\ell_2,1})
\nonumber \\
&=&
E^{\cal W} \Big\{ \frac{1}{q^4 \sigma_{\ell_1} \sigma_{\ell_2} } \sum_{i_1=1}^{q^2} \sum_{i_2=1}^{q^2} 
\sum_{0\leq u^{(1)}_1,\ldots, u^{(1)}_d \leq 1: |u^{(1)}|=\ell_1 +2}
\sum_{0\leq u^{(2)}_1,\ldots, u^{(2)}_d \leq 1: |u^{(2)}|=\ell_2 +2}
\nonumber \\
&&\times
{\cal I}\{J\in \{j_{1,1},\ldots, j_{1,|u^{(1)}|} \}
\cap \{j_{2,1},\ldots, j_{2,|u^{(2)}|} \} \}
{\cal I}\{ \pi_J (a_{i_1,J} ) = B_1
= \pi_J (a_{i_2,J} ) \}
\nonumber \\
&&\times
\nu^* [\tilde{\pi}_{j_{1,1}} (a_{i_1, j_{1,1}});\ldots; \tilde{\pi}_{j_{1,|u^{(1)}|}} (a_{i_1, j_{1,|u^{(1)}|}}) ]
\nu^* [\pi_{j_{2,1}} (a_{i_2, j_{2,1}});\ldots; \pi_{j_{2, |u^{(2)}|}} (a_{i_2, j_{2,|u^{(2)}|}}) ]
\Big\}
\nonumber \\
&=&
E^{\cal W} \Big\{ \frac{1}{d q^4 \sigma_{\ell_1} \sigma_{\ell_2} } \sum_{i_1=1}^{q^2} \sum_{i_2=1}^{q^2} 
\sum_{0\leq u^{(1)}_1,\ldots, u^{(1)}_d \leq 1: |u^{(1)}|=\ell_1 +2}
\sum_{0\leq u^{(2)}_1,\ldots, u^{(2)}_d \leq 1: |u^{(2)}|=\ell_2 +2}
\sum_{k=1}^d
\nonumber \\
&&\times
{\cal I}\{ k\in \{j_{1,1},\ldots, j_{1,|u^{(1)}|} \}
\cap \{j_{2,1},\ldots, j_{2,|u^{(2)}|} \} \}
{\cal I}\{ \pi_k (a_{i_1, k} ) = B_1
= \pi_k (a_{i_2, k} ) \}
\nonumber \\
&&\hspace{0.5cm}\times
\nu^* [\pi_{j_{1,1}} (a_{i_1, j_{1,1}});\ldots; \tau_{B_1,B_2}\circ \pi_k (a_{i_1, k});\ldots; \pi_{j_{1,|u^{(1)}|}} (a_{i_1, j_{1,|u^{(1)}|}}) ]
\nonumber \\
&&\hspace{0.5cm}\times
\nu^* [\pi_{j_{2,1}} (a_{i_2, j_{2,1}});\ldots; \pi_{j_{2, |u^{(2)}|}} (a_{i_2, j_{2,|u^{(2)}|}}) ]
\Big\}
\nonumber \\
&=&
- \frac{1}{d q^5 (q-1) \sigma_{\ell_1} \sigma_{\ell_2} } \sum_{i_1=1}^{q^2} \sum_{i_2=1}^{q^2} 
\sum_{0\leq u^{(1)}_1,\ldots, u^{(1)}_d \leq 1: |u^{(1)}|=\ell_1 +2}
\sum_{0\leq u^{(2)}_1,\ldots, u^{(2)}_d \leq 1: |u^{(2)}|=\ell_2 +2}
\nonumber \\
&&\times
\sum_{k=1}^d {\cal I}\{ k\in \{j_{1,1},\ldots, j_{1,|u^{(1)}|} \}
\cap \{j_{2,1},\ldots, j_{2,|u^{(2)}|} \} \}
{\cal I}\{ a_{i_1, k} 
= a_{i_2, k}  \}
\nonumber \\
&&\times
\nu^* [\pi_{j_{1,1}} (a_{i_1, j_{1,1}});\ldots; \pi_{j_{1,|u^{(1)}|}} (a_{i_1, j_{1,|u^{(1)}|}}) ]
\nu^* [\pi_{j_{2,1}} (a_{i_2, j_{2,1}});\ldots; \pi_{j_{2, |u^{(2)}|}} (a_{i_2, j_{2,|u^{(2)}|}}) ],
\end{eqnarray*}
and
\begin{eqnarray*}
&& E \{ [ E^{\cal W} (\tilde{S}_{\ell_1,1} S_{\ell_2,1}) ]^2\}
\nonumber \\
&=&
E\Big\{ \frac{1}{d^2 q^{10} (q-1)^2 \sigma_{\ell_1}^2 \sigma_{\ell_2}^2 } 
\sum_{i_1=1}^{q^2} \sum_{i_2=1}^{q^2} 
\sum_{i_3=1}^{q^2} \sum_{i_4=1}^{q^2} 
\sum_{0\leq u^{(1)}_1,\ldots, u^{(1)}_d \leq 1: |u^{(1)}|=\ell_1 +2}
\nonumber \\
&&\times
\sum_{0\leq u^{(2)}_1,\ldots, u^{(2)}_d \leq 1: |u^{(2)}|=\ell_2 +2}
\sum_{0\leq u^{(3)}_1,\ldots, u^{(3)}_d \leq 1: |u^{(3)}|=\ell_1 +2}
\sum_{0\leq u^{(4)}_1,\ldots, u^{(4)}_d \leq 1: |u^{(4)}|=\ell_2 +2}
\nonumber \\
&&\times
\sum_{k_1=1}^d {\cal I}\{ k_1\in \{j_{1,1},\ldots, j_{1,|u^{(1)}|} \}
\cap \{j_{2,1},\ldots, j_{2,|u^{(2)}|} \} \}
{\cal I}\{ a_{i_1, k_1}
= a_{i_2, k_1} \}
\nonumber \\
&&\times
\sum_{k_3=1}^d {\cal I}\{ k_3\in \{j_{3,1},\ldots, j_{3,|u^{(3)}|} \}
\cap \{j_{4,1},\ldots, j_{4,|u^{(4)}|} \} \}
{\cal I}\{ a_{i_3, k_3}
= a_{i_4, k_3} \}
\nonumber \\
&&\times
\nu^* [\pi_{j_{1,1}} (a_{i_1, j_{1,1}});\ldots;
\pi_{j_{1,|u^{(1)}|}} (a_{i_1, j_{1,|u^{(1)}|}}) ]
\nu^* [\pi_{j_{2,1}} (a_{i_2, j_{2,1}});\ldots; \pi_{j_{2, |u^{(2)}|}} (a_{i_2, j_{2,|u^{(2)}|}}) ]
\nonumber \\
&& \times
\nu^* [\pi_{j_{3,1}} (a_{i_3, j_{3,1}});\ldots; 
\pi_{j_{3,|u^{(3)}|}} (a_{i_3, j_{3,|u^{(3)}|}}) ]
\nu^* [\pi_{j_{4,1}} (a_{i_4, j_{4,1}});\ldots; \pi_{j_{4, |u^{(4)}|}} (a_{i_4, j_{4,|u^{(4)}|}}) ]
\Big\}
\nonumber \\
&=&
R^{(\tilde{1},1)}_{\{1,2,3,4\}} + R^{(\tilde{1},1)}_{\{1,2,3\},\{4\}} + R^{(\tilde{1},1)}_{\{1,2,4\},\{3\}}
+ R^{(\tilde{1},1)}_{\{1,3,4\},\{2\}} + R^{(\tilde{1},1)}_{\{2,3,4\},\{1\}}
\nonumber \\
&& + R^{(\tilde{1},1)}_{\{1,2\},\{3\},\{4\}} + R^{(\tilde{1},1)}_{\{1,3\},\{2\},\{4\}} + R^{(\tilde{1},1)}_{\{1,4\},\{2\},\{3\}}
+ R^{(\tilde{1},1)}_{\{2,3\},\{1\},\{4\}} + R^{(\tilde{1},1)}_{\{2,4\},\{1\},\{3\}}
\nonumber \\
&& + R^{(\tilde{1},1)}_{\{3,4\},\{1\},\{2\}} + R^{(\tilde{1},1)}_{\{1,2\},\{3,4\}} + R^{(\tilde{1},1)}_{\{1,3\},\{2,4\}}
+ R^{(\tilde{1},1)}_{\{1,4\},\{2,3\}} + R^{(\tilde{1},1)}_{\{1\},\{2\},\{3\},\{4\}},
\end{eqnarray*}
where
\begin{eqnarray*}
&& R^{(\tilde{1},1)}_{\{1,2,3,4\}} 
\nonumber \\
&=&
E\Big\{ \frac{1}{d^2 q^{10} (q-1)^2 \sigma_{\ell_1}^2 \sigma_{\ell_2}^2 } 
\sum_{i_1=1}^{q^2} 
\sum_{0\leq u^{(1)}_1,\ldots, u^{(1)}_d \leq 1: |u^{(1)}|=\ell_1 +2}
\nonumber \\
&&\times
\sum_{0\leq u^{(2)}_1,\ldots, u^{(2)}_d \leq 1: |u^{(2)}|=\ell_2 +2}
\sum_{0\leq u^{(3)}_1,\ldots, u^{(3)}_d \leq 1: |u^{(3)}|=\ell_1 +2}
\sum_{0\leq u^{(4)}_1,\ldots, u^{(4)}_d \leq 1: |u^{(4)}|=\ell_2 +2}
\nonumber \\
&&\times
\sum_{k_1=1}^d {\cal I}\{ k_1\in \{j_{1,1},\ldots, j_{1,|u^{(1)}|} \}
\cap \{j_{2,1},\ldots, j_{2,|u^{(2)}|} \} \}
\nonumber \\
&&\times
\sum_{k_3=1}^d {\cal I}\{ k_3\in \{j_{3,1},\ldots, j_{3,|u^{(3)}|} \}
\cap \{j_{4,1},\ldots, j_{4,|u^{(4)}|} \} \}
\nonumber \\
&&\times
\nu^* [\pi_{j_{1,1}} (a_{i_1, j_{1,1}});\ldots;
\pi_{j_{1,|u^{(1)}|}} (a_{i_1, j_{1,|u^{(1)}|}}) ]
\nu^* [\pi_{j_{2,1}} (a_{i_1, j_{2,1}});\ldots; \pi_{j_{2, |u^{(2)}|}} (a_{i_1, j_{2,|u^{(2)}|}}) ]
\nonumber \\
&&\times
\nu^* [\pi_{j_{3,1}} (a_{i_1, j_{3,1}});\ldots; 
\pi_{j_{3,|u^{(3)}|}} (a_{i_1, j_{3,|u^{(3)}|}}) ]
\nu^* [\pi_{j_{4,1}} (a_{i_1, j_{4,1}});\ldots; \pi_{j_{4, |u^{(4)}|}} (a_{i_1, j_{4,|u^{(4)}|}}) ]
\Big\}
\nonumber \\
&=& O(\frac{1}{q^6}),
\end{eqnarray*}
\begin{eqnarray*}
&& R^{(\tilde{1},1)}_{\{1,2,3\},\{4\}} 
\hspace{0.1cm} = \hspace{0.1cm}
R^{(\tilde{1},1)}_{\{1,2,4\},\{3\}}
\hspace{0.1cm} = \hspace{0.1cm} R^{(\tilde{1},1)}_{\{1,3,4\},\{2\}} \hspace{0.1cm}= \hspace{0.1cm} R^{(\tilde{1},1)}_{\{2,3,4\},\{1\}}
\nonumber \\
&=&
E\Big\{ \frac{1}{d^2 q^{10} (q-1)^2 \sigma_{\ell_1}^2 \sigma_{\ell_2}^2 } 
\sum_{i_1=1}^{q^2} \sum_{1\leq i_4\leq q^2: i_4\neq i_1}
\sum_{0\leq u^{(1)}_1,\ldots, u^{(1)}_d \leq 1: |u^{(1)}|=\ell_1 +2}
\nonumber \\
&&\times
\sum_{0\leq u^{(2)}_1,\ldots, u^{(2)}_d \leq 1: |u^{(2)}|=\ell_2 +2}
\sum_{0\leq u^{(3)}_1,\ldots, u^{(3)}_d \leq 1: |u^{(3)}|=\ell_1 +2}
\sum_{0\leq u^{(4)}_1,\ldots, u^{(4)}_d \leq 1: |u^{(4)}|=\ell_2 +2}
\nonumber \\
&&\times
\sum_{k_1=1}^d {\cal I}\{ k_1\in \{j_{1,1},\ldots, j_{1,|u^{(1)}|} \}
\cap \{j_{2,1},\ldots, j_{2,|u^{(2)}|} \} \}
\nonumber \\
&&\times
\sum_{k_3=1}^d {\cal I}\{ k_3\in \{j_{3,1},\ldots, j_{3,|u^{(3)}|} \}
\cap \{j_{4,1},\ldots, j_{4,|u^{(4)}|} \} \} {\cal I}\{ a_{i_1, k_3}
= a_{i_4, k_3} \}
\nonumber \\
&&\times
\nu^* [\pi_{j_{1,1}} (a_{i_1, j_{1,1}});\ldots;
\pi_{j_{1,|u^{(1)}|}} (a_{i_1, j_{1,|u^{(1)}|}}) ]
\nu^* [\pi_{j_{2,1}} (a_{i_1, j_{2,1}});\ldots; \pi_{j_{2, |u^{(2)}|}} (a_{i_1, j_{2,|u^{(2)}|}}) ]
\nonumber \\
&&\times
\nu^* [\pi_{j_{3,1}} (a_{i_1, j_{3,1}});\ldots; 
\pi_{j_{3,|u^{(3)}|}} (a_{i_1, j_{3,|u^{(3)}|}}) ]
\nu^* [\pi_{j_{4,1}} (a_{i_4, j_{4,1}});\ldots; \pi_{j_{4, |u^{(4)}|}} (a_{i_4, j_{4,|u^{(4)}|}}) ]
\Big\}
\nonumber \\
&=& O(\frac{1}{q^5}),
\nonumber \\
&& R^{(\tilde{1},1)}_{\{1,2\},\{3\},\{4\}} 
\hspace{0.1cm} = \hspace{0.1cm}
R^{(\tilde{1},1)}_{\{3,4\},\{1\},\{2\}} 
\nonumber \\
&=&
E\Big\{ \frac{1}{d^2 q^{10} (q-1)^2 \sigma_{\ell_1} \sigma_{\ell_2}^2 } 
\sum_{i_1=1}^{q^2} \sum_{1\leq i_3\leq q^2:i_3\neq i_1} \sum_{1\leq i_4\leq q^2: i_4\neq i_1, i_3}
\sum_{0\leq u^{(1)}_1,\ldots, u^{(1)}_d \leq 1: |u^{(1)}|=\ell_1+2}
\nonumber \\
&&\times
\sum_{0\leq u^{(2)}_1,\ldots, u^{(2)}_d \leq 1: |u^{(2)}|=\ell_2 +2}
\sum_{0\leq u^{(3)}_1,\ldots, u^{(3)}_d \leq 1: |u^{(3)}|=\ell_1 +2}
\sum_{0\leq u^{(4)}_1,\ldots, u^{(4)}_d \leq 1: |u^{(4)}|=\ell_2 +2}
\nonumber \\
&&\times
\sum_{k_1=1}^d {\cal I}\{ k_1\in \{j_{1,1},\ldots, j_{1,|u^{(1)}|} \}
\cap \{j_{2,1},\ldots, j_{2,|u^{(2)}|} \} \}
\nonumber \\
&&\times
\sum_{k_3=1}^d {\cal I}\{ k_3\in \{j_{3,1},\ldots, j_{3,|u^{(3)}|} \}
\cap \{j_{4,1},\ldots, j_{4,|u^{(4)}|} \} \} {\cal I}\{ a_{i_3, k_3}
= a_{i_4, k_3} \}
\nonumber \\
&&\times
\nu^* [\pi_{j_{1,1}} (a_{i_1, j_{1,1}});\ldots;
\pi_{j_{1,|u^{(1)}|}} (a_{i_1, j_{1,|u^{(1)}|}}) ]
\nu^* [\pi_{j_{2,1}} (a_{i_1, j_{2,1}});\ldots; \pi_{j_{2, |u^{(2)}|}} (a_{i_1, j_{2,|u^{(2)}|}}) ]
\nonumber \\
&&\times
\nu^* [\pi_{j_{3,1}} (a_{i_3, j_{3,1}});\ldots; 
\pi_{j_{3,|u^{(3)}|}} (a_{i_3, j_{3,|u^{(3)}|}}) ]
\nu^* [\pi_{j_{4,1}} (a_{i_4, j_{4,1}});\ldots; \pi_{j_{4, |u^{(4)}|}} (a_{i_4, j_{4,|u^{(4)}|}}) ]
\Big\}
\nonumber \\
&=& O(\frac{1}{q^5}),
\nonumber \\
&& R^{(\tilde{1},1)}_{\{1,3\},\{2\},\{4\}} 
\hspace{0.1cm} = \hspace{0.1cm}
R^{(\tilde{1},1)}_{\{1,4\},\{2\},\{3\}}
\hspace{0.1cm} = \hspace{0.1cm} R^{(\tilde{1},1)}_{\{2,3\},\{1\},\{4\}} \hspace{0.1cm} = \hspace{0.1cm} R^{(\tilde{1},1)}_{\{2,4\},\{1\},\{3\}}
\nonumber \\
&=&
E\Big\{ \frac{1}{d^2 q^{10} (q-1)^2 \sigma_{\ell_1}^2 \sigma_{\ell_2}^2 } 
\sum_{i_1=1}^{q^2} \sum_{1\leq i_2\leq q^2:i_2\neq i_1} \sum_{1\leq i_4\leq q^2: i_4\neq i_1, i_2}
\sum_{0\leq u^{(1)}_1,\ldots, u^{(1)}_d \leq 1: |u^{(1)}|=\ell_1 +2}
\nonumber \\
&&\times
\sum_{0\leq u^{(2)}_1,\ldots, u^{(2)}_d \leq 1: |u^{(2)}|=\ell_2 +2}
\sum_{0\leq u^{(3)}_1,\ldots, u^{(3)}_d \leq 1: |u^{(3)}|=\ell_1 +2}
\sum_{0\leq u^{(4)}_1,\ldots, u^{(4)}_d \leq 1: |u^{(4)}|=\ell_2 +2}
\nonumber \\
&&\times
\sum_{k_1=1}^d {\cal I}\{ k_1\in \{j_{1,1},\ldots, j_{1,|u^{(1)}|} \}
\cap \{j_{2,1},\ldots, j_{2,|u^{(2)}|} \} \} {\cal I}\{ a_{i_1, k_1}
= a_{i_2, k_1} \}
\nonumber \\
&&\times
\sum_{k_3=1}^d {\cal I}\{ k_3\in \{j_{3,1},\ldots, j_{3,|u^{(3)}|} \}
\cap \{j_{4,1},\ldots, j_{4,|u^{(4)}|} \} \} {\cal I}\{ a_{i_1, k_3}
= a_{i_4, k_3} \}
\nonumber \\
&&\times
\nu^* [\pi_{j_{1,1}} (a_{i_1, j_{1,1}});\ldots;
\pi_{j_{1,|u^{(1)}|}} (a_{i_1, j_{1,|u^{(1)}|}}) ]
\nu^* [\pi_{j_{2,1}} (a_{i_2, j_{2,1}});\ldots; \pi_{j_{2, |u^{(2)}|}} (a_{i_2, j_{2,|u^{(2)}|}}) ]
\nonumber \\
&&\times
\nu^* [\pi_{j_{3,1}} (a_{i_1, j_{3,1}});\ldots; 
\pi_{j_{3,|u^{(3)}|}} (a_{i_1, j_{3,|u^{(3)}|}}) ]
\nu^* [\pi_{j_{4,1}} (a_{i_4, j_{4,1}});\ldots; \pi_{j_{4, |u^{(4)}|}} (a_{i_4, j_{4,|u^{(4)}|}}) ]
\Big\}
\nonumber \\
&=& O(\frac{1}{q^6}),
\nonumber \\
&& R^{(\tilde{1},1)}_{\{1,2\},\{3,4\}} 
\nonumber \\
&=&
E\Big\{ \frac{1}{d^2 q^{10} (q-1)^2 \sigma_{\ell_1}^2 \sigma_{\ell_2}^2 } 
\sum_{i_1=1}^{q^2} \sum_{1\leq i_3\leq q^2:i_3\neq i_1} 
\sum_{0\leq u^{(1)}_1,\ldots, u^{(1)}_d \leq 1: |u^{(1)}|=\ell_1 +2}
\nonumber \\
&&\times
\sum_{0\leq u^{(2)}_1,\ldots, u^{(2)}_d \leq 1: |u^{(2)}|=\ell_2 +2}
\sum_{0\leq u^{(3)}_1,\ldots, u^{(3)}_d \leq 1: |u^{(3)}|=\ell_1 +2}
\sum_{0\leq u^{(4)}_1,\ldots, u^{(4)}_d \leq 1: |u^{(4)}|=\ell_2 +2}
\nonumber \\
&&\times
\sum_{k_1=1}^d {\cal I}\{ k_1\in \{j_{1,1},\ldots, j_{1,|u^{(1)}|} \}
\cap \{j_{2,1},\ldots, j_{2,|u^{(2)}|} \} \}
\nonumber \\
&&\times
\sum_{k_3=1}^d {\cal I}\{ k_3\in \{j_{3,1},\ldots, j_{3,|u^{(3)}|} \}
\cap \{j_{4,1},\ldots, j_{4,|u^{(4)}|} \} \}
\nonumber \\
&&\times
\nu^* [\pi_{j_{1,1}} (a_{i_1, j_{1,1}});\ldots;
\pi_{j_{1,|u^{(1)}|}} (a_{i_1, j_{1,|u^{(1)}|}}) ]
\nu^* [\pi_{j_{2,1}} (a_{i_1, j_{2,1}});\ldots; \pi_{j_{2, |u^{(2)}|}} (a_{i_1, j_{2,|u^{(2)}|}}) ]
\nonumber \\
&&\times
\nu^* [\pi_{j_{3,1}} (a_{i_3, j_{3,1}});\ldots; 
\pi_{j_{3,|u^{(3)}|}} (a_{i_3, j_{3,|u^{(3)}|}}) ]
\nu^* [\pi_{j_{4,1}} (a_{i_3, j_{4,1}});\ldots; \pi_{j_{4, |u^{(4)}|}} (a_{i_3, j_{4,|u^{(4)}|}}) ]
\Big\}
\nonumber \\
&=& O(\frac{1}{q^4}),
\nonumber \\
&& R^{(\tilde{1},1)}_{\{1,3\},\{2,4\}} 
\hspace{0.1cm} = \hspace{0.1cm}
R^{(\tilde{1},1)}_{\{1,4\},\{3,4\}}
\nonumber \\
&=&
E\Big\{ \frac{1}{d^2 q^{10} (q-1)^2 \sigma_{\ell_1}^2 \sigma_{\ell_2}^2 } 
\sum_{i_1=1}^{q^2} \sum_{1\leq i_2\leq q^2:i_2 \neq i_1} 
\sum_{0\leq u^{(1)}_1,\ldots, u^{(1)}_d \leq 1: |u^{(1)}|=\ell_1 +2}
\nonumber \\
&&\times
\sum_{0\leq u^{(2)}_1,\ldots, u^{(2)}_d \leq 1: |u^{(2)}|=\ell_2 +2}
\sum_{0\leq u^{(3)}_1,\ldots, u^{(3)}_d \leq 1: |u^{(3)}|=\ell_1 +2}
\sum_{0\leq u^{(4)}_1,\ldots, u^{(4)}_d \leq 1: |u^{(4)}|=\ell_2 +2}
\nonumber \\
&&\times
\sum_{k_1=1}^d {\cal I}\{ k_1\in \{j_{1,1},\ldots, j_{1,|u^{(1)}|} \} 
\cap \{j_{2,1},\ldots, j_{2,|u^{(2)}|} \} \} {\cal I}\{ a_{i_1, k_1}
= a_{i_2, k_1} \}
\nonumber \\
&&\times
\sum_{k_3=1}^d {\cal I}\{ k_3\in \{j_{3,1},\ldots, j_{3,|u^{(3)}|} \}
\cap \{j_{4,1},\ldots, j_{4,|u^{(4)}|} \} \} {\cal I}\{ a_{i_1, k_3}
= a_{i_2, k_3} \}
\nonumber \\
&&\times
\nu^* [\pi_{j_{1,1}} (a_{i_1, j_{1,1}});\ldots;
\pi_{j_{1,|u^{(1)}|}} (a_{i_1, j_{1,|u^{(1)}|}}) ]
\nu^* [\pi_{j_{2,1}} (a_{i_2, j_{2,1}});\ldots; \pi_{j_{2, |u^{(2)}|}} (a_{i_2, j_{2,|u^{(2)}|}}) ]
\nonumber \\
&&\times
\nu^* [\pi_{j_{3,1}} (a_{i_1, j_{3,1}});\ldots; 
\pi_{j_{3,|u^{(3)}|}} (a_{i_1, j_{3,|u^{(3)}|}}) ]
\nu^* [\pi_{j_{4,1}} (a_{i_2, j_{4,1}});\ldots; \pi_{j_{4, |u^{(4)}|}} (a_{i_2, j_{4,|u^{(4)}|}}) ]
\Big\}
\nonumber \\
&=& O(\frac{1}{q^5}),
\nonumber \\
&& R^{(\tilde{1},1)}_{\{1\},\{2\},\{3\},\{4\}} 
\nonumber \\
&=&
E\Big\{ \frac{1}{d^2 q^{10} (q-1)^2 \sigma_{\ell_1}^2 \sigma_{\ell_2}^2 } 
\sum_{i_1=1}^{q^2} \sum_{1\leq i_2\leq q^2:i_2\neq i_1} 
\sum_{1\leq i_3\leq q^2:i_3\neq i_1, i_2} 
\sum_{1\leq i_4\leq q^2: i_4\neq i_1, i_2, i_3}
\nonumber \\
&&\times
\sum_{0\leq u^{(1)}_1,\ldots, u^{(1)}_d \leq 1: |u^{(1)}|=\ell_1 +2}
\sum_{0\leq u^{(2)}_1,\ldots, u^{(2)}_d \leq 1: |u^{(2)}|=\ell_2 +2}
\nonumber \\
&&\times
\sum_{0\leq u^{(3)}_1,\ldots, u^{(3)}_d \leq 1: |u^{(3)}|=\ell_1 +2}
\sum_{0\leq u^{(4)}_1,\ldots, u^{(4)}_d \leq 1: |u^{(4)}|=\ell_2 +2}
\nonumber \\
&&\times
\sum_{k_1=1}^d {\cal I}\{ k_1\in \{j_{1,1},\ldots, j_{1,|u^{(1)}|} \}
\cap \{j_{2,1},\ldots, j_{2,|u^{(2)}|} \} \} {\cal I}\{ a_{i_1, k_1}
= a_{i_2, k_1} \}
\nonumber \\
&&\times
\sum_{k_3=1}^d {\cal I}\{ k_3\in \{j_{3,1},\ldots, j_{3,|u^{(3)}|} \}
\cap \{j_{4,1},\ldots, j_{4,|u^{(4)}|} \} \} {\cal I}\{ a_{i_3, k_3}
= a_{i_4, k_3} \}
\nonumber \\
&&\times
\nu^* [\pi_{j_{1,1}} (a_{i_1, j_{1,1}});\ldots;
\pi_{j_{1,|u^{(1)}|}} (a_{i_1, j_{1,|u^{(1)}|}}) ]
\nu^* [\pi_{j_{2,1}} (a_{i_2, j_{2,1}});\ldots; \pi_{j_{2, |u^{(2)}|}} (a_{i_2, j_{2,|u^{(2)}|}}) ]
\nonumber \\
&&\times
\nu^* [\pi_{j_{3,1}} (a_{i_3, j_{3,1}});\ldots; 
\pi_{j_{3,|u^{(3)}|}} (a_{i_3, j_{3,|u^{(3)}|}}) ]
\nu^* [\pi_{j_{4,1}} (a_{i_4, j_{4,1}});\ldots; \pi_{j_{4, |u^{(4)}|}} (a_{i_4, j_{4,|u^{(4)}|}}) ]
\Big\}
\nonumber \\
&=& O(\frac{1}{q^6}),
\end{eqnarray*}
as $q\rightarrow\infty$.
Thus we conclude that
\begin{displaymath}
\frac{d (q-1)}{\ell_1 +2} E | E^{\cal W} (\tilde{S}_{\ell_1,1} S_{\ell_2,1} ) | \leq
\frac{d (q-1)}{\ell_1 +2}
\{ E [ E^{\cal W} (\tilde{S}_{\ell_1,1} S_{\ell_2,1} ) ]^2 \}^{1/2}
= O(\frac{1}{q}),
\end{displaymath}
as $q\rightarrow\infty$. This proves Lemma  \ref{la:a.21}. \hfill $\Box$

\begin{la} \label{la:a.22}
With the notation of (\ref{eq:a.67}),
for $1\leq \ell_1, \ell_2 \leq d-2$,
\begin{displaymath}
\frac{d (q-1)}{\ell_1 +2} E | E^{\cal W} ( \tilde{S}_{\ell_1,1} \tilde{S}_{\ell_2,2} ) | = O(\frac{1}{q}),
\hspace{0.5cm}\mbox{as $q\rightarrow\infty$}.
\end{displaymath}
\end{la}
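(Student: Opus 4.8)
\emph{Proof proposal.} The plan is to follow the same route used in the proofs of Lemmas \ref{la:a.19}, \ref{la:a.20} and \ref{la:a.21}: reduce to a second moment via Cauchy--Schwarz, expand that second moment according to the coincidence pattern of the four row indices, and bound each resulting term by counting powers of $q$ with the help of Lemma \ref{la:a.5}. First I would write $E^{\cal W}(\tilde{S}_{\ell_1,1}\tilde{S}_{\ell_2,2})$ explicitly from the definitions in (\ref{eq:a.78}) and (\ref{eq:5.47}). Since $B_1\neq B_2$, the indicator events $\{\pi_J(a_{i_1,J})=B_1\}$ and $\{\pi_J(a_{i_2,J})=B_2\}$ force $a_{i_1,J}\neq a_{i_2,J}$, so the sum runs over $i_1\neq i_2$ together with a single common coordinate $k=J$ lying in both $\nu^*$-supports; averaging over $(B_1,B_2)$ and $J$ and using $\tau_{B_1,B_2}(B_1)=B_2$, $\tau_{B_1,B_2}(B_2)=B_1$, the transposed values at coordinate $k$ become forced, and summing them out against the ANOVA relation (\ref{eq:3.76}) and Lemma \ref{la:a.5} produces the usual $\pm(q-1)^{-1}$ weights, exactly as in Lemma \ref{la:a.20}.

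Next I would square and take expectation, obtaining a sum over $i_1\neq i_2$ and $i_3\neq i_4$ which I decompose according to the coincidence pattern of $(i_1,i_2,i_3,i_4)$. As in Lemma \ref{la:a.20}, the admissible partitions are $\{1,3\},\{2\},\{4\}$; $\{1,4\},\{2\},\{3\}$; $\{2,3\},\{1\},\{4\}$; $\{2,4\},\{1\},\{3\}$; $\{1,3\},\{2,4\}$; $\{1,4\},\{2,3\}$; and $\{1\},\{2\},\{3\},\{4\}$, giving terms $R^{(\tilde1,\tilde2)}_{A_1,\ldots,A_p}$. For each I would bound the summand by power counting: each factor $\langle f,\prod\psi_{0,0,c}\rangle$, once summed over its $c$-index, contributes a factor of order $q^{-2}$ per coordinate of its support by (\ref{eq:a.545}); each orthogonal-array constraint ${\cal I}\{a_{i,k}=a_{i',k}\}$ (respectively its complement) contributes the standard $q$ (respectively $q^2$) counting factor for that pair of rows, while forcing two of the $i$'s to agree removes a factor of $q^2$ from the range of summation. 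Carrying out this accounting shows that every $R^{(\tilde1,\tilde2)}_{A_1,\ldots,A_p}=O(q^{-4})$, hence $E\{[E^{\cal W}(\tilde{S}_{\ell_1,1}\tilde{S}_{\ell_2,2})]^2\}=O(q^{-4})$ as $q\rightarrow\infty$. Then Cauchy--Schwarz gives
\begin{eqnarray*}
\frac{d(q-1)}{\ell_1+2}\,E\big|E^{\cal W}(\tilde{S}_{\ell_1,1}\tilde{S}_{\ell_2,2})\big|
&\leq& \frac{d(q-1)}{\ell_1+2}\,\Big\{E\big[E^{\cal W}(\tilde{S}_{\ell_1,1}\tilde{S}_{\ell_2,2})\big]^2\Big\}^{1/2}
\\
&=& O(q)\cdot O(q^{-2}) \hspace{0.1cm}=\hspace{0.1cm} O(q^{-1}),
\end{eqnarray*}
which is the assertion.

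I expect the main obstacle to be the bookkeeping of the two transpositions $\tau_{B_1,B_2}$, one acting in each factor with the roles of $B_1$ and $B_2$ interchanged, together with the interaction between the distinguished coordinate $k$ coming from $J$ and the running coordinates $k_1,k_3$ appearing in the second-moment expansion. As in Lemma \ref{la:a.20}, the point to check is that the extra transposition in the first factor only multiplies the combinatorial weights by $O(1)$ and never shifts a power of $q$ in the unfavourable direction, so that the worst partitions still yield $O(q^{-4})$ rather than $O(q^{-3})$; this is what makes the final bound $O(q^{-1})$ rather than merely $O(q^{-1/2})$.
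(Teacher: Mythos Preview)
Your proposal is correct and follows essentially the same route as the paper: compute $E^{\cal W}(\tilde S_{\ell_1,1}\tilde S_{\ell_2,2})$ explicitly (the paper obtains the swapped entries $\pi_k(a_{i_2,k})$ and $\pi_k(a_{i_1,k})$ in the two $\nu^*$'s directly from $\tau_{B_1,B_2}(B_1)=B_2$, $\tau_{B_1,B_2}(B_2)=B_1$, with the $(q(q-1))^{-1}$ factor coming from the $(B_1,B_2)$ average rather than from (\ref{eq:3.76})), square, split over the seven admissible coincidence patterns of $(i_1,i_2,i_3,i_4)$, and bound each $\tilde R^{(1,2)}_{A_1,\ldots,A_p}$ by power counting to get $E\{[E^{\cal W}(\tilde S_{\ell_1,1}\tilde S_{\ell_2,2})]^2\}=O(q^{-4})$, whence Cauchy--Schwarz gives the claimed $O(q^{-1})$. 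The paper's worst partition is $\{1,3\},\{2,4\}$ at $O(q^{-4})$, exactly as you anticipate.
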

{\sc Proof.}
We observe that
\begin{eqnarray*}
&& E^{\cal W} (\tilde{S}_{\ell_1,1} \tilde{S}_{\ell_2,2} )
\nonumber \\
&=&
E^{\cal W} \Big\{ \frac{1}{q^4 \sigma_{\ell_1} \sigma_{\ell_2} } \sum_{i_1=1}^{q^2} \sum_{i_2\neq i_1} 
\sum_{0\leq u^{(1)}_1,\ldots, u^{(1)}_d \leq 1: |u^{(1)}|=\ell_1 +2}
\sum_{0\leq u^{(2)}_1,\ldots, u^{(2)}_d \leq 1: |u^{(2)}|=\ell_2 +2}
\nonumber \\
&&\times
{\cal I}\{J\in \{j_{1,1},\ldots, j_{1,|u^{(1)}|} \}
\cap \{j_{2,1},\ldots, j_{2,|u^{(2)}|} \} \}
{\cal I}\{ \pi_J (a_{i_1,J} ) = B_1,
\pi_J (a_{i_2,J} ) = B_2\}
\nonumber \\
&&\times
\nu^* [\tilde{\pi}_{j_{1,1}} (a_{i_1, j_{1,1}});\ldots; \tilde{\pi}_{j_{1,|u^{(1)}|}} (a_{i_1, j_{1,|u^{(1)}|}}) ]
\nu^* [\tilde{\pi}_{j_{2,1}} (a_{i_2, j_{2,1}});\ldots; \tilde{\pi}_{j_{2, |u^{(2)}|}} (a_{i_2, j_{2,|u^{(2)}|}}) ]
\Big\}
\nonumber \\
&=&
\frac{1}{d q^5 (q-1) \sigma_{\ell_1} \sigma_{\ell_2} } \sum_{i_1=1}^{q^2} \sum_{i_2\neq i_1} 
\sum_{0\leq u^{(1)}_1,\ldots, u^{(1)}_d \leq 1: |u^{(1)}|=\ell_1 +2}
\sum_{0\leq u^{(2)}_1,\ldots, u^{(2)}_d \leq 1: |u^{(2)}|=\ell_2 +2}
\nonumber \\
&&\hspace{0.5cm}\times
\sum_{k=1}^d {\cal I}\{ k\in \{j_{1,1},\ldots, j_{1,|u^{(1)}|} \}
\cap \{j_{2,1},\ldots, j_{2,|u^{(2)}|} \} \}
{\cal I}\{ a_{i_1,k} 
\neq a_{i_2,k}  \}
\nonumber \\
&&\hspace{0.5cm}\times
\nu^* [\pi_{j_{1,1}} (a_{i_1, j_{1,1}});\ldots; \pi_k (a_{i_2, k});\ldots; \pi_{j_{1,|u^{(1)}|}} (a_{i_1, j_{1,|u^{(1)}|}}) ]
\nonumber \\
&&\hspace{0.5cm}\times
\nu^* [\pi_{j_{2,1}} (a_{i_2, j_{2,1}});\ldots; \pi_k (a_{i_1, k});\ldots; \pi_{j_{2, |u^{(2)}|}} (a_{i_2, j_{2,|u^{(2)}|}}) ],
\end{eqnarray*}
and
\begin{eqnarray*}
&& E \{ [ E^{\cal W} (\tilde{S}_{\ell_1,1} \tilde{S}_{\ell_2,2} ) ]^2 \}
\nonumber \\
&=&
E \Big\{ \frac{1}{d^2 q^{10} (q-1)^2 \sigma_{\ell_1}^2 \sigma_{\ell_2}^2 } \sum_{i_1=1}^{q^2} \sum_{i_2\neq i_1} 
\sum_{i_3=1}^{q^2} \sum_{i_4\neq i_3} \sum_{0\leq u^{(1)}_1,\ldots, u^{(1)}_d \leq 1: |u^{(1)}|=\ell_1 +2}
\nonumber \\
&&\hspace{0.5cm}\times
\sum_{0\leq u^{(2)}_1,\ldots, u^{(2)}_d \leq 1: |u^{(2)}|=\ell_2 +2}
\sum_{0\leq u^{(3)}_1,\ldots, u^{(3)}_d \leq 1: |u^{(3)}|=\ell_1 +2}
\sum_{0\leq u^{(4)}_1,\ldots, u^{(4)}_d \leq 1: |u^{(4)}|=\ell_2 +2}
\nonumber \\
&&\hspace{0.5cm}\times
\sum_{k_1=1}^d {\cal I}\{ k_1 \in \{j_{1,1},\ldots, j_{1,|u^{(1)}|} \}
\cap \{j_{2,1},\ldots, j_{2,|u^{(2)}|} \} \}
{\cal I}\{ a_{i_1, k_1}
\neq a_{i_2, k_1 } \}
\nonumber \\
&&\hspace{0.5cm}\times
\sum_{k_3=1}^d {\cal I}\{ k_3 \in \{j_{3,1},\ldots, j_{3,|u^{(3)}|} \}
\cap \{j_{4,1},\ldots, j_{4,|u^{(4)}|} \} \}
{\cal I}\{ a_{i_3, k_3}
\neq a_{i_4, k_3 } \}
\nonumber \\
&&\hspace{0.5cm}\times
\nu^* [\pi_{j_{1,1}} (a_{i_1, j_{1,1}});\ldots; \pi_{k_1} (a_{i_2, k_1});\ldots; \pi_{j_{1,|u^{(1)}|}} (a_{i_1, j_{1,|u^{(1)}|}}) ]
\nonumber \\
&&\hspace{0.5cm}\times
\nu^* [\pi_{j_{2,1}} (a_{i_2, j_{2,1}});\ldots; \pi_{k_1} (a_{i_1, k_1});\ldots; \pi_{j_{2, |u^{(2)}|}} (a_{i_2, j_{2,|u^{(2)}|}}) ]
\nonumber \\
&&\hspace{0.5cm}\times
\nu^* [\pi_{j_{3,1}} (a_{i_3, j_{3,1}});\ldots; \pi_{k_3} (a_{i_4, k_3});\ldots; \pi_{j_{3,|u^{(3)}|}} (a_{i_3, j_{3,|u^{(3)}|}}) ]
\nonumber \\
&&\hspace{0.5cm}\times
\nu^* [\pi_{j_{4,1}} (a_{i_4, j_{4,1}});\ldots; \pi_{k_3} (a_{i_3, k_3});\ldots; \pi_{j_{4, |u^{(4)}|}} (a_{i_4, j_{4,|u^{(4)}|}}) ]
\Big\}
\nonumber \\
&=& \tilde{R}^{(1,2)}_{\{1,3\},\{2\},\{4\}} + \tilde{R}^{(1,2)}_{\{1,4\},\{2\},\{3\}} + \tilde{R}^{(1,2)}_{\{2,3\},\{1\},\{4\}} 
+ \tilde{R}^{(1,2)}_{\{2,4\},\{1\},\{3\}} 
\nonumber \\
&&+ \tilde{R}^{(1,2)}_{\{1,3\},\{2,4\}} + \tilde{R}^{(1,2)}_{\{1,4\},\{2,3\}} 
+ \tilde{R}^{(1,2)}_{\{1\},\{2\},\{3\},\{4\}},
\end{eqnarray*}
where
\begin{eqnarray*}
&& \tilde{R}^{(1,2)}_{\{1,3\},\{2\},\{4\}} 
\hspace{0.1cm} = \hspace{0.1cm} \tilde{R}^{(1,2)}_{\{1,4\},\{2\},\{3\}} \hspace{0.1cm} = 
\hspace{0.1cm} \tilde{R}^{(1,2)}_{\{2,3\},\{1\},\{4\}} 
\hspace{0.1cm} = \hspace{0.1cm} \tilde{R}^{(1,2)}_{\{2,4\},\{1\},\{3\}}
\nonumber \\
&=&
E \Big\{ \frac{1}{d^2 q^{10} (q-1)^2 \sigma_{\ell_1}^2 \sigma_{\ell_2}^2 } \sum_{i_1=1}^{q^2} \sum_{1\leq i_2\leq q^2: i_2\neq i_1} 
\sum_{1\leq i_4\leq q^2: i_4\neq i_1, i_2} 
\nonumber \\
&&\times
\sum_{0\leq u^{(1)}_1,\ldots, u^{(1)}_d \leq 1: |u^{(1)}|=\ell_1 +2}
\sum_{0\leq u^{(2)}_1,\ldots, u^{(2)}_d \leq 1: |u^{(2)}|=\ell_2 +2}
\nonumber \\
&&\times
\sum_{0\leq u^{(3)}_1,\ldots, u^{(3)}_d \leq 1: |u^{(3)}|=\ell_1 +2}
\sum_{0\leq u^{(4)}_1,\ldots, u^{(4)}_d \leq 1: |u^{(4)}|=\ell_2 +2}
\nonumber \\
&&\times
\sum_{k_1=1}^d {\cal I}\{ k_1 \in \{j_{1,1},\ldots, j_{1,|u^{(1)}|} \}
\cap \{j_{2,1},\ldots, j_{2,|u^{(2)}|} \} \} {\cal I}\{ a_{i_1, k_1}
\neq a_{i_2, k_1 } \}
\nonumber \\
&&\times
\sum_{k_3=1}^d {\cal I}\{ k_3 \in \{j_{3,1},\ldots, j_{3,|u^{(3)}|} \}
\cap \{j_{4,1},\ldots, j_{4,|u^{(4)}|} \} \} {\cal I}\{ a_{i_1, k_3}
\neq a_{i_4, k_3 } \}
\nonumber \\
&&\times
\nu^* [\pi_{j_{1,1}} (a_{i_1, j_{1,1}});\ldots; \pi_{k_1} (a_{i_2, k_1});\ldots; \pi_{j_{1,|u^{(1)}|}} (a_{i_1, j_{1,|u^{(1)}|}}) ]
\nonumber \\
&&\times
\nu^* [\pi_{j_{2,1}} (a_{i_2, j_{2,1}});\ldots; \pi_{k_1} (a_{i_1, k_1});\ldots; \pi_{j_{2, |u^{(2)}|}} (a_{i_2, j_{2,|u^{(2)}|}}) ]
\nonumber \\
&&\times
\nu^* [\pi_{j_{3,1}} (a_{i_1, j_{3,1}});\ldots; \pi_{k_3} (a_{i_4, k_3});\ldots; \pi_{j_{3,|u^{(3)}|}} (a_{i_1, j_{3,|u^{(3)}|}}) ]
\nonumber \\
&&\times
\nu^* [\pi_{j_{4,1}} (a_{i_4, j_{4,1}});\ldots; \pi_{k_3} (a_{i_1, k_3});\ldots; \pi_{j_{4, |u^{(4)}|}} (a_{i_4, j_{4,|u^{(4)}|}}) ]
\Big\}
\nonumber \\
&=& O(\frac{1}{q^5}),
\nonumber \\
&& \tilde{R}^{(1,2)}_{\{1,3\},\{2,4\}} 
\hspace{0.1cm} = \hspace{0.1cm} \tilde{R}^{(1,2)}_{\{1,4\},\{2,3\}}
\nonumber \\
&=&
E \Big\{ \frac{1}{d^2 q^{10} (q-1)^2 \sigma_{\ell_1}^2 \sigma_{\ell_2}^2 } \sum_{i_1=1}^{q^2} \sum_{1\leq i_2\leq q^2: i_2\neq i_1} 
\sum_{0\leq u^{(1)}_1,\ldots, u^{(1)}_d \leq 1: |u^{(1)}|=\ell_1 +2}
\nonumber \\
&&\times
\sum_{0\leq u^{(2)}_1,\ldots, u^{(2)}_d \leq 1: |u^{(2)}|=\ell_2 +2}
\sum_{0\leq u^{(3)}_1,\ldots, u^{(3)}_d \leq 1: |u^{(3)}|=\ell_1 +2}
\sum_{0\leq u^{(4)}_1,\ldots, u^{(4)}_d \leq 1: |u^{(4)}|=\ell_2 +2}
\nonumber \\
&&\times
\sum_{k_1=1}^d {\cal I}\{ k_1 \in \{j_{1,1},\ldots, j_{1,|u^{(1)}|} \}
\cap \{j_{2,1},\ldots, j_{2,|u^{(2)}|} \} \} {\cal I}\{ a_{i_1, k_1}
\neq a_{i_2, k_1 } \}
\nonumber \\
&&\times
\sum_{k_3=1}^d {\cal I}\{ k_3 \in \{j_{3,1},\ldots, j_{3,|u^{(3)}|} \}
\cap \{j_{4,1},\ldots, j_{4,|u^{(4)}|} \} \} {\cal I}\{ a_{i_1, k_3}
\neq a_{i_2, k_3 } \}
\nonumber \\
&&\times
\nu^* [\pi_{j_{1,1}} (a_{i_1, j_{1,1}});\ldots; \pi_{k_1} (a_{i_2, k_1});\ldots; \pi_{j_{1,|u^{(1)}|}} (a_{i_1, j_{1,|u^{(1)}|}}) ]
\nonumber \\
&&\times
\nu^* [\pi_{j_{2,1}} (a_{i_2, j_{2,1}});\ldots; \pi_{k_1} (a_{i_1, k_1});\ldots; \pi_{j_{2, |u^{(2)}|}} (a_{i_2, j_{2,|u^{(2)}|}}) ]
\nonumber \\
&&\times
\nu^* [\pi_{j_{3,1}} (a_{i_1, j_{3,1}});\ldots; \pi_{k_3} (a_{i_2, k_3});\ldots; \pi_{j_{3,|u^{(3)}|}} (a_{i_1, j_{3,|u^{(3)}|}}) ]
\nonumber \\
&&\times
\nu^* [\pi_{j_{4,1}} (a_{i_2, j_{4,1}});\ldots; \pi_{k_3} (a_{i_1, k_3});\ldots; \pi_{j_{4, |u^{(4)}|}} (a_{i_2, j_{4,|u^{(4)}|}}) ]
\Big\}
\nonumber \\
&=& O(\frac{1}{q^4}),
\nonumber \\
&& \tilde{R}^{(1,2)}_{\{1\},\{2\},\{3\},\{4\}} 
\nonumber \\
&=&
E \Big\{ \frac{1}{d^2 q^{10} (q-1)^2 \sigma_{\ell_1}^2 \sigma_{\ell_2}^2 } \sum_{i_1=1}^{q^2} \sum_{1\leq i_2\leq q^2: i_2\neq i_1} 
\sum_{1\leq i_3\leq q^2: i_3\neq i_1, i_2}
\nonumber \\
&&\times
\sum_{1\leq i_4\leq q^2: i_4\neq i_1, i_2, i_3} 
\sum_{0\leq u^{(1)}_1,\ldots, u^{(1)}_d \leq 1: |u^{(1)}|=\ell_1 +2}
\sum_{0\leq u^{(2)}_1,\ldots, u^{(2)}_d \leq 1: |u^{(2)}|=\ell_2 +2}
\nonumber \\
&&\times
\sum_{0\leq u^{(3)}_1,\ldots, u^{(3)}_d \leq 1: |u^{(3)}|=\ell_1 +2}
\sum_{0\leq u^{(4)}_1,\ldots, u^{(4)}_d \leq 1: |u^{(4)}|=\ell_2 +2}
\nonumber \\
&&\times
\sum_{k_1=1}^d {\cal I}\{ k_1 \in \{j_{1,1},\ldots, j_{1,|u^{(1)}|} \}
\cap \{j_{2,1},\ldots, j_{2,|u^{(2)}|} \} \} {\cal I}\{ a_{i_1, k_1}
\neq a_{i_2, k_1 } \}
\nonumber \\
&&\times
\sum_{k_3=1}^d {\cal I}\{ k_3 \in \{j_{3,1},\ldots, j_{3,|u^{(3)}|} \}
\cap \{j_{4,1},\ldots, j_{4,|u^{(4)}|} \} \} {\cal I}\{ a_{i_3, k_3}
\neq a_{i_4, k_3 } \}
\nonumber \\
&&\times
\nu^* [\pi_{j_{1,1}} (a_{i_1, j_{1,1}});\ldots; \pi_{k_1} (a_{i_2, k_1});\ldots; \pi_{j_{1,|u^{(1)}|}} (a_{i_1, j_{1,|u^{(1)}|}}) ]
\nonumber \\
&&\times
\nu^* [\pi_{j_{2,1}} (a_{i_2, j_{2,1}});\ldots; \pi_{k_1} (a_{i_1, k_1});\ldots; \pi_{j_{2, |u^{(2)}|}} (a_{i_2, j_{2,|u^{(2)}|}}) ]
\nonumber \\
&&\times
\nu^* [\pi_{j_{3,1}} (a_{i_3, j_{3,1}});\ldots; \pi_{k_3} (a_{i_4, k_3});\ldots; \pi_{j_{3,|u^{(3)}|}} (a_{i_3, j_{3,|u^{(3)}|}}) ]
\nonumber \\
&&\times
\nu^* [\pi_{j_{4,1}} (a_{i_4, j_{4,1}});\ldots; \pi_{k_3} (a_{i_3, k_3});\ldots; \pi_{j_{4, |u^{(4)}|}} (a_{i_4, j_{4,|u^{(4)}|}}) ]
\Big\}
\nonumber \\
&=& O(\frac{1}{q^6}),
\end{eqnarray*}
as $q\rightarrow\infty$.
Thus we conclude that
\begin{displaymath}
\frac{d (q-1)}{\ell_1 +2} E | E^{\cal W} (\tilde{S}_{\ell_1,1} \tilde{S}_{\ell_2,2} ) | \leq
\frac{d (q-1)}{\ell_1 +2}
\{ E [ E^{\cal W} (\tilde{S}_{\ell_1,1} \tilde{S}_{\ell_2,2} ) ]^2 \}^{1/2}
= O(\frac{1}{q}),
\end{displaymath}
as $q\rightarrow\infty$. This proves Lemma  \ref{la:a.22}. \hfill $\Box$

\begin{pn} \label{pn:a.4}
Let $S_i$ and $\tilde{S}_i$, $i=1,\ldots, d-2$, be as in (\ref{eq:3.75}).
Then for $1\leq i\neq j\leq d-2$,
\begin{displaymath}
| \frac{ d(q-1)}{4(i+2)} 
E \{ (\tilde{S}_i -S_i)^2
\int_0^1 [ \frac{\partial^2}{\partial v_i^2} \psi_{\varepsilon^2} (V + t(\tilde{V}-V))
- \frac{\partial^2}{\partial v_i^2} \psi_{\varepsilon^2} (V) ] dt \} |
\leq O(\frac{\|h\|_\infty}{ \varepsilon q^{1/2}}),
\end{displaymath}
and
\begin{eqnarray*}
&& | \frac{ d(q-1)}{4(i+2)} 
E \{ (\tilde{S}_i -S_i) (\tilde{S}_j - S_j)
\int_0^1 [ \frac{\partial^2}{\partial v_i\partial v_j} \psi_{\varepsilon^2} (V + t(\tilde{V}-V))
- \frac{\partial^2}{\partial v_i\partial v_j} \psi_{\varepsilon^2} (V) ] dt \} |
\nonumber \\
& \leq & O(\frac{\|h\|_\infty}{ \varepsilon q^{1/2}}),
\end{eqnarray*}
as $q\rightarrow \infty$ uniformly over $0<\varepsilon <1$.
\end{pn}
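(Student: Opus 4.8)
\textsc{Proof proposal.} The plan is to reduce both inequalities, via a first--order Taylor expansion along the segment from $V$ to $\tilde V$, to the third--derivative estimate of Lemma~\ref{la:5.1}, and from there to a fourth--moment bound on $\tilde S_\ell - S_\ell$. First I would use the fundamental theorem of calculus: for $g\in C^1(\mathbb R^{d-2})$,
\[
g(V + t(\tilde V - V)) - g(V) = \sum_{k=1}^{d-2}\int_0^t (\tilde V_k - V_k)\,(\partial_{v_k} g)(V + s(\tilde V - V))\,ds ,
\]
applied with $g = \partial^2\psi_{\varepsilon^2}/\partial v_i^2$ (resp.\ $g = \partial^2\psi_{\varepsilon^2}/\partial v_i\partial v_j$). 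Since $\tilde V_k - V_k = \tilde S_k - S_k$ by (\ref{eq:3.75}), the expression to be estimated becomes a finite sum over $k = 1,\ldots,d-2$ of terms of the form
\[
\frac{d(q-1)}{4(i+2)}\int_0^1\!\!\int_0^t E\Big\{(\tilde S_i - S_i)^2(\tilde S_k - S_k)\,\tfrac{\partial^3}{\partial v_i^2\partial v_k}\psi_{\varepsilon^2}(V + s(\tilde V - V))\Big\}\,ds\,dt
\]
(and similarly with $\partial^3/\partial v_i\partial v_j\partial v_k$ in the second statement). For fixed $s$ the inner expectation equals $\int_{\mathbb R^{d-2}}(\partial^3\psi_{\varepsilon^2}/\partial v_i^2\partial v_k)(v)\,Q_s(dv)$, where $Q_s$ is the finite signed measure on $\mathbb R^{d-2}$ obtained as the law of $V + s(\tilde V - V)$ weighted by the (bounded) random variable $(\tilde S_i - S_i)^2(\tilde S_k - S_k)$.

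Next I would invoke Lemma~\ref{la:5.1}, with $t^{1/2}$ there replaced by $\varepsilon$; note that $\partial^3\psi_{\varepsilon^2}/\partial v_i^2\partial v_k$ is of one of the three types covered there whether $k=i$ or $k\neq i$, and likewise $\partial^3\psi_{\varepsilon^2}/\partial v_i\partial v_j\partial v_k$ when $i\neq j$. This gives, uniformly in $s,t\in[0,1]$ and in $h\in{\cal A}$,
\[
\Big|\int_{\mathbb R^{d-2}}\tfrac{\partial^3\psi_{\varepsilon^2}}{\partial v_i^2\partial v_k}(v)\,Q_s(dv)\Big|
\le \frac{c}{\varepsilon}\sup_{0\le\rho\le1,\ y}\Big|\int_{\mathbb R^{d-2}}h(\rho v + y)\,Q_s(dv)\Big|
\le \frac{c\,\|h\|_\infty}{\varepsilon}\,E\big[(\tilde S_i - S_i)^2|\tilde S_k - S_k|\big] .
\]
Integrating over the bounded region $\{0\le s\le t\le 1\}$, summing the finitely many $k$, and using $d(q-1)/(4(i+2)) = O(q)$, the left--hand side of the proposition is bounded by $O(\|h\|_\infty/\varepsilon)\cdot q\cdot\max_{1\le k\le d-2}E[(\tilde S_i - S_i)^2|\tilde S_k - S_k|]$.

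The remaining step — and the real work — is to show $E[(\tilde S_i - S_i)^2|\tilde S_k - S_k|] = O(q^{-3/2})$. By Cauchy--Schwarz this reduces to $E[(\tilde S_i - S_i)^4] = O(q^{-2})$ together with $E[(\tilde S_k - S_k)^2] = O(q^{-1})$, the latter being immediate from Proposition~\ref{pn:a.2}. The fourth--moment bound I would prove by exactly the combinatorial method of Lemmas~\ref{la:a.18}--\ref{la:a.22}: write $\tilde S_\ell - S_\ell = \tilde S_{\ell,1} + \tilde S_{\ell,2} - S_{\ell,1} - S_{\ell,2}$ as in (\ref{eq:a.78}), expand the fourth power into four--fold products, sum over $i_1,i_2,i_3,i_4$ using $A\in$ OA$(q^2,d,q,2)$ (so an off--diagonal contribution forces a coincidence $a_{i,k}=a_{i',k}$, each of which creates one factor $q$ while deleting one factor $q$ from the index count, via the vanishing sum (\ref{eq:3.76}) for $\nu^*$), and bound each product of $\nu^*$'s by $O(1)$, using $E\{\nu^*[\cdots]^4\} = O(1)$ from Lemma~\ref{la:a.5} together with Cauchy--Schwarz (so $E\{\nu^*_A{}^2\nu^*_B{}^2\}\le(E\{\nu^*_A{}^4\}E\{\nu^*_B{}^4\})^{1/2} = O(1)$); the normalisation $1/(q^2\sigma_\ell)$ with $\sigma_\ell^2\asymp q^{-2}$ then yields $E[(\tilde S_\ell - S_\ell)^4] = O(q^{-2})$. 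Substituting, the bound becomes $O(q)\cdot O(\varepsilon^{-1})\cdot O(q^{-3/2}) = O(\|h\|_\infty\,\varepsilon^{-1}q^{-1/2})$, as claimed. The second statement is identical except that $(\tilde S_i - S_i)^2$ is replaced by $(\tilde S_i - S_i)(\tilde S_j - S_j)$ and Cauchy--Schwarz by Hölder with exponents $4,4,2$, so one needs $E[(\tilde S_i - S_i)^4], E[(\tilde S_j - S_j)^4] = O(q^{-2})$ and $E[(\tilde S_k - S_k)^2] = O(q^{-1})$.

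The \emph{main obstacle} is that last fourth--moment estimate: although it uses only tools already in place, the bookkeeping of the partitions of $\{1,2,3,4\}$ and of the coordinate sets (as in the proof of Lemma~\ref{la:a.18}) is lengthy. Everything else — the Taylor expansion, the passage to signed measures, and the application of Lemma~\ref{la:5.1} — is routine.
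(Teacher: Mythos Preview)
Your proposal is correct and follows essentially the same route as the paper: Taylor-expand to introduce third derivatives of $\psi_{\varepsilon^2}$, bound these via Lemma~\ref{la:5.1} by $c\|h\|_\infty/\varepsilon$, then separate the moments by Cauchy--Schwarz (first inequality) and H\"older with exponents $4,4,2$ (second), reducing everything to the fourth-moment bound $E[(\tilde S_\ell - S_\ell)^4]=O(q^{-2})$ together with Proposition~\ref{pn:a.2}. The only cosmetic difference is that the paper, rather than bounding $E[(\tilde S_\ell - S_\ell)^4]$ directly, uses exchangeability to reduce it to $E[S_\ell^4]$ and then carries out exactly the partition-of-$\{1,2,3,4\}$ combinatorics you describe in its Lemma~\ref{la:a.23}; your direct approach works equally well.
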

{\sc Proof.}
Using Taylor series and Lemma \ref{la:5.1}, we observe that for $0<\varepsilon <1$ and
$i=1,\ldots, d-2$,
\begin{eqnarray*}
&& | \frac{ d(q-1)}{4(i+2)} 
E \{ (\tilde{S}_i -S_i)^2
\int_0^1 [ \frac{\partial^2}{\partial v_i^2} \psi_{\varepsilon^2} (V + t(\tilde{V}-V))
- \frac{\partial^2}{\partial v_i^2} \psi_{\varepsilon^2} (V) ] dt \} |
\nonumber \\
&\leq & \sum_{j=1}^{d-2} \frac{ d(q-1)}{4(i+2)} 
E [ (\tilde{S}_i -S_i)^2 |\tilde{S}_j -S_j| \sup_{v\in {\mathbb R}^{d-2} }
|\frac{\partial^3}{\partial v_i^2 \partial v_j} \psi_{\varepsilon^2} (v) | ]
\nonumber \\
&\leq & \frac{c \|h\|_\infty }{\varepsilon}
\sum_{j=1}^{d-2} \frac{ d(q-1)}{4(i+2)} 
\{ E [ (\tilde{S}_i -S_i)^4 ]\}^{1/2} \{ E[ (\tilde{S}_j -S_j)^2 ] \}^{1/2}
\nonumber \\
&\leq & \frac{2^{1/2} c \|h\|_\infty }{\varepsilon}
\sum_{j=1}^{d-2} \frac{ d(q-1)}{4(i+2)} 
[ E ( S_i^4 ) ]^{1/2} \{ E[ (\tilde{S}_j -S_j)^2 ] \}^{1/2},
\end{eqnarray*}
where $c$ is a constant depending only on $d$. In a similar fashion, we have for $1\leq i\neq j\leq d-2$,
\begin{eqnarray*}
&& | \frac{ d(q-1)}{4(i+2)} 
E \{ (\tilde{S}_i -S_i) (\tilde{S}_j - S_j)
\int_0^1 [ \frac{\partial^2}{\partial v_i\partial v_j} \psi_{\varepsilon^2} (V + t(\tilde{V}-V))
- \frac{\partial^2}{\partial v_i\partial v_j} \psi_{\varepsilon^2} (V) ] dt \} |
\nonumber \\
& \leq & \sum_{k=1}^{d-2}
\frac{ d(q-1)}{4(i+2)}
E [ |\tilde{S}_i -S_i| |\tilde{S}_j - S_j| |\tilde{S}_k -S_k| 
\sup_{v\in {\mathbb R}^{d-2}} | \frac{\partial^3}{\partial v_i\partial v_j\partial v_k} \psi_{\varepsilon^2} (v) | ]
\nonumber \\
&\leq & \frac{ c\|h\|_\infty}{\varepsilon} \sum_{k=1}^{d-2} \frac{ d(q-1)}{4(i+2)}
\{E [ (\tilde{S}_i -S_i)^4] \}^{1/4} \{E [(\tilde{S}_j - S_j)^4 ] \}^{1/4} \{ E [(\tilde{S}_k -S_k)^2] \}^{1/2}
\nonumber \\
&\leq &
\frac{ 2^{1/2} c\|h\|_\infty}{\varepsilon} \sum_{k=1}^{d-2} \frac{ d(q-1)}{4(i+2)}
[ E (S_i^4 ) ]^{1/4} [ E ( S_j^4 )]^{1/4} \{ E [(\tilde{S}_k -S_k)^2] \}^{1/2}.
\end{eqnarray*}
Proposition \ref{pn:a.4} now follows from Proposition \ref{pn:a.2} and Lemma \ref{la:a.23}.
\hfill $\Box$

\begin{la} \label{la:a.23}
Let $S_\ell, \ell = 1,\ldots, d-2,$ be as in (\ref{eq:3.75}).
Then  $E (S_\ell^4 ) = O(1/q^2)$ as $q\rightarrow\infty$.
\end{la}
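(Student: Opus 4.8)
The plan is to expand $E(S_\ell^4)$ as a sum over quadruples of rows of $A$ and to show that, after exploiting the orthogonality of $\nu^*$, only a handful of index patterns survive, each contributing $O(q^2)$ to the relevant sum. First I would reduce to the one‑point statistic: writing $S_\ell = S_{\ell,1} + S_{\ell,2}$ with $S_{\ell,k}$ as in (\ref{eq:a.78}), convexity of $t\mapsto t^4$ gives $(S_{\ell,1}+S_{\ell,2})^4 \le 8(S_{\ell,1}^4 + S_{\ell,2}^4)$, and by symmetry $E(S_{\ell,1}^4) = E(S_{\ell,2}^4)$, so it suffices to prove $E(S_{\ell,1}^4) = O(q^{-2})$. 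Expanding the fourth power, and writing $S(u) = \{j: u_j = 1\}$,
\begin{displaymath}
E(S_{\ell,1}^4) = \frac{1}{q^8 \sigma_\ell^4} \sum_{i_1,i_2,i_3,i_4=1}^{q^2} \sum_{u^{(1)},\ldots,u^{(4)}} E\Big[ \prod_{m=1}^4 {\cal I}\{J \in S(u^{(m)}),\, \pi_J(a_{i_m,J}) = B_1\}\, \nu^*[\pi_{j_{m,1}}(a_{i_m,j_{m,1}});\ldots;\pi_{j_{m,|u^{(m)}|}}(a_{i_m,j_{m,|u^{(m)}|}})]\Big],
\end{displaymath}
where each inner sum runs over $0\le u^{(m)}_1,\ldots,u^{(m)}_d\le 1$ with $|u^{(m)}| = \ell+2$ and $S(u^{(m)}) = \{j_{m,1},\ldots,j_{m,|u^{(m)}|}\}$. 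By the asymptotic expansion of $\sigma_\ell^2$ obtained in the proof of Proposition~\ref{pn:a.2}, $q^2\sigma_\ell^2$ is bounded away from $0$ and $\infty$, so $q^{-8}\sigma_\ell^{-4} = O(q^{-4})$ and it is enough to bound the triple sum by $O(q^2)$.

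Two structural facts drive the argument. First, since distinct symbols have distinct images under $\pi_J$, the product $\prod_m {\cal I}\{\pi_J(a_{i_m,J}) = B_1\}$ forces $a_{i_1,J} = a_{i_2,J} = a_{i_3,J} = a_{i_4,J}$, and conditionally on $J$ and on this common symbol $v$ the event $\{\pi_J(v) = B_1\}$ has probability $1/q$, because $B_1$ is marginally uniform on $\{0,\ldots,q-1\}$ and independent of the $\pi$'s. Second, since the $\pi_j$ are mutually independent and, by (\ref{eq:3.76}), $\nu^*$ sums to zero over each of its coordinate arguments, $E_\pi\big[\prod_{m=1}^4 \nu^*[\cdots]\big]$ vanishes unless, for every $m$ and every $j \in S(u^{(m)})$, there is some $m' \neq m$ with $j \in S(u^{(m')})$ and $a_{i_m,j} = a_{i_{m'},j}$; and, since $A \in {\rm OA}(q^2,d,q,2)$, two distinct rows of $A$ agree in at most one column.

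I would then go through the finitely many equality patterns of $(i_1,i_2,i_3,i_4)$. Suppose some row, say $i_m$, occurs exactly once among the four slots. Each of its $\ell+2 \ge 3$ active coordinates $j$ must, by the second fact, satisfy $a_{i_m,j} = a_{i_{m'},j}$ for some $m'$ with $i_{m'} \neq i_m$; but by the first fact $a_{i_m,J} = a_{i_{m'},J}$ too, so if $j \neq J$ the distinct rows $i_m$ and $i_{m'}$ agree in the two columns $j$ and $J$, contradicting strength $2$. Hence $S(u^{(m)}) = \{J\}$, contradicting $|u^{(m)}| \ge 3$. Thus every pattern containing a singleton row (``one pair $+$ two singletons'', ``triple $+$ singleton'', ``all four distinct'') contributes $0$, and only ``all four equal'' and ``two pairs'' survive. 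For $i_1 = i_2 = i_3 = i_4 = a$, conditioning on $J$ and summing over $a$ gives a contribution at most $O(1)\cdot q^2 \cdot q^{-1} \cdot \max E|\nu^*[\cdots]|^4 = O(q)$, using $E\{\nu^*[\cdots]^4\} = O(1)$ from Lemma~\ref{la:a.5}. For a two‑pairs pattern, say $i_1 = i_2 = a$, $i_3 = i_4 = b$ with $a \neq b$, conditioning on $J = k$ (probability $1/d$) the product of indicators forces $a_{a,k} = a_{b,k}$ and contributes a further factor $1/q$; since $\sum_{a\neq b} {\cal I}\{a_{a,k} = a_{b,k}\} = q^2(q-1)$ for each $k$ and the surviving $\pi$‑expectations are $O(1)$ by Lemma~\ref{la:a.5}, this pattern contributes $O(1)\cdot q^2(q-1)\cdot q^{-1}\cdot O(1) = O(q^2)$. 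Summing the $O(1)$ surviving terms yields $O(q^2)$, so $E(S_{\ell,1}^4) = O(q^{-4})\cdot O(q^2) = O(q^{-2})$, and hence $E(S_\ell^4) = O(q^{-2})$.

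The main obstacle is the combinatorial bookkeeping behind the second structural fact: establishing the precise ``every active column must be matched'' vanishing criterion from (\ref{eq:3.76}) and the independence of the $\pi_j$, and then intersecting it correctly with the column‑$J$ constraint of the first fact and with the strength‑$2$ property so as to eliminate every pattern with a singleton row. The remaining estimates of the individual $\nu^*$‑expectations are routine given Lemma~\ref{la:a.5} and H\"{o}lder's inequality.
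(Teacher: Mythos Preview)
Your overall strategy---partitioning by the equality pattern of $(i_1,i_2,i_3,i_4)$ and bounding each block---is exactly the paper's, and your reduction to $E(S_{\ell,1}^4)$ and your bounds for the ``all four equal'' and ``two pairs'' patterns are correct. The gap is in your ``second structural fact'': the claim that $E_\pi\bigl[\prod_m \nu^*[\cdots]\bigr]$ vanishes unless every active pair $(m,j)$ has a matching $m'$ with both $j\in S(u^{(m')})$ \emph{and} $a_{i_m,j}=a_{i_{m'},j}$ is false. It is true that if some column $j$ lies in only one $S(u^{(m)})$, averaging over $\pi_j$ kills the term by (\ref{eq:3.76}). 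But if $j\in S(u^{(m)})\cap S(u^{(m')})$ with $a_{i_m,j}\neq a_{i_{m'},j}$, the values $\pi_j(a_{i_m,j})$ and $\pi_j(a_{i_{m'},j})$ come from the \emph{same} random permutation $\pi_j$ and are not independent: conditionally on $\pi_j(a_{i_{m'},j})$, the other is uniform on the remaining $q-1$ symbols, so the conditional average of $\nu^*$ over that slot is $-1/(q-1)$ times its value at $\pi_j(a_{i_{m'},j})$, not zero. This is precisely the $(-1/(q-1))^{|u|-1}$ mechanism used throughout the paper.

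Consequently your elimination of the singleton-row patterns fails. For instance, with all four rows distinct and $u^{(1)}=\cdots=u^{(4)}$, every non-$J$ coordinate of each row is ``unmatched'' in your sense, yet the $\pi$-expectation is nonzero. The paper does not claim these blocks vanish; it computes $R^*_{\{1,2,3\},\{4\}}=O(q^{-4})$, $R^*_{\{1,2\},\{3\},\{4\}}=O(q^{-3})$, $R^*_{\{1\},\{2\},\{3\},\{4\}}=O(q^{-4})$ by tracking those $1/(q-1)$ factors---each unmatched coordinate costs a factor $\sim 1/q$, and a singleton row has at least $\ell+1\ge 2$ of them, which more than offsets the extra row choices. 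Your argument can be repaired along exactly these lines, but as written the vanishing step that drives the elimination of singleton patterns is incorrect.
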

{\sc Proof.}
We observe from (\ref{eq:a.78}) that $S_\ell^4 = S_{\ell,1}^4 + S_{\ell,2}^4$ and hence
\begin{eqnarray}
&& E (S_\ell^4 ) 
\hspace{0.1cm} = \hspace{0.1cm} 2 E (S_{\ell,1}^4 )
\nonumber \\
&=&
\frac{2}{ q^8 \sigma_\ell^4}  E\Big\{  \sum_{i_1=1}^{q^2} \sum_{i_2=1}^{q^2}
\sum_{i_3=1}^{q^2} \sum_{i_4=1}^{q^2}
\sum_{0\leq u_1^{(1)},\ldots, u_d^{(1)}\leq 1: |u^{(1)}|=\ell+2} 
\nonumber \\
&&\times
\sum_{0\leq u_1^{(2)},\ldots, u_d^{(2)}\leq 1: |u^{(2)}|=\ell+2} 
\sum_{0\leq u_1^{(3)},\ldots, u_d^{(3)}\leq 1: |u^{(3)}|=\ell+2} 
\sum_{0\leq u_1^{(4)},\ldots, u_d^{(4)}\leq 1: |u^{(4)}|=\ell+2} 
\nonumber \\
&&\times
{\cal I}\{ J\in \cap_{l =1}^4 \{ j_{l,1},\ldots, j_{ l,|u^{(l)}|} \}\}
{\cal I}\{ B_1 = \pi_J (a_{i_l, J}), l=1,\ldots, 4 \}
\nonumber \\
&&\times
\nu^* [\pi_{j_{1,1}} (a_{i_1, j_{1,1}}); \ldots; \pi_{j_{1,|u^{(1)}|}} (a_{i_1, j_{1, |u^{(1)}|}}) ]
\nu^* [\pi_{j_{2,1}} (a_{i_2, j_{2,1}}); \ldots; \pi_{j_{2,|u^{(2)}|}} (a_{i_2, j_{2, |u^{(2)}|}}) ]
\nonumber \\
&&\times
\nu^* [\pi_{j_{3,1}} (a_{i_3, j_{3,1}}); \ldots; \pi_{j_{3,|u^{(3)}|}} (a_{i_3, j_{3, |u^{(3)}|}}) ]
\nu^* [\pi_{j_{4,1}} (a_{i_4, j_{4,1}}); \ldots; \pi_{j_{4,|u^{(4)}|}} (a_{i_4, j_{4, |u^{(4)}|}}) ]
\Big\}
\nonumber \\
&=&
\frac{2}{ d q^9 \sigma_\ell^4}  E\Big\{  \sum_{i_1=1}^{q^2} \sum_{i_2=1}^{q^2}
\sum_{i_3=1}^{q^2} \sum_{i_4=1}^{q^2}
\sum_{0\leq u_1^{(1)},\ldots, u_d^{(1)}\leq 1: |u^{(1)}|=\ell+2} 
\nonumber \\
&&\times
\sum_{0\leq u_1^{(2)},\ldots, u_d^{(2)}\leq 1: |u^{(2)}|=\ell+2} 
\sum_{0\leq u_1^{(3)},\ldots, u_d^{(3)}\leq 1: |u^{(3)}|=\ell+2} 
\sum_{0\leq u_1^{(4)},\ldots, u_d^{(4)}\leq 1: |u^{(4)}|=\ell+2} 
\nonumber \\
&&\times
\sum_{k=1}^d {\cal I}\{ k\in \cap_{l =1}^4 \{ j_{l,1},\ldots, j_{ l,|u^{(l)}|} \}\}
{\cal I}\{ a_{i_1, k} = a_{i_2, k} = a_{i_3, k} = a_{i_4, k} \}
\nonumber \\
&&\times
\nu^* [\pi_{j_{1,1}} (a_{i_1, j_{1,1}}); \ldots; \pi_{j_{1,|u^{(1)}|}} (a_{i_1, j_{1, |u^{(1)}|}}) ]
\nu^* [\pi_{j_{2,1}} (a_{i_2, j_{2,1}}); \ldots; \pi_{j_{2,|u^{(2)}|}} (a_{i_2, j_{2, |u^{(2)}|}}) ]
\nonumber \\
&&\times
\nu^* [\pi_{j_{3,1}} (a_{i_3, j_{3,1}}); \ldots; \pi_{j_{3,|u^{(3)}|}} (a_{i_3, j_{3, |u^{(3)}|}}) ]
\nu^* [\pi_{j_{4,1}} (a_{i_4, j_{4,1}}); \ldots; \pi_{j_{4,|u^{(4)}|}} (a_{i_4, j_{4, |u^{(4)}|}}) ]
\Big\}
\nonumber \\
&=& R^*_{\{1,2,3,4\}} + R^*_{\{1,2,3\},\{4\}} + R^*_{\{1,2,4\},\{3\}} + R^*_{\{1,3,4\},\{2\}} 
\nonumber \\
&&
+ R^*_{\{2,3,4\},\{1\}}
+ R^*_{\{1,2\},\{3\},\{4\}} + R^*_{\{1,3\},\{2\},\{4\}} + R^*_{\{1,4\},\{2\},\{3\}} 
\nonumber \\
&&
+ R^*_{\{2,3\},\{1\},\{4\}}
+ R^*_{\{2,4\},\{1\},\{3\}}
+ R^*_{\{3,4\},\{1\},\{2\}} + R^*_{\{1,2\},\{3,4\}} 
\nonumber \\
&&
+ R^*_{\{1,3\},\{2,4\}} + R^*_{\{1,4\},\{2,3\}} + R^*_{\{1\},\{2\},\{3\},\{4\}},
\label{eq:a.62}
\end{eqnarray}
where
\begin{eqnarray*}
&& R^*_{\{1,2,3,4\}} 
\nonumber \\
&=&
\frac{2}{ d q^9 \sigma_\ell^4}  E\Big\{  \sum_{i_1=i_2=i_3=i_4=1}^{q^2} 
\sum_{0\leq u_1^{(1)},\ldots, u_d^{(1)}\leq 1: |u^{(1)}|=\ell+2} 
\sum_{0\leq u_1^{(2)},\ldots, u_d^{(2)}\leq 1: |u^{(2)}|=\ell+2} 
\nonumber \\
&&\times
\sum_{0\leq u_1^{(3)},\ldots, u_d^{(3)}\leq 1: |u^{(3)}|=\ell+2} 
\sum_{0\leq u_1^{(4)},\ldots, u_d^{(4)}\leq 1: |u^{(4)}|=\ell+2} 
\sum_{k=1}^d {\cal I}\{ k\in \cap_{l =1}^4 \{ j_{l,1},\ldots, j_{ l,|u^{(l)}|} \}\}
\nonumber \\
&&\times
\nu^* [\pi_{j_{1,1}} (a_{i_1, j_{1,1}}); \ldots; \pi_{j_{1,|u^{(1)}|}} (a_{i_1, j_{1, |u^{(1)}|}}) ]
\nu^* [\pi_{j_{2,1}} (a_{i_2, j_{2,1}}); \ldots; \pi_{j_{2,|u^{(2)}|}} (a_{i_2, j_{2, |u^{(2)}|}}) ]
\nonumber \\
&&\times
\nu^* [\pi_{j_{3,1}} (a_{i_3, j_{3,1}}); \ldots; \pi_{j_{3,|u^{(3)}|}} (a_{i_3, j_{3, |u^{(3)}|}}) ]
\nu^* [\pi_{j_{4,1}} (a_{i_4, j_{4,1}}); \ldots; \pi_{j_{4,|u^{(4)}|}} (a_{i_4, j_{4, |u^{(4)}|}}) ]
\Big\}
\nonumber \\
&=& O(\frac{1}{q^3}),
\nonumber \\
&& R^*_{\{1,2,3\},\{4\}} 
\hspace{0.1cm} = \hspace{0.1cm} R^*_{\{1,2,4\},\{3\}} \hspace{0.1cm} = \hspace{0.1cm} R^*_{\{1,3,4\},\{2\}} 
\hspace{0.1cm} = \hspace{0.1cm} R^*_{\{2,3,4\},\{1\}}
\nonumber \\
&=&
\frac{2}{ d q^9 \sigma_\ell^4}  E\Big\{  \sum_{i_1=i_2=i_3=1}^{q^2} 
\sum_{1\leq i_4\leq q^2: i_4\neq i_1}
\sum_{0\leq u_1^{(1)},\ldots, u_d^{(1)}\leq 1: |u^{(1)}|=\ell+2} 
\nonumber \\
&&\times
\sum_{0\leq u_1^{(2)},\ldots, u_d^{(2)}\leq 1: |u^{(2)}|=\ell+2} 
\sum_{0\leq u_1^{(3)},\ldots, u_d^{(3)}\leq 1: |u^{(3)}|=\ell+2} 
\sum_{0\leq u_1^{(4)},\ldots, u_d^{(4)}\leq 1: |u^{(4)}|=\ell+2} 
\nonumber \\
&&\times
\sum_{k=1}^d {\cal I}\{ k\in \cap_{l =1}^4 \{ j_{l,1},\ldots, j_{ l,|u^{(l)}|} \}\}
{\cal I}\{ a_{i_1, k} = a_{i_4, k} \}
\nonumber \\
&&\times
\nu^* [\pi_{j_{1,1}} (a_{i_1, j_{1,1}}); \ldots; \pi_{j_{1,|u^{(1)}|}} (a_{i_1, j_{1, |u^{(1)}|}}) ]
\nu^* [\pi_{j_{2,1}} (a_{i_2, j_{2,1}}); \ldots; \pi_{j_{2,|u^{(2)}|}} (a_{i_2, j_{2, |u^{(2)}|}}) ]
\nonumber \\
&&\times
\nu^* [\pi_{j_{3,1}} (a_{i_3, j_{3,1}}); \ldots; \pi_{j_{3,|u^{(3)}|}} (a_{i_3, j_{3, |u^{(3)}|}}) ]
\nu^* [\pi_{j_{4,1}} (a_{i_4, j_{4,1}}); \ldots; \pi_{j_{4,|u^{(4)}|}} (a_{i_4, j_{4, |u^{(4)}|}}) ]
\Big\}
\nonumber \\
&=& O(\frac{1}{q^4}),
\nonumber \\
&& R^*_{\{1,2\}, \{3,4\}} 
\hspace{0.1cm} = \hspace{0.1cm} R^*_{\{1,3\},\{2,4\}} \hspace{0.1cm} = \hspace{0.1cm} R^*_{\{1,4\},\{2,3\}}
\nonumber \\
&=&
\frac{2}{ d q^9 \sigma_\ell^4}  E\Big\{  \sum_{i_1=i_2=1}^{q^2} 
\sum_{1\leq i_3=i_4 \leq q^2: i_3\neq i_1}
\sum_{0\leq u_1^{(1)},\ldots, u_d^{(1)}\leq 1: |u^{(1)}|=\ell+2} 
\nonumber \\
&&\times
\sum_{0\leq u_1^{(2)},\ldots, u_d^{(2)}\leq 1: |u^{(2)}|=\ell+2} 
\sum_{0\leq u_1^{(3)},\ldots, u_d^{(3)}\leq 1: |u^{(3)}|=\ell+2} 
\sum_{0\leq u_1^{(4)},\ldots, u_d^{(4)}\leq 1: |u^{(4)}|=\ell+2} 
\nonumber \\
&&\times
\sum_{k=1}^d {\cal I}\{ k\in \cap_{l =1}^4 \{ j_{l,1},\ldots, j_{ l,|u^{(l)}|} \}\}
{\cal I}\{ a_{i_1, k} = a_{i_3, k} \}
\nonumber \\
&&\times
\nu^* [\pi_{j_{1,1}} (a_{i_1, j_{1,1}}); \ldots; \pi_{j_{1,|u^{(1)}|}} (a_{i_1, j_{1, |u^{(1)}|}}) ]
\nu^* [\pi_{j_{2,1}} (a_{i_1, j_{2,1}}); \ldots; \pi_{j_{2,|u^{(2)}|}} (a_{i_1, j_{2, |u^{(2)}|}}) ]
\nonumber \\
&&\times
\nu^* [\pi_{j_{3,1}} (a_{i_3, j_{3,1}}); \ldots; \pi_{j_{3,|u^{(3)}|}} (a_{i_3, j_{3, |u^{(3)}|}}) ]
\nu^* [\pi_{j_{4,1}} (a_{i_3, j_{4,1}}); \ldots; \pi_{j_{4,|u^{(4)}|}} (a_{i_3, j_{4, |u^{(4)}|}}) ]
\Big\}
\nonumber \\
&=& O(\frac{1}{q^2}),
\nonumber \\
&& R^*_{\{1,2\}, \{3\}, \{4\}} 
\nonumber \\
&=&
\frac{2}{ d q^9 \sigma_\ell^4}  E\Big\{  \sum_{i_1=i_2=1}^{q^2} 
\sum_{1\leq i_3 \leq q^2: i_3\neq i_1} \sum_{1\leq i_4\leq q^2: i_4\neq i_1, i_3}
\sum_{0\leq u_1^{(1)},\ldots, u_d^{(1)}\leq 1: |u^{(1)}|=\ell+2} 
\nonumber \\
&&\times
\sum_{0\leq u_1^{(2)},\ldots, u_d^{(2)}\leq 1: |u^{(2)}|=\ell+2} 
\sum_{0\leq u_1^{(3)},\ldots, u_d^{(3)}\leq 1: |u^{(3)}|=\ell+2} 
\sum_{0\leq u_1^{(4)},\ldots, u_d^{(4)}\leq 1: |u^{(4)}|=\ell+2} 
\nonumber \\
&&\times
\sum_{k=1}^d {\cal I}\{ k\in \cap_{l =1}^4 \{ j_{l,1},\ldots, j_{ l,|u^{(l)}|} \}\}
{\cal I}\{ a_{i_1, k} = a_{i_3, k} = a_{i_4, k} \}
\nonumber \\
&&\times
\nu^* [\pi_{j_{1,1}} (a_{i_1, j_{1,1}}); \ldots; \pi_{j_{1,|u^{(1)}|}} (a_{i_1, j_{1, |u^{(1)}|}}) ]
\nu^* [\pi_{j_{2,1}} (a_{i_1, j_{2,1}}); \ldots; \pi_{j_{2,|u^{(2)}|}} (a_{i_1, j_{2, |u^{(2)}|}}) ]
\nonumber \\
&&\times
\nu^* [\pi_{j_{3,1}} (a_{i_3, j_{3,1}}); \ldots; \pi_{j_{3,|u^{(3)}|}} (a_{i_3, j_{3, |u^{(3)}|}}) ]
\nu^* [\pi_{j_{4,1}} (a_{i_4, j_{4,1}}); \ldots; \pi_{j_{4,|u^{(4)}|}} (a_{i_4, j_{4, |u^{(4)}|}}) ]
\Big\}
\nonumber \\
&=& O(\frac{1}{q^3}),
\nonumber \\
&& R^*_{\{1\}, \{2\}, \{3\}, \{4\}} 
\nonumber \\
&=&
\frac{2}{ d q^9 \sigma_\ell^4}  E\Big\{  \sum_{i_1=1}^{q^2} 
\sum_{1\leq i_2\leq q^2: i_2\neq i_1}
\sum_{1\leq i_3 \leq q^2: i_3\neq i_1, i_2} \sum_{1\leq i_4\leq q^2: i_4\neq i_1, _2, i_3}
\nonumber \\
&&\times
\sum_{0\leq u_1^{(1)},\ldots, u_d^{(1)}\leq 1: |u^{(1)}|=\ell+2} 
\sum_{0\leq u_1^{(2)},\ldots, u_d^{(2)}\leq 1: |u^{(2)}|=\ell+2} 
\nonumber \\
&&\times
\sum_{0\leq u_1^{(3)},\ldots, u_d^{(3)}\leq 1: |u^{(3)}|=\ell+2} 
\sum_{0\leq u_1^{(4)},\ldots, u_d^{(4)}\leq 1: |u^{(4)}|=\ell+2} 
\nonumber \\
&&\times
\sum_{k=1}^d {\cal I}\{ k\in \cap_{l =1}^4 \{ j_{l,1},\ldots, j_{ l,|u^{(l)}|} \}\}
{\cal I}\{ a_{i_1, k} = a_{i_2, k} = a_{i_3, k} = a_{i_4, k} \}
\nonumber \\
&&\times
\nu^* [\pi_{j_{1,1}} (a_{i_1, j_{1,1}}); \ldots; \pi_{j_{1,|u^{(1)}|}} (a_{i_1, j_{1, |u^{(1)}|}}) ]
\nu^* [\pi_{j_{2,1}} (a_{i_2, j_{2,1}}); \ldots; \pi_{j_{2,|u^{(2)}|}} (a_{i_2, j_{2, |u^{(2)}|}}) ]
\nonumber \\
&&\times
\nu^* [\pi_{j_{3,1}} (a_{i_3, j_{3,1}}); \ldots; \pi_{j_{3,|u^{(3)}|}} (a_{i_3, j_{3, |u^{(3)}|}}) ]
\nu^* [\pi_{j_{4,1}} (a_{i_4, j_{4,1}}); \ldots; \pi_{j_{4,|u^{(4)}|}} (a_{i_4, j_{4, |u^{(4)}|}}) ]
\Big\}
\nonumber \\
&=& O(\frac{1}{q^4}),
\end{eqnarray*}
as $q\rightarrow \infty$.
Consequently we conclude from (\ref{eq:a.62}) that $E  (S_\ell^4 ) = O(1/q^2)$ as $q\rightarrow\infty$.
This proves Lemma \ref{la:a.23}.
\hfill $\Box$

\begin{pn} \label{pn:a.5}
Let $S_i$ and $\tilde{S}_i$, $i=1,\ldots, d-2$, be as in (\ref{eq:3.75}).
Then for $1\leq i\neq j\leq d-2$,
\begin{displaymath}
| \frac{ d(q-1)}{4(i+2)} 
E [ (\tilde{S}_i -S_i) (\tilde{S}_j - S_j)
\frac{\partial^2}{\partial v_i \partial v_j} \psi_{\varepsilon^2} (V) ] |
= O( \frac{\|h\|_\infty}{ q^{1/2}}) \log(1/\varepsilon),
\end{displaymath}
as $q\rightarrow \infty$ uniformly over $0<\varepsilon <1$.
\end{pn}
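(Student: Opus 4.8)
The plan is to condition on the $\sigma$-field ${\cal W}$ generated by the $\pi_j(a_{i,j})$'s. Since the random vector $V$ is ${\cal W}$-measurable, one may pull the mixed second partial of $\psi_{\varepsilon^2}$ out of the conditional expectation:
\begin{eqnarray*}
\Big| \frac{ d(q-1)}{4(i+2)} E [ (\tilde{S}_i -S_i) (\tilde{S}_j - S_j) \frac{\partial^2}{\partial v_i \partial v_j} \psi_{\varepsilon^2} (V) ] \Big|
&\leq & \frac{ d(q-1)}{4(i+2)} \Big\{ \sup_{v\in {\mathbb R}^{d-2}} \Big| \frac{\partial^2}{\partial v_i \partial v_j} \psi_{\varepsilon^2} (v) \Big| \Big\} \\
&& \times\, E \big| E^{{\cal W}} \{ (\tilde{S}_i -S_i) (\tilde{S}_j - S_j) \} \big|.
\end{eqnarray*}
By Lemma \ref{la:5.1} the supremum is at most $\|h\|_\infty\log(1/\varepsilon^2)\leq 2\|h\|_\infty\log(1/\varepsilon)$, so it suffices to show that $\frac{d(q-1)}{4(i+2)}E|E^{{\cal W}}\{(\tilde{S}_i -S_i)(\tilde{S}_j - S_j)\}| = O(q^{-1/2})$ as $q\rightarrow\infty$, for $1\leq i\neq j\leq d-2$. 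This is the off-diagonal analogue of the estimate established in Proposition \ref{pn:a.3}.

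Next I would expand $\tilde{S}_i - S_i = \tilde{S}_{i,1}+\tilde{S}_{i,2}-S_{i,1}-S_{i,2}$, and likewise for the index $j$, using the decomposition (\ref{eq:a.78}), so that $(\tilde{S}_i-S_i)(\tilde{S}_j-S_j)$ is a sum of sixteen products $A_{i,a}B_{j,b}$ with $A,B\in\{\tilde{S},S\}$ and $a,b\in\{1,2\}$. Because the joint law of all the random ingredients is invariant under interchanging $B_1$ and $B_2$, we have $E|E^{{\cal W}}(A_{i,1}B_{j,1})| = E|E^{{\cal W}}(A_{i,2}B_{j,2})|$ and $E|E^{{\cal W}}(A_{i,1}B_{j,2})| = E|E^{{\cal W}}(A_{i,2}B_{j,1})|$, which reduces matters to a bounded number of representatives: the mixed-slot products (of types $\tilde{S}_{i,1}\tilde{S}_{j,2}$, $\tilde{S}_{i,1}S_{j,2}$, $\tilde{S}_{j,1}S_{i,2}$, $S_{i,1}S_{j,2}$), the one-slot products $\tilde{S}_{i,1}S_{j,1}$ and $\tilde{S}_{j,1}S_{i,1}$, and the common-slot products $\tilde{S}_{i,1}\tilde{S}_{j,1}$ and $S_{i,1}S_{j,1}$. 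For the mixed-slot group, Lemmas \ref{la:a.19}, \ref{la:a.20} and \ref{la:a.22} (applied with $(\ell_1,\ell_2)$ equal to $(i,j)$ or $(j,i)$, and using that $i,j\leq d-2$ are bounded) give a contribution of $O(q^{-1})$ after multiplication by $d(q-1)/(4(i+2))$; for $\tilde{S}_{i,1}S_{j,1}$ and $\tilde{S}_{j,1}S_{i,1}$, Lemma \ref{la:a.21} does the same.

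The only genuinely new estimates are for $S_{i,1}S_{j,1}$ and $\tilde{S}_{i,1}\tilde{S}_{j,1}$; in the diagonal case $i=j$ these are the squares $S_{i,1}^2$ and $\tilde{S}_{i,1}^2$, which were handled by mean subtraction in Lemma \ref{la:a.18}. Here, since $i\neq j$, the support sets $\{j_1,\ldots,j_{i+2}\}$ and $\{j_1,\ldots,j_{j+2}\}$ of the two $\nu^*$ factors inside $E^{{\cal W}}(S_{i,1}S_{j,1})$ have different cardinalities, so at least one coordinate lies in their symmetric difference and the corresponding permutation does not occur in the other factor; averaging $\nu^*$ over that argument and invoking the ANOVA identity (\ref{eq:3.76}) gives $E(S_{i,1}S_{j,1})=E(\tilde{S}_{i,1}\tilde{S}_{j,1})=0$, so no mean need be subtracted. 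I would then bound $E|E^{{\cal W}}(S_{i,1}S_{j,1})|$ by $\{E[(E^{{\cal W}}(S_{i,1}S_{j,1}))^2]\}^{1/2}$ and expand the second moment over its fifteen partition types exactly as in the proof of Lemma \ref{la:a.18}. The same symmetric-difference coordinate forces an extra factor $q^{-1}$ in the partition term $R_{\{1,2\},\{3,4\}}$ that contributed the non-vanishing $O(q^{-2})$ piece in the diagonal case, so now that term, and every other surviving partition type, is $O(q^{-3})$ when controlled via the orthogonal-array property of $A$ together with Lemma \ref{la:a.5}. Hence $E[(E^{{\cal W}}(S_{i,1}S_{j,1}))^2]=O(q^{-3})$, so $E|E^{{\cal W}}(S_{i,1}S_{j,1})|=O(q^{-3/2})$ and $\frac{d(q-1)}{4(i+2)}E|E^{{\cal W}}(S_{i,1}S_{j,1})|=O(q^{-1/2})$; the estimate for $\tilde{S}_{i,1}\tilde{S}_{j,1}$ is identical. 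Summing the finitely many contributions gives the proposition.

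The step I expect to be the main obstacle is this last one: verifying that all fifteen partition types in the expansions of $E[(E^{{\cal W}}(S_{i,1}S_{j,1}))^2]$ and $E[(E^{{\cal W}}(\tilde{S}_{i,1}\tilde{S}_{j,1}))^2]$ are $O(q^{-3})$ after the $q^{-10}\sigma_i^{-2}\sigma_j^{-2}$ normalisation. This is the same style of orthogonal-array counting as in Lemmas \ref{la:a.18}--\ref{la:a.23}, but it must be redone from scratch because the two $\nu^*$ factors in each conditional expectation now carry the mismatched support cardinalities $i+2$ and $j+2$, which alters which row collisions survive; one must also keep careful track of the $B_1\leftrightarrow B_2$ reductions so that all sixteen original products are accounted for.
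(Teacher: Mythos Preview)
Your proposal is correct and follows essentially the same route as the paper. The paper's proof likewise conditions on ${\cal W}$, bounds the second partial via Lemma \ref{la:5.1}, expands $(\tilde S_i-S_i)(\tilde S_j-S_j)$ into the sixteen products $A_{i,a}B_{j,b}$, reduces by the $B_1\leftrightarrow B_2$ symmetry, and invokes Lemmas \ref{la:a.19}--\ref{la:a.22} for the mixed-slot and $\tilde S S$ same-slot terms. The only difference is organizational: the paper packages the two remaining common-slot estimates, for $E|E^{\cal W}(S_{\ell_1,1}S_{\ell_2,1})|$ and $E|E^{\cal W}(\tilde S_{\ell_1,1}\tilde S_{\ell_2,1})|$ with $\ell_1\neq\ell_2$, into a separate Lemma \ref{la:a.24}, whose proof is exactly the fifteen-partition expansion you outline, with the explicit note that $R^{\dag}_{\{1,2\},\{3,4\}}=O(q^{-3})$ (rather than $[E(S_{\ell,1}^2)]^2+O(q^{-3})$ as in the diagonal Lemma \ref{la:a.18}) precisely because $\ell_1\neq\ell_2$.
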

{\sc Proof.} Let $S_{i,k}$ and $\tilde{S}_{i,k}$, $i=1,\ldots, d-2$ and $k=1,2$, be as in (\ref{eq:a.78}).
Then $S_i= S_{i,1}+ S_{i,2}$ and $\tilde{S}_i = \tilde{S}_{i,1} + \tilde{S}_{i,2}$.
For $1\leq i\neq j\leq d-2$, we observe that
\begin{eqnarray*}
&& | \frac{ d(q-1)}{4(i+2)} 
E [ (\tilde{S}_i -S_i) (\tilde{S}_j - S_j)
\frac{\partial^2}{\partial v_i \partial v_j} \psi_{\varepsilon^2} (V) ] |
\nonumber \\
&= & | \frac{d(q-1)}{4 (i+2)} E \{ \frac{\partial^2}{\partial v_i \partial v_j} \psi_{\varepsilon^2} (V)
 E^{\cal W} [(\tilde{S}_i - S_i) (\tilde{S}_j -S_j) ] \} |
\nonumber \\
& = & \frac{d(q-1)}{4 (i+2)} \{ \sup_{v\in {\mathbb R}^{d-2}} |\frac{\partial^2}{\partial v_i \partial v_j} \psi_{\varepsilon^2} (v)| \}
\nonumber \\
&&\times
E | E^{\cal W} [(\tilde{S}_{i,1} + \tilde{S}_{i,2} - S_{i,1} -S_{i,2}) 
(\tilde{S}_{j,1} + \tilde{S}_{j,2} -S_{j,1} -S_{j,2}) ] |
\nonumber \\
& = & \frac{d(q-1)}{4 (i+2)} 
\{ \sup_{v\in {\mathbb R}^{d-2}} | \frac{\partial^2}{\partial v_i \partial v_j} \psi_{\varepsilon^2} (v) | \}
E | E^{\cal W} ( \tilde{S}_{i,1} \tilde{S}_{j,1} + \tilde{S}_{i,1} \tilde{S}_{j,2} -\tilde{S}_{i,1} S_{j,1} - \tilde{S}_{i,1} S_{j,2}
\nonumber \\
&&
 + \tilde{S}_{i,2} \tilde{S}_{j,1} + \tilde{S}_{i,2} \tilde{S}_{j,2} -\tilde{S}_{i,2} S_{j,1} - \tilde{S}_{i,2} S_{j,2}
- S_{i,1} \tilde{S}_{j,1} - S_{i,1} \tilde{S}_{j,2} + S_{i,1} S_{j,1} + S_{i,1} S_{j,2}
\nonumber \\
&&
 - S_{i,2} \tilde{S}_{j,1} - S_{i,2} \tilde{S}_{j,2} + S_{i,2} S_{j,1} + S_{i,2} S_{j,2} ) |
\nonumber \\
&\leq & \frac{d(q-1)}{2 (i+2)}
\{ \sup_{v\in {\mathbb R}^{d-2}} | \frac{\partial^2}{\partial v_i \partial v_j} \psi_{\varepsilon^2} (v) | \} \Big\{
E | E^{\cal W} ( \tilde{S}_{i,1} \tilde{S}_{j,1} )|   + E| E^{\cal W} ( \tilde{S}_{i,1} \tilde{S}_{j,2} )| 
\nonumber \\
&& 
+ E| E^{\cal W} ( \tilde{S}_{i,1} S_{j,1} )| 
+ E| E^{\cal W} ( \tilde{S}_{i,1} S_{j,2} )| 
+ E| E^{\cal W} ( S_{i,1} \tilde{S}_{j,1} )| 
\nonumber \\
&&
+ E| E^{\cal W} ( S_{i,1} \tilde{S}_{j,2} )| 
+ E| E^{\cal W} ( S_{i,1} S_{j,1} )| 
+ E| E^{\cal W} ( S_{i,1} S_{j,2} )| \Big\}.
\end{eqnarray*}
Now it follows from Lemma \ref{la:5.1}, Lemmas \ref{la:a.19} to \ref{la:a.22} and Lemma \ref{la:a.24} that
\begin{displaymath}
| \frac{ d(q-1)}{4(i+2)} 
E [ (\tilde{S}_i -S_i) (\tilde{S}_j - S_j)
\frac{\partial^2}{\partial v_i \partial v_j} \psi_{\varepsilon^2} (V) ] |
\nonumber \\
\leq O( \frac{\|h\|_\infty}{ q^{1/2}}) \log(1/\varepsilon),
\end{displaymath}
as $q\rightarrow \infty$ uniformly over $0<\varepsilon <1$.
This proves Proposition \ref{pn:a.5}. \hfill $\Box$

\begin{la} \label{la:a.24}
Let $S_{\ell, 1}$ and $\tilde{S}_{\ell, 1}$, $\ell = 1,\ldots, d-2,$ be as in (\ref{eq:a.78}).
Then  for $1\leq \ell_1\neq \ell_2\leq d-2$, 
\begin{eqnarray*}
\frac{d(q-1)}{2 (\ell_1 +2)} E| E^{\cal W} (S_{\ell_1, 1} S_{\ell_2, 1} )| & = & O( q^{-1/2}), 
\nonumber \\
\frac{d(q-1)}{2 (\ell_1 +2)} E| E^{\cal W} ( \tilde{S}_{\ell_1, 1} \tilde{S}_{\ell_2, 1} )| & = & O(q^{-1/2}), 
\hspace{0.5cm}\mbox{as $q\rightarrow\infty$}.
\end{eqnarray*}
\end{la}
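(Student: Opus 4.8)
The plan is to follow the template already used for Lemmas \ref{la:a.18}--\ref{la:a.22}. By the Cauchy--Schwarz inequality,
\[
\frac{d(q-1)}{2(\ell_1+2)}E|E^{\cal W}(S_{\ell_1,1}S_{\ell_2,1})|
\leq \frac{d(q-1)}{2(\ell_1+2)}\big\{E[(E^{\cal W}(S_{\ell_1,1}S_{\ell_2,1}))^2]\big\}^{1/2},
\]
and since $d(q-1)/[2(\ell_1+2)]=O(q)$ it suffices to show $E\{[E^{\cal W}(S_{\ell_1,1}S_{\ell_2,1})]^2\}=O(q^{-3})$, and similarly with $\tilde S_{\ell_1,1}\tilde S_{\ell_2,1}$ in place of $S_{\ell_1,1}S_{\ell_2,1}$. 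No centering is needed, because $E(S_{\ell_1,1}S_{\ell_2,1})=0$: expanding $E\{E^{\cal W}(S_{\ell_1,1}S_{\ell_2,1})\}$ leaves expectations of the form $E\{\nu^*[\pi_{j_1}(a_{i_1,j_1});\ldots]\,\nu^*[\pi_{j'_1}(a_{i_2,j'_1});\ldots]\}$ whose two $\nu^*$-factors carry coordinate sets of different cardinalities $\ell_1+2\neq\ell_2+2$, so there is a coordinate lying in exactly one of them, and averaging over its (independent) permutation $\pi_j$ annihilates the expectation by the ANOVA identity (\ref{eq:3.76})/(\ref{eq:a.544}).

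First I would obtain an explicit formula for $E^{\cal W}(S_{\ell_1,1}S_{\ell_2,1})$ by integrating out $J$ and $(B_1,B_2)$ exactly as in Lemmas \ref{la:a.18} and \ref{la:a.21}: the event $\{\pi_J(a_{i_1,J})=B_1=\pi_J(a_{i_2,J})\}$ forces $a_{i_1,J}=a_{i_2,J}$ and contributes a factor $1/(dq)$, giving
\[
E^{\cal W}(S_{\ell_1,1}S_{\ell_2,1})
= \frac{1}{dq^5\sigma_{\ell_1}\sigma_{\ell_2}}\sum_{i_1=1}^{q^2}\sum_{i_2=1}^{q^2}
\sum_{0\leq u^{(1)}_1,\ldots,u^{(1)}_d\leq 1:\,|u^{(1)}|=\ell_1+2}
\sum_{0\leq u^{(2)}_1,\ldots,u^{(2)}_d\leq 1:\,|u^{(2)}|=\ell_2+2}
\sum_{k=1}^d {\cal I}\{k\in\cdots\cap\cdots\}{\cal I}\{a_{i_1,k}=a_{i_2,k}\}
\nu^*[\,\cdot_{i_1}\,]\,\nu^*[\,\cdot_{i_2}\,],
\]
with the analogous formula for $\tilde S_{\ell_1,1}\tilde S_{\ell_2,1}$ carrying an extra factor $1/(q-1)$ and an inner sum over $\tilde c_k\neq\pi_k(a_{i_1,k})$, as in the $\tilde S_{\ell,1}^2$ computation of Lemma \ref{la:a.18}. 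Squaring and expanding, I would decompose $E\{[E^{\cal W}(\cdot)]^2\}$ into the usual fifteen partition terms $R_{A_1,\ldots,A_p}$ indexed by partitions of $\{1,2,3,4\}$ recording which of the row indices $i_1,i_2,i_3,i_4$ coincide, exactly as in (\ref{eq:a.61}) and the displays following it. Each such term would then be bounded by combining Lemma \ref{la:a.5} (the orders of the coefficients $\langle f,\prod\psi\rangle$, the bound $E\{\nu^{*4}\}=O(1)$, and the vanishing of mixed $\nu^*$-expectations via (\ref{eq:3.76})), the estimate $\sigma_\ell^{-2}=O(q^2)$ (from the proof of Proposition \ref{pn:a.2}), and the strength-$2$ property of $A$ to count admissible $4$-tuples $(i_1,i_2,i_3,i_4)$.

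The crux of the argument, and what distinguishes it from Lemmas \ref{la:a.18}--\ref{la:a.22}, is the treatment of the three ``diagonal'' partition terms $R_{\{1,2\},\{3,4\}}$, $R_{\{1,3\},\{2,4\}}$ and $R_{\{1,4\},\{2,3\}}$. In the equal-order situation of Lemma \ref{la:a.18}, the term $R_{\{1,2\},\{3,4\}}$ supplies the leading $\Theta(q^{-2})$ piece $[E(S_{\ell,1}^2)]^2$, arising from the configuration in which the two $\nu^*$-factors attached to each repeated row index share the same coordinate set. Here those two factors have orders $\ell_1+2$ and $\ell_2+2$ with $\ell_1\neq\ell_2$, so that configuration is annihilated by (\ref{eq:3.76}); for the expectation to survive one must instead link the two row-blocks, i.e.\ some coordinate $j$ must occur in a factor indexed by row $i_1$ and in one indexed by row $i_3\neq i_1$. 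Averaging over $\pi_j$ then costs a factor $O(1/q)$ when $a_{i_1,j}\neq a_{i_3,j}$, while if $a_{i_1,j}=a_{i_3,j}$ the strength-$2$ property limits the number of such pairs $(i_1,i_3)$ by a factor $O(1/q)$, since two distinct rows of $A$ agree in at most one column. Either way $R_{\{1,2\},\{3,4\}}=O(q^{-3})$. For $R_{\{1,3\},\{2,4\}}$ and $R_{\{1,4\},\{2,3\}}$ the same strength-$2$ input forces the two column labels $k_1,k_3$ to coincide (otherwise the distinct rows $i_1,i_2$ would have to agree in two columns), again costing a factor $O(1/q)$, so these are $O(q^{-3})$ as well. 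All remaining partition terms carry strictly more row-coincidence constraints and are $O(q^{-3})$ or smaller by the same estimates used in Lemmas \ref{la:a.18}--\ref{la:a.22}. Summing, $E\{[E^{\cal W}(S_{\ell_1,1}S_{\ell_2,1})]^2\}=O(q^{-3})$, and the $\tilde S$ case is handled identically, the transposition $\tau_{B_1,B_2}$ only inserting harmless factors $\pm1/(q-1)$ and a sum $\sum_{\tilde c_k\neq\pi_k(a_{i_1,k})}$ that is reduced via $\sum_{\tilde c_k}\nu^*=0$ exactly as before. The main obstacle will be the careful bookkeeping confirming that the order mismatch $\ell_1\neq\ell_2$ together with the strength-$2$ property really does gain the extra factor $1/q$ in each of the three diagonal terms; everything else is routine.
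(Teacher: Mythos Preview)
Your proposal is correct and follows essentially the same approach as the paper: compute $E^{\cal W}(S_{\ell_1,1}S_{\ell_2,1})$ explicitly, square, decompose into the fifteen partition terms $R^\dag_{A_1,\ldots,A_p}$ (and $R^\ddag$ for the $\tilde S$ case), and bound each, with the hypothesis $\ell_1\neq\ell_2$ entering precisely at $R^\dag_{\{1,2\},\{3,4\}}$ and $R^\ddag_{\{1,2\},\{3,4\}}$. Your phrase ``strictly more row-coincidence constraints'' for the remaining terms is a slight misdescription---for instance $R^\dag_{\{1\},\{2\},\{3\},\{4\}}$ has no coincidences at all---but those terms are indeed all $O(q^{-3})$ by the same estimates as in Lemmas~\ref{la:a.18}--\ref{la:a.22}, so this does not affect the argument.
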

{\sc Proof.} We observe from (\ref{eq:a.78}) that for $1\leq \ell_1 <\ell_2 \leq d-2$,
\begin{eqnarray*}
&& E^{\cal W} (S_{\ell_1, 1} S_{\ell_2, 1} )
\nonumber \\
&=& E^{\cal W} \Big\{ \frac{ 1}{q^4 \sigma_{\ell_1} \sigma_{\ell_2} } 
\sum_{i_1=1}^{q^2} \sum_{i_2=1}^{q^2} \sum_{0\leq u_1^{(1)},\ldots, u_d^{(1)}\leq 1: |u^{(1)}|=\ell_1+2}
\sum_{0\leq u_1^{(2)},\ldots, u_d^{(2)}\leq 1: |u^{(2)}|=\ell_2+2}
\nonumber \\
&& \times {\cal I} \{ J\in \{j_{1,1},\ldots, j_{1,|u^{(1)}|} \} \cap \{j_{2,1},\ldots, j_{2,|u^{(2)}|} \}
{\cal I} \{ \pi_J (a_{i_1, J} ) = B_1 = \pi_J (a_{i_2, J}) \}
\nonumber \\
&&
\times
\nu^* [\pi_{j_{1,1}} (a_{i_1, j_{1,1}});\ldots; \pi_{j_{1,|u^{(1)}|}} (a_{i_1, j_{1, |u^{(1)}|}} ) ]
\nu^* [\pi_{j_{2,1}} (a_{i_2, j_{2,1}});\ldots; \pi_{j_{2,|u^{(2)}|}} (a_{i_2, j_{2, |u^{(2)}|}} ) ]
\Big\}
\nonumber \\
&=& \frac{ 1}{d q^5 \sigma_{\ell_1} \sigma_{\ell_2} } 
\sum_{i_1=1}^{q^2} \sum_{i_2=1}^{q^2} \sum_{0\leq u_1^{(1)},\ldots, u_d^{(1)}\leq 1: |u^{(1)}|=\ell_1+2}
\sum_{0\leq u_1^{(2)},\ldots, u_d^{(2)}\leq 1: |u^{(2)}|=\ell_2+2}
\nonumber \\
&& \times \sum_{k=1}^d {\cal I} \{ k\in \{j_{1,1},\ldots, j_{1,|u^{(1)}|} \} \cap \{j_{2,1},\ldots, j_{2,|u^{(2)}|} \}
{\cal I} \{ a_{i_1, k}  = a_{i_2, k} \}
\nonumber \\
&&
\times
\nu^* [\pi_{j_{1,1}} (a_{i_1, j_{1,1}});\ldots; \pi_{j_{1,|u^{(1)}|}} (a_{i_1, j_{1, |u^{(1)}|}} ) ]
\nu^* [\pi_{j_{2,1}} (a_{i_2, j_{2,1}});\ldots; \pi_{j_{2,|u^{(2)}|}} (a_{i_2, j_{2, |u^{(2)}|}} ) ]
\Big\}.
\end{eqnarray*}
Hence
\begin{eqnarray*}
&& E\{ [ E^{\cal W} (S_{\ell_1, 1} S_{\ell_2, 1} ) ]^2\}
\nonumber \\
&=& E\Big\{ \frac{ 1}{d^2 q^{10} \sigma_{\ell_1}^2 \sigma_{\ell_2}^2 } 
\sum_{i_1=1}^{q^2} \sum_{i_2=1}^{q^2} \sum_{i_3=1}^{q^2} \sum_{i_4=1}^{q^2}
\sum_{0\leq u_1^{(1)},\ldots, u_d^{(1)}\leq 1: |u^{(1)}|=\ell_1+2}
\sum_{0\leq u_1^{(2)},\ldots, u_d^{(2)}\leq 1: |u^{(2)}|=\ell_2+2}
\nonumber \\
&& \times \sum_{0\leq u_1^{(3)},\ldots, u_d^{(3)}\leq 1: |u^{(3)}|=\ell_1+2}
\sum_{0\leq u_1^{(4)},\ldots, u_d^{(4)}\leq 1: |u^{(4)}|=\ell_2+2}
\nonumber \\
&&
 \times \sum_{k_1 =1}^d {\cal I} \{ k_1 \in \{j_{1,1},\ldots, j_{1,|u^{(1)}|} \} \cap \{j_{2,1},\ldots, j_{2,|u^{(2)}|} \}
{\cal I} \{ a_{i_1, k_1}  = a_{i_2, k_1} \}
\nonumber \\
&&\times
\sum_{k_3 =1}^d {\cal I} \{ k_3 \in \{j_{3,1},\ldots, j_{3,|u^{(3)}|} \} \cap \{j_{4,1},\ldots, j_{4,|u^{(4)}|} \}
{\cal I} \{ a_{i_3, k_3}  = a_{i_4, k_3} \}
\nonumber \\
&&
\times
\nu^* [\pi_{j_{1,1}} (a_{i_1, j_{1,1}});\ldots; \pi_{j_{1,|u^{(1)}|}} (a_{i_1, j_{1, |u^{(1)}|}} ) ]
\nu^* [\pi_{j_{2,1}} (a_{i_2, j_{2,1}});\ldots; \pi_{j_{2,|u^{(2)}|}} (a_{i_2, j_{2, |u^{(2)}|}} ) ]
\nonumber \\
&&\times
\nu^* [\pi_{j_{3,1}} (a_{i_3, j_{3,1}});\ldots; \pi_{j_{3,|u^{(3)}|}} (a_{i_3, j_{3, |u^{(3)}|}} ) ]
\nu^* [\pi_{j_{4,1}} (a_{i_4, j_{4,1}});\ldots; \pi_{j_{4,|u^{(4)}|}} (a_{i_4, j_{4, |u^{(4)}|}} ) ]
\Big\}
\nonumber \\
&=& R^\dag_{\{1,2,3,4\}} + R^\dag_{\{1,2,3\},\{4\}} + R^\dag_{\{1,2,4\},\{3\}} + R^\dag_{\{1,3,4\},\{2\}}
+ R^\dag_{\{2,3,4\},\{1\}} 
\nonumber \\
&& + R^\dag_{\{1,2\}, \{3\},\{4\}} + R^\dag_{\{1,3\},\{2\},\{4\}} + R^\dag_{\{1,4\},\{2\},\{3\}}
+ R^\dag_{\{2,3\},\{1\},\{4\}} + R^\dag_{\{2,4\},\{1\},\{3\}} 
\nonumber \\
&& + R^\dag_{\{3,4\},\{1\},\{2\}} + R^\dag_{\{1,2\},\{3,4\}} + R^\dag_{\{1,3\},\{2,4\}}
+R^\dag_{\{1,4\},\{2, 3\}} + R^\dag_{\{1\},\{2\},\{3\},\{4\}},
\end{eqnarray*}
where
\begin{eqnarray*}
&& R^\dag_{\{1,2,3,4\}} 
\nonumber \\
&=&E\Big\{ \frac{ 1}{d^2 q^{10} \sigma_{\ell_1}^2 \sigma_{\ell_2}^2 } 
\sum_{i_1=i_2=i_3=i_4=1}^{q^2} 
\sum_{0\leq u_1^{(1)},\ldots, u_d^{(1)}\leq 1: |u^{(1)}|=\ell_1+2}
\sum_{0\leq u_1^{(2)},\ldots, u_d^{(2)}\leq 1: |u^{(2)}|=\ell_2+2}
\nonumber \\
&& \times \sum_{0\leq u_1^{(3)},\ldots, u_d^{(3)}\leq 1: |u^{(3)}|=\ell_1+2}
\sum_{0\leq u_1^{(4)},\ldots, u_d^{(4)}\leq 1: |u^{(4)}|=\ell_2+2}
\nonumber \\
&&
 \times \sum_{k_1 =1}^d {\cal I} \{ k_1 \in \{j_{1,1},\ldots, j_{1,|u^{(1)}|} \} \cap \{j_{2,1},\ldots, j_{2,|u^{(2)}|} \}
\nonumber \\
&&\times
\sum_{k_3 =1}^d {\cal I} \{ k_3 \in \{j_{3,1},\ldots, j_{3,|u^{(3)}|} \} \cap \{j_{4,1},\ldots, j_{4,|u^{(4)}|} \}
\nonumber \\
&&
\times
\nu^* [\pi_{j_{1,1}} (a_{i_1, j_{1,1}});\ldots; \pi_{j_{1,|u^{(1)}|}} (a_{i_1, j_{1, |u^{(1)}|}} ) ]
\nu^* [\pi_{j_{2,1}} (a_{i_2, j_{2,1}});\ldots; \pi_{j_{2,|u^{(2)}|}} (a_{i_2, j_{2, |u^{(2)}|}} ) ]
\nonumber \\
&&\times
\nu^* [\pi_{j_{3,1}} (a_{i_3, j_{3,1}});\ldots; \pi_{j_{3,|u^{(3)}|}} (a_{i_3, j_{3, |u^{(3)}|}} ) ]
\nu^* [\pi_{j_{4,1}} (a_{i_4, j_{4,1}});\ldots; \pi_{j_{4,|u^{(4)}|}} (a_{i_4, j_{4, |u^{(4)}|}} ) ]
\Big\}
\nonumber \\
&=& O(\frac{1}{q^4}),
\nonumber \\
&& R^\dag_{\{1,2,3\},\{4\}} \hspace{0.1cm} = \hspace{0.1cm} R^\dag_{\{1,2,4\},\{3\}} 
\hspace{0.1cm} = \hspace{0.1cm} R^\dag_{\{1,3,4\},\{2\}}
\hspace{0.1cm} = \hspace{0.1cm}  R^\dag_{\{2,3,4\},\{1\}}  
\nonumber \\
&=& E\Big\{ \frac{ 1}{d^2 q^{10} \sigma_{\ell_1}^2 \sigma_{\ell_2}^2 } 
\sum_{i_1=i_2=i_3=1}^{q^2} \sum_{1\leq i_4\leq q^2: i_4\neq i_1}
\sum_{0\leq u_1^{(1)},\ldots, u_d^{(1)}\leq 1: |u^{(1)}|=\ell_1+2}
\sum_{0\leq u_1^{(2)},\ldots, u_d^{(2)}\leq 1: |u^{(2)}|=\ell_2+2}
\nonumber \\
&& \times \sum_{0\leq u_1^{(3)},\ldots, u_d^{(3)}\leq 1: |u^{(3)}|=\ell_1+2}
\sum_{0\leq u_1^{(4)},\ldots, u_d^{(4)}\leq 1: |u^{(4)}|=\ell_2+2}
\nonumber \\
&&
 \times \sum_{k_1 =1}^d {\cal I} \{ k_1 \in \{j_{1,1},\ldots, j_{1,|u^{(1)}|} \} \cap \{j_{2,1},\ldots, j_{2,|u^{(2)}|} \}
\nonumber \\
&&\times
\sum_{k_3 =1}^d {\cal I} \{ k_3 \in \{j_{3,1},\ldots, j_{3,|u^{(3)}|} \} \cap \{j_{4,1},\ldots, j_{4,|u^{(4)}|} \}
{\cal I} \{ a_{i_1, k_3}  = a_{i_4, k_3} \}
\nonumber \\
&&
\times
\nu^* [\pi_{j_{1,1}} (a_{i_1, j_{1,1}});\ldots; \pi_{j_{1,|u^{(1)}|}} (a_{i_1, j_{1, |u^{(1)}|}} ) ]
\nu^* [\pi_{j_{2,1}} (a_{i_2, j_{2,1}});\ldots; \pi_{j_{2,|u^{(2)}|}} (a_{i_2, j_{2, |u^{(2)}|}} ) ]
\nonumber \\
&&\times
\nu^* [\pi_{j_{3,1}} (a_{i_3, j_{3,1}});\ldots; \pi_{j_{3,|u^{(3)}|}} (a_{i_3, j_{3, |u^{(3)}|}} ) ]
\nu^* [\pi_{j_{4,1}} (a_{i_4, j_{4,1}});\ldots; \pi_{j_{4,|u^{(4)}|}} (a_{i_4, j_{4, |u^{(4)}|}} ) ]
\Big\}
\nonumber \\
&=& O(\frac{1}{q^3}),
\nonumber \\
&& R^\dag_{\{1,2\},\{3\},\{4\}} \hspace{0.1cm} = \hspace{0.1cm} R^\dag_{\{3,4\},\{1\},\{2\}}
\nonumber \\
&=& E\Big\{ \frac{ 1}{d^2 q^{10} \sigma_{\ell_1}^2 \sigma_{\ell_2}^2 } 
\sum_{i_1=i_2=1}^{q^2} \sum_{1\leq i_3\leq q^2: i_3\neq i_1} \sum_{1\leq i_4\leq q^2: i_4\neq i_1, i_3}
\sum_{0\leq u_1^{(1)},\ldots, u_d^{(1)}\leq 1: |u^{(1)}|=\ell_1+2}
\nonumber \\
&& \times
\sum_{0\leq u_1^{(2)},\ldots, u_d^{(2)}\leq 1: |u^{(2)}|=\ell_2+2}
\sum_{0\leq u_1^{(3)},\ldots, u_d^{(3)}\leq 1: |u^{(3)}|=\ell_1+2}
\sum_{0\leq u_1^{(4)},\ldots, u_d^{(4)}\leq 1: |u^{(4)}|=\ell_2+2}
\nonumber \\
&&
 \times \sum_{k_1 =1}^d {\cal I} \{ k_1 \in \{j_{1,1},\ldots, j_{1,|u^{(1)}|} \} \cap \{j_{2,1},\ldots, j_{2,|u^{(2)}|} \}
\nonumber \\
&&\times
\sum_{k_3 =1}^d {\cal I} \{ k_3 \in \{j_{3,1},\ldots, j_{3,|u^{(3)}|} \} \cap \{j_{4,1},\ldots, j_{4,|u^{(4)}|} \}
{\cal I} \{ a_{i_3, k_3}  = a_{i_4, k_3} \}
\nonumber \\
&&
\times
\nu^* [\pi_{j_{1,1}} (a_{i_1, j_{1,1}});\ldots; \pi_{j_{1,|u^{(1)}|}} (a_{i_1, j_{1, |u^{(1)}|}} ) ]
\nu^* [\pi_{j_{2,1}} (a_{i_2, j_{2,1}});\ldots; \pi_{j_{2,|u^{(2)}|}} (a_{i_2, j_{2, |u^{(2)}|}} ) ]
\nonumber \\
&&\times
\nu^* [\pi_{j_{3,1}} (a_{i_3, j_{3,1}});\ldots; \pi_{j_{3,|u^{(3)}|}} (a_{i_3, j_{3, |u^{(3)}|}} ) ]
\nu^* [\pi_{j_{4,1}} (a_{i_4, j_{4,1}});\ldots; \pi_{j_{4,|u^{(4)}|}} (a_{i_4, j_{4, |u^{(4)}|}} ) ]
\Big\}
\nonumber \\
&=& O(\frac{1}{q^3}),
\nonumber \\
&& R^\dag_{\{1,3\},\{2\},\{4\}} \hspace{0.1cm}= \hspace{0.1cm}  R^\dag_{\{1,4\},\{2\},\{3\}}
\hspace{0.1cm} = \hspace{0.1cm}  R^\dag_{\{2,3\},\{1\},\{4\}} \hspace{0.1cm} = \hspace{0.1cm} R^\dag_{\{2,4\},\{1\},\{3\}} 
\nonumber \\
&=& E\Big\{ \frac{ 1}{d^2 q^{10} \sigma_{\ell_1}^2 \sigma_{\ell_2}^2 } 
\sum_{i_1=i_3=1}^{q^2} \sum_{1\leq i_2\leq q^2: i_2\neq i_1} \sum_{1\leq i_4\leq q^2: i_4\neq i_1, i_2}
\sum_{0\leq u_1^{(1)},\ldots, u_d^{(1)}\leq 1: |u^{(1)}|=\ell_1+2}
\nonumber \\
&& \times
\sum_{0\leq u_1^{(2)},\ldots, u_d^{(2)}\leq 1: |u^{(2)}|=\ell_2+2}
\sum_{0\leq u_1^{(3)},\ldots, u_d^{(3)}\leq 1: |u^{(3)}|=\ell_1+2}
\sum_{0\leq u_1^{(4)},\ldots, u_d^{(4)}\leq 1: |u^{(4)}|=\ell_2+2}
\nonumber \\
&&
 \times \sum_{k_1 =1}^d {\cal I} \{ k_1 \in \{j_{1,1},\ldots, j_{1,|u^{(1)}|} \} \cap \{j_{2,1},\ldots, j_{2,|u^{(2)}|} \}
{\cal I} \{ a_{i_1, k_1}  = a_{i_2, k_1} \}
\nonumber \\
&&\times
\sum_{k_3 =1}^d {\cal I} \{ k_3 \in \{j_{3,1},\ldots, j_{3,|u^{(3)}|} \} \cap \{j_{4,1},\ldots, j_{4,|u^{(4)}|} \}
{\cal I} \{ a_{i_3, k_3}  = a_{i_4, k_3} \}
\nonumber \\
&&
\times
\nu^* [\pi_{j_{1,1}} (a_{i_1, j_{1,1}});\ldots; \pi_{j_{1,|u^{(1)}|}} (a_{i_1, j_{1, |u^{(1)}|}} ) ]
\nu^* [\pi_{j_{2,1}} (a_{i_2, j_{2,1}});\ldots; \pi_{j_{2,|u^{(2)}|}} (a_{i_2, j_{2, |u^{(2)}|}} ) ]
\nonumber \\
&&\times
\nu^* [\pi_{j_{3,1}} (a_{i_3, j_{3,1}});\ldots; \pi_{j_{3,|u^{(3)}|}} (a_{i_3, j_{3, |u^{(3)}|}} ) ]
\nu^* [\pi_{j_{4,1}} (a_{i_4, j_{4,1}});\ldots; \pi_{j_{4,|u^{(4)}|}} (a_{i_4, j_{4, |u^{(4)}|}} ) ]
\Big\}
\nonumber \\
&=& O(\frac{1}{q^4}),
\nonumber \\
&& R^\dag_{\{1,3\},\{2,4\}} \hspace{0.1cm} = \hspace{0.1cm} R^\dag_{\{1,4\},\{2, 3\}} 
\nonumber \\
&=& E\Big\{ \frac{ 1}{d^2 q^{10} \sigma_{\ell_1}^2 \sigma_{\ell_2}^2 } 
\sum_{i_1=i_3=1}^{q^2} \sum_{1\leq i_2=i_4\leq q^2: i_2\neq i_1}
\sum_{0\leq u_1^{(1)},\ldots, u_d^{(1)}\leq 1: |u^{(1)}|=\ell_1+2}
\nonumber \\
&& \times
\sum_{0\leq u_1^{(2)},\ldots, u_d^{(2)}\leq 1: |u^{(2)}|=\ell_2+2}
\sum_{0\leq u_1^{(3)},\ldots, u_d^{(3)}\leq 1: |u^{(3)}|=\ell_1+2}
\sum_{0\leq u_1^{(4)},\ldots, u_d^{(4)}\leq 1: |u^{(4)}|=\ell_2+2}
\nonumber \\
&&
 \times \sum_{k_1 =1}^d {\cal I} \{ k_1 \in \{j_{1,1},\ldots, j_{1,|u^{(1)}|} \} \cap \{j_{2,1},\ldots, j_{2,|u^{(2)}|} \}
{\cal I} \{ a_{i_1, k_1}  = a_{i_2, k_1} \}
\nonumber \\
&&\times
\sum_{k_3 =1}^d {\cal I} \{ k_3 \in \{j_{3,1},\ldots, j_{3,|u^{(3)}|} \} \cap \{j_{4,1},\ldots, j_{4,|u^{(4)}|} \}
{\cal I} \{ a_{i_1, k_3}  = a_{i_2, k_3} \}
\nonumber \\
&&
\times
\nu^* [\pi_{j_{1,1}} (a_{i_1, j_{1,1}});\ldots; \pi_{j_{1,|u^{(1)}|}} (a_{i_1, j_{1, |u^{(1)}|}} ) ]
\nu^* [\pi_{j_{2,1}} (a_{i_2, j_{2,1}});\ldots; \pi_{j_{2,|u^{(2)}|}} (a_{i_2, j_{2, |u^{(2)}|}} ) ]
\nonumber \\
&&\times
\nu^* [\pi_{j_{3,1}} (a_{i_3, j_{3,1}});\ldots; \pi_{j_{3,|u^{(3)}|}} (a_{i_3, j_{3, |u^{(3)}|}} ) ]
\nu^* [\pi_{j_{4,1}} (a_{i_4, j_{4,1}});\ldots; \pi_{j_{4,|u^{(4)}|}} (a_{i_4, j_{4, |u^{(4)}|}} ) ]
\Big\}
\nonumber \\
&=& O(\frac{1}{q^3}),
\nonumber \\
&& R^\dag_{\{1\},\{2\},\{3\},\{4\}}
\nonumber \\
&=& E\Big\{ \frac{ 1}{d^2 q^{10} \sigma_{\ell_1}^2 \sigma_{\ell_2}^2 } 
\sum_{i_1=1}^{q^2} \sum_{1\leq i_2\leq q^2:i_2\neq i_1} \sum_{1\leq i_3\leq q^2: i_3\neq i_1,i_2} 
\sum_{1\leq i_4\leq q^2: i_4\neq i_1,i_2,i_3}
\sum_{0\leq u_1^{(1)},\ldots, u_d^{(1)}\leq 1: |u^{(1)}|=\ell_1+2}
\nonumber \\
&& \times
\sum_{0\leq u_1^{(2)},\ldots, u_d^{(2)}\leq 1: |u^{(2)}|=\ell_2+2}
\sum_{0\leq u_1^{(3)},\ldots, u_d^{(3)}\leq 1: |u^{(3)}|=\ell_1+2}
\sum_{0\leq u_1^{(4)},\ldots, u_d^{(4)}\leq 1: |u^{(4)}|=\ell_2+2}
\nonumber \\
&&
 \times \sum_{k_1 =1}^d {\cal I} \{ k_1 \in \{j_{1,1},\ldots, j_{1,|u^{(1)}|} \} \cap \{j_{2,1},\ldots, j_{2,|u^{(2)}|} \}
{\cal I} \{ a_{i_1, k_1}  = a_{i_2, k_1} \}
\nonumber \\
&&\times
\sum_{k_3 =1}^d {\cal I} \{ k_3 \in \{j_{3,1},\ldots, j_{3,|u^{(3)}|} \} \cap \{j_{4,1},\ldots, j_{4,|u^{(4)}|} \}
{\cal I} \{ a_{i_3, k_3}  = a_{i_4, k_3} \}
\nonumber \\
&&
\times
\nu^* [\pi_{j_{1,1}} (a_{i_1, j_{1,1}});\ldots; \pi_{j_{1,|u^{(1)}|}} (a_{i_1, j_{1, |u^{(1)}|}} ) ]
\nu^* [\pi_{j_{2,1}} (a_{i_2, j_{2,1}});\ldots; \pi_{j_{2,|u^{(2)}|}} (a_{i_2, j_{2, |u^{(2)}|}} ) ]
\nonumber \\
&&\times
\nu^* [\pi_{j_{3,1}} (a_{i_3, j_{3,1}});\ldots; \pi_{j_{3,|u^{(3)}|}} (a_{i_3, j_{3, |u^{(3)}|}} ) ]
\nu^* [\pi_{j_{4,1}} (a_{i_4, j_{4,1}});\ldots; \pi_{j_{4,|u^{(4)}|}} (a_{i_4, j_{4, |u^{(4)}|}} ) ]
\Big\}
\nonumber \\
&=& O(\frac{1}{q^4}),
\nonumber \\
&& R^\dag_{\{1,2\},\{3,4\}}
\nonumber \\
&=&
E\Big\{ \frac{ 1}{d^2 q^{10} \sigma_{\ell_1}^2 \sigma_{\ell_2}^2 } 
\sum_{i_1=1}^{q^2}  \sum_{1\leq i_3\leq q^2: i_3\neq i_1} 
\sum_{0\leq u_1^{(1)},\ldots, u_d^{(1)}\leq 1: |u^{(1)}|=\ell_1+2}
\nonumber \\
&& \times
\sum_{0\leq u_1^{(2)},\ldots, u_d^{(2)}\leq 1: |u^{(2)}|=\ell_2+2}
\sum_{0\leq u_1^{(3)},\ldots, u_d^{(3)}\leq 1: |u^{(3)}|=\ell_1+2}
\sum_{0\leq u_1^{(4)},\ldots, u_d^{(4)}\leq 1: |u^{(4)}|=\ell_2+2}
\nonumber \\
&&
 \times \sum_{k_1 =1}^d {\cal I} \{ k_1 \in \{j_{1,1},\ldots, j_{1,|u^{(1)}|} \} \cap \{j_{2,1},\ldots, j_{2,|u^{(2)}|} \}
\nonumber \\
&&\times
\sum_{k_3 =1}^d {\cal I} \{ k_3 \in \{j_{3,1},\ldots, j_{3,|u^{(3)}|} \} \cap \{j_{4,1},\ldots, j_{4,|u^{(4)}|} \}
\nonumber \\
&&
\times
\nu^* [\pi_{j_{1,1}} (a_{i_1, j_{1,1}});\ldots; \pi_{j_{1,|u^{(1)}|}} (a_{i_1, j_{1, |u^{(1)}|}} ) ]
\nu^* [\pi_{j_{2,1}} (a_{i_1, j_{2,1}});\ldots; \pi_{j_{2,|u^{(2)}|}} (a_{i_1, j_{2, |u^{(2)}|}} ) ]
\nonumber \\
&&\times
\nu^* [\pi_{j_{3,1}} (a_{i_3, j_{3,1}});\ldots; \pi_{j_{3,|u^{(3)}|}} (a_{i_3, j_{3, |u^{(3)}|}} ) ]
\nu^* [\pi_{j_{4,1}} (a_{i_3, j_{4,1}});\ldots; \pi_{j_{4,|u^{(4)}|}} (a_{i_3, j_{4, |u^{(4)}|}} ) ]
\Big\}
\nonumber \\
&=& O(\frac{1}{q^3}),
\end{eqnarray*}
as $q\rightarrow \infty$. The last equality ues the assumption that $\ell_1\neq \ell_2$.
Thus we conclude that
\begin{eqnarray*}
\frac{d(q-1)}{2(\ell_1+2) } E| E^{\cal W} (S_{\ell_1,1} S_{\ell_2,1} ) | \leq 
\frac{d(q-1)}{2(\ell_1+2)} \{ E[ E^{\cal W} (S_{\ell_1,1} S_{\ell_2,1} ) ]^2\}^{1/2} = O(\frac{1}{q^{1/2}}),
\end{eqnarray*}
as $q\rightarrow\infty$. Next we observe that from (\ref{eq:a.78}) that for $1\leq \ell_1< \ell_2\leq d-2$,
\begin{eqnarray*}
&& E^{\cal W} (\tilde{S}_{\ell_1, 1} \tilde{S}_{\ell_2, 1} )
\nonumber \\
&=& E^{\cal W} \Big\{ \frac{1}{ q^4 \sigma_{\ell_1} \sigma_{\ell_2}} \sum_{i_1=1}^{q^2}
\sum_{i_2=1}^{q^2} \sum_{0\leq u^{(1)}_1,\ldots, u^{(1)}_d\leq 1: |u^{(1)}|=\ell_1 +2}
\sum_{0\leq u^{(2)}_1,\ldots, u^{(2)}_d\leq 1: |u^{(2)}|=\ell_2 +2}
\nonumber \\
&&\times
{\cal I} \{ J\in \{j_{1,1},\ldots, j_{1,|u^{(1)}|} \} \cap \{ j_{2,1},\ldots, j_{2,|u^{(2)}|} \} \}
{\cal I} \{ \pi_J (a_{i_1, J} ) = B_1 = \pi_J (a_{i_2, J}) \}
\nonumber \\
&&\times
\nu^* [\pi_{j_{1,1}} (a_{i_1, j_{1,1}});\ldots; \tau_{B_1,B_2}\circ \pi_J (a_{i_1, J});\ldots;
\pi_{j_{1,|u^{(1)}|}} (a_{i_1, j_{1, |u^{(1)}|}} ) ]
\nonumber \\
&&\times
\nu^* [\pi_{j_{2,1}} (a_{i_2, j_{2,1}});\ldots; \tau_{B_1,B_2}\circ \pi_J (a_{i_2, J}); \ldots;
\pi_{j_{2,|u^{(2)}|}} (a_{i_2, j_{2, |u^{(2)}|}} ) ]
\Big\}
\nonumber \\
&=&
\frac{1}{ d q^5 (q-1) \sigma_{\ell_1} \sigma_{\ell_2}} \sum_{i_1=1}^{q^2}
\sum_{i_2=1}^{q^2} \sum_{0\leq u^{(1)}_1,\ldots, u^{(1)}_d\leq 1: |u^{(1)}|=\ell_1 +2}
\sum_{0\leq u^{(2)}_1,\ldots, u^{(2)}_d\leq 1: |u^{(2)}|=\ell_2 +2}
\nonumber \\
&&\times
\sum_{k=1}^d {\cal I} \{ k\in \{j_{1,1},\ldots, j_{1,|u^{(1)}|} \} \cap \{ j_{2,1},\ldots, j_{2,|u^{(2)}|} \} \}
{\cal I} \{ a_{i_1, k} = a_{i_2, k} \}
\nonumber \\
&&\times
\sum_{0\leq \tilde{c}_k\leq q-1: \tilde{c}_k\neq \pi_k (a_{i_1, k})}
\nu^* [\pi_{j_{1,1}} (a_{i_1, j_{1,1}});\ldots; \tilde{c}_k;\ldots;
\pi_{j_{1,|u^{(1)}|}} (a_{i_1, j_{1, |u^{(1)}|}} ) ]
\nonumber \\
&&\times
\nu^* [\pi_{j_{2,1}} (a_{i_2, j_{2,1}});\ldots; \tilde{c}_k; \ldots;
\pi_{j_{2,|u^{(2)}|}} (a_{i_2, j_{2, |u^{(2)}|}} ) ],
\end{eqnarray*}
and
\begin{eqnarray*}
&& E \{ [ E^{\cal W}  (\tilde{S}_{\ell_1, 1} \tilde{S}_{\ell_2, 1} ) ]^2 \}
\nonumber \\
&=& E\Big\{ \frac{1}{ d^2 q^{10} (q-1)^2 \sigma_{\ell_1}^2 \sigma_{\ell_2}^2 } \sum_{i_1=1}^{q^2}
\sum_{i_2=1}^{q^2} \sum_{i_3=1}^{q^2} \sum_{i_4=1}^{q^2} 
\sum_{0\leq u^{(1)}_1,\ldots, u^{(1)}_d\leq 1: |u^{(1)}|=\ell_1 +2}
\nonumber \\
&&\times
\sum_{0\leq u^{(2)}_1,\ldots, u^{(2)}_d\leq 1: |u^{(2)}|=\ell_2 +2}
\sum_{0\leq u^{(3)}_1,\ldots, u^{(3)}_d\leq 1: |u^{(3)}|=\ell_1 +2}
\sum_{0\leq u^{(4)}_1,\ldots, u^{(4)}_d\leq 1: |u^{(4)}|=\ell_2 +2}
\nonumber \\
&&\times
\sum_{k_1 =1}^d {\cal I} \{ k_1 \in \{j_{1,1},\ldots, j_{1,|u^{(1)}|} \} \cap \{ j_{2,1},\ldots, j_{2,|u^{(2)}|} \} \}
{\cal I} \{ a_{i_1, k_1} = a_{i_2, k_1} \}
\nonumber \\
&&\times
\sum_{k_3=1}^d {\cal I} \{ k_3 \in \{j_{3,1},\ldots, j_{3,|u^{(3)}|} \} \cap \{ j_{4,1},\ldots, j_{4,|u^{(4)}|} \} \}
{\cal I} \{ a_{i_3, k_3 } = a_{i_4, k_3 } \}
\nonumber \\
&&\times
\sum_{0\leq \tilde{c}_{k_1}\leq q-1: \tilde{c}_{k_1} \neq \pi_{k_1} (a_{i_1, k_1})}
\sum_{0\leq \tilde{c}_{k_3}\leq q-1: \tilde{c}_{k_3} \neq \pi_{k_3} (a_{i_3, k_3})}
\nonumber \\
&&\times
\nu^* [\pi_{j_{1,1}} (a_{i_1, j_{1,1}});\ldots; \tilde{c}_{k_1};\ldots;
\pi_{j_{1,|u^{(1)}|}} (a_{i_1, j_{1, |u^{(1)}|}} ) ]
\nonumber \\
&&\times
\nu^* [\pi_{j_{2,1}} (a_{i_2, j_{2,1}});\ldots; \tilde{c}_{k_1}; \ldots;
\pi_{j_{2,|u^{(2)}|}} (a_{i_2, j_{2, |u^{(2)}|}} ) ] 
\nonumber \\
&&\times
\nu^* [\pi_{j_{3,1}} (a_{i_3, j_{3,1}});\ldots; \tilde{c}_{k_3};\ldots;
\pi_{j_{3,|u^{(3)}|}} (a_{i_3, j_{3, |u^{(3)}|}} ) ]
\nonumber \\
&&\times
\nu^* [\pi_{j_{4,1}} (a_{i_4, j_{4,1}});\ldots; \tilde{c}_{k_3}; \ldots;
\pi_{j_{4,|u^{(4)}|}} (a_{i_4, j_{4, |u^{(4)}|}} ) ] \Big\}
\nonumber \\
&=& R^\ddag_{\{1,2,3,4\}} + R^\ddag_{\{1,2,3\},\{4\}} + R^\ddag_{\{1,2,4\},\{3\}} + R^\ddag_{\{1,3,4\},\{2\}}
+ R^\ddag_{\{2,3,4\},\{1\}} 
\nonumber \\
&& + R^\ddag_{\{1,2\}, \{3\},\{4\}} + R^\ddag_{\{1,3\},\{2\},\{4\}} + R^\ddag_{\{1,4\},\{2\},\{3\}}
+ R^\dag_{\{2,3\},\{1\},\{4\}} + R^\dag_{\{2,4\},\{1\},\{3\}} 
\nonumber \\
&& + R^\ddag_{\{3,4\},\{1\},\{2\}} + R^\ddag_{\{1,2\},\{3,4\}} + R^\ddag_{\{1,3\},\{2,4\}}
+R^\ddag_{\{1,4\},\{2, 3\}} + R^\ddag_{\{1\},\{2\},\{3\},\{4\}},
\end{eqnarray*}
where
\begin{eqnarray*}
&& R^\ddag_{\{1,2,3,4\}} \nonumber \\
&=& E\Big\{ \frac{1}{ d^2 q^{10} (q-1)^2 \sigma_{\ell_1}^2 \sigma_{\ell_2}^2 } 
\sum_{i_=i_2=i_3=i_4=1}^{q^2} 
\sum_{0\leq u^{(1)}_1,\ldots, u^{(1)}_d\leq 1: |u^{(1)}|=\ell_1 +2}
\nonumber \\
&&\times
\sum_{0\leq u^{(2)}_1,\ldots, u^{(2)}_d\leq 1: |u^{(2)}|=\ell_2 +2}
\sum_{0\leq u^{(3)}_1,\ldots, u^{(3)}_d\leq 1: |u^{(3)}|=\ell_1 +2}
\sum_{0\leq u^{(4)}_1,\ldots, u^{(4)}_d\leq 1: |u^{(4)}|=\ell_2 +2}
\nonumber \\
&&\times
\sum_{k_1 =1}^d {\cal I} \{ k_1 \in \{j_{1,1},\ldots, j_{1,|u^{(1)}|} \} \cap \{ j_{2,1},\ldots, j_{2,|u^{(2)}|} \} \}
\nonumber \\
&&\times
\sum_{k_3=1}^d {\cal I} \{ k_3 \in \{j_{3,1},\ldots, j_{3,|u^{(3)}|} \} \cap \{ j_{4,1},\ldots, j_{4,|u^{(4)}|} \} \}
\nonumber \\
&&\times
\sum_{0\leq \tilde{c}_{k_1}\leq q-1: \tilde{c}_{k_1} \neq \pi_{k_1} (a_{i_1, k_1})}
\sum_{0\leq \tilde{c}_{k_3}\leq q-1: \tilde{c}_{k_3} \neq \pi_{k_3} (a_{i_3, k_3})}
\nonumber \\
&&\times
\nu^* [\pi_{j_{1,1}} (a_{i_1, j_{1,1}});\ldots; \tilde{c}_{k_1};\ldots;
\pi_{j_{1,|u^{(1)}|}} (a_{i_1, j_{1, |u^{(1)}|}} ) ]
\nonumber \\
&&\times
\nu^* [\pi_{j_{2,1}} (a_{i_2, j_{2,1}});\ldots; \tilde{c}_{k_1}; \ldots;
\pi_{j_{2,|u^{(2)}|}} (a_{i_2, j_{2, |u^{(2)}|}} ) ] 
\nonumber \\
&&\times
\nu^* [\pi_{j_{3,1}} (a_{i_3, j_{3,1}});\ldots; \tilde{c}_{k_3};\ldots;
\pi_{j_{3,|u^{(3)}|}} (a_{i_3, j_{3, |u^{(3)}|}} ) ]
\nonumber \\
&&\times
\nu^* [\pi_{j_{4,1}} (a_{i_4, j_{4,1}});\ldots; \tilde{c}_{k_3}; \ldots;
\pi_{j_{4,|u^{(4)}|}} (a_{i_4, j_{4, |u^{(4)}|}} ) ] \Big\}
\nonumber \\
&=& O(\frac{1}{q^4}),
\nonumber \\
&& R^\ddag_{\{1,2,3\},\{4\}} \hspace{0.1cm}= \hspace{0.1cm} R^\ddag_{\{1,2,4\},\{3\}} 
\hspace{0.1cm} = \hspace{0.1cm} R^\ddag_{\{1,3,4\},\{2\}}
\hspace{0.1cm} = \hspace{0.1cm} R^\ddag_{\{2,3,4\},\{1\}} 
\nonumber \\
&=& E\Big\{ \frac{1}{ d^2 q^{10} (q-1)^2 \sigma_{\ell_1}^2 \sigma_{\ell_2}^2 } \sum_{i_1=i_2=i_3=1}^{q^2}
 \sum_{1\leq i_4\leq q^2:i_4\neq i_1} 
\sum_{0\leq u^{(1)}_1,\ldots, u^{(1)}_d\leq 1: |u^{(1)}|=\ell_1 +2}
\nonumber \\
&&\times
\sum_{0\leq u^{(2)}_1,\ldots, u^{(2)}_d\leq 1: |u^{(2)}|=\ell_2 +2}
\sum_{0\leq u^{(3)}_1,\ldots, u^{(3)}_d\leq 1: |u^{(3)}|=\ell_1 +2}
\sum_{0\leq u^{(4)}_1,\ldots, u^{(4)}_d\leq 1: |u^{(4)}|=\ell_2 +2}
\nonumber \\
&&\times
\sum_{k_1 =1}^d {\cal I} \{ k_1 \in \{j_{1,1},\ldots, j_{1,|u^{(1)}|} \} \cap \{ j_{2,1},\ldots, j_{2,|u^{(2)}|} \} \}
\nonumber \\
&&\times
\sum_{k_3=1}^d {\cal I} \{ k_3 \in \{j_{3,1},\ldots, j_{3,|u^{(3)}|} \} \cap \{ j_{4,1},\ldots, j_{4,|u^{(4)}|} \} \}
{\cal I} \{ a_{i_3, k_3 } = a_{i_4, k_3 } \}
\nonumber \\
&&\times
\sum_{0\leq \tilde{c}_{k_1}\leq q-1: \tilde{c}_{k_1} \neq \pi_{k_1} (a_{i_1, k_1})}
\sum_{0\leq \tilde{c}_{k_3}\leq q-1: \tilde{c}_{k_3} \neq \pi_{k_3} (a_{i_3, k_3})}
\nonumber \\
&&\times
\nu^* [\pi_{j_{1,1}} (a_{i_1, j_{1,1}});\ldots; \tilde{c}_{k_1};\ldots;
\pi_{j_{1,|u^{(1)}|}} (a_{i_1, j_{1, |u^{(1)}|}} ) ]
\nonumber \\
&&\times
\nu^* [\pi_{j_{2,1}} (a_{i_2, j_{2,1}});\ldots; \tilde{c}_{k_1}; \ldots;
\pi_{j_{2,|u^{(2)}|}} (a_{i_2, j_{2, |u^{(2)}|}} ) ] 
\nonumber \\
&&\times
\nu^* [\pi_{j_{3,1}} (a_{i_3, j_{3,1}});\ldots; \tilde{c}_{k_3};\ldots;
\pi_{j_{3,|u^{(3)}|}} (a_{i_3, j_{3, |u^{(3)}|}} ) ]
\nonumber \\
&&\times
\nu^* [\pi_{j_{4,1}} (a_{i_4, j_{4,1}});\ldots; \tilde{c}_{k_3}; \ldots;
\pi_{j_{4,|u^{(4)}|}} (a_{i_4, j_{4, |u^{(4)}|}} ) ] \Big\}
\nonumber \\
&=& O(\frac{1}{q^2}),
\nonumber \\
&& R^\ddag_{\{1,2\}, \{3\},\{4\}} \hspace{0.1cm} = \hspace{0.1cm} R^\ddag_{\{3,4\}, \{1\},\{2\}}
\nonumber \\
&=& E\Big\{ \frac{1}{ d^2 q^{10} (q-1)^2 \sigma_{\ell_1}^2 \sigma_{\ell_2}^2 } \sum_{i_1=i_2=1}^{q^2}
\sum_{1\leq i_3\leq q^2:i_3\neq i_1} \sum_{1\leq i_4\leq q^2: i_4\neq i_1, i_3} 
\sum_{0\leq u^{(1)}_1,\ldots, u^{(1)}_d\leq 1: |u^{(1)}|=\ell_1 +2}
\nonumber \\
&&\times
\sum_{0\leq u^{(2)}_1,\ldots, u^{(2)}_d\leq 1: |u^{(2)}|=\ell_2 +2}
\sum_{0\leq u^{(3)}_1,\ldots, u^{(3)}_d\leq 1: |u^{(3)}|=\ell_1 +2}
\sum_{0\leq u^{(4)}_1,\ldots, u^{(4)}_d\leq 1: |u^{(4)}|=\ell_2 +2}
\nonumber \\
&&\times
\sum_{k_1 =1}^d {\cal I} \{ k_1 \in \{j_{1,1},\ldots, j_{1,|u^{(1)}|} \} \cap \{ j_{2,1},\ldots, j_{2,|u^{(2)}|} \} \}
\nonumber \\
&&\times
\sum_{k_3=1}^d {\cal I} \{ k_3 \in \{j_{3,1},\ldots, j_{3,|u^{(3)}|} \} \cap \{ j_{4,1},\ldots, j_{4,|u^{(4)}|} \} \}
{\cal I} \{ a_{i_3, k_3 } = a_{i_4, k_3 } \}
\nonumber \\
&&\times
\sum_{0\leq \tilde{c}_{k_1}\leq q-1: \tilde{c}_{k_1} \neq \pi_{k_1} (a_{i_1, k_1})}
\sum_{0\leq \tilde{c}_{k_3}\leq q-1: \tilde{c}_{k_3} \neq \pi_{k_3} (a_{i_3, k_3})}
\nonumber \\
&&\times
\nu^* [\pi_{j_{1,1}} (a_{i_1, j_{1,1}});\ldots; \tilde{c}_{k_1};\ldots;
\pi_{j_{1,|u^{(1)}|}} (a_{i_1, j_{1, |u^{(1)}|}} ) ]
\nonumber \\
&&\times
\nu^* [\pi_{j_{2,1}} (a_{i_2, j_{2,1}});\ldots; \tilde{c}_{k_1}; \ldots;
\pi_{j_{2,|u^{(2)}|}} (a_{i_2, j_{2, |u^{(2)}|}} ) ] 
\nonumber \\
&&\times
\nu^* [\pi_{j_{3,1}} (a_{i_3, j_{3,1}});\ldots; \tilde{c}_{k_3};\ldots;
\pi_{j_{3,|u^{(3)}|}} (a_{i_3, j_{3, |u^{(3)}|}} ) ]
\nonumber \\
&&\times
\nu^* [\pi_{j_{4,1}} (a_{i_4, j_{4,1}});\ldots; \tilde{c}_{k_3}; \ldots;
\pi_{j_{4,|u^{(4)}|}} (a_{i_4, j_{4, |u^{(4)}|}} ) ] \Big\}
\nonumber \\
&=& O(\frac{1}{q^3}),
\nonumber \\
&& R^\ddag_{\{1,3\},\{2\},\{4\}} \hspace{0.1cm}=\hspace{0.1cm} R^\ddag_{\{1,4\},\{2\},\{3\}}
\hspace{0.1cm}=\hspace{0.1cm} R^\dag_{\{2,3\},\{1\},\{4\}} \hspace{0.1cm}=\hspace{0.1cm} R^\dag_{\{2,4\},\{1\},\{3\}} 
\nonumber \\
&=& E\Big\{ \frac{1}{ d^2 q^{10} (q-1)^2 \sigma_{\ell_1}^2 \sigma_{\ell_2}^2 } \sum_{i_1=i_3=1}^{q^2}
\sum_{1\leq i_2\leq q^2:i_2\neq i_1} \sum_{1\leq i_4\leq q^2: i_4\neq i_1, i_2} 
\sum_{0\leq u^{(1)}_1,\ldots, u^{(1)}_d\leq 1: |u^{(1)}|=\ell_1 +2}
\nonumber \\
&&\times
\sum_{0\leq u^{(2)}_1,\ldots, u^{(2)}_d\leq 1: |u^{(2)}|=\ell_2 +2}
\sum_{0\leq u^{(3)}_1,\ldots, u^{(3)}_d\leq 1: |u^{(3)}|=\ell_1 +2}
\sum_{0\leq u^{(4)}_1,\ldots, u^{(4)}_d\leq 1: |u^{(4)}|=\ell_2 +2}
\nonumber \\
&&\times
\sum_{k_1 =1}^d {\cal I} \{ k_1 \in \{j_{1,1},\ldots, j_{1,|u^{(1)}|} \} \cap \{ j_{2,1},\ldots, j_{2,|u^{(2)}|} \} \}
{\cal I} \{ a_{i_1, k_1 } = a_{i_2, k_1 } \}
\nonumber \\
&&\times
\sum_{k_3=1}^d {\cal I} \{ k_3 \in \{j_{3,1},\ldots, j_{3,|u^{(3)}|} \} \cap \{ j_{4,1},\ldots, j_{4,|u^{(4)}|} \} \}
{\cal I} \{ a_{i_3, k_3 } = a_{i_4, k_3 } \}
\nonumber \\
&&\times
\sum_{0\leq \tilde{c}_{k_1}\leq q-1: \tilde{c}_{k_1} \neq \pi_{k_1} (a_{i_1, k_1})}
\sum_{0\leq \tilde{c}_{k_3}\leq q-1: \tilde{c}_{k_3} \neq \pi_{k_3} (a_{i_3, k_3})}
\nonumber \\
&&\times
\nu^* [\pi_{j_{1,1}} (a_{i_1, j_{1,1}});\ldots; \tilde{c}_{k_1};\ldots;
\pi_{j_{1,|u^{(1)}|}} (a_{i_1, j_{1, |u^{(1)}|}} ) ]
\nonumber \\
&&\times
\nu^* [\pi_{j_{2,1}} (a_{i_2, j_{2,1}});\ldots; \tilde{c}_{k_1}; \ldots;
\pi_{j_{2,|u^{(2)}|}} (a_{i_2, j_{2, |u^{(2)}|}} ) ] 
\nonumber \\
&&\times
\nu^* [\pi_{j_{3,1}} (a_{i_3, j_{3,1}});\ldots; \tilde{c}_{k_3};\ldots;
\pi_{j_{3,|u^{(3)}|}} (a_{i_3, j_{3, |u^{(3)}|}} ) ]
\nonumber \\
&&\times
\nu^* [\pi_{j_{4,1}} (a_{i_4, j_{4,1}});\ldots; \tilde{c}_{k_3}; \ldots;
\pi_{j_{4,|u^{(4)}|}} (a_{i_4, j_{4, |u^{(4)}|}} ) ] \Big\}
\nonumber \\
&=& O(\frac{1}{q^4}),
\nonumber \\
&& R^\ddag_{\{1,3\},\{2,4\}}
\hspace{0.1cm} = \hspace{0.1cm} R^\ddag_{\{1,4\},\{2, 3\}} 
\nonumber \\
&=& E\Big\{ \frac{1}{ d^2 q^{10} (q-1)^2 \sigma_{\ell_1}^2 \sigma_{\ell_2}^2 } \sum_{i_1=i_3=1}^{q^2}
\sum_{1\leq i_2=i_4\leq q^2:i_2\neq i_3} 
\sum_{0\leq u^{(1)}_1,\ldots, u^{(1)}_d\leq 1: |u^{(1)}|=\ell_1 +2}
\nonumber \\
&&\times
\sum_{0\leq u^{(2)}_1,\ldots, u^{(2)}_d\leq 1: |u^{(2)}|=\ell_2 +2}
\sum_{0\leq u^{(3)}_1,\ldots, u^{(3)}_d\leq 1: |u^{(3)}|=\ell_1 +2}
\sum_{0\leq u^{(4)}_1,\ldots, u^{(4)}_d\leq 1: |u^{(4)}|=\ell_2 +2}
\nonumber \\
&&\times
\sum_{k_1 =1}^d {\cal I} \{ k_1 \in \{j_{1,1},\ldots, j_{1,|u^{(1)}|} \} \cap \{ j_{2,1},\ldots, j_{2,|u^{(2)}|} \} \}
{\cal I} \{ a_{i_1, k_1} = a_{i_2, k_1} \}
\nonumber \\
&&\times
\sum_{k_3=1}^d {\cal I} \{ k_3 \in \{j_{3,1},\ldots, j_{3,|u^{(3)}|} \} \cap \{ j_{4,1},\ldots, j_{4,|u^{(4)}|} \} \}
{\cal I} \{ a_{i_1, k_3 } = a_{i_2, k_3 } \}
\nonumber \\
&&\times
\sum_{0\leq \tilde{c}_{k_1}\leq q-1: \tilde{c}_{k_1} \neq \pi_{k_1} (a_{i_1, k_1})}
\sum_{0\leq \tilde{c}_{k_3}\leq q-1: \tilde{c}_{k_3} \neq \pi_{k_3} (a_{i_3, k_3})}
\nonumber \\
&&\times
\nu^* [\pi_{j_{1,1}} (a_{i_1, j_{1,1}});\ldots; \tilde{c}_{k_1};\ldots;
\pi_{j_{1,|u^{(1)}|}} (a_{i_1, j_{1, |u^{(1)}|}} ) ]
\nonumber \\
&&\times
\nu^* [\pi_{j_{2,1}} (a_{i_2, j_{2,1}});\ldots; \tilde{c}_{k_1}; \ldots;
\pi_{j_{2,|u^{(2)}|}} (a_{i_2, j_{2, |u^{(2)}|}} ) ] 
\nonumber \\
&&\times
\nu^* [\pi_{j_{3,1}} (a_{i_3, j_{3,1}});\ldots; \tilde{c}_{k_3};\ldots;
\pi_{j_{3,|u^{(3)}|}} (a_{i_3, j_{3, |u^{(3)}|}} ) ]
\nonumber \\
&&\times
\nu^* [\pi_{j_{4,1}} (a_{i_4, j_{4,1}});\ldots; \tilde{c}_{k_3}; \ldots;
\pi_{j_{4,|u^{(4)}|}} (a_{i_4, j_{4, |u^{(4)}|}} ) ] \Big\}
\nonumber \\
&=& O(\frac{1}{q^3}),
\nonumber \\
&& R^\ddag_{\{1\},\{2\},\{3\},\{4\}}
\nonumber \\
&=& E\Big\{ \frac{1}{ d^2 q^{10} (q-1)^2 \sigma_{\ell_1}^2 \sigma_{\ell_2}^2 } \sum_{i_1=1}^{q^2}
\sum_{1\leq i_2\leq q^2: i_2\neq i_1} \sum_{1\leq i_3\leq q^2: i_3\neq i_1, i_2} 
\sum_{1\leq i_4\leq q^2: i_4\neq i_1,i_2, i_3} 
\nonumber \\
&&\times
\sum_{0\leq u^{(1)}_1,\ldots, u^{(1)}_d\leq 1: |u^{(1)}|=\ell_1 +2}
\sum_{0\leq u^{(2)}_1,\ldots, u^{(2)}_d\leq 1: |u^{(2)}|=\ell_2 +2}
\nonumber \\
&&\times
\sum_{0\leq u^{(3)}_1,\ldots, u^{(3)}_d\leq 1: |u^{(3)}|=\ell_1 +2}
\sum_{0\leq u^{(4)}_1,\ldots, u^{(4)}_d\leq 1: |u^{(4)}|=\ell_2 +2}
\nonumber \\
&&\times
\sum_{k_1 =1}^d {\cal I} \{ k_1 \in \{j_{1,1},\ldots, j_{1,|u^{(1)}|} \} \cap \{ j_{2,1},\ldots, j_{2,|u^{(2)}|} \} \}
{\cal I} \{ a_{i_1, k_1} = a_{i_2, k_1} \}
\nonumber \\
&&\times
\sum_{k_3=1}^d {\cal I} \{ k_3 \in \{j_{3,1},\ldots, j_{3,|u^{(3)}|} \} \cap \{ j_{4,1},\ldots, j_{4,|u^{(4)}|} \} \}
{\cal I} \{ a_{i_3, k_3 } = a_{i_4, k_3 } \}
\nonumber \\
&&\times
\sum_{0\leq \tilde{c}_{k_1}\leq q-1: \tilde{c}_{k_1} \neq \pi_{k_1} (a_{i_1, k_1})}
\sum_{0\leq \tilde{c}_{k_3}\leq q-1: \tilde{c}_{k_3} \neq \pi_{k_3} (a_{i_3, k_3})}
\nonumber \\
&&\times
\nu^* [\pi_{j_{1,1}} (a_{i_1, j_{1,1}});\ldots; \tilde{c}_{k_1};\ldots;
\pi_{j_{1,|u^{(1)}|}} (a_{i_1, j_{1, |u^{(1)}|}} ) ]
\nonumber \\
&&\times
\nu^* [\pi_{j_{2,1}} (a_{i_2, j_{2,1}});\ldots; \tilde{c}_{k_1}; \ldots;
\pi_{j_{2,|u^{(2)}|}} (a_{i_2, j_{2, |u^{(2)}|}} ) ] 
\nonumber \\
&&\times
\nu^* [\pi_{j_{3,1}} (a_{i_3, j_{3,1}});\ldots; \tilde{c}_{k_3};\ldots;
\pi_{j_{3,|u^{(3)}|}} (a_{i_3, j_{3, |u^{(3)}|}} ) ]
\nonumber \\
&&\times
\nu^* [\pi_{j_{4,1}} (a_{i_4, j_{4,1}});\ldots; \tilde{c}_{k_3}; \ldots;
\pi_{j_{4,|u^{(4)}|}} (a_{i_4, j_{4, |u^{(4)}|}} ) ] \Big\}
\nonumber \\
&=& O(\frac{1}{q^4}),
\nonumber \\
&& R^\ddag_{\{1,2\},\{3,4\}} 
\nonumber \\
&=& E\Big\{ \frac{1}{ d^2 q^{10} (q-1)^2 \sigma_{\ell_1}^2 \sigma_{\ell_2}^2 } \sum_{i_1=i_2=1}^{q^2}
\sum_{1\leq i_3=i_4\leq q^2:i_3\neq i_2}
\sum_{0\leq u^{(1)}_1,\ldots, u^{(1)}_d\leq 1: |u^{(1)}|=\ell_1 +2}
\nonumber \\
&&\times
\sum_{0\leq u^{(2)}_1,\ldots, u^{(2)}_d\leq 1: |u^{(2)}|=\ell_2 +2}
\sum_{0\leq u^{(3)}_1,\ldots, u^{(3)}_d\leq 1: |u^{(3)}|=\ell_1 +2}
\sum_{0\leq u^{(4)}_1,\ldots, u^{(4)}_d\leq 1: |u^{(4)}|=\ell_2 +2}
\nonumber \\
&&\times
\sum_{k_1 =1}^d {\cal I} \{ k_1 \in \{j_{1,1},\ldots, j_{1,|u^{(1)}|} \} \cap \{ j_{2,1},\ldots, j_{2,|u^{(2)}|} \} \}
\nonumber \\
&&\times
\sum_{k_3=1}^d {\cal I} \{ k_3 \in \{j_{3,1},\ldots, j_{3,|u^{(3)}|} \} \cap \{ j_{4,1},\ldots, j_{4,|u^{(4)}|} \} \}
\nonumber \\
&&\times
\sum_{0\leq \tilde{c}_{k_1}\leq q-1: \tilde{c}_{k_1} \neq \pi_{k_1} (a_{i_1, k_1})}
\sum_{0\leq \tilde{c}_{k_3}\leq q-1: \tilde{c}_{k_3} \neq \pi_{k_3} (a_{i_3, k_3})}
\nonumber \\
&&\times
\nu^* [\pi_{j_{1,1}} (a_{i_1, j_{1,1}});\ldots; \tilde{c}_{k_1};\ldots;
\pi_{j_{1,|u^{(1)}|}} (a_{i_1, j_{1, |u^{(1)}|}} ) ]
\nonumber \\
&&\times
\nu^* [\pi_{j_{2,1}} (a_{i_2, j_{2,1}});\ldots; \tilde{c}_{k_1}; \ldots;
\pi_{j_{2,|u^{(2)}|}} (a_{i_2, j_{2, |u^{(2)}|}} ) ] 
\nonumber \\
&&\times
\nu^* [\pi_{j_{3,1}} (a_{i_3, j_{3,1}});\ldots; \tilde{c}_{k_3};\ldots;
\pi_{j_{3,|u^{(3)}|}} (a_{i_3, j_{3, |u^{(3)}|}} ) ]
\nonumber \\
&&\times
\nu^* [\pi_{j_{4,1}} (a_{i_4, j_{4,1}});\ldots; \tilde{c}_{k_3}; \ldots;
\pi_{j_{4,|u^{(4)}|}} (a_{i_4, j_{4, |u^{(4)}|}} ) ] \Big\}
\nonumber \\
&=& O(\frac{1}{q^3}),
\end{eqnarray*}
as $q\rightarrow\infty$. The last equality uses the assumption that $\ell_1<\ell_2$. 
Thus we conclude that
\begin{eqnarray*}
\frac{d(q-1)}{2(\ell_1+2) } E| E^{\cal W} ( \tilde{S}_{\ell_1,1} \tilde{S}_{\ell_2,1} ) | \leq 
\frac{d(q-1)}{2(\ell_1+2)} \{ E[ E^{\cal W} ( \tilde{S}_{\ell_1,1} \tilde{S}_{\ell_2,1} ) ]^2\}^{1/2} = O(\frac{1}{q^{1/2}}),
\end{eqnarray*}
as $q\rightarrow\infty$. This proves Lemma \ref{la:a.24}. \hfill $\Box$

{\sc Proof of Theorem \ref{tm:5.2}.}
Let $Z_1$ be a random variable having the standard (univariate) normal distribution and
$\xi = (\sigma_1/\sigma, \ldots, \sigma_{d-2}/\sigma)'$. Then $\|\xi \|^2= 1$ and 
$W = \xi' V$. For ease of exposition in the subsequent argument, we shall write $\xi= \xi(q)$ and $V=V(q)$.

We claim that $\xi' V\rightarrow Z_1$ in distribution as $q\rightarrow \infty$. We shall prove this
claim by contraposition. Suppose the claim is false. Then there exists an interval, say $[a, b) \subset {\mathbb R}$, 
such that $P(\xi'V \in [a, b))$ does not converge to $P( Z_1 \in [a, b))$ as $q\rightarrow \infty$.
Since $P(\xi' V \in [a, b)) \in [0, 1]$, by the compactness of $[0,1]$, there exists a subsequence, say $\xi'(q_k) V(q_k)$, of $\xi' V$
such that $P( \xi'(q_k) V(q_k) \in [a, b))$ converges to a number, say $L \neq P(Z_1\in [a, b))$. 
As $\| \xi(q_k)\| =1$,  there exists a further subsequence,
say $\xi' (q_{k_l}) V (q_{k_l})$, of $\xi'(q_k) V(q_k)$  such that $\xi(q_{k_l})$ 
converges to a point $\tilde{\xi}\in {\mathbb R}^{d-2}$ as $q_{k_l}\rightarrow \infty$.
This implies that $\xi'(q_{k_l}) V(q_{k_l}) - \tilde{\xi}' V (q_{k_l})\rightarrow 0$ in probability as $q_{k_l}\rightarrow \infty$
and hence $\xi'(q_{k_l}) V(q_{k_l})$ and $\tilde{\xi}' V(q_{k_l})$ have the same asymptotic distribution. 
Using Theorem \ref{tm:5.1} and $\|\tilde{\xi}\|^2 =1$, we observe that $\tilde{\xi}'V(q_{k_l})$ converges in law to the standard normal distribution
as $q_{k_l}\rightarrow \infty$.
Hence $\xi'(q_{k_l}) V(q_{k_l})$ converges in law to the same latter distribution. This is a contradiction and the claim is proved.

We observe from Theorem \ref{tm:3.1} and Proposition \ref{pn:3.1} that 
for $\int_{[0,1)^d} f^2_{rem} (x) dx >0$, we have
$\sigma^2_{oal}/\sigma^2 = 1 + O(q^{-1})$ and
$\sigma^2_{oas}/\sigma^2 = 1 + O(q^{-1})$ as $q\rightarrow \infty$.
Thus we conclude from Proposition \ref{pn:4.1} and Slutsky's theorem that
$W_{oal}$ and $W_{oas}$ both tend in law  to the standard (univariate) normal distribution
as $q\rightarrow \infty$. Finally using Theorem \ref{tm:3.1} and Proposition \ref{pn:3.1}, we have
$\lim_{q\rightarrow \infty} \sigma_{oal}^*/\sigma_{oal} = 1$. Hence for $[a, b) \subset {\mathbb R}$,
\begin{displaymath}
P( W^*_{oal} \in [a, b) ) = \frac{1}{d!} {\sum}^* P( W_{oal} \in [a, b)) +o(1)
\rightarrow P(Z_1\in [a, b)),
\end{displaymath}
as $q\rightarrow \infty$ where $\sum^*$ denotes summation over all the $d!$ permutations of the columns of $A^{**}$.
This proves that $W_{oal}^*$ converges in law to the standard normal distribution. The proof of Theorem \ref{tm:5.2} is complete.
\hfill $\Box$

\end{document}